\newtheorem{theorem}{Theorem}[section]
\newtheorem{remark}{Remark}[section]
\newtheorem{definition}{Definition}[section]
\newtheorem{lemma}[theorem]{Lemma}
\newtheorem{pro}[theorem]{Proposition}
\newenvironment{pf}{{\noindent \it \bf Proof:}}{{\hfill$\Box$}\\}
\renewcommand{\div}{{\rm div }}
\newcommand{\bt}{\begin{theorem}}
\newcommand{\bl}{\begin{lemma}}
\newcommand{\el}{\end{lemma}}
\newcommand{\et}{\end{theorem}}
\newcommand{\la}{\label}
\newcommand{\bn}{\begin{eqnarray}}
\newcommand{\en}{\end{eqnarray}}
\newcommand{\bnn}{\begin{eqnarray*}}
\newcommand{\enn}{\end{eqnarray*}}
\newcommand{\ba}{\begin{aligned}}
\newcommand{\ea}{\end{aligned}}
\newcommand{\be}{\begin{equation}}
\newcommand{\ee}{\end{equation}}
\renewcommand{\la}{\label}
\newcommand{\Bv}{{\boldsymbol{v}}}
\newcommand{\Bn}{{\boldsymbol{n}}}
\newcommand{\Bu}{{\boldsymbol{u}}}
\newcommand{\Be}{{\boldsymbol{e}}}
\newcommand{\BF}{{\boldsymbol{F}}}
\newcommand{\supp}{\text{supp}}
\newcommand{\bBU}{\bar{{\boldsymbol{U}}}}
\newcommand{\Bo}{{\boldsymbol{\omega}}}
\newcommand{\Bp}{{\boldsymbol{\phi}}}
\newcommand{\msR}{\mathscr{R}}
\newcommand{\mcE}{\mathcal{E}}
\newcommand{\mcA}{\mathcal{A}}
\newcommand{\mcH}{\mathcal{H}}
\newcommand{\what}{\widehat}
\newcommand{\veps}{\varepsilon}
\begin{document}

\title[Stability of Hagen-Poiseuille Flows]
{Uniform Structural stability of Hagen-Poiseuille flows in a pipe}

\author{Yun Wang}
\address{School of Mathematical Sciences, Center for dynamical systems and differential equations, Soochow University, Suzhou, China}
\email{ywang3@suda.edu.cn}

\author{Chunjing Xie}
\address{School of mathematical Sciences, Institute of Natural Sciences,
Ministry of Education Key Laboratory of Scientific and Engineering Computing,
and SHL-MAC, Shanghai Jiao Tong University, 800 Dongchuan Road, Shanghai, China}
\email{cjxie@sjtu.edu.cn}

\begin{abstract}
In this paper, we prove the uniform nonlinear structural stability of Hagen-Poiseuille flows with arbitrary large fluxes in the axisymmetric case.  This uniform nonlinear structural stability is the first step to study Liouville type theorem for steady solution of Navier-Stokes system in a pipe, which may play an important role in proving the existence of solutions for the  Leray's problem, the existence of solutions of steady Navier-Stokes system with arbitrary flux in a general nozzle. A key step to establish nonlinear structural stability is the a priori estimate for the associated linearized problem for Navier-Stokes system around Hagen-Poiseuille flows.
The linear structural stability is established as a consequence of elaborate analysis for the governing equation for the partial Fourier transform of the stream function. The uniform estimates are obtained based on the analysis for the solutions with different fluxes and frequencies.  One of the most involved cases is to analyze the solutions with large flux and intermediate frequency, where the boundary layer analysis for the solutions plays a crucial role.
\end{abstract}

\keywords{Hagen-Poiseuille flows, steady Navier-Stokes equations, pipe, uniform structural stability.}
\subjclass[2010]{
35G61, 35J66, 35L72, 35M32, 76N10, 76J20}


\maketitle

\section{Introduction and Main Results}
An important problem in fluid dynamics is to study the steady Navier-Stokes system in infinitely long nozzles (\cite{Galdi}). More precisely, given an infinitely long nozzle $\tilde{\Omega}$ tending to a flat cylinder $\Omega=\Sigma\times \mathbb{R}$, the famous Leray's problem is to establish the existence of the solutions for the system
\begin{equation}\label{SNS}
\left\{
\begin{aligned}
& \Bu\cdot \nabla \Bu +\nabla p=\Delta \Bu +\BF, \\
&\div~\Bu=0,
\end{aligned}
\right.
\end{equation}
 supplemented with the conditions
 \begin{equation}\label{fluxBC0}
\Bu= 0\ \ \ \ \mbox{on}\ \partial \tilde{\Omega},\quad \quad \int_{\tilde{\Sigma}} \Bu\cdot \Bn dS =\Phi,
\end{equation}
and the velocity field tending to the Poiseuille flow associated with the flux $\Phi$, where $\tilde{\Sigma}$ is any cross section of the nozzle $\tilde\Omega$ and the Poiseuille flow which is a solution steady Navier-Stokes equations of the form $\bar\Bu=(0, 0, \bar u^z(x,y)$.

 When $\Omega=B_1(0)\times \mathbb{R}$, the velocity field for the Poiseuille flow  with the flux $\Phi$ in this case has the explicit form
\be \label{1.8}
(0, 0, \bar{U}(r)) = (0, 0, \frac{2\Phi}{\pi}(1-r^2)),
\ee
where $r=\sqrt{x^2+y^2}$.
  These solutions are also called Hagen-Poiseuille flows and We denote them by $\bBU =\bar{U}(r) \Be_z$ later on.  The Leary's problem nowadays and it was first addressed by Leray (\cite{Leray}) in 1933. Without loss of generality,  $\Phi$ is assumed to be nonnegative.
  The first significant contribution to the solvability of Leray's problem is due to Amick \cite{Amick1, Amick2}, who reduced the proof of existence to the resolution of a well-known variational problem related to the stability of the Poiseuille flow in a flat cylinder. However, Amick left out the investigation of uniqueness and the existence of solutions with large flux. A rich and detailed analysis of the problem of the flow in domains having outlets to infinity of bounded cross section is due to Ladyzhenskaya and Solonnikov \cite{LS}. However, the asymptotic far field behavior of the solutions obtained in \cite{LS} is not very clear.
There are lots of studies on well-posedness for the Leray's problem and far field behavior for the associated solutions, one may refer to \cite{AmickF,MF, Rabier1, Rabier2, HW, LS, AP,Pileckas}, etc.
For more references on steady solutions of the Navier-Stokes equation in nozzles or other type of domains, please refer to the book by Galdi \cite{Galdi}. A significant open problem posed in \cite[p. 19]{Galdi} is global well-posedness for Leray's problem in a general nozzle when the flux $\Phi$ is large.

In fact, with the aid of the compactness obtained in \cite{LS},  global well-posedness for the Leary's problem in a general nozzle tending to a pipe could be established even when the flux $\Phi$ is large, provided that we can prove global uniqueness of Hagen-Poiseuille flow in a pipe. As a first step to prove global uniqueness of Hagen-Poiseuille flows, we investigate the local uniqueness of Hagen-Poiseuille flows. More precisely,  under the boundary condition
 \begin{equation}\label{fluxBC}
\Bu= 0\ \ \ \ \mbox{on}\ \partial \Omega,\quad \quad \int_\Sigma \Bu\cdot \Bn dS =\Phi,
\end{equation}
boundary conditions where $\Sigma$ is any cross section of the pipe, is Hagen-Poiseuille flow the only solution of steady Navier-Stokes system \eqref{SNS} in the neighborhood of such flow?

A key issue to prove the local uniqueness of Hagen-Poiseuille flows is to investigate the invertibility of associated linearized operator around Hagen-Poiseuille flows. More precisely, we need to study the well-posedness for the system
\be \label{2-0-1} \left\{ \ba
&\bBU \cdot \nabla \Bv + \Bv \cdot \nabla \bBU - \Delta \Bv + \nabla P = \BF, \ \ \ \mbox{in}\ \Omega, \\
& {\rm div}~\Bv = 0,\ \ \ \mbox{in}\ \Omega,
\ea
\right.
\ee
supplemented
with no-slip boundary conditions and the flux constraint,
\be \label{BC}
\Bv = 0\ \ \ \mbox{on}\ \partial \Omega,\quad \int_{B_1(0)} v^z(\cdot, \cdot, z)\, dS = 0\ \ \ \text{for any}\,\, z \in \mathbb{R}.
\ee


Our first main result is the following uniform linear structural stability of Hagen-Poiseuille flows, which plays a crucial role in proving local uniqueness.

\bt \label{thm1}
Assume that $\BF= \BF(r, z) \in L^2(\Omega)$  is axisymmetric, the linear problem \eqref{2-0-1} and \eqref{BC} has a unique axisymmetric solution $\Bv$ which satisfies
\be \label{estuniformlinear}
\|\Bv\|_{H^{\frac53}(\Omega)} \leq C \|\BF\|_{L^2 (\Omega)},
\ee
and
\be \label{estimatelinear}
\|\Bv\|_{ H^2 (\Omega)} \leq C (1 + \Phi^{\frac14} ) \|\BF\|_{L^2 (\Omega)},
\ee
where $C$ is a uniform constant independent of $\BF$ and $\Phi$.
\et

Several remarks with respect to Theorem \ref{thm1} are in order.

\begin{remark}
The key point of Theorem \ref{thm1} is that the constant $C$ in \eqref{estuniformlinear} does not depend on $\Phi$, which provides a uniform estimate for the axisymmetric solutions of \eqref{2-0-1} and \eqref{BC}.
\end{remark}

\begin{remark}
Here we would like to mention that our results also shed lights on hydrodynamical stability of Hagen-Poiseuille flows in an infinitely long pipe, which is a longstanding problem (\cite{Asen}).
One of the key issues  is to investigate the following eigenvalue problem
\be \label{eigpb} \left\{ \ba
&s\Bv+\bar\Bu \cdot \nabla \Bv + \Bv \cdot \nabla \bar\Bu - \frac{1}{Re}\Delta \Bv + \nabla P = \BF, \ \ \ \mbox{in}\ \Omega, \\
& {\rm div}~\Bv = 0,\ \ \ \mbox{in}\ \Omega,
\ea
\right.
\ee
supplemented
with the condition \eqref{BC}, where $\bar{\Bu}=(1 - r^2) \Be_z$.
The computations by Meseguer and Trefethen \cite{MT} indicate that the $L^2$-norm of the resolvent operator $\msR(s)$, the solution operator for \eqref{eigpb} and \eqref{BC}, is maximized at $s=0$ and depends on the Reynolds number $Re$ as $\|\msR(0)\|\sim Re^2$.
Given
$\tilde{\Bv} =\msR(0)\tilde{\BF}$,
then  $\Bv=\tilde{\Bv}$ satisfies \eqref{2-0-1}-\eqref{BC}  with ${\BF}= {Re}\tilde{\BF}$ and $\Phi=\frac{  \pi Re}{2} $. Therefore, it follows from Theorem \ref{thm1} that
\begin{equation}
\|\tilde{\Bv}\|_{L^2(\Omega)}= \|{\Bv}\|_{L^2(\Omega)}\leq C \|\BF\|_{L^2(\Omega)} = C Re \|\tilde{\BF}\|_{L^2(\Omega)}.
\end{equation}
This implies that
\[
\|\msR(0)\|_{L^2\to L^2}\leq C Re.
\]
Therefore, Theorem \ref{thm1} not only answers the problem left by Asen and Kreiss \cite[p. 461]{Asen} about the estimate for the resolvent operator $\msR(0)$ in the axisymmetric case, but also shows that the norm estimate for the resolvent computed in  \cite{MT} can be improved in the axisymmetric setting. The linear stability of Poiseuille flows in a pipe for the solutions periodic in $z$-direction was obtained in \cite{Guo, Zhang}.
\end{remark}

Making use of the uniform estimates for the linear problem, we have the following results on uniform nonlinear structural stability.
\bt \label{mainthm}
Assume that $\BF= \BF(r, z)\in L^2 (\Omega)$ is axisymmetric. There exists a  constant $\veps_0$, independent of $\BF$ and $\Phi$, such that if
\be \label{thmuniformnonlinear1}
\|\BF\|_{L^2  (\Omega)} \leq \veps_0,
\ee
then the steady Navier-Stokes system \eqref{SNS} supplemented with the boundary condition \eqref{fluxBC} has a unique axisymmetric solution
$\Bu$
satisfying the estimate
\be \label{thmuniformnonlinear2}
\|\Bu - \bBU \|_{H^{\frac53} (\Omega)} \leq C \|\BF\|_{L^2(\Omega)},
\ee
and
\be \label{thmuniformnonlinear3}
\|\Bu - \bBU \|_{H^2(\Omega)} \leq C (1 + \Phi^{\frac14}) \|\BF\|_{L^2(\Omega)}.
\ee
Here $C$ is a uniform constant independent of $\Phi$.
\et

There are a few remarks in order.

\begin{remark}
Theorem \ref{mainthm} gives the unique existence of steady solutions for the Navier-Stokes system near the Hagen-Poiseuille flow. The key point of Theorem \ref{mainthm} is that the flux of the flow can be arbitrarily large, and the constant $C$ in \eqref{thmuniformnonlinear2} is independent of the flux. So Theorem \ref{mainthm} provides uniform structural stability of Hagen-Poiseuille flows with respect to the flux.  This is the main difference between Theorem \ref{mainthm} and the results in previous work where the flux must be small or  satisfy certain relation with the viscosity, and the magnitude of the force depends on the viscosity,  \cite{Amick1, Amick2, LS}.
\end{remark}

\begin{remark}
In the forthcoming paper \cite{WX1}, we will show the results similar to Theorems \ref{thm1} and \ref{mainthm} are also true for axisymmetric flows satisfying Navier slip boundary conditions. Inspired by the method in this paper,  we proved the structural stability for Poiseuille flows in a two dimensional strip with Navier slip boundary conditions \cite{WX2}.
\end{remark}

\begin{remark}
In fact, using the ideas in this paper, we can even prove the existence of solutions for nonlinear Navier-Stokes equations when $\BF$ is big in certain spaces, see \cite{WX3} for these improved estimates and the asymptotic behavior for the solutions of Navier-Stokes system in a pipe.
\end{remark}

Here we outline some key observations and techniques in this paper. First, it is observed that  for the axisymmetric solutions to the linearized system around Hagen-Poiseuille flows, the equation for the swirl velocity is decoupled from the ones for other unknowns. And the major part of the analysis is for the axial and radial velocity components, which is  equivalent to a fourth order equation for the stream function. The main difficulty for the linear structural stability corresponds to the analysis for a non-self adjoint operator with large coefficients. After taking Fourier transform with respect to the axial variable, the linearized problem for the stream function is reduced to a fourth order complex ODE with frequency as a parameter. We deal with all frequencies simultaneously. The estimate derived from the imaginary part, together with an elementary inequality  (\eqref{ineqHLP} in Lemma \ref{lemmaHLP}) used to deal with the degeneracy of the flow near the boundary, gives the estimate for the real part, which consists the basic a priori estimate for the linearized problem. However,  these estimates do not give uniform stability. In order to deal with the flow with large flux, we analyze the problem by different methods when the frequencies are different.  The most involved situation comes from the case with large flux and intermediate frequency. Inspired by the study on two dimensional fast rotating flows in \cite{M}, we decompose the solutions of the linearized axisymmetric Navier-Stokes system in a pipe into four parts. The first part is an axisymmetric solution of the same linearized system supplemented with the slip boundary conditions, while the rest parts correspond to the boundary layer corrections of the solutions. More precisely, the boundary layer corrections contain three parts, one of them is an exact solution of the boundary layer equations, the other part is an exact irrotational solution of the linearized Navier-Stokes system which associates with the modified Bessel function of the first kind,  while the last one corresponds to remainders after we take care of all the above leading parts.

The organization of the paper is as follows. In Section \ref{Linear},  the stream function formulation for the  linear problem \eqref{2-0-1} and \eqref{BC} in the axisymmetric case is established and some basic a priori estimates for the stream function are given.  The uniform a priori estimate of the swirl velocity is established in Section \ref{sec-swirl}.  The existence and regularity of solutions are provided in Sections \ref{sec-ex} and \ref{sec-reg}, respectively.       The uniform a priori  estimate for the case with large flux is obtained in Section \ref{sec-res}. The estimate is established via different analysis for the problem with different frequencies. With the aid of the analysis on the associated linearized problem and a fixed point argument, the uniform nonlinear structural stability of Hagen-Poiseuille flows in axisymmetric case is proved in Section \ref{secnonlinear}.  Two appendices are included. The first one collects some important lemmas and their proofs which are used in the paper. The detailed analysis for the vorticity is given in the second appendix.


\section{Stream function formulation and a priori estimate}\label{Linear}
This section devotes to the study on the linearized problem \eqref{2-0-1} and \eqref{BC} for axisymmetric solutions. We will make full use of the fact that the equation for the swirl velocity decouples from the equations for  radial and axial velocity. After introducing the stream function, one can reduce the equations for the axial and radial velocity into a single fourth order equation.
 Taking Fourier transform with respect to the axial variable $z$  induces a fourth order complex ODE for the stream function, where the frequency is regarded as a parameter. These are presented in \S \ref{sec-stream}. The careful energy estimate for both the imaginary and the real parts of the complex ODE gives a good estimate for the solutions for all the frequencies simultaneously. Although these estimates are not uniform with respect to the fluxes, they are enough for getting  the existence of solutions for the associated linear problem for the stream function.

\subsection{Stream function formulation}\label{sec-stream}
In terms of the cylindrical coordinates, an axisymmetric solution $\Bv$ can be written as
\be \nonumber
\Bv = v^r(r,z) \Be_r + v^z(r,z) \Be_z +v^\theta(r,z) \Be_\theta.
\ee
Then the equations \eqref{2-0-1} become
\be \label{2-0-1-1}
\left\{
\ba
& \bar U(r)  \frac{\partial v^r}{\partial z} + \frac{\partial P}{\partial r} -\left[ \frac{1}{r} \frac{\partial}{\partial r}\left(
r \frac{\partial v^r}{\partial r} \right) + \frac{\partial^2 v^r}{\partial z^2} - \frac{v^r}{r^2} \right] = F^r  \ \ \ \mbox{in}\ D ,\\
& v^r \frac{\partial \bar U }{\partial r} + \bar U(r)  \frac{\partial v^z}{\partial z} + \frac{\partial P}{\partial z}
- \left[ \frac{1}{r} \frac{\partial }{\partial r} \left( r \frac{\partial v^z}{\partial r}\right) + \frac{\partial^2 v^z}{\partial z^2} \right] = F^z \ \ \mbox{in}\ D ,                     \\
& \partial_r v^r + \partial_z v^z + \frac{v^r}{r} =0\ \ \ \mbox{in}\ D ,
\ea \right.
\ee
and
\be \label{vswirl}
\bar U(r)  \partial_z  v^\theta - \left[ \frac{1}{r} \frac{\partial }{\partial r} \left( r \frac{\partial v^\theta}{\partial r}\right) + \frac{\partial^2 v^\theta}{\partial z^2} - \frac{v^\theta}{r^2} \right] =  F^\theta, \ \ \ \mbox{in}\ \ D.
\ee
Here $F^r$, $F^z$, and $F^\theta$ are the radial, axial, and azimuthal component of $\BF$, respectively, and $D=\{(r, z): r\in (0, 1), z\in \mathbb{R}\}$.
The Dirichlet boundary conditions and the flux constraint \eqref{BC} can be written as
\be \label{BC-1}
v^r(1, z)  = v^z(1, z) = 0,\quad  \int_0^1 r v^z(r, z)\, dr = 0,
\ee
and
\begin{equation}\label{BC-swirl}
v^\theta (1, z) = 0.
\end{equation}

It follows from  the third equation in \eqref{2-0-1-1} that there exists a stream function $\psi(r, z)$ satisfying
\be \label{2-0-4}
v^r =  \partial_z \psi \ \ \text{and} \ \ v^z = - \frac{\partial_r (r \psi) }{r}.
\ee
Then the azimuthal vorticities of $\Bv$ and $\BF$ are written as
\be \nonumber
\omega^\theta= \partial_z v^r - \partial_r v^z= \frac{\partial }{\partial r}  \left(  \frac1r \frac{\partial }{\partial r} (r \psi) \right) + \partial_z^2 \psi \ \ \ \ \mbox{and}\ \ \ \ f= \partial_z F^r - \partial_r F^z,
\ee
respectively.
It follows from the first two equations in \eqref{2-0-1-1} that
\be \label{2-0-2}
\bar U(r)  \partial_z \omega^\theta - \left(\partial_r^2 + \partial_z^2 + \frac{1}{r}  \partial_r \right)\omega^\theta
+ \frac{\omega^\theta}{r^2}  = f.
\ee
Denote
\be \nonumber
\mathcal{L} = \frac{\partial}{\partial r} \left(  \frac1r \frac{\partial}{\partial r}(r \cdot)      \right) = \frac{\partial^2}{\partial r^2} + \frac1r \frac{\partial}{\partial r} - \frac{1}{r^2}.
\ee
Then $\psi$ satisfies the following fourth order equation,
\be \label{2-0-4-1}
\bar U(r)  \partial_z( \mathcal{L} + \partial_z^2) \psi -
( \mathcal{L} + \partial_z^2)^2 \psi = f.
\ee

 Next, we derive the boundary conditions for $\psi$. As discussed in \cite{Liu-Wang}, in order to get classical solutions, some compatibility conditions at the axis should be imposed. Assume that $\Bv$ and the vorticity $\Bo$ are continuous, $v^r(0, z)$ and $\omega^\theta(0, z)$ should vanish, which implies that
\be \nonumber
\partial_z \psi(0, z) = (\mathcal{L} + \partial_z^2)\psi (0, z) = 0.
\ee
Without loss of generality, one can  assume that $\psi(0, z) = 0$. Hence, the following compatibility condition holds at the axis,
\be \label{2-0-4-2}
\psi(0, z) = \mathcal{L} \psi(0, z) = 0.
\ee
On the other hand, it follows from \eqref{BC-1} that
\be \nonumber
\int_0^1 \partial_r (r \psi ) (r, z)\, dr =-  \int_0^1 r v^z \, dr = 0.
\ee
This, together with \eqref{2-0-4-2}, gives
\be \label{2-0-4-3}
\psi(1, z) = \lim_{r\rightarrow 0+ } ( r \psi)  (r, z)  = 0.
\ee
Moreover, according to the Dirichlet boundary condition for $\Bv$,
\be \nonumber
\frac{\partial}{\partial r}(r \psi)  |_{r=1} = r v^z  |_{r=1} = 0,
\ee
which implies that
\be \label{2-0-4-4}
\frac{\partial \psi}{\partial r}(1, z) = 0.
\ee

In this paper, for a given function $g(r, z)$, define its Fourier transform with respect to $z$ variable by
\be  \nonumber
\hat{g}(r, \xi) = \int_{\mathbb{R}^1} g(r, z) e^{-i \xi z} dz.
\ee
We take the Fourier transform with respect to $z$ for the equation \eqref{2-0-4-1}. For each fixed $\xi$, $\hat{\psi}$ satisfies
\be \label{2-0-8}
i \xi \bar{U}(r) ( \mathcal{L} -\xi^2) \hat{\psi} - ( \mathcal{L} -\xi^2)^2 \hat{\psi} = \hat{f}.
\ee
Furthermore,
the boundary conditions \eqref{2-0-4-2}-\eqref{2-0-4-4} can be written as
\be \label{FBC}
 \hat{\psi}(0) = \hat{\psi} (1) = \hat{\psi}^{\prime} (1) = 0\quad \text{and}\quad  \mathcal{L} \hat{\psi}(0) = 0.
\ee

\subsection{ A priori estimates for the stream function}\label{sec-apri}
In this subsection, we derive some a priori estimates for the linear problem \eqref{2-0-8}-\eqref{FBC}, which guarantee the existence of solutions. The estimates consist in the following two lemmas.

\begin{lemma}\label{lemapri1}
Let $\hat{\psi}(r, \xi)$ be a smooth solution of the problem \eqref{2-0-8}--\eqref{FBC}. then one has
\be \la{lapr1}
\int_0^1 | \mathcal{L} \psi|^2 r \, dr + \xi^2 \int_0^1 \left| \frac{d}{dr} ( r\hat{\psi} )  \right|^2  \frac1r \, dr
+ \xi^4 \int_0^1 \left|  \hat{\psi} \right|^2  r \, dr
\leq C \int_0^1 |\hat{f}|^2 r \, dr.
\ee
\end{lemma}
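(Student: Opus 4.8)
Throughout set $\hat\omega:=(\mathcal L-\xi^2)\hat\psi$, the partial Fourier transform of the azimuthal vorticity, so that \eqref{2-0-8} reads $i\xi\bar U\hat\omega-(\mathcal L-\xi^2)\hat\omega=\hat f$. The plan is to control the single quantity $\int_0^1|\hat\omega|^2 r\,dr$. Expanding $|\hat\omega|^2=|(\mathcal L-\xi^2)\hat\psi|^2$ and integrating $\int_0^1(\mathcal L\hat\psi)\overline{\hat\psi}\,r\,dr$ by parts — here only $\hat\psi(1)=0$, the regularity of $\hat\psi$ at $r=0$, and $\int_0^1\tfrac{d}{dr}|\hat\psi|^2\,dr=0$ are used — one obtains
\[
\int_0^1|\hat\omega|^2 r\,dr=\int_0^1|\mathcal L\hat\psi|^2 r\,dr+2\xi^2\int_0^1\Big|\tfrac{d}{dr}(r\hat\psi)\Big|^2\tfrac1r\,dr+\xi^4\int_0^1|\hat\psi|^2 r\,dr ,
\]
so that the left-hand side of \eqref{lapr1} is bounded above by $\int_0^1|\hat\omega|^2 r\,dr$; it therefore suffices to prove $\int_0^1|\hat\omega|^2 r\,dr\le C\int_0^1|\hat f|^2 r\,dr$.

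The first move is to test \eqref{2-0-8} against $r\overline{\hat\psi}$ and integrate over $(0,1)$. Writing $(\mathcal L-\xi^2)^2\hat\psi=(\mathcal L-\xi^2)\hat\omega$ and integrating by parts twice, all boundary contributions at $r=1$ vanish by $\hat\psi(1)=\hat\psi'(1)=0$ in \eqref{FBC}, and all boundary contributions at $r=0$ vanish since each carries a factor $r$ and $\hat\psi,\hat\psi',\hat\omega,\hat\omega'$ stay bounded there for a smooth solution respecting \eqref{2-0-4-2}; since $(\mathcal L-\xi^2)$ is formally self-adjoint for the weight $r\,dr$, this produces the identity
\[
i\xi\int_0^1\bar U\,\hat\omega\,\overline{\hat\psi}\,r\,dr-\int_0^1|\hat\omega|^2 r\,dr=\int_0^1\hat f\,\overline{\hat\psi}\,r\,dr .
\]
Taking the imaginary part and integrating $\int_0^1\bar U(\mathcal L\hat\psi)\overline{\hat\psi}r\,dr$ by parts — using $\bar U(r)=\tfrac{2\Phi}{\pi}(1-r^2)$ from \eqref{1.8}, so $(\bar U'r)'=-\tfrac{8\Phi}{\pi}r$ — one finds $\xi\,\mathrm{Re}\int_0^1\bar U\hat\omega\overline{\hat\psi}r\,dr=\mathrm{Im}\int_0^1\hat f\overline{\hat\psi}r\,dr$ with
\[
\mathrm{Re}\int_0^1\bar U\hat\omega\overline{\hat\psi}r\,dr=-\Big(\xi^2\!\int_0^1\bar U|\hat\psi|^2 r\,dr+\tfrac{4\Phi}{\pi}\!\int_0^1 r|\hat\psi|^2\,dr+\int_0^1\bar U r|\hat\psi'|^2\,dr+\int_0^1\bar U\tfrac{|\hat\psi|^2}{r}\,dr\Big),
\]
every term on the right being nonnegative. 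Discarding all but the curvature term $\tfrac{4\Phi}{\pi}\int_0^1 r|\hat\psi|^2\,dr$ and applying Cauchy–Schwarz to $\mathrm{Im}\int_0^1\hat f\overline{\hat\psi}r\,dr$ yields, for $\xi\neq0$, the crucial weighted bound
\[
\int_0^1|\hat\psi|^2 r\,dr\le\Big(\frac{\pi}{4\Phi|\xi|}\Big)^{2}\int_0^1|\hat f|^2 r\,dr .
\]

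It remains to exploit the real part of the energy identity, $\int_0^1|\hat\omega|^2 r\,dr=-\xi\,\mathrm{Im}\int_0^1\bar U\hat\omega\overline{\hat\psi}r\,dr-\mathrm{Re}\int_0^1\hat f\overline{\hat\psi}r\,dr$. One more integration by parts gives $\mathrm{Im}\int_0^1\bar U\hat\omega\overline{\hat\psi}r\,dr=\tfrac{4\Phi}{\pi}\mathrm{Im}\int_0^1 r^2\hat\psi'\overline{\hat\psi}\,dr$, so Cauchy–Schwarz together with $r^2\le r$ bounds the cross term by $\tfrac{4\Phi|\xi|}{\pi}\big(\int_0^1 r|\hat\psi'|^2\,dr\big)^{1/2}\big(\int_0^1 r|\hat\psi|^2\,dr\big)^{1/2}$; substituting the bound on $\int_0^1|\hat\psi|^2 r\,dr$ just obtained, the factor $\Phi|\xi|$ cancels and this is $\le\big(\int_0^1 r|\hat\psi'|^2\,dr\big)^{1/2}\big(\int_0^1|\hat f|^2 r\,dr\big)^{1/2}$. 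Finally the elementary inequalities $\int_0^1 r|\hat\psi'|^2\,dr+\int_0^1\tfrac{|\hat\psi|^2}{r}\,dr=-\int_0^1(\mathcal L\hat\psi)\overline{\hat\psi}r\,dr\le\big(\int_0^1|\mathcal L\hat\psi|^2 r\,dr\big)^{1/2}\big(\int_0^1|\hat\psi|^2 r\,dr\big)^{1/2}$ and $\int_0^1|\hat\psi|^2 r\,dr\le\int_0^1\tfrac{|\hat\psi|^2}{r}\,dr$ force $\int_0^1|\hat\psi|^2 r\,dr$ and $\int_0^1 r|\hat\psi'|^2\,dr$ to be both $\le\int_0^1|\mathcal L\hat\psi|^2 r\,dr\le\int_0^1|\hat\omega|^2 r\,dr$. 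Plugging everything in, $\int_0^1|\hat\omega|^2 r\,dr\le2\big(\int_0^1|\hat\omega|^2 r\,dr\big)^{1/2}\big(\int_0^1|\hat f|^2 r\,dr\big)^{1/2}$, hence $\int_0^1|\hat\omega|^2 r\,dr\le4\int_0^1|\hat f|^2 r\,dr$, which combined with the first‑step identity proves \eqref{lapr1}; the degenerate frequency $\xi=0$ is immediate, since then \eqref{2-0-8} reduces to $-\mathcal L^2\hat\psi=\hat f$.

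The delicate point I expect is the absorption in the last paragraph: the cross term $\xi\,\mathrm{Im}\int_0^1\bar U\hat\omega\overline{\hat\psi}r\,dr$ carries a coefficient proportional to the flux $\Phi$ (it originates from $\bar U'$), and it cannot be absorbed into $\int_0^1|\hat\omega|^2 r\,dr$ by a direct Cauchy–Schwarz/Young argument when $\Phi$ is large; what rescues the estimate is the $\Phi^{-1}$ gain extracted from the imaginary part of the energy identity, which in turn rests on the strict concavity $\bar U''<0$ of the Hagen–Poiseuille profile. The only other step demanding care is verifying, term by term, that each boundary contribution truly vanishes — at $r=1$ through \eqref{FBC} and at $r=0$ through the factor $r$ together with the axis compatibility \eqref{2-0-4-2}.
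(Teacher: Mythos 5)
Your proof is correct, and it follows the paper's overall strategy — multiply \eqref{2-0-8} by $r\overline{\hat{\psi}}$, split into real and imaginary parts, extract a bound on $\Phi|\xi|\int_0^1|\hat{\psi}|^2r\,dr$ from the imaginary part, and feed it back to absorb the large convection cross-term in the real part — but it differs at the one genuinely delicate step. The paper keeps the degenerate identity \eqref{2-1-7}, obtaining $\Phi|\xi|\int_0^1\frac{1-r^2}{r}\bigl|\frac{d}{dr}(r\hat{\psi})\bigr|^2dr+\Phi|\xi|^3\int_0^1(1-r^2)|\hat{\psi}|^2r\,dr\leq C\int_0^1|\hat{f}||\hat{\psi}|r\,dr$, and then invokes the Hardy--Littlewood--P\'olya-type inequality \eqref{HLP-2} of Lemma \ref{lemmaHLP} to convert the weight $1-r^2$, which vanishes at the wall, into the unweighted norm $\int_0^1|\hat{\psi}|^2r\,dr$. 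You instead expand $\frac{\bar{U}}{r}\bigl|\frac{d}{dr}(r\hat{\psi})\bigr|^2$ and integrate the cross term $\int_0^1\bar{U}\frac{d}{dr}|\hat{\psi}|^2dr=-\int_0^1\bar{U}'|\hat{\psi}|^2dr=\frac{4\Phi}{\pi}\int_0^1 r|\hat{\psi}|^2dr$, so the non-degenerate term appears directly from the monotonicity of the profile ($-\bar{U}'=\frac{4\Phi}{\pi}r>0$, a consequence of $\bar{U}''<0$ with $\bar{U}'(0)=0$); in effect you have re-proved \eqref{HLP-2} with the explicit constant $1/2$ by a two-line integration by parts, bypassing the odd-extension argument in Appendix A. Your route is more self-contained and gives cleaner constants for this lemma; the paper's route isolates Lemma \ref{lemmaHLP} as a reusable tool, which it needs again (e.g.\ in \eqref{2-1-8}, \eqref{5-53}, \eqref{B-67}). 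Everything else in your argument — the identity $\int_0^1|\hat{\omega}|^2r\,dr=\int_0^1|\mathcal{L}\hat{\psi}|^2r\,dr+2\xi^2\int_0^1\bigl|\frac{d}{dr}(r\hat{\psi})\bigr|^2\frac{1}{r}\,dr+\xi^4\int_0^1|\hat{\psi}|^2r\,dr$, the vanishing of $\mathrm{Re}\int_0^1\frac{d}{dr}(r\hat{\psi})\,\overline{r\hat{\psi}}\,dr$, the closing chain via Lemma \ref{lemma1}, and the $\xi=0$ case — matches the paper's \eqref{2-1-2}--\eqref{2-1-9} up to repackaging in terms of $\hat{\omega}$.
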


\begin{pf}
 Multiplying \eqref{2-0-8} by $r \overline{\hat{\psi}}$ (here and later on $\overline{\hat{\psi}}$ denots the complex conjugate of $\hat{\psi}$) and integrating the resulting equation over $[0, 1]$ yield
\be \label{2-1-1}
\int_0^1 \left[ i \xi \bar U(r) (\mathcal{L} - \xi^2) \hat{\psi}- (\mathcal{L} -\xi^2)^2  \hat{\psi}        \right] \overline{\hat{\psi}} r  \,  dr
= \int_0^1 \hat{f}  \overline{\hat{\psi}}  r \, dr.
\ee

For the first  term on the left hand  of \eqref{2-1-1}, it follows from integration by parts and the homogeneous boundary conditions \eqref{FBC} for $\hat{\psi}$ that
\be \la{2-1-3} \ba
& \int_0^1 i \xi \bar U(r)  ( \mathcal{L}  -\xi^2) \hat{\psi}  \overline{\hat{\psi}}  r \, dr \\
= \,\,& i \xi \int_0^1 \bar{U}(r)  \frac{d}{dr} \left(  \frac1r \frac{d}{dr} (r\hat{\psi})     \right)  r \overline{\hat{\psi}} \, dr
-  i \xi^3 \int_0^1 \bar U(r) |\hat{\psi}|^2  r \, dr \\
= \,\,& i \frac{4\Phi}{\pi} \xi \int_0^1 \frac{d}{dr} (r \hat{\psi} )  r \overline{\hat{\psi}}\, dr
- i \xi \int_0^1 \frac{\bar{U}(r)}{r} \left| \frac{d}{dr} (r \hat{\psi})   \right|^2 \, dr - i \xi^3 \int_0^1 \bar{U}(r) |\hat{\psi}|^2  r \, dr.
\ea
\ee
While for the second term on the left hand  of \eqref{2-1-1}, one has
\be \la{2-1-2} \ba
& \int_0^1 ( \mathcal{L}  - \xi^2)^2 \hat{\psi}  \overline{\hat{\psi}}  r \, dr \\
 =\,\, & \int_0^1 \frac{d}{dr} \left(  \frac1r \frac{d}{dr} ( r \mathcal{L} \hat{\psi})      \right)  \overline{\hat{\psi}}  r \, dr
- 2\xi^2 \int_0^1 \frac{d}{dr} \left( \frac1r \frac{d}{dr} (r\hat{\psi})    \right)  \overline{\hat{\psi}}  r \, dr
+ \xi^4 \int_0^1 |\hat{\psi}|^2  r\, dr \\
 =\,\,& \int_0^1 | \mathcal{L} \hat{\psi} |^2  r \, dr + 2\xi^2 \int_0^1 \left|  \frac{d}{dr} (r \hat{\psi})  \right|^2  \frac1r \, dr
+ \xi^4 \int_0^1 |\hat{\psi}|^2  r \, dr.
\ea \ee

From now on, we denote $\Im g$ and $\Re g$ by the imaginary and real part of  $g$ (a function or a number), respectively.
It follows from  \eqref{2-1-1}-\eqref{2-1-2} that
\be \la{2-1-5} \ba
& \int_0^1 | \mathcal{L} \hat{\psi} |^2  r \, dr
+ 2\xi^2 \int_0^1 \left|  \frac{d}{dr} (r \hat{\psi})   \right|^2  \frac1r \, dr
+ \xi^4 \int_0^1 |\hat{\psi}|^2  r \, dr \\
 =\,\, &- \Re \int_0^1 \hat{f} \overline{\hat{\psi}}  r \, dr - \frac{4 \Phi}{\pi} \xi \Im \int_0^1  \left[ \frac{d}{dr} (r \hat{\psi} )  r \overline{\hat{\psi}}      \right] \, dr,
\ea \ee
and
\be \la{2-1-6} \ba
& - \frac{4 \Phi}{\pi} \xi  \Re \int_0^1   \left[ \frac{d}{dr} ( r\hat{\psi} )  r \overline{ \hat{\psi} } \right] \, dr
+ \xi \int_0^1 \frac{\bar{U}(r) }{ r } \left| \frac{d}{dr} ( r \hat{\psi} )  \right|^2 \, dr
+ \xi^3 \int_0^1  \bar{U}(r) |\hat{\psi}|^2  r\, dr \\
=\,\,& - \Im \int_0^1 \hat{f}  \overline{\hat{\psi} }  r \, dr.
\ea \ee
Note that the homogeneous boundary conditions for $\hat{\psi}$ implies
\be \nonumber
\Re \int_0^1 \frac{d}{dr} (r \hat{\psi} ) r \overline{\hat{\psi} } \, dr  = 0.
\ee
Hence the expression  \eqref{2-1-6} can be rewritten as
\be \la{2-1-7}
 \xi \int_0^1 \frac{\bar{U}(r) }{ r } \left| \frac{d}{dr} ( r \hat{\psi} )   \right|^2 \, dr
+ \xi^3 \int_0^1  \bar{U}(r) |\hat{\psi}|^2  r\, dr
= - \Im \int_0^1 \hat{f} \overline{\hat{\psi} }  r \, dr .
\ee
Substituting the explicit expression of $\bar{U}(r)$ into the above equation yields
\be \label{2-1-8}
\Phi |\xi| \int_0^1  \left| \frac{d}{dr} (r \hat{\psi} )   \right|^2  \frac{1-r^2}{r} \, dr +
\Phi |\xi|^3 \int_0^1  \left| \hat{\psi}   \right|^2  r(1-r^2) \, dr
\leq C \int_0^1 |\hat{f} \hat{\psi} |  r \, dr .
\ee
By Lemma \ref{lemmaHLP},  \eqref{2-1-8}, together with Cauchy-Schwarz inequality, implies that
\be \label{2-1-8-1}
\Phi^2 \xi^2 \int_0^1 |\hat{\psi}|^2 r \, dr \leq C \int_0^1 |\hat{f}|^2 r\, dr.
\ee

Now we estimate the second term on the right hand side of  \eqref{2-1-5}. By Lemma \ref{lemma1} and the inequality  \eqref{2-1-8-1}, one has
\be \label{2-1-9} \ba
  \left| \frac{4 \Phi}{\pi} \xi \int_0^1 \frac{d}{dr} ( r \hat{\psi} )  r \overline{\hat{\psi} }  \, dr          \right|
\leq  & \frac14 \int_0^1 \left|  \frac{d}{dr} (r \hat{\psi})   \right|^2 \frac1r \, dr + C \Phi^2 \xi^2 \int_0^1 |\hat{\psi}|^2 r \, dr
\\
\leq & \frac14 \int_0^1 |\mathcal{L} \hat{\psi} |^2 r\, dr + C \int_0^1 |\hat{f}|^2 r \, dr .
\ea \ee

Substituting  \eqref{2-1-9} into \eqref{2-1-5} and using Cauchy-Schwarz inequality yield  \eqref{lapr1}.
This finishes the proof of Lemma \ref{lemapri1}.
\end{pf}


Using the similar idea as in the proof of  Lemma \ref{lemapri1}, one has the following  higher order a priori estimates.
\begin{lemma}\label{lemhapr}
Let $\hat{\psi}$ be a smooth solution of the problem \eqref{2-0-8}--\eqref{FBC}. Then it holds that
\begin{equation}\label{hapr1}
\begin{aligned}
&\,\, \xi^4 \int_0^1 \left| \mathcal{L} \hat{\psi} \right|^2  r  \, dr
+ \xi^6 \int_0^1 \left| \frac{d}{dr} ( r\hat{\psi} ) \right|^2  \frac1r \,  dr +\xi^8 \int_0^1 \left|  \hat{\psi}   \right|^2  r  \, dr
\leq C (1+\Phi) \int_0^1 |\hat{f}|^2  r \, dr
\end{aligned}
\end{equation}
and
\begin{equation}\label{hapr2}
\int_0^1 |\mathcal{L}^2 \hat{\psi} |^2  r \, dr \leq C (1+ \Phi^2) \int_0^1 |\hat{f}|^2  r \,  dr .
\end{equation}
\end{lemma}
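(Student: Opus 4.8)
The plan is to mimic the energy method used for Lemma \ref{lemapri1}, but now testing equation \eqref{2-0-8} against a higher-order multiplier rather than against $r\overline{\hat\psi}$. Concretely, I would multiply \eqref{2-0-8} by $r\,\overline{(\mathcal{L}-\xi^2)^2\hat\psi}$ — equivalently by $r\overline{\mathcal{L}^2\hat\psi}$ with the $\xi^2$ corrections tracked separately — and integrate over $[0,1]$. Using the boundary conditions \eqref{FBC} together with the compatibility condition $\mathcal{L}\hat\psi(0)=0$, repeated integration by parts should turn the term coming from $(\mathcal{L}-\xi^2)^2\hat\psi$ into $\int_0^1|\mathcal{L}^2\hat\psi|^2 r\,dr$ plus nonnegative lower-order terms weighted by powers of $\xi^2$ (this is the higher-order analogue of \eqref{2-1-2}). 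The right-hand side contributes $\int_0^1\hat f\,\overline{(\mathcal{L}-\xi^2)^2\hat\psi}\,r\,dr$, which I absorb by Cauchy–Schwarz into the left-hand side at the cost of $C\int_0^1|\hat f|^2 r\,dr$. The genuinely delicate term is the one involving $\bar U(r)$: $i\xi\int_0^1\bar U(r)(\mathcal{L}-\xi^2)\hat\psi\,\overline{(\mathcal{L}-\xi^2)^2\hat\psi}\,r\,dr$, whose imaginary and real parts must be controlled using the lower-order estimates \eqref{lapr1} and \eqref{2-1-8-1} already in hand.

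For the $\bar U$-term I would split it exactly as in \eqref{2-1-3}: separating real and imaginary parts produces, on one side, a coercive contribution of the form $|\xi|\int_0^1\frac{\bar U(r)}{r}|\frac{d}{dr}(r\mathcal{L}\hat\psi)|^2\,dr + |\xi|^3\int_0^1 \bar U(r)|\mathcal{L}\hat\psi|^2 r\,dr$ (dropped since it has a good sign, after applying Lemma \ref{lemmaHLP} as in \eqref{2-1-8}), and on the other side commutator-type terms where $\bar U'(r)=-\frac{8\Phi}{\pi}r$ lands on $\hat\psi$ and its derivatives. These commutator terms carry a factor $\Phi|\xi|$ and must be estimated by the already-established bound \eqref{lapr1}: schematically $\Phi|\xi|\cdot\|\mathcal{L}\hat\psi\|\cdot\|\frac{d}{dr}(r\hat\psi)\|_{1/r}$, and since \eqref{lapr1} controls $\|\mathcal{L}\hat\psi\|^2$ and $|\xi|\,\|\frac{d}{dr}(r\hat\psi)\|_{1/r}$ by $C\|\hat f\|^2$, while \eqref{2-1-8-1} gives $\Phi|\xi|\,\|\hat\psi\|\le C\|\hat f\|$, such products are bounded by $C(1+\Phi)\|\hat f\|^2$ after Cauchy–Schwarz with a small parameter to absorb the top-order piece $\xi^4\|\mathcal{L}\hat\psi\|^2$ back to the left. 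This yields \eqref{hapr1}. For \eqref{hapr2} I then return to the identity obtained from the real part: $\int_0^1|\mathcal{L}^2\hat\psi|^2 r\,dr$ equals the $\bar U$-contribution plus the $\hat f$-contribution plus $\xi^2$- and $\xi^4$-weighted lower-order terms, each of which is now dominated using \eqref{hapr1} and \eqref{lapr1}; the worst term is again the $\bar U$ commutator, which after using $\Phi|\xi|\,\|\hat\psi\|\le C\|\hat f\|$ and $\Phi\,\|\mathcal{L}\hat\psi\|\cdot(\text{things bounded in }\|\hat f\|)$ produces the factor $1+\Phi^2$.

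The main obstacle I anticipate is the careful bookkeeping of the $\bar U$-term: one must check that every commutator generated by moving $\bar U(r)$ and $\bar U'(r)$ past the operators $\mathcal{L}-\xi^2$ and $\frac{d}{dr}$ can indeed be paired, via Cauchy–Schwarz, with quantities already controlled in Lemmas \ref{lemapri1} and \ref{lemmaHLP}, and that the $\Phi$-powers do not exceed $1$ in \eqref{hapr1} and $2$ in \eqref{hapr2}. In particular one needs the degenerate weight $(1-r^2)/r$ appearing through $\bar U(r)/r$ to be handled by the Hardy–type inequality \eqref{ineqHLP} of Lemma \ref{lemmaHLP} rather than naively, exactly as in the step from \eqref{2-1-8} to \eqref{2-1-8-1}, but now one level of derivatives higher. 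Once the coercive part from $(\mathcal{L}-\xi^2)^2$ is isolated and the small-parameter absorption is set up, the remaining computations are routine integration by parts.
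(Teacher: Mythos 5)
Your overall strategy --- a single higher-order energy identity obtained by testing \eqref{2-0-8} against $r\,\overline{(\mathcal{L}-\xi^2)^2\hat\psi}$, with the convection term integrated by parts into a sign-definite piece plus a commutator --- is not what the paper does, and as written it has two genuine gaps. First, the boundary conditions \eqref{FBC} give you $\hat\psi(1)=\hat\psi'(1)=0$ and $\mathcal{L}\hat\psi(0)=0$, but \emph{not} $\mathcal{L}\hat\psi(1)=0$ or $\frac{d}{dr}(r\mathcal{L}\hat\psi)(1)=0$. Consequently the integrations by parts you invoke do not close: the claim that $\int_0^1 (\mathcal{L}-\xi^2)^2\hat\psi\,\overline{(\mathcal{L}-\xi^2)^2\hat\psi}\,r\,dr$ decomposes into $\int_0^1|\mathcal{L}^2\hat\psi|^2r\,dr$ plus \emph{nonnegative} $\xi$-weighted lower-order terms (the ``higher-order analogue of \eqref{2-1-2}'') produces boundary contributions such as $\xi^2\frac{d}{dr}(r\mathcal{L}\hat\psi)(1)\,\mathcal{L}\overline{\hat\psi}(1)$ that do not vanish; likewise the imaginary-part identity acquires an extra term proportional to $\Phi\xi\,|\mathcal{L}\hat\psi(1)|^2$, since for $w=(\mathcal{L}-\xi^2)\hat\psi$ one has $\Re\int_0^1 rw\,\frac{d}{dr}(r\overline w)\,dr=\tfrac12|w(1)|^2\neq 0$, unlike the analogous term in \eqref{2-1-6}. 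These boundary terms are exactly the ones the paper has to fight with trace estimates (Lemma \ref{lemmaA2}) in the later Lemma \ref{lemma3-4-1}, \eqref{3-4-15}; they cannot simply be dropped, and for $\xi$ of the ``wrong'' sign they are not even sign-favorable. Second, you have miscounted derivatives in the commutator: with this multiplier the term on which $\bar U'$ lands is $\Phi|\xi|\int_0^1 |w|\,\bigl|\frac{d}{dr}(rw)\bigr|\,dr$ with $w=\mathcal{L}\hat\psi-\xi^2\hat\psi$, i.e.\ it involves $\frac{d}{dr}(r\mathcal{L}\hat\psi)$, not $\frac{d}{dr}(r\hat\psi)$. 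That quantity is \emph{not} controlled by \eqref{lapr1} or \eqref{2-1-8-1}; it lives at the level of $\mathcal{L}^2\hat\psi$, so your claimed bound of the commutator by $C(1+\Phi)\|\hat f\|^2$ is not established.

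For contrast, the paper's proof is considerably more elementary and avoids both problems. For \eqref{hapr1} it never tests the equation again: it multiplies the already-derived scalar identity \eqref{2-1-5} by $\xi^2$ and then by $\xi^4$, and bounds the resulting right-hand sides using only Lemma \ref{lemma1}, Young's inequality and \eqref{2-1-8-1}; the intermediate $\xi^2$-weighted bound \eqref{2-2-2} (with no $\Phi$) feeds the $\xi^4$-weighted one and produces the factor $1+\Phi$. For \eqref{hapr2} it tests against $r\mathcal{L}^2\overline{\hat\psi}$ but performs \emph{no} integration by parts on the $\bar U$-terms and extracts no sign: it keeps $\int_0^1|\mathcal{L}^2\hat\psi|^2r\,dr$ as a whole square and estimates every other term, including $i\xi\int_0^1\bar U\,\mathcal{L}\hat\psi\,\mathcal{L}^2\overline{\hat\psi}\,r\,dr$, by crude Cauchy--Schwarz, paying $C\Phi^2\xi^2\int_0^1|\mathcal{L}\hat\psi|^2r\,dr\le C\Phi^2\int_0^1|\hat f|^2r\,dr$ via \eqref{2-2-2}; this is where the $\Phi^2$ comes from. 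If you want to salvage your approach you would need to import the trace machinery of Lemma \ref{lemmaA2} to handle the $r=1$ boundary terms and to control $\int_0^1|\frac{d}{dr}(r\mathcal{L}\hat\psi)|^2\frac1r\,dr$ through \eqref{3-3-1-20}, at which point the argument is no simpler than the paper's and the exponent bookkeeping for the factor $1+\Phi$ in \eqref{hapr1} still has to be redone from scratch.
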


\begin{pf}
 Multiplying \eqref{2-1-5} by $\xi^2$ gives
\be \label{2-2-1}
\ba
&\xi^2 \int_0^1 \left| \mathcal{L} \hat{\psi}    \right|^2  r  \, dr + 2\xi^4 \int_0^1 \left|  \frac{d}{dr} (r \hat{\psi} )  \right|^2  \frac1r \, dr
+ \xi^6 \int_0^1 \left| \hat{\psi}  \right|^2  r  \, dr\\
 \leq \,\,&  \xi^2 \int_0^1 |\hat{f}| |\hat{\psi} | r \, dr + C \Phi |\xi|^3 \int_0^1 \left| \frac{d}{dr}(r \hat{\psi}) \right| |r \hat{\psi}| \, dr \\
\leq \, \, & \xi^2 \int_0^1 |\hat{f}| |\hat{\psi} | r \, dr + \xi^4 \int_0^1 \left| \frac{d}{dr}( r \hat{\psi})   \right|^2 \frac1r \, dr + C \Phi^2 \xi^2  \int_0^1 |  \hat{\psi} |^2 r   \, dr .
\ea
\ee
By Lemma \ref{lemma1},  Young's inequality and  the inequality \eqref{2-1-8-1}, one has
\be \label{2-2-2}
\xi^2 \int_0^1 \left| \mathcal{L} \hat{\psi}    \right|^2  r  \, dr + \xi^4 \int_0^1 \left|  \frac{d}{dr} (r \hat{\psi} )  \right|^2  \frac1r \, dr
+ \xi^6 \int_0^1 \left| \hat{\psi}  \right|^2  r  \, dr \leq C \int_0^1 |\hat{f}|^2 r \, dr .
\ee

Similarly, multiplying \eqref{2-1-5} by $\xi^4$ yields
\be \label{2-2-3} \ba
& \xi^4 \int_0^1 \left| \mathcal{L} \hat{\psi}    \right|^2  r  \, dr + 2\xi^6 \int_0^1 \left|  \frac{d}{dr} (r \hat{\psi} )  \right|^2  \frac1r \, dr
+ \xi^8 \int_0^1 \left| \hat{\psi}  \right|^2  r  \, dr \\
\leq  & \int_0^1 |\hat{f}| |\xi^4 \hat{\psi}| r \, dr + C \Phi |\xi|^5 \int_0^1 \left|\frac{d}{dr} ( r \hat{\psi})  \right| |r \hat{\psi}| \, dr \\
\leq & \int_0^1 |\hat{f} | |\xi^4 \hat{\psi} | r \, dr + \Phi \xi^4 \int_0^1 \left| \frac{d}{dr} ( r \hat{\psi})  \right|^2 \frac1r \, dr
+ C \Phi \xi^6 \int_0^1 |\hat{\psi}|^2 r \, dr.
\ea \ee
By Young's inequality and the inequality \eqref{2-2-2}, one has
\be \label{2-2-4}
 \xi^4 \int_0^1 \left| \mathcal{L} \hat{\psi}    \right|^2  r  \, dr + \xi^6 \int_0^1 \left|  \frac{d}{dr} (r \hat{\psi} )  \right|^2  \frac1r \, dr
+ \xi^8 \int_0^1 \left| \hat{\psi}  \right|^2  r  \, dr
\leq  C (1 + \Phi) \int_0^1 |\hat{f}|^2 r \, dr,
\ee
which gives exactly the estimate \eqref{hapr1}.

Next, we turn to the high order regularity of $\psi$ with respect to $r$.
Multiplying \eqref{2-0-8} by $r \mathcal{L} ^2 \overline{\hat{\psi}}  $ and integrating over $[0, 1]$ yield
\be \label{2-2-5} \ba
& i \xi \int_0^1 \bar{U}(r) \mathcal{L} \hat{\psi} \mathcal{L}^2\overline{\hat{\psi}} r  \, dr
- i \xi^3 \int_0^1 \bar{U}(r) \hat{\psi}  \mathcal{L}^2 \overline{\hat{\psi}}  r  \, dr \\
& \ \ - \int_0^1 | \mathcal{L}^2 \hat{\psi}|^2  r  \, dr +  2\xi^2 \int_0^1 \mathcal{L}\hat{\psi}  \mathcal{L}^2 \overline{\hat{\psi}} r  \, dr - \xi^4 \int_0^1 \hat{\psi}  \mathcal{L}^2 \overline{\hat{\psi}} r  \, dr
 = \int_0^1 \hat{f}  \mathcal{L}^2 \overline{\hat{\psi}}  r \, dr.
\ea \ee

It follows from the  estimates \eqref{2-2-2} and Cauchy-Schwarz inequality that
\be \label{2-2-6} \ba
& \,\,\left| \xi \int_0^1   \bar{U}(r) \mathcal{L} \hat{\psi}  \mathcal{L}^2\overline{\hat{\psi}}  r  \, dr  \right|
\leq C \Phi |\xi| \int_0^1 |\mathcal{L} \hat{\psi} | |\mathcal{L}^2 \hat{\psi} | r \, dr \\
\leq & \, \, C \Phi^2 \xi^2 \int_0^1 |\mathcal{L} \hat{\psi}|^2 r \, dr +  \frac18 \int_0^1 | \mathcal{L}^2 \hat{\psi} |^2  r  \, dr \\
 \leq &\,\, C  \Phi^2  \int_0^1 |\hat{f}|^2  r  \, dr +  \frac18 \int_0^1 | \mathcal{L}^2 \hat{\psi} |^2  r  \, dr
\ea \ee
and
\be \label{2-2-7} \ba
&\,\, \left|   \xi^3 \int_0^1 \bar{U}(r) \hat{\psi} \mathcal{L}^2 \overline{\hat{\psi}}  r  \, dr   \right|  \leq C \Phi^2 \xi^6 \int_0^1 |\hat{\psi}|^2  r  \, dr + \frac18 \int_0^1 | \mathcal{L}^2 \hat{\psi}|^2   r  \, dr \\
 \leq &\,\, C \Phi^2   \int_0^1 |\hat{f}|^2 r \, dr +  \frac18 \int_0^1 | \mathcal{L}^2 \hat{\psi} |^2  r  \, dr.
\ea \ee
Similarly, according to the estimate \eqref{hapr1}, one has
\be \la{2-2-8}
\ba
& \,\,\left|   2\xi^2 \int_0^1 \mathcal{L} \hat{\psi}  \mathcal{L}^2 \overline{\hat{\psi}} r  \, dr  \right|
 \leq C \xi^4 \int_0^1 | \mathcal{L} \hat{\psi}|^2  r  \, dr + \frac18 \int_0^1 | \mathcal{L}^2 \hat{\psi}|^2  r  \, dr \\
 \leq &\,\, C (1+ \Phi)  \int_0^1 |\hat{f}|^2  r \, dr +  \frac18 \int_0^1 | \mathcal{L}^2 \hat{\psi} |^2  r  \, dr
\ea\ee
and
\be \la{2-2-9} \ba
&\,\, \left|  \xi^4 \int_0^1 \hat{\psi}  \mathcal{L}^2 \overline{\hat{\psi}}  r  \, dr   \right|
 \leq  C \xi^8 \int_0^1 | \hat{\psi}|^2  r  \, dr + \frac18 \int_0^1 | \mathcal{L}^2 \hat{\psi}|^2  r  \, dr \\
 \leq &\,\, C (1+ \Phi)  \int_0^1 |\hat{f}|^2  r \, dr +  \frac18 \int_0^1 | \mathcal{L}^2 \hat{\psi} |^2  r \, dr.
\ea \ee
Hence, combining all the estimates \eqref{2-2-5}--\eqref{2-2-9} gives \eqref{hapr2}. This finishes the proof of Lemma \ref{lemhapr}.
\end{pf}

\begin{remark}
 The key point for the a priori estimates obtained in Lemmas \ref{lemapri1} and  \ref{lemhapr}  is that there is
no smallness assumption on the flux $\Phi$. The result reveals that one can still get the regularity estimate for the solutions of   \eqref{2-0-8}  in spite of the  existence of convection terms which could be very large comparing with the diffusion term.
\end{remark}

\begin{remark}
In fact, the a priori estimates established in  Lemmas \ref{lemapri1} and  \ref{lemhapr} are enough to prove the existence of solutions of the problem \eqref{2-0-8}--\eqref{FBC}. The details for the existence of solutions for the linear problem \eqref{2-0-8}--\eqref{FBC} are presented  in Section \ref{sec-ex}.
\end{remark}


\section{Analysis on the linearized problem for swirl velocity}\label{sec-swirl}

In this section, we give the uniform estimates for $\Bv^\theta$, which is defined by $\Bv^\theta = v^\theta \Be_\theta$.  Assume that $\Bv$ is continuous, then the compatibility conditions at the axis implies that $v^\theta (0, z) = 0$. Hence the problem for $v^\theta$ can be written as
\be \label{swirlsystem}
\left\{   \ba
& \bar U(r)  \partial_z  v^\theta - \left[ \frac{1}{r} \frac{\partial }{\partial r} \left( r \frac{\partial v^\theta}{\partial r}\right) + \frac{\partial^2 v^\theta}{\partial z^2} - \frac{v^\theta}{r^2} \right] =  F^\theta, \ \ \ \mbox{in}\ \ D, \\
& v^\theta(1, z)= v^\theta(0, z) = 0.
\ea
\right.
\ee

\begin{pro}\label{swirl}
Assume that $\BF^\theta = F^\theta \Be_\theta \in L^2 (\Omega)$. The smooth solution $v^\theta$ to the linear problem \eqref{swirlsystem} satisfies
\be \label{swirl-1}
\| \Bv^{\theta} \|_{H^2(\Omega)} \leq C \|\BF^\theta \|_{L^2 (\Omega)},
\ee
where the  constant $C$ is independent of $\BF^\theta$ and $\Phi$.
\end{pro}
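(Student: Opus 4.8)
The plan is to treat the swirl equation by the same Fourier-in-$z$ strategy used for the stream function, exploiting that \eqref{swirlsystem} is a \emph{second order} elliptic problem in $r$ with a purely imaginary convection term after the Fourier transform. Writing $\widehat{v^\theta}(r,\xi)$ for the partial Fourier transform, equation \eqref{swirlsystem} becomes
\be \nonumber
i\xi \bar U(r)\,\widehat{v^\theta} - \left(\mathcal L - \xi^2\right)\widehat{v^\theta} = \widehat{F^\theta},
\ee
with the boundary/compatibility conditions $\widehat{v^\theta}(0,\xi)=\widehat{v^\theta}(1,\xi)=0$. The crucial structural point, exactly as in Lemma \ref{lemapri1}, is that the convection term is skew-symmetric: multiplying by $r\,\overline{\widehat{v^\theta}}$ and integrating by parts, the term $i\xi\int_0^1 \bar U |\widehat{v^\theta}|^2 r\,dr$ is purely imaginary, so it drops out of the real part entirely. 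Hence the real part of the energy identity yields, with \emph{no} dependence on $\Phi$,
\be \nonumber
\int_0^1 \left|\frac{d}{dr}\widehat{v^\theta}\right|^2 r\,dr + \int_0^1 \frac{|\widehat{v^\theta}|^2}{r}\,dr + \xi^2\int_0^1 |\widehat{v^\theta}|^2 r\,dr
\le C\int_0^1 |\widehat{F^\theta}|^2 r\,dr
\ee
(here $\int |\widehat{v^\theta}|^2/r\,dr$ comes from the $-v^\theta/r^2$ term and is controlled since $\widehat{v^\theta}(0)=0$; it also dominates $|\widehat{v^\theta}|$ pointwise via a Hardy-type bound).

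Next I would upgrade to the $H^2$-level estimate. Multiplying the Fourier-transformed equation by $r\,\mathcal L\overline{\widehat{v^\theta}}$ and integrating, the second-order (principal) term gives $\int_0^1 |\mathcal L \widehat{v^\theta}|^2 r\,dr$; the convection term is now estimated by Cauchy--Schwarz against $\frac12\int|\mathcal L\widehat{v^\theta}|^2 r\,dr$ plus $C\Phi^2\xi^2\int|\widehat{v^\theta}|^2 r\,dr$ — but at this point, crucially, the factor $\xi^2\int|\widehat{v^\theta}|^2 r\,dr$ has already been bounded by $C\int|\widehat{F^\theta}|^2 r\,dr$ \emph{uniformly}, so we must be careful that the $\Phi^2$ does not survive. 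The mechanism that removes it is the degeneracy $\bar U(r)=\tfrac{2\Phi}{\pi}(1-r^2)$ near $r=1$ together with the imaginary part of the energy identity: taking the imaginary part of the basic identity gives $\Phi|\xi|\int_0^1 |\widehat{v^\theta}|^2 r(1-r^2)\,dr \le C\int|\widehat{F^\theta}\,\widehat{v^\theta}|r\,dr$, and then Lemma \ref{lemmaHLP} converts the weight $(1-r^2)$ into a clean bound $\Phi^2\xi^2\int_0^1|\widehat{v^\theta}|^2 r\,dr \le C\int|\widehat{F^\theta}|^2 r\,dr$, mirroring \eqref{2-1-8}--\eqref{2-1-8-1}. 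Combining this with the $\mathcal L$-multiplier estimate yields $\int_0^1|\mathcal L\widehat{v^\theta}|^2 r\,dr \le C\int_0^1|\widehat{F^\theta}|^2 r\,dr$ with $C$ independent of $\Phi$ and $\xi$, and one similarly controls $\xi^2\int|\tfrac{d}{dr}\widehat{v^\theta}|^2 r\,dr$ and $\xi^4\int|\widehat{v^\theta}|^2 r\,dr$.

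Finally I would assemble the pieces. The quantities $\int_0^1(|\mathcal L\widehat{v^\theta}|^2 + \xi^2|\tfrac{d}{dr}\widehat{v^\theta}|^2 + \xi^4|\widehat{v^\theta}|^2)\,r\,dr$, integrated in $\xi$ against $d\xi$ and using Plancherel, are exactly equivalent to $\|\Bv^\theta\|_{H^2(\Omega)}^2$ (after accounting for the axisymmetric geometry, where $\|\Bv^\theta\|_{L^2(\Omega)}^2 \sim \int\!\!\int |v^\theta|^2\,r\,dr\,dz$ and the $-v^\theta/r^2$ term provides the missing lower-order control needed near the axis); the right-hand side integrates to $C\|\BF^\theta\|_{L^2(\Omega)}^2$. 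The main obstacle I anticipate is precisely the uniformity in $\Phi$ at the $H^2$ level: the naive Cauchy--Schwarz bound on the convection term in the $\mathcal L$-multiplier identity produces a $\Phi^2$ that must be absorbed, and making this rigorous requires the combination of the degenerate-weight imaginary-part estimate with the Hardy--Littlewood--Pólya-type inequality of Lemma \ref{lemmaHLP} — the same delicate point that drives Lemma \ref{lemapri1}, now carried out for the scalar swirl equation. A secondary technical point is the careful handling of the singular weight $1/r$ and the $-v^\theta/r^2$ term at the axis, which requires the compatibility condition $v^\theta(0,z)=0$ and a Hardy inequality to keep all boundary terms from the integration by parts under control.
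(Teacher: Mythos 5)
Your overall strategy (Fourier transform in $z$, real/imaginary parts of the energy identity, then a second-order multiplier) is the right skeleton, but there is a genuine gap at the decisive step: the claimed bound $\Phi^2\xi^2\int_0^1|\widehat{v^\theta}|^2 r\,dr\le C\int_0^1|\widehat{F^\theta}|^2 r\,dr$ "mirroring \eqref{2-1-8}--\eqref{2-1-8-1}" does not follow from Lemma \ref{lemmaHLP}, and the $H^2$-level argument you build on it would not close. For the stream function, the imaginary part \eqref{2-1-7} produces the \emph{weighted gradient} term $\Phi|\xi|\int_0^1\frac{1-r^2}{r}\bigl|\frac{d}{dr}(r\hat\psi)\bigr|^2dr$, and it is precisely this term that Lemma \ref{lemmaHLP} (inequality \eqref{HLP-2}) converts into control of $\int|\hat\psi|^2 r\,dr$, yielding the $(\Phi|\xi|)^{-2}$ decay of \eqref{2-1-8-1}. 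The swirl equation is second order, so its imaginary part \eqref{swirl-5} only yields the degenerate weighted $L^2$ quantity $\Phi|\xi|\int_0^1(1-r^2)|\widehat{v^\theta}|^2 r\,dr$ — there is no weighted gradient on the right-hand side to feed into Lemma \ref{lemmaHLP}. The paper instead combines this degenerate $L^2$ bound with the non-degenerate $H^1$ bound from the real part via the weighted interpolation inequality of Lemma \ref{weightinequality}, which gives only $\int_0^1|\widehat{v^\theta}|^2 r\,dr\le C(\Phi|\xi|)^{-4/3}\int_0^1|\widehat{F^\theta}|^2 r\,dr$ (and $(\Phi|\xi|)^{-2/3}$ for the gradient). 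The stronger $(\Phi|\xi|)^{-2}$ rate you assume is not available; indeed $v^\theta$ here plays the role of the \emph{velocity}, not of the stream function, and only the latter enjoys the $(\Phi|\xi|)^{-1}$ resolvent rate.

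With only the $(\Phi|\xi|)^{-4/3}$ decay, your naive Cauchy--Schwarz treatment of the convection term in the $\mathcal L$-multiplier identity, which requires absorbing $C\Phi^2\xi^2\int|\widehat{v^\theta}|^2 r\,dr$, leaves an uncontrolled factor $(\Phi|\xi|)^{2/3}$. The paper avoids this by exploiting skew-symmetry once more at the second-order level: in $\Re\int_0^1 i\xi\bar U\,\widehat{v^\theta}\,(\mathcal L-\xi^2)\overline{\widehat{v^\theta}}\,r\,dr$ the symmetric part is purely imaginary after integration by parts, and only the term carrying $\bar U'(r)=-\frac{4\Phi}{\pi}r$ survives (see \eqref{swirl-7}); this residual term is of the form $\Phi|\xi|\,\|\widehat{v^\theta}\|_{L^2_r}\,\|\tfrac{d}{dr}(r\widehat{v^\theta})\|$, and the rates $(\Phi|\xi|)^{-2/3}\cdot(\Phi|\xi|)^{-1/3}$ exactly cancel the prefactor $\Phi|\xi|$, as in \eqref{swirl-16}. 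You need both ingredients — Lemma \ref{weightinequality} in place of Lemma \ref{lemmaHLP}, and the integration by parts in the second-order convection term — to make the estimate uniform in $\Phi$. A secondary point: passing from the Fourier-side bounds to $\|\Bv^\theta\|_{H^2(\Omega)}$ is not a pure Plancherel identity because of the singular weights at the axis; the paper goes through the Cartesian elliptic equation $\Delta\Bv^\theta=(\mathcal L+\partial_z^2)v^\theta\Be_\theta$ with $\Bv^\theta=0$ on $\partial\Omega$ and standard elliptic regularity, and your sketch should do the same.
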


\begin{proof}
Taking the Fourier transform with respect to $z$ in the equation \eqref{vswirl} yields
\be \label{swirl-2}
i \xi \bar{U}(r) \widehat{v^\theta} - ( \mathcal{L} - \xi^2 )  \widehat{v^\theta} = \widehat{F^\theta},\quad \xi\in \mathbb{R},
\ee
and the boundary conditions for $\widehat{v^\theta}$ become
\be \label{swirl-3}
\widehat{v^\theta}(1 ) = \widehat{v^\theta} (0) = 0.
\ee

 Multiplying \eqref{swirl-2} by $r\overline{\widehat{v^\theta}}$ and integrating over
$[0, 1]$ leads to
\be \label{swirl-4}
\int_0^1 \left| \frac{d}{dr} ( r \widehat{v^\theta} )  \right|^2 \frac1r \, dr + \xi^2 \int_0^1 |\widehat{v^\theta}|^2 r\,dr
= \Re \int_0^1 \widehat{F^\theta} \overline{\widehat{v^\theta}} r \, dr
\ee
and
\be \label{swirl-5}
\frac{2\Phi}{\pi} \xi \int_0^1 (1  - r^2) |\widehat{v^\theta} |^2 r \, dr
= \Im \int_0^1 \widehat{F^\theta} \overline{\widehat{v^\theta}} r\, dr.
\ee
According to Lemma \ref{lemma1} and the equality \eqref{swirl-4}, it holds that
\be \label{swirl-6}
 \int_0^1 \left| \frac{d}{dr}( r \widehat{v^\theta} )\right|^2 \frac1r \, dr + \xi^2 \int_0^1 |\what{v^\theta}|^2 r \, dr  \leq \int_0^1 |\widehat{F^\theta}|^2 r \, dr .
\ee

Moreover, it follows from Lemma \ref{lemma1}, Lemma \ref{weightinequality} and \eqref{swirl-4}-\eqref{swirl-5} that
\begin{equation} \label{swirl-15-1}
\begin{aligned}
\int_0^1 |\what{v^\theta}|^2 rdr \leq & C \int_0^1 (1-r^2) |\what{v^\theta}|^2 rdr \\
&\quad +  C \left( \int_0^1 (1-r^2) |\what{v^\theta}|^2 rdr \right)^{\frac23} \left(\int_0^1 \left| \frac{d}{dr} ( r \widehat{v^\theta} )  \right|^2 \frac1r \, dr\right)^{\frac13}\\
\leq & C \left( \int_0^1 (1-r^2) |\what{v^\theta}|^2 rdr \right)^{\frac23} \left(\int_0^1 \left| \frac{d}{dr} ( r \widehat{v^\theta} )  \right|^2 \frac1r \, dr\right)^{\frac13}\\
\leq & C (\Phi |\xi|)^{-\frac23} \int_0^1 |\widehat{F^\theta}| |\widehat{v^\theta}| r \, dr.
\end{aligned}
\end{equation}
Therefore,
\begin{equation} \label{swirl-15-2}
\begin{aligned}
\int_0^1 |\what{v^\theta}|^2 rdr \leq  C (\Phi |\xi|)^{-\frac43} \int_0^1 |\widehat{F^\theta}|^2 r \, dr.
\end{aligned}
\end{equation}
This, together with \eqref{swirl-4}, yields
\be \label{swirl-15-3}
\ba
\int_0^1 \left| \frac{d}{dr} ( r \widehat{v^\theta} )  \right|^2 \frac1r \, dr + \xi^2 \int_0^1 |\widehat{v^\theta}|^2 r\,dr
\leq  &  \left(\int_0^1 |\widehat{F^\theta}|^2 r  dr \right)^{\frac12}\left(\int_0^1| \widehat{v^\theta} |^2 r \, dr\right)^{\frac12}\\
\leq &  C (\Phi |\xi|)^{-\frac23} \int_0^1 |\widehat{F^\theta} |^2 r \, dr .
\ea
\ee
Multiplying the equation \eqref{swirl-2} by $r ( \mathcal{L} - \xi^2) \overline{\widehat{v^\theta}}$ and integrating over $[0, 1]$ yield
\be \label{swirl-7}
- \int_0^1 | ( \mathcal{L} - \xi^2) \widehat{v^\theta} |^2 r \, dr
= \Re \int_0^1 \widehat{F^\theta} ( \mathcal{L} - \xi^2) \overline{\widehat{v^\theta}} r \, dr
+ \frac{2 \Phi}{\pi} \xi \Im \int_0^1 2r \widehat{v^\theta} \frac{d}{dr} ( r \overline{\widehat{v^\theta}}) \, dr .
\ee
Combining  \eqref{swirl-15-2} and \eqref{swirl-15-3} gives
\be \label{swirl-16} \ba
 \left| \frac{2 \Phi}{\pi} \xi \Im \int_0^1 2 r \widehat{v^\theta} \frac{d}{dr} ( r \overline{\widehat{v^\theta}} ) \, dr  \right|
\leq & C \Phi |\xi|  \left( \int_0^1 |\widehat{v^\theta}|^2 r  \, dr \right)^{\frac12}
\left( \int_0^1 \left| \frac{d}{dr} ( r \widehat{v^\theta})  \right|^2 \frac1r \, dr   \right)^{\frac12}\\
\leq & C    \int_0^1 |\widehat{F^\theta} |^2 r \, dr.
\ea \ee
Hence, by Young's inequality, it holds that
\be \label{swirl-16-1} \ba
\int_0^1 \left(| \mathcal{L} \widehat{v^\theta} |^2  + 2\xi^2 \left| \frac1r  \frac{d}{dr} ( r {\widehat{v^\theta}} ) \right|^2 + \xi^4 |\widehat{v^\theta} |^2  \right) r\, dr & =\int_0^1 | ( \mathcal{L} - \xi^2) \widehat{v^\theta} |^2 r \, dr \\
&\leq  C    \int_0^1 |\widehat{F^\theta} |^2 r \, dr .
\ea
\ee

Note that $\Bv^\theta$ satisfies
\be \label{swirl-17}
\left\{
\begin{aligned}
&\Delta \Bv^\theta = ( \mathcal{L} + \partial_z^2) v^\theta \Be_\theta, \ \ \ &\mbox{in} \ \Omega,\\
& \Bv^\theta =0\quad  & \text{on}\ \partial \Omega.
\end{aligned}
\right.
\ee
Indeed, the proof of  for the validity of \eqref{swirl-17} is almost the same as that of \eqref{vorteq}, which can be found in Appendix \ref{secappendix}, so we omit the details here.
Applying the regularity theory for elliptic equations \cite{GT} yields
\be \label{swirl-18}
\|\Bv^\theta\|_{H^2 (\Omega)}
\leq C \| ( \mathcal{L} + \partial_z^2) v^\theta\|_{L^2(\Omega)}
+ C \|v^\theta \|_{L^2(\Omega)} \leq C \|\BF^\theta\|_{L^2(\Omega)}.
\ee
This finishes the proof of Proposition \ref{swirl}.
\end{proof}


\section{Existence of solutions for the linearized problem}\label{sec-ex}
In this section, we use  Galerkin method to  prove the existence of solutions to the problem \eqref{2-0-8}-\eqref{FBC} for each fixed $\xi$, give a sketch of the existence proof of solutions to the problem \eqref{swirlsystem} on the swirl velocity.

The following function spaces will be needed.
\begin{definition}\label{def0}
  Denote
$$X= \left\{ \varphi \in C^\infty([0, 1]): \  \varphi(1) = \varphi^{\prime}(1)=0, \ \ \lim_{r \rightarrow 0+} \mathcal{L}^k \varphi(r) = 0, \ \lim_{r \rightarrow 0+} \, \frac{d}{dr} ( r \mathcal{L}^k \varphi)(r) = 0,\  k\in \mathbb{N} \right\}.$$
Let $X_0$ be the completion of $C^\infty([0,1])$ under the norm
$$\|\varphi\|_{X_0} := \left( \int_0^1 |\varphi|^2  r \, dr \right)^{1/2}. $$
Let $X_4$ be the closure of $X$ with respect to the following $X_4$-norm,
$$\|\varphi\|_{X_4}^2 : = \displaystyle
\int_0^1  \left(  |\varphi|^2 + |\mathcal{L} \varphi |^2 + |\mathcal{L}^2 \varphi|^2\right) r \, dr .$$
\end{definition}

In order to apply Galerkin method, one needs to construct an orthonormal basis for $X_0$. Our strategy is  to seek
a basis in the function space $X_4$. To this end, we need some properties of functions in  $X_4$.

\begin{lemma}\la{lemma3-3-1}
Let $\varphi\in X_4$. Then $\varphi,$ $\frac{d}{dr}(r\varphi ) ,$  $\mathcal{L}\varphi$,
$\frac{d}{dr} ( r \mathcal{L} \varphi) $ $\in C([0, 1])$,
\begin{equation}\label{3-3-1-1}
\varphi (0) = \varphi (1) = \frac{d}{dr} ( r\varphi ) (1) = \mathcal{L} \varphi (0) = 0,
\end{equation}
and
\begin{equation} \label{3-3-1-2}
\lim_{r \rightarrow 0 +} \frac{1}{\sqrt{r}} \frac{d}{dr} ( r\varphi ) = \lim_{r \rightarrow 0 + }
\frac{1}{\sqrt{r}} \frac{d}{dr} ( r \mathcal{L} \varphi ) = 0.
\end{equation}
Moreover, there exists a positive constant $C$ independent of $\varphi$, such that
\be \la{3-3-1-3}
\int_0^1 \frac{|\varphi|^2}{r} \, dr + \int_0^1 \left|  \frac{d\varphi}{dr} \right|^2  r \, dr \leq C \int_0^1 | \mathcal{L} \varphi|^2  r \, dr,
\ee
and
\be \la{3-3-1-4}
 \int_0^1 \left| \frac{d}{dr}( r \mathcal{L} \varphi) \right|^2 \frac1r \, dr + \int_0^1 \frac{|\mathcal{L}\varphi |^2}{r} \, dr
+ \int_0^1 \left| \frac{d}{dr} \mathcal{L} \varphi \right|^2  r \, dr \leq C \|\varphi \|_{X_4}^2.
\ee
\end{lemma}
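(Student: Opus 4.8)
The plan is to establish the regularity and the two weighted inequalities \eqref{3-3-1-3}--\eqref{3-3-1-4} by first proving them for $\varphi \in X$ (where all quantities are smooth and the boundary behavior is explicit) and then passing to the limit using the completeness of $X_4$. The starting observation is the pointwise identity $\mathcal{L}\varphi = \frac{d}{dr}\left(\frac1r \frac{d}{dr}(r\varphi)\right)$, which lets us integrate outward from $r=0$: writing $h = \frac{d}{dr}(r\varphi)$, one has $\frac1r h(r) = \int_0^r \mathcal{L}\varphi(s)\,ds$ once we know $\frac1r h \to 0$ at the origin (which holds on $X$ by definition, and will be recovered in the limit). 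By Cauchy--Schwarz, $\left|\frac1r h(r)\right|^2 \le \left(\int_0^r s\,ds\right)\left(\int_0^r |\mathcal{L}\varphi|^2 \frac{ds}{s}\right)$, so I will need a Hardy-type bound $\int_0^1 |\mathcal{L}\varphi|^2 \frac{dr}{r} \le C\int_0^1 |\mathcal{L}\varphi|^2 r\,dr$ under the condition $\mathcal{L}\varphi(0)=0$ — this is the operator-$\mathcal{L}$ analogue of the Hardy inequality and should already be available among the lemmas collected in the first appendix (the same kind of inequality as in Lemma \ref{lemma1}). Granting that, $\left|\frac1r h(r)\right| \lesssim r \,\|\mathcal{L}\varphi\|$, which gives $h(1)=\frac{d}{dr}(r\varphi)(1)=0$, the limit $\frac{1}{\sqrt r}h \to 0$, and after another integration $\varphi(0)=\varphi(1)=0$ with $|\varphi(r)|\lesssim r\|\mathcal{L}\varphi\|$, hence continuity of $\varphi$ on $[0,1]$.

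Next I would prove \eqref{3-3-1-3}. From $\mathcal{L}\varphi = \varphi'' + \frac1r\varphi' - \frac{1}{r^2}\varphi$, multiply by $r\bar\varphi$ and integrate over $[\delta,1]$; integration by parts, using the boundary behavior just established to kill the boundary terms as $\delta\to 0+$ and at $r=1$, yields $\int_0^1 |\varphi'|^2 r\,dr + \int_0^1 \frac{|\varphi|^2}{r}\,dr = -\,\Re\int_0^1 \mathcal{L}\varphi\,\bar\varphi\,r\,dr \le \left(\int_0^1|\mathcal{L}\varphi|^2 r\,dr\right)^{1/2}\left(\int_0^1 \frac{|\varphi|^2}{r}\,dr\right)^{1/2}$, where I bounded $\int |\varphi|^2 r\,dr \le \int \frac{|\varphi|^2}{r}\,dr$ on $(0,1)$; absorbing gives \eqref{3-3-1-3}. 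For \eqref{3-3-1-4}, I apply exactly the same estimate with $\varphi$ replaced by $\mathcal{L}\varphi$: since $\mathcal{L}\varphi(0)=0$ (from the definition of $X$, preserved in the limit) and $\mathcal{L}^2\varphi \in L^2(r\,dr)$, the inequality $\int_0^1 \frac{|\mathcal{L}\varphi|^2}{r}\,dr + \int_0^1 |(\mathcal{L}\varphi)'|^2 r\,dr \le C\int_0^1 |\mathcal{L}^2\varphi|^2 r\,dr \le C\|\varphi\|_{X_4}^2$ follows; and the remaining term $\int_0^1 \left|\frac{d}{dr}(r\mathcal{L}\varphi)\right|^2 \frac1r\,dr$ is controlled by the same Hardy argument as in the first paragraph applied to $\mathcal{L}\varphi$ in place of $\varphi$, giving continuity of $\frac{d}{dr}(r\mathcal{L}\varphi)$ and the limit in \eqref{3-3-1-2}.

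Finally I would run the density/limiting argument: for $\varphi \in X_4$ pick $\varphi_n \in X$ with $\|\varphi_n - \varphi\|_{X_4}\to 0$. The inequalities \eqref{3-3-1-3}, \eqref{3-3-1-4} applied to $\varphi_n - \varphi_m$ show that $\{\varphi_n\}$, $\{\varphi_n'\}$, $\{\mathcal{L}\varphi_n\}$, $\{(\mathcal{L}\varphi_n)'\}$, and $\{\frac{d}{dr}(r\varphi_n)\}$, $\{\frac{d}{dr}(r\mathcal{L}\varphi_n)\}$ are Cauchy in the relevant weighted $L^2$ spaces, and the uniform pointwise bounds of the form $|\varphi_n(r)| \lesssim \sqrt r\,\|\varphi_n\|_{X_4}$ etc. (which come out of the Cauchy--Schwarz estimates above) show the convergences are in fact locally uniform up to the endpoints, so the limiting functions are continuous and satisfy the stated boundary values \eqref{3-3-1-1} and limits \eqref{3-3-1-2}, while the inequalities pass to the limit by lower semicontinuity of norms. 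The main obstacle, and the only place any real work is needed, is the Hardy-type inequality $\int_0^1 |\mathcal{L}\varphi|^2 \frac{dr}{r} \le C\int_0^1 |\mathcal{L}\varphi|^2 r\,dr$ together with making the boundary terms in the integration by parts vanish rigorously as $\delta \to 0+$ — i.e., controlling the behavior of $\varphi$, $\frac{d}{dr}(r\varphi)$, and their $\mathcal{L}$-counterparts near the degenerate point $r=0$; everything else is bookkeeping with Cauchy--Schwarz and density.
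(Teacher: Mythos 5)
Your overall skeleton (density from $X$, pointwise bounds via integral representations, weighted Cauchy--Schwarz) matches the paper's, but there are two concrete gaps. First, the starting identity $\frac1r \frac{d}{dr}(r\varphi)(r) = \int_0^r \mathcal{L}\varphi(s)\,ds$ is false: for $\varphi \in X$ one has $\frac1r\frac{d}{dr}(r\varphi) = \frac{\varphi}{r} + \varphi' \to 2\varphi'(0)$ as $r\to 0+$, and nothing in the definition of $X$ forces $\varphi'(0)=0$ (only $\varphi(0)=0$ and $\mathcal{L}\varphi(0)=0$, i.e.\ $\varphi''(0)=0$; indeed $\varphi'(0)\neq 0$ is the generic case, since $v^z(0,z) = -2\partial_r\psi(0,z)$). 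The correct representation integrates from the other endpoint, $\frac1r\frac{d}{dr}(r\varphi)(r) = -\int_r^1 \mathcal{L}\varphi\,ds$, using $\frac{d}{dr}(r\varphi)(1)=0$ (which holds by definition of $X$, not as a consequence of your pointwise bound). With that representation the Cauchy--Schwarz step costs a factor $|\ln r|^{1/2}$ rather than producing $|\frac1r\frac{d}{dr}(r\varphi)|\lesssim r$, and this is enough for \eqref{3-3-1-2} and for $|\varphi(r)|\lesssim r^{3/4}\|\mathcal{L}\varphi\|_{L^2_r}$, hence \eqref{3-3-1-3}. Crucially, this route makes your ``main obstacle'' disappear: the Hardy-type inequality $\int_0^1 |\mathcal{L}\varphi|^2\,\frac{dr}{r} \leq C\int_0^1|\mathcal{L}\varphi|^2\,r\,dr$ that you hope to find in the appendix is not there and cannot hold with only $\int_0^1|\mathcal{L}\varphi|^2 r\,dr$ on the right (test against $g\sim r^{\epsilon}$: the ratio of the two sides blows up as $\epsilon\to 0$). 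Note that \eqref{3-3-1-4} only claims control of $\int_0^1 |\mathcal{L}\varphi|^2 r^{-1}dr$ by the full $\|\varphi\|_{X_4}^2$, which includes $\mathcal{L}^2\varphi$; your proposed inequality is strictly stronger than the lemma itself.

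Second, the claim that \eqref{3-3-1-4} follows by ``exactly the same estimate with $\varphi$ replaced by $\mathcal{L}\varphi$'' fails at $r=1$: the integration by parts for $-\Re\int_0^1 \mathcal{L}^2\varphi\,\overline{\mathcal{L}\varphi}\,r\,dr$ produces the boundary term $\Re\bigl[r\,(\mathcal{L}\varphi)'\,\overline{\mathcal{L}\varphi}\bigr]_{r=1}$, and neither $\mathcal{L}\varphi(1)$ nor $(\mathcal{L}\varphi)'(1)$ vanishes for $\varphi\in X$ (the definition of $X$ constrains $\mathcal{L}^k\varphi$ only at $r=0$). Likewise $\frac1r\frac{d}{dr}(r\mathcal{L}\varphi)(r) = \frac{d}{dr}(r\mathcal{L}\varphi)(1) - \int_r^1\mathcal{L}^2\varphi\,ds$ carries a nonzero boundary value at $1$. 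Handling these boundary terms --- estimating $|\mathcal{L}\varphi(1)|$ and $|\frac{d}{dr}(r\mathcal{L}\varphi)(1)|$ by interior norms and absorbing --- is the real content of this half of the lemma, and it is exactly what the inequalities \eqref{3-3-1-16}, \eqref{3-3-1-20}, \eqref{3-3-1-20-1} and \eqref{estlinfty} of Lemma \ref{lemmaA2} are designed for; your proposal omits this step entirely. The density/limiting argument in your last paragraph is fine once the pointwise bounds are corrected as above.
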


\begin{proof}
Suppose that $\{ \varphi_n  \} \subseteq X $ is a sequence which converges to $\varphi$ in $X_4$. It follows from the proof of
Lemma \ref{lemma1} that
\be  \nonumber
\int_0^1 \left| \frac{d}{dr} ( r \varphi_n)  \right|^2  \frac{1}{r} \, dr
\leq  \int_0^1 \left| \mathcal{L} \varphi_n  \right|^2  r \, dr
\leq  \|\varphi_n\|_{X_4}^2.
\ee

For every $r \in [0, 1]$,
\be \nonumber
| r \varphi_n (r) | = \left|  \int_0^r \frac{d}{ds} [s \varphi_n (s)]  ds       \right|
\leq \sqrt{2}  r\left(  \int_0^1  \left|  \frac{d}{ds}[s \varphi_n (s)] \right|^2 \frac{1}{s} \, ds \right)^{\frac12}  ,
\ee
which  gives that
\begin{equation}\label{3-3-1-7}
\sup_{r \in [0, 1] } | \varphi_n (r) | \leq \frac{\sqrt{2}}{2} \left(  \int_0^1  \left|  \frac{d}{ds}[s \varphi_n (s)] \right|^2 \frac{1}{s} \, ds \right)^{\frac12}  \leq \frac{\sqrt{2}}{2} \|\varphi_n \|_{X_4}.
\end{equation}
The inequality \eqref{3-3-1-7} also holds for $\varphi_n - \varphi_m$. Hence, $\varphi \in C([0, 1])$, and
\begin{equation}\nonumber
\varphi(0) = \lim_{n \rightarrow + \infty} \varphi_n (0) = 0,
\ \ \ \ \ \varphi(1) = \lim_{n \rightarrow + \infty}  \varphi_n (1) = 0.
\end{equation}

Similarly, one has
\begin{equation}\nonumber
\left|  \frac{d(r \varphi_n)}{dr}  \frac{1}{r}     \right|
= \left| - \int_r^1 \mathcal{L} \varphi_n \, ds             \right|
\leq \left[ \int_0^1 | \mathcal{L} \varphi_n |^2  s \, ds     \right]^{\frac12}  \left[ \int_r^1 \frac{1}{s} \, ds   \right]^{\frac12},
\end{equation}
which implies that
\begin{equation}\label{3-3-1-10}
\left| \frac{d (r \varphi_n )}{dr} \right|  \leq  r  |\ln r |^{\frac12}  \left[ \int_0^1 | \mathcal{L} \varphi_n|^2  s \, ds \right]^{\frac12}.
\end{equation}
Hence,  $ \frac{d (r \varphi) }{dr} \in C([0, 1])$, and
\begin{equation} \label{3-3-1-11}
\left[  \frac{1}{r} \frac{d(r \varphi)}{dr}  \right](1) = 0,\ \ \ \lim_{r \rightarrow 0 +} \frac{1}{\sqrt{r}}
\frac{d(r \varphi)}{dr} = 0.
\end{equation}

Moreover, one has
\be \la{3-3-1-10-1}
|r \varphi_n(r)|  \leq \int_0^r \left| \frac{d}{ds} \left[ s \varphi_n(s)  \right]      \right|\, ds
\leq   \int_0^r s |\ln s|^{\frac12} \, ds  \left[ \int_0^1 | \mathcal{L} \varphi_n|^2  s \, ds  \right]^{\frac12} ,
\ee
which implies that
\be \la{3-3-1-10-2}
|\varphi_n(r)| \leq C r^{\frac34}  \left[ \int_0^1 | \mathcal{L} \varphi_n|^2  s \, ds  \right]^{\frac12} ,
\ee
and consequently
\be \la{3-3-1-10-3}
\int_0^1 \frac{|\varphi_n|^2}{r} \, dr \leq C  \int_0^1 | \mathcal{L} \varphi_n|^2  s \, ds   \leq C \|\varphi_n \|_{X_4}^2.
\ee
Meanwhile, one has
\be \la{3-3-1-10-4}
\int_0^1 \left|  \frac{d\varphi_n}{dr} \right|^2  r \, dr
\leq 2 \int_0^1 \left| \frac{d}{dr} (r \varphi_n) \right|^2 \frac{1}{r}\, dr +  2 \int_0^1 \frac{|\varphi_n|^2}{r} \, dr
\leq C \int_0^1 | \mathcal{L} \varphi_n|^2  r \, dr  .
\ee
Finally, taking the limit for $\varphi_n$ shows that the inequalities \eqref{3-3-1-10-1}--\eqref{3-3-1-10-4} hold also for $\varphi$.

For every $0 \leq r \leq 1$, one has
\be\nonumber
 |  r \mathcal{L} \varphi_n (r) | = \left|  \int_0^r \frac{d}{ds} ( s \mathcal{L} \varphi_n) \, ds        \right|
\leq C \left[  \int_0^r  \left|   \frac{d}{ds} ( s \mathcal{L} \varphi_n) \right|^2  \frac{1}{s} \, ds   \right]^{\frac12} r ,
\ee
which together with  \eqref{3-3-1-20} in Lemma \ref{lemmaA2} implies that
\begin{equation}\label{3-3-1-25}
|\mathcal{L} \varphi_n (r) | \leq C \left[ \int_0^1 \left| \frac{d}{dr}(s \mathcal{L} \varphi_n)  \right|^2 \frac1s \, ds  \right]^{\frac12} \leq C \|\varphi_n\|_{X_4}.
\end{equation}
Hence, taking the limit for $\varphi_n$ gives that
\be \nonumber
\mathcal{L}\varphi \in C([0, 1])\ \ \ \ \ \mbox{and}\ \ \ \ \ \ \lim_{r\rightarrow 0+} \mathcal{L} \varphi(r) = 0.
\ee

Similarly,
\be \label{3-3-1-28} \ba
\left| \frac{1}{r} \frac{d}{dr} ( r \mathcal{L} \varphi_n)  \right|   = &\,\, \left|
\frac{d}{dr}( r \mathcal{L} \varphi_n) (1)  - \int_r^1 \frac{d}{ds} \left[
\frac{1}{s} \frac{d}{ds} ( s \mathcal{L} \varphi_n ) \right] \, ds   \right| \\
 \leq &\,\, \left| \frac{d}{dr} ( r \mathcal{L} \varphi_n )(1)  \right|  +  \left( \int_0^1 |\mathcal{L}^2 \varphi_n |^2  s \, ds \right)^{\frac12}
 | \ln r|^{\frac12} \\
 \leq &\,\, C \|\varphi_n\|_{X_4}  \left( 1 + |\ln r|^{\frac12} \right),
\ea
\ee
where we used the inequality \eqref{estlinfty} in Lemma \ref{lemmaA2} for the last inequality.
Therefore, taking the limit for $\varphi_n$ yields that
\begin{equation}\nonumber
\frac{d}{dr}( r \mathcal{L} \varphi ) \in C([0, 1])\ \ \ \ \ \mbox{and}\ \ \ \ \lim_{r\rightarrow 0+} \frac{1}{\sqrt{r}} \frac{d}{dr}
( r \mathcal{L} \varphi ) = 0.
\end{equation}

Similar to the estimate above, one can get that
\be \la{3-3-1-30}
\left| r \mathcal{L} \varphi(r) \right|
\leq \int_0^r \left|  \frac{d}{ds} \left[ s \mathcal{L} \varphi(s)   \right]    \right| \, ds
\leq C \int_0^r \left( s + s |\ln s|^{\frac12}  \right) \, ds  \|\varphi\|_{X_4}
\leq C r^{\frac74} \|\varphi\|_{X_4},
\ee
and
\be \la{3-3-1-31}
\left|  \frac{d}{dr} ( \mathcal{L} \varphi) (r) \right| = \left| \frac{1}{r} \frac{d}{dr}( r \mathcal{L} \varphi) - \frac{\mathcal{L} \varphi}{r}  \right|
\leq C \left(  r^{-\frac14} + |\ln r|^{\frac12} \right) \|\varphi \|_{X_4}.
\ee
Hence,
\be \la{3-3-1-32}
\int_0^1 \left|  \frac{d}{dr} (  r \mathcal{L} \varphi) \right|^2 \frac1r \, dr + \int_0^1 \frac{|\mathcal{L}\varphi|^2 }{r} \, dr +
\int_0^1 \left| \frac{d}{dr} (\mathcal{L} \varphi)  \right|^2  r \, dr \leq C \|\varphi \|_{X_4}.
\ee
This finishes the proof of Lemma \ref{lemma3-3-1}.
\end{proof}

Based on Lemma \ref{lemma3-3-1}, one can obtain the following compactness result.


\begin{lemma}\la{lemma3-3-3}
$X_4$ is compactly embedded into $X_0$.
\end{lemma}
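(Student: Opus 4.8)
The plan is to show that any sequence $\{\varphi_n\}\subset X_4$ with $\|\varphi_n\|_{X_4}\le M$ has a subsequence converging in $X_0$. The only difficulty is the degeneracy of the weight $r$ at the axis $r=0$: away from the axis the $X_4$-norm controls a bona fide $H^1$-norm and compactness is classical, whereas the behavior near $r=0$ is handled by the Hardy-type bound \eqref{3-3-1-3} from Lemma~\ref{lemma3-3-1}. Accordingly I would split the interval as $(0,1)=(0,\delta)\cup[\delta,1)$ and treat the two pieces separately.

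First, fix $\delta\in(0,1)$. By \eqref{3-3-1-3}, $\int_0^1\tfrac{|\varphi_n|^2}{r}\,dr+\int_0^1|\varphi_n'|^2 r\,dr\le C\|\varphi_n\|_{X_4}^2\le CM^2$, so on $[\delta,1]$, where the weight $r$ is comparable to $1$, the sequence $\{\varphi_n\}$ is bounded in $H^1((\delta,1))$. Since $H^1((\delta,1))$ embeds compactly into $L^2((\delta,1))$, a diagonal extraction over $\delta=1/k$, $k\in\mathbb{N}$, produces a subsequence, still denoted $\{\varphi_n\}$, that converges in $L^2((\delta,1))$ — hence is Cauchy in the weighted space $L^2((\delta,1);\,r\,dr)$ — for every $\delta\in(0,1)$.

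It remains to make the contribution on $(0,\delta)$ uniformly small. Using $|\varphi|^2 r=\tfrac{|\varphi|^2}{r}\,r^2\le\delta^2\tfrac{|\varphi|^2}{r}$ on $(0,\delta)$ (or, alternatively, the pointwise estimate \eqref{3-3-1-10-2}), one gets from \eqref{3-3-1-3} that $\int_0^\delta|\varphi_n-\varphi_m|^2 r\,dr\le C\delta^2M^2$ for all $n,m$. Given $\veps>0$, I would first choose $\delta$ so small that $C\delta^2M^2<\veps/2$, and then $N$ so large that $\int_\delta^1|\varphi_n-\varphi_m|^2 r\,dr<\veps/2$ for all $n,m\ge N$; adding the two bounds shows $\|\varphi_n-\varphi_m\|_{X_0}^2<\veps$ for $n,m\ge N$. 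Thus $\{\varphi_n\}$ is Cauchy in $X_0$, and completeness of $X_0$ yields a limit in $X_0$, which proves the compact embedding. The main — indeed the only — obstacle is the weight degeneracy at $r=0$, and it is precisely the Hardy inequality \eqref{3-3-1-3} that renders the near-axis tail negligible uniformly in $n$; apart from that the argument is the standard Rellich-plus-diagonal scheme.
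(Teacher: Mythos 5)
Your proof is correct. It rests on the same key estimate as the paper's, namely the Hardy-type bound \eqref{3-3-1-3} from Lemma \ref{lemma3-3-1}, but the mechanism for converting that bound into compactness is different. The paper observes that $\int_0^1 |\varphi_n'|^2 r\,dr$ and $\int_0^1 |\varphi_n|^2 r^{-1}\,dr$ are exactly (up to constants) the $H^1(B_1(0))$-norm of $\varphi_n$ viewed as a radially symmetric function on the two-dimensional unit disk, and then invokes the classical Rellich embedding $H^1(B_1(0))\hookrightarrow\hookrightarrow L^2(B_1(0))$ in a single step, with no decomposition of the interval. You instead split $(0,1)$ at $r=\delta$, use the bound $|\varphi|^2 r\le \delta^2 |\varphi|^2/r$ to make the near-axis contribution equi-small, apply one-dimensional Rellich on $[\delta,1]$ where the weight is non-degenerate, and glue via a diagonal extraction. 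The paper's route is shorter; yours is more elementary and self-contained in that it never needs the identification of the weighted norms with a higher-dimensional Sobolev norm, and it isolates explicitly why the degeneracy of the weight at the axis does not destroy compactness. Both arguments are complete and correct.
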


\begin{proof} Assume that $\{\varphi_n\}$ is a bounded sequence in $X_4$. Owing to Lemma \ref{lemma3-3-1}, it holds that
\be \nonumber
 \int_0^1 \left|  \frac{d \varphi_n}{dr}    \right|^2  r \, dr + \int_0^1  |\varphi_n |^2  \frac1r \, dr
\leq C \|\varphi_n\|_{X_4}^2 .
\ee
Therefore, if  $\varphi_n$ is regarded as a radially symmetric function defined on $B_1(0) \subset \mathbb{R}^2$,
then $ \{ \varphi_n \}$ is a bounded sequence in $H^1(B_1(0))$.  It is well-known that $H^1(B_1(0))$ is compact in $L^2(B_1(0))$. Hence there is a subsequence of $\{\varphi_n \}$ (still labelled by $\{ \varphi_n \}$) and a radially symmetric function
$\varphi$, such that $\{ \varphi_n \}$ converges to $\varphi$ in $L^2(B_1(0))$.  Hence, $\{\varphi_n \} $ converges to $\varphi$ in $X_0$, which completes the proof of Lemma \ref{lemma3-3-3}.
\end{proof}

Before discussing the eigenfunctions of the differential operator $\mathcal{L}^2$, we study first the existence of solutions to the associated PDE.
\begin{lemma}\la{lemma3-3-2}
Given $g\in X_0$, the following problem
\begin{equation}\label{3-3-1}
\left\{  \ba
&\mathcal{L}^2 \varphi = g,\ \ \ \ \mbox{in}\ (0, 1),\\
&\varphi (0) = \mathcal{L} \varphi (0) =\varphi(1) = \varphi^{\prime}(1) = 0,
\ea
\right.
\end{equation}
has a unique solution $\varphi \in X_4$, and it holds that
\begin{equation}\nonumber
\|\varphi \|_{X_4}   \leq C \|g\|_{X_0},
\end{equation}
where the constant $C$ is independent of $g$.
\end{lemma}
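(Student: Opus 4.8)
The plan is to obtain the solution via a standard variational argument in the Hilbert space $X_4$ (or rather its natural energy subspace), using the compactness and trace properties established in Lemma \ref{lemma3-3-1}, and then to bootstrap to the full $X_4$-estimate by interpreting $\mathcal{L}^2\varphi=g$ as an elliptic equation. First I would introduce the bilinear form $a(\varphi,\chi) := \int_0^1 \mathcal{L}\varphi\,\mathcal{L}\chi\, r\,dr$ on the space $H := \{\varphi : \int_0^1(|\mathcal{L}\varphi|^2 + |\varphi|^2/r)\,dr < \infty,\ \varphi(0)=\varphi(1)=0,\ \frac{d}{dr}(r\varphi)(1)=0\}$, the closure of $X$ under $a(\cdot,\cdot)^{1/2}$. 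By the inequality \eqref{3-3-1-3} of Lemma \ref{lemma3-3-1}, $a(\varphi,\varphi)$ controls $\int_0^1|\varphi'|^2 r\,dr + \int_0^1 |\varphi|^2/r\,dr$, so $\|\varphi\|_{H}^2 := a(\varphi,\varphi)$ is a genuine norm and $H$ is a Hilbert space continuously embedded in $X_0$. The linear functional $\chi\mapsto \int_0^1 g\bar\chi\, r\,dr$ is bounded on $H$ since $\|\chi\|_{X_0}\le C\|\chi\|_{H}$. By the Lax–Milgram theorem (here just Riesz representation, as $a$ is symmetric and coercive) there is a unique $\varphi\in H$ with $a(\varphi,\chi)=\int_0^1 g\bar\chi\,r\,dr$ for all $\chi\in H$, and $\|\varphi\|_H\le C\|g\|_{X_0}$.

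Next I would check that this weak solution satisfies the boundary conditions and the equation in the stated sense. The conditions $\varphi(0)=\varphi(1)=0$ and $\frac{d}{dr}(r\varphi)(1)=0$ are built into $H$ (they pass to the limit by the uniform estimates \eqref{3-3-1-7} and \eqref{3-3-1-11} in Lemma \ref{lemma3-3-1}); the condition $\mathcal{L}\varphi(0)=0$ and $\varphi'(1)=0$ are the natural boundary conditions encoded by the variational formulation — integrating by parts twice in $a(\varphi,\chi)$ against smooth test functions $\chi$ produces boundary terms at $r=1$ proportional to $\chi'(1)\mathcal{L}\varphi(1)$ type expressions once we know $\mathcal{L}\varphi\in X_4$ has the regularity from \eqref{3-3-1-4}, and at $r=0$ the weighted vanishing $\lim_{r\to 0+}\frac{1}{\sqrt r}\frac{d}{dr}(r\mathcal{L}\varphi)=0$ from \eqref{3-3-1-2} kills the axis contributions. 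Choosing $\chi\in C_c^\infty(0,1)$ first shows $\mathcal{L}^2\varphi=g$ in the distributional sense on $(0,1)$; then the remaining free boundary terms force $\varphi'(1)=0$ and $\mathcal{L}\varphi(0)=0$.

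Finally, to upgrade from $\varphi\in H$ to $\varphi\in X_4$ with the full estimate $\|\varphi\|_{X_4}\le C\|g\|_{X_0}$, I would view $\mathcal{L}w=g$ with $w:=\mathcal{L}\varphi$: since $\mathcal{L}$ is (up to the $\frac{1}{r^2}$ term) the radial part of the Laplacian on $B_1(0)\subset\mathbb{R}^2$ acting on the vector field $w\,\Be_\theta$-type quantity, elliptic regularity for the Laplacian on the disk gives $\|w\|_{H^2_{\text{rad}}}\le C\|g\|_{L^2}$, hence $\int_0^1 |\mathcal{L} w|^2 r\,dr = \int_0^1 |\mathcal{L}^2\varphi|^2 r\,dr \le C\|g\|_{X_0}^2$, and combining with the $H$-estimate for $\int_0^1(|\varphi|^2+|\mathcal{L}\varphi|^2)r\,dr$ yields $\|\varphi\|_{X_4}\le C\|g\|_{X_0}$. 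Uniqueness is immediate: if $g=0$ then $a(\varphi,\varphi)=0$, so $\mathcal{L}\varphi\equiv 0$, and then \eqref{3-3-1-3} forces $\varphi\equiv 0$.

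I expect the main obstacle to be the careful handling of the boundary terms at the axis $r=0$ when justifying that the variational solution genuinely realizes the four pointwise boundary conditions in \eqref{3-3-1} — the degeneracy of the weight $r$ and the $\frac{1}{r^2}$ singularity in $\mathcal{L}$ mean one must invoke precisely the weighted trace estimates \eqref{3-3-1-2}–\eqref{3-3-1-4} of Lemma \ref{lemma3-3-1} rather than the naive integration by parts, and one must confirm that the function space $H$ coincides with (the closure relevant to) $X_4$ so that these traces are available. Everything else is routine Lax–Milgram plus elliptic regularity on the disk.
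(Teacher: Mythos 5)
Your variational route is genuinely different from the paper's. The paper does not attack the weighted one-dimensional problem directly: it sets $G=g(r)/r$ on the unit ball $B_1^4(0)\subset\mathbb{R}^4$, solves the clamped biharmonic problem $\Delta_4^2\phi=G$, $\phi=\partial_{\Bn}\phi=0$ by Agmon--Douglis--Nirenberg theory, observes that $\phi$ is radial by uniqueness, and then sets $\varphi=r\phi$, using the identity $\mathcal{L}(r\phi)=r\Delta_4\phi$. That lifting buys three things at once: the $H^4$ estimate, the correct (non-singular) behaviour at the axis, and --- crucially --- membership of $\varphi$ in $X_4$, because $G$ can be approximated by smooth radial $G_n$, giving $\varphi_n=r\phi_n\in X$ converging in the $X_4$-norm. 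Your Lax--Milgram argument in $H$ is a viable alternative in outline (the form $a$ is coercive over $X_0$ by Lemma \ref{lemma1}, and uniqueness goes through exactly as you say), but it must eventually reproduce each of these three outputs by hand.

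Two of your steps have genuine gaps. First, $X_4$ is \emph{defined} as the closure of $X$ in the $X_4$-norm, so producing $\varphi\in H$ with $\mathcal{L}^2\varphi=g$ distributionally and $\|\varphi\|_{X_4}<\infty$ does not yet place $\varphi$ in $X_4$; you still need a density/approximation argument (this is precisely where the paper's smooth approximation of $G$ does the work), and without $\varphi\in X_4$ you cannot invoke Lemma \ref{lemma3-3-1} to read off the boundary values. Second, the mechanism you propose for recovering $\mathcal{L}\varphi(0)=0$ is not the right one: if you integrate $a(\varphi,\chi)$ by parts against $\chi\in X$, \emph{every} boundary term is annihilated by the conditions already imposed on the test functions ($\frac{d}{dr}(r\chi)(1)=0$, $\chi(0)=\chi(1)=0$, and the vanishing of $\frac{d}{dr}(r\chi)$ at the axis), so no free boundary term survives to force a natural condition at $r=0$. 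The condition $\mathcal{L}\varphi(0)=0$ is instead encoded by finiteness of the energy: the homogeneous solutions of $\mathcal{L}^2\varphi=0$ near $r=0$ behave like $r,\ r^3,\ r^{-1},\ r\ln r$, and the last two are excluded because $\int_0^1|\varphi|^2r^{-1}dr<\infty$ and $\int_0^1|\mathcal{L}(r\ln r)|^2r\,dr=\int_0^1 4r^{-1}dr=\infty$; this indicial analysis (or an equivalent regularity statement at the degenerate endpoint) has to be supplied. Finally, the ``elliptic regularity on the disk for $\mathcal{L}w=g$'' step is both unnecessary --- $\int_0^1|\mathcal{L}^2\varphi|^2r\,dr=\int_0^1|g|^2r\,dr$ is immediate from the equation, so the norm bound follows from the $H$-estimate alone --- and not applicable as stated, since $w=\mathcal{L}\varphi$ carries no boundary condition at $r=1$, so Dirichlet regularity theory for the disk does not apply to it. (A small point: $\varphi'(1)=0$ is essential, not natural, in your $H$, since $\frac{d}{dr}(r\varphi)(1)=0$ and $\varphi(1)=0$ already give it.)
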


\begin{proof}

We do not construct the solution to the problem \eqref{3-3-1} directly. Instead, we consider the following boundary value problem for a fourth order equation on $\overline{B_1^4}(0)\subset \mathbb{R}^4$,
\be \la{3-3-2}  \left\{ \ba
& \Delta^2_4 \phi = G, \ \ \ \ \ \mbox{in}\ B_1^4(0),\\
& \phi = \frac{\partial \phi}{\partial \Bn  } = 0,\ \ \ \ \ \mbox{on}\ \partial B_1^4(0),
\ea \right. \ee
where $B_1^4(0)$ is the unit ball centered at the origin in $\mathbb{R}^4$,
\be \nonumber
\Delta_4= \sum_{i=1}^4 \partial_{x_i}^2 \quad \text{and}\quad G(x_1, x_2, x_3, x_4) = g(r)/r\ \ \ \ \mbox{with}\ r = \left(\sum_{i=1}^4 x_i^2 \right)^{ \frac12 }.
\ee

According to the classical theory for elliptic equations\cite{ADN}, there exists a unique solution $\phi \in H^4(B_1^4(0) )$, satisfying
\be \label{3-3-3}
\|\phi\|_{H^4(B_1^4(0) )} \leq C \| G\|_{L^2(B_1^4(0))} = C \|g\|_{X_0}.
\ee
 Since $G$ is a radially symmetric function, due to the rotational invariance  of $\Delta_4$ and the uniqueness of solution to the problem \eqref{3-3-2},
$\phi$ is also a radially symmetric function, i.e.,
$\phi(x_1, x_2, x_3, x_4) = \phi(r)$.

Let $\varphi(r) = r \phi(r)$. It can be verified that
\be \nonumber \ba
\mathcal{L} \varphi & = \left(  \frac{d^2}{dr^2} + \frac{1}{r} \frac{d}{dr} - \frac{1}{r^2}       \right) ( r\phi)   = r \left(  \frac{d^2}{dr^2} + \frac{3}{r} \frac{d}{dr}             \right) \phi
 = r \Delta_4 \phi.
\ea\ee
Similarly,
\be \nonumber
\mathcal{L}^2 \varphi = \mathcal{L} ( \mathcal{L} \varphi) = \mathcal{L} (r \Delta_4 \phi) = r \Delta_4^2 \phi =g.
\ee
Hence,
\begin{equation}\label{3-3-9} \ba
\|\varphi\|_{X_4}^2  = &\,\, \int_0^1 \left(  |\mathcal{L}^2 \varphi |^2  + |\mathcal{L} \varphi |^2  +
|\varphi|^2 \right)  r \, dr
 =  \int_0^1 \left(     | \Delta_4^2 \phi |^2 + |\Delta_4 \phi |^2 + |\phi|^2     \right)  r^3 \, dr \\
 \leq &\,\, C \|\phi \|_{H^4(B_1^4(0))}^2
 \leq  C \|g\|_{X_0}^2.
\ea
\end{equation}

In fact, the function $G$ can be approximated by a sequence of smooth radially symmetric functions $\{ G_n \}$ in $L^2(B_1^4(0))$. The corresponding solutions $\{ \phi_n \} \subseteq C^\infty(\overline{B_1^4(0)})$, which implies
$\{ \varphi_n = r \phi_n \}\subseteq X$. Hence, the solution $\varphi$ can be approximated by $\{ \varphi_n \}\subseteq X$ under
$X_4$-norm, thus $\varphi \in X_4$. It follows from Lemma \ref{lemma3-3-1} that $\varphi$ satisfies the boundary conditions in \eqref{3-3-1}. Therefore, $\varphi$ is a solution to the problem \eqref{3-3-1}.

Finally, we prove the uniqueness. Suppose $\varphi_1$ and $\varphi_2$ are both solutions to \eqref{3-3-1} in $X_4$. Taking
$\varphi_1 - \varphi_2$ as a test function,  by virtue of Lemma \ref{lemma3-3-1},
\be\nonumber
\ba
0= &\,\,\int_0^1 \mathcal{L}^2 ( \varphi_1 - \varphi_2) \overline{(\varphi_1 - \varphi_2)}  r \, dr \\
=&\,\, \displaystyle -  \int_0^1 \frac{1}{r} \frac{d}{dr} ( r \mathcal{L} \varphi_1  - r \mathcal{L}\varphi_2)
 \frac{d}{dr} \left[   \overline{ r (\varphi_1 - \varphi_2) }      \right] \, dr
+ \frac{d}{dr}( r \mathcal{L} \varphi_1 - r\mathcal{L} \varphi_2 )(1)  (\overline{\varphi_1 - \varphi_2})(1)\\
&\ \ \ \ \ \
- \lim_{r \rightarrow 0+ } \frac{d}{dr} ( r \mathcal{L} \varphi_1 - r \mathcal{L} \varphi_2)(r)
\lim_{r \rightarrow 0+} (\overline{ \varphi_1 - \varphi_2} )(r)\\
 =&\,\, - \displaystyle  \int_0^1 \frac{1}{r} \frac{d}{dr} ( r \mathcal{L} \varphi_1  - r \mathcal{L}\varphi_2)
 \frac{d}{dr} \left[   \overline{ r (\varphi_1 - \varphi_2) }      \right] \, dr \\
=&\,\, \int_0^1 | \mathcal{L} (\varphi_1 - \varphi_2) |^2  r \, dr
- ( \mathcal{L} \varphi_1 - \mathcal{L} \varphi_2) (1)  \frac{d}{dr}[ r (\overline{\varphi_1 - \varphi_2} ) ](1) \\
&\ \ \ \ \
+  \lim_{r \rightarrow 0+ } ( \mathcal{L} \varphi_1 - \mathcal{L} \varphi_2)(r)  \lim_{r\rightarrow 0+}
\frac{d}{dr}[r( \overline{\varphi_1 - \varphi_2} )]\\
 =&\,\,  \int_0^1 | \mathcal{L} (\varphi_1 - \varphi_2) |^2  r \, dr.
\ea
\end{equation}
Hence, one has
\begin{equation}\nonumber
\int_0^1  |\varphi_1 - \varphi_2|^2  r \, dr  \leq C \int_0^1  |\mathcal{L} (\varphi_1 - \varphi_2)|^2  r \, dr = 0,
\end{equation}
which implies the uniqueness of solutions to \eqref{3-3-1}. This finishes the proof of Lemma \ref{lemma3-3-2}.
\end{proof}

Now we are ready to show the existence of orthonormal basis for $X_0$.
\begin{pro}\label{theorem3-3-4}
There exists an orthonormal basis $\{\varphi_n\}\subset X_4$ for $X_0$.
\end{pro}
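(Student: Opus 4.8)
The plan is to realize the fourth-order operator $\mathcal{L}^2$, together with the boundary conditions in \eqref{3-3-1}, as the inverse of a compact, self-adjoint, positive operator on the Hilbert space $X_0$, and then invoke the spectral theorem for such operators. First I would use Lemma \ref{lemma3-3-2}: for each $g\in X_0$ the problem \eqref{3-3-1} has a unique solution, which we denote $\varphi = Tg \in X_4$, and $\|Tg\|_{X_4}\le C\|g\|_{X_0}$. Thus $T\colon X_0\to X_4$ is a bounded linear map, and composing it with the embedding $X_4\hookrightarrow X_0$, which is compact by Lemma \ref{lemma3-3-3}, yields a compact linear operator $T\colon X_0\to X_0$. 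Here $X_0$ is a separable complex Hilbert space (isometric to $L^2((0,1),\,r\,dr)$) with inner product $\langle u,v\rangle_{X_0}=\int_0^1 u\,\bar v\, r\,dr$.

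Next I would verify that $T$ is self-adjoint and positive definite. Given $g,h\in X_0$, set $\varphi=Tg$, $\chi=Th$, so that $g=\mathcal{L}^2\varphi$ and $h=\mathcal{L}^2\chi$ with $\varphi,\chi\in X_4$. Performing two integrations by parts — in which every boundary term at $r=1$ and as $r\to 0+$ vanishes by the boundary conditions in \eqref{3-3-1} and the continuity and weighted-limit properties of $X_4$-functions recorded in Lemma \ref{lemma3-3-1} (exactly the computation carried out in the uniqueness part of the proof of Lemma \ref{lemma3-3-2}) — gives
\[
\langle Tg,h\rangle_{X_0}=\int_0^1\varphi\,\overline{\mathcal{L}^2\chi}\,r\,dr=\int_0^1\mathcal{L}\varphi\,\overline{\mathcal{L}\chi}\,r\,dr=\int_0^1\mathcal{L}^2\varphi\,\bar\chi\,r\,dr=\langle g,Th\rangle_{X_0}.
\]
Hence $T$ is self-adjoint. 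Taking $h=g$ gives $\langle Tg,g\rangle_{X_0}=\int_0^1|\mathcal{L}\varphi|^2\,r\,dr\ge 0$, and by \eqref{3-3-1-3} this vanishes only when $\varphi\equiv0$, i.e. $g=\mathcal{L}^2\varphi=0$. Therefore $T$ is positive definite and, in particular, injective.

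Finally I would apply the Hilbert–Schmidt spectral theorem to the compact self-adjoint operator $T$: there is an orthonormal basis $\{\varphi_n\}$ of $X_0$ consisting of eigenfunctions, $T\varphi_n=\mu_n\varphi_n$, with $\mu_n\in\mathbb{R}$, $\mu_n\neq0$ by injectivity, and $\mu_n\to0$. Since $\varphi_n=\mu_n^{-1}T\varphi_n$ lies in the range of $T$, we get $\varphi_n\in X_4$ for every $n$; equivalently, each $\varphi_n$ solves $\mathcal{L}^2\varphi_n=\lambda_n\varphi_n$ in $(0,1)$ with $\lambda_n=\mu_n^{-1}\to+\infty$ and the boundary conditions in \eqref{3-3-1}. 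Completeness of $\{\varphi_n\}$ in $X_0$ holds because the orthogonal complement of $\overline{\operatorname{span}}\{\varphi_n\}$ would lie in $\ker T=\{0\}$. This proves the proposition.

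The step requiring the most care is the self-adjointness of $T$: one must be sure that all boundary contributions in the two integrations by parts genuinely drop out. This is exactly where the delicate behaviour of $X_4$-functions near $r=0$ and $r=1$ from Lemma \ref{lemma3-3-1} — the vanishing of $\varphi(1)$, $\frac{d}{dr}(r\varphi)(1)$, $\mathcal{L}\varphi(0)$, and the weighted limits in \eqref{3-3-1-2} — is essential; everything else is a routine application of the spectral theorem.
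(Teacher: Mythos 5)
Your proposal is correct and follows essentially the same route as the paper: both define the solution operator of \eqref{3-3-1} (your $T$, the paper's $\mathcal{M}$), observe it is compact on $X_0$ via Lemma \ref{lemma3-3-3} and self-adjoint via the integration-by-parts identity $\langle Tg,g\rangle_{X_0}=\int_0^1|\mathcal{L}\varphi|^2 r\,dr$, and then invoke the Hilbert--Schmidt theorem. You merely supply details the paper leaves implicit (the full sesquilinear computation, injectivity of $T$, and the observation that the eigenfunctions lie in the range of $T$ and hence in $X_4$), all of which are sound.
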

\begin{proof}
Suppose that $\varphi$ is the unique solution to \eqref{3-3-1}. Define the solution operator as $\varphi = \mathcal{M} g$. Since
\be \nonumber
\int_0^1 \mathcal{M}g  \bar{g}  r  \, dr    =  \int_0^1 \varphi  \mathcal{L}^2 \overline{\varphi}  r \, dr
= \int_0^1 |\mathcal{L} \varphi|^2  r \, dr  \in \mathbb{R},
\ee
$\mathcal{M}$ is a symmetric operator on $X_0$.

It follows from Hilbert-Schmidt theory (\cite{Lax}) and Lemma \ref{lemma3-3-3} that there exists an orthonormal basis of $X_0$. In fact,
the basis consists of the eigenfunctions of the operator $\mathcal{M}$.
\end{proof}

Now for every $\xi \in \mathbb{R}$, the existence of a solution $\hat{\psi} $ can be obtained by the  standard Galerkin approximation method together with the a priori estimates. Since all the a priori estimates hold for the approximate solutions, they also hold for the solution $\hat{\psi}$. The uniqueness of the solution to \eqref{2-0-8}-\eqref{FBC} can also be established by a priori estimates.

\begin{definition}\label{def1}
Define a function space as
\be \nonumber
C_*^\infty(D )  = \left\{ \varphi(r, z) \left|  \ba & \ \varphi \in C_c^\infty([0, 1] \times \mathbb{R} ), \
\varphi(1, z) = \frac{\partial \varphi}{\partial r}(1, z) =0, \\
&\ \  \   \mbox{and}\ \lim_{r\rightarrow 0+ } \mathcal{L}^k \varphi(r, z)
= \lim_{r \rightarrow 0+ } \frac{\partial}{\partial r} ( r \mathcal{L}^k \varphi ) (r, z) = 0, \ k \in \mathbb{N}
\ea \right.
\right\}.
 \ee
The $H_r^4(D)$-norm is defined as follows,
\begin{equation}\nonumber
\|\varphi\|_{H^4_r(D)}^2  : = \int_{-\infty}^{+ \infty} \int_0^1 \left(  |\mathcal{L} \hat{\varphi}|^2 + \xi^4 |\hat{\varphi}|^2 \right)   r
 \, dr d\xi   + \int_{-\infty}^{+\infty} \int_0^1 \left( |\mathcal{L}^2 \hat{\varphi}|^2 + \xi^4 |\mathcal{L}\hat{\varphi}|^2
+ \xi^8 |\hat{\varphi}|^2 \right)  r \, dr d\xi .
\end{equation}
Let $H_*^4(D)$ denote the closure of $C_*^\infty(D)$ under the $H_r^4(D)$-norm.
Define
\be \nonumber
L_r^2(D) = \left\{ f(r, z):  \ \|f\|_{L_r^2(D )}^2 = \int_{-\infty}^{+\infty} \int_0^1  |f|^2  r \, dr dz < + \infty \right\}.
\ee
\end{definition}

The existence result for $\psi$ follows from the a priori estimates established in Section \ref{Linear} .
\begin{pro} \label{existence-stream} Assume that  $f(r, z) \in L_r^2(D)$. There exists a unique solution $\psi \in H_*^4(D)$ to the linear system \eqref{2-0-4-1}--\eqref{2-0-4-4},  and a positive constant $C$, which is independent of $f$ and $\Phi$, such that
\begin{equation}\nonumber
\|\psi \|_{H^4_r(D)} \leq C (1 + \Phi) \|f\|_{L^2_r(D)}.
\end{equation}
\end{pro}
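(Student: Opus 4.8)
The plan is to assemble the existence statement from the pieces already prepared: for each fixed frequency $\xi$, the Galerkin argument (based on the orthonormal basis $\{\varphi_n\}$ of $X_0$ constructed in Proposition \ref{theorem3-3-4}) together with the a priori estimates of Lemmas \ref{lemapri1} and \ref{lemhapr} produces a unique solution $\hat{\psi}(\cdot,\xi)\in X_4$ of the ODE boundary value problem \eqref{2-0-8}--\eqref{FBC}, and then one glues these fibrewise solutions back together via the inverse Fourier transform in $z$. First I would fix $\xi\in\mathbb{R}$ and note $\hat{f}(\cdot,\xi)\in X_0$ for a.e. $\xi$ by Fubini; projecting \eqref{2-0-8} onto $\mathrm{span}\{\varphi_1,\dots,\varphi_N\}$ yields a finite linear system whose solvability follows because the corresponding homogeneous problem has only the trivial solution — this is exactly where the energy identities \eqref{2-1-5}--\eqref{2-1-7} (with $\hat f=0$) force $\mathcal{L}\hat\psi\equiv0$ and hence, by \eqref{3-3-1-3}, $\hat\psi\equiv0$. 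The uniform-in-$N$ bounds \eqref{lapr1}, \eqref{hapr1}, \eqref{hapr2} then give weak-$X_4$ compactness of the Galerkin sequence, and the limit solves \eqref{2-0-8}--\eqref{FBC}; since the a priori estimates pass to the limit, the solution obeys them, and uniqueness at fixed $\xi$ follows again from the homogeneous energy identity.

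Next I would package the $\xi$-dependence. Combining \eqref{lapr1}, \eqref{hapr1} and \eqref{hapr2} gives, for a.e. $\xi$,
\be \nonumber
\int_0^1\!\big(|\mathcal{L}\hat\psi|^2+\xi^4|\hat\psi|^2+|\mathcal{L}^2\hat\psi|^2+\xi^4|\mathcal{L}\hat\psi|^2+\xi^8|\hat\psi|^2\big)r\,dr
\;\le\; C(1+\Phi^2)\int_0^1|\hat f|^2 r\,dr.
\ee
Integrating in $\xi$ over $\mathbb{R}$ and invoking Plancherel's theorem in the $z$-variable, the left side is precisely $\|\psi\|_{H^4_r(D)}^2$ in the sense of Definition \ref{def1}, while the right side is $C(1+\Phi^2)\|f\|_{L^2_r(D)}^2$; taking square roots yields the claimed estimate $\|\psi\|_{H^4_r(D)}\le C(1+\Phi)\|f\|_{L^2_r(D)}$. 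One must also check that the measurable selection $\xi\mapsto\hat\psi(\cdot,\xi)$ is genuinely measurable into $X_4$ (so that its inverse Fourier transform $\psi$ is well defined) and that $\psi$ lies in the closure $H^4_*(D)$ of $C^\infty_*(D)$; the first follows from uniqueness at each $\xi$ together with continuity of the solution map $\hat f\mapsto\hat\psi$ (itself a consequence of linearity plus the a priori bound), and the second from approximating $f$ by functions whose Fourier transforms are smooth and compactly supported in $\xi$, for which the corresponding $\hat\psi$ are smooth in $r$ (by elliptic regularity for $\mathcal{L}^2$) and decay rapidly, so the inverse transforms lie in $C^\infty_*(D)$.

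The boundary conditions \eqref{2-0-4-2}--\eqref{2-0-4-4} for $\psi$ are recovered from \eqref{FBC} for $\hat\psi$ by Lemma \ref{lemma3-3-1}, which guarantees that every $X_4$-function has the pointwise traces $\varphi(0)=\varphi(1)=\frac{d}{dr}(r\varphi)(1)=\mathcal{L}\varphi(0)=0$; transforming back in $z$ gives \eqref{2-0-4-2}--\eqref{2-0-4-4}, and the equation \eqref{2-0-4-1} holds in $L^2_r(D)$ since it holds fibrewise in $X_0$. The main obstacle I anticipate is not the fixed-$\xi$ existence (which is routine Galerkin once the compact embedding Lemma \ref{lemma3-3-3} is in hand) but the measurability/regularity bookkeeping needed to assemble the $\xi$-fibres into an honest element of $H^4_*(D)$ — in particular verifying that no regularity is lost in the inverse transform and that the function space norm in Definition \ref{def1} is exactly the $\xi$-integral of the fibrewise norms controlled above. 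Since all constants in Lemmas \ref{lemapri1} and \ref{lemhapr} are independent of $\Phi$ beyond the explicit $(1+\Phi^2)$ factor, the final constant is manifestly independent of $f$ and $\Phi$, completing the proof.
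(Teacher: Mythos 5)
Your proposal is correct and follows essentially the same route as the paper: fixed-frequency Galerkin approximation on the basis from Proposition \ref{theorem3-3-4}, the a priori estimates of Lemmas \ref{lemapri1} and \ref{lemhapr} for existence, uniqueness and the bound at each $\xi$, and then integration in $\xi$ (Plancherel) to recover $\|\psi\|_{H^4_r(D)}\le C\sqrt{1+\Phi^2}\,\|f\|_{L^2_r(D)}\le C(1+\Phi)\|f\|_{L^2_r(D)}$. In fact you supply more detail (the measurability of the fibrewise solution map and the verification that $\psi$ lies in $H^4_*(D)$) than the paper, which simply asserts that the existence result follows from the a priori estimates of Section \ref{Linear}.
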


 Let us discuss the existence of solutions for the problem \eqref{swirlsystem} on the swirl velocity. For every fixed $\xi \in \mathbb{R}$, when $\Phi=  0$, the existence of $v^\theta$ is obtained by standard Galerkin method. When $\Phi  \neq 0$, the existence of solutions for the problem \eqref{swirlsystem} is obtained by the continuity method (\cite{GT}) and the a priori estimates derived in Section \ref{sec-swirl}. The uniqueness of the solutions for \eqref{swirlsystem} also follows from the a priori estimate \eqref{swirl-1}. Therefore, we have the following existence result.
\begin{pro}\label{existence-swirl}
Assume that $\BF^\theta \in L^2 (\Omega)$. There exists a unique solution $v^\theta $ to the linear system \eqref{swirlsystem} and a positive constant $C$, which is independent of $F^\theta$ and $\Phi$, such that
\be \nonumber
\|\Bv^\theta \|_{H^2(\Omega)} \leq C \|\BF^\theta \|_{L^2(\Omega)}.
\ee
\end{pro}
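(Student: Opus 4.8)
The plan is to establish Proposition \ref{existence-swirl} by combining the a priori estimate \eqref{swirl-1} from Proposition \ref{swirl} with a standard existence argument carried out frequency by frequency. First, for every fixed $\xi \in \mathbb{R}$, I would solve the two-point boundary value problem \eqref{swirl-2}--\eqref{swirl-3}. When $\Phi = 0$ the operator $-(\mathcal{L} - \xi^2)$ is self-adjoint, positive, and coercive on the natural weighted Sobolev space (with weight $r$ and the boundary conditions $\widehat{v^\theta}(0) = \widehat{v^\theta}(1) = 0$), so existence and uniqueness follow from Lax--Milgram or, equivalently, from the Galerkin method using an orthonormal basis of eigenfunctions of $-(\mathcal{L} - \xi^2)$ — this is entirely parallel to Proposition \ref{theorem3-3-4} but for the second-order operator $\mathcal{L}$ instead of $\mathcal{L}^2$, and in fact simpler.

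Next, to pass from $\Phi = 0$ to arbitrary $\Phi \neq 0$, I would use the method of continuity: consider the family of operators $T_t \widehat{v^\theta} := it\xi \bar{U}(r)\widehat{v^\theta} - (\mathcal{L} - \xi^2)\widehat{v^\theta}$ for $t \in [0,1]$, all acting between the same pair of spaces. The key point is that the a priori bounds \eqref{swirl-6} and \eqref{swirl-16-1} — whose derivations never used $t = 1$ in an essential way, only the structure of the convection term — hold uniformly in $t \in [0,1]$, since replacing $\Phi$ by $t\Phi$ with $t \le 1$ only improves the constants. Thus the set of $t$ for which $T_t$ is invertible is both open and closed in $[0,1]$, and nonempty since $t = 0$ belongs to it; hence $T_1$ is invertible. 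This gives, for each $\xi$, a unique $\widehat{v^\theta}(\cdot, \xi)$ satisfying \eqref{swirl-2}--\eqref{swirl-3} together with the uniform-in-$\xi$ bounds \eqref{swirl-6} and \eqref{swirl-16-1}.

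Then I would assemble the solution in physical space: define $v^\theta(r,z)$ as the inverse Fourier transform in $z$ of $\widehat{v^\theta}(r,\xi)$. Since $\widehat{F^\theta} \in L^2(D)$ with the weight $r\,dr\,d\xi$, the bounds \eqref{swirl-16-1} integrated over $\xi$ show $v^\theta$ lies in the appropriate weighted $H^2$-type space, and standard arguments (e.g.\ Plancherel in $z$, together with the fact that $\Bv^\theta$ solves the vector Laplace equation \eqref{swirl-17}) upgrade this to $\Bv^\theta \in H^2(\Omega)$ with $\|\Bv^\theta\|_{H^2(\Omega)} \le C\|\BF^\theta\|_{L^2(\Omega)}$, exactly as in \eqref{swirl-18}. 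Uniqueness is immediate: the difference of two solutions solves \eqref{swirlsystem} with $\BF^\theta = 0$, and \eqref{swirl-1} forces it to vanish.

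I do not expect a genuine obstacle here — this proposition is a routine bookkeeping consequence of Section \ref{sec-swirl}. The only mild care needed is in verifying that the function spaces used in the continuity method at $\Phi = 0$ are the right ones (weighted spaces adapted to the degeneracy at $r = 0$, so that the compatibility condition $v^\theta(0,z) = 0$ makes sense and the relevant embeddings are compact, analogously to Lemmas \ref{lemma3-3-1}--\ref{lemma3-3-3}), and in checking that the a priori estimates of Section \ref{sec-swirl} are indeed uniform along the homotopy parameter $t$ — both of which are straightforward given the simpler second-order structure of the swirl equation compared to the fourth-order stream-function equation. Because of this, I would present the argument briefly, referring to the parallel but more delicate constructions already carried out for $\hat\psi$.
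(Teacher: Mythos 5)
Your proposal is correct and follows essentially the same route as the paper, which (in the paragraph preceding Proposition \ref{existence-swirl}) likewise solves \eqref{swirl-2}--\eqref{swirl-3} for each fixed $\xi$ by Galerkin when $\Phi=0$, extends to $\Phi\neq 0$ by the continuity method using the $\Phi$-independent a priori estimates of Section \ref{sec-swirl}, and deduces uniqueness and the $H^2$ bound from \eqref{swirl-1} via \eqref{swirl-17}--\eqref{swirl-18}. The extra details you supply (uniformity of the bounds along the homotopy parameter, the weighted spaces at $r=0$) are consistent with, and merely flesh out, the paper's sketch.
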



\section{Regularity of the Velocity field} \label{sec-reg}
In this section, we first give more properties of functions in
$H_*^4(D)$,  and then give the regularity of $\Bv$.


\begin{lemma}\label{lemma3-4-1}  Let $\varphi$ be a function in $H^4_*(D)$  defined in Definition \ref{def1}. Then there exists a positive constant $C$, independent of $\varphi$, such that
\be \la{2-4-5} \ba
&\,\, \int_{-\infty}^{+\infty} \int_0^1\left( |\hat{\varphi} |^2 + \xi^2 |\hat{\varphi} |^2
+ \xi^2 | \mathcal{L} \hat{\varphi} |^2 + \left| \frac{\partial}{\partial r} (\mathcal{L} \hat{\varphi} ) \right|^2 + \xi^4 \left|  \frac{\partial}{\partial r}\hat{\varphi}    \right|^2 + \xi^6 | \hat{\varphi} |^2  \right)  r \, dr d\xi
\\[2mm]
& \ \ \ \ +\int_{-\infty}^{+ \infty} \int_0^1  \left(
\left| \frac{\partial}{\partial r}( r \hat{\varphi} )  \right|^2 + \xi^6 \left|  \frac{\partial}{\partial r} ( r\hat{\varphi} )   \right|^2 + \xi^2 \left| \frac{\partial}{\partial r} ( r \mathcal{L} \hat{\varphi} )     \right|^2  \right)  \frac1r  \, dr d\xi \\
  \leq &\,\, C \|\varphi\|_{H_r^4(D)}^2.
\ea \ee
\end{lemma}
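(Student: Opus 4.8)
The plan is to prove the inequality \eqref{2-4-5} first for functions $\varphi \in C_*^\infty(D)$ and then pass to the limit using the density of $C_*^\infty(D)$ in $H_*^4(D)$. After taking the Fourier transform in $z$, every term in \eqref{2-4-5} is an integral in $\xi$ of a one–dimensional weighted integral in $r$, so it suffices to bound, for each fixed $\xi$, the quantity
\[
A(\xi):=\int_0^1\Bigl(|\hat\varphi|^2+\xi^2|\hat\varphi|^2+\xi^2|\mathcal L\hat\varphi|^2+|\partial_r(\mathcal L\hat\varphi)|^2+\xi^4|\partial_r\hat\varphi|^2+\xi^6|\hat\varphi|^2\Bigr)r\,dr
\]
\[
\qquad\qquad+\int_0^1\Bigl(|\partial_r(r\hat\varphi)|^2+\xi^6|\partial_r(r\hat\varphi)|^2+\xi^2|\partial_r(r\mathcal L\hat\varphi)|^2\Bigr)\tfrac1r\,dr
\]
by a constant multiple of
\[
B(\xi):=\int_0^1\Bigl(|\mathcal L\hat\varphi|^2+\xi^4|\hat\varphi|^2+|\mathcal L^2\hat\varphi|^2+\xi^4|\mathcal L\hat\varphi|^2+\xi^8|\hat\varphi|^2\Bigr)r\,dr,
\]
with a constant independent of $\xi$ (and of $\varphi$); integrating the resulting pointwise bound $A(\xi)\le CB(\xi)$ in $\xi$ gives \eqref{2-4-5}.

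The one–dimensional estimates are exactly of the type already proved in Lemma \ref{lemma3-3-1} and Lemma \ref{lemma1}, applied to $\hat\varphi$ and then to $\mathcal L\hat\varphi$. Concretely, I would proceed term by term. The zeroth–order terms $\int|\hat\varphi|^2r\,dr$, $\int|\partial_r(r\hat\varphi)|^2\frac1r\,dr$, and $\int|\partial_r\hat\varphi|^2r\,dr$ are controlled by $\int|\mathcal L\hat\varphi|^2r\,dr$ via \eqref{3-3-1-3} (the inequality $\int_0^1\frac{|\hat\varphi|^2}{r}dr+\int_0^1|\partial_r\hat\varphi|^2r\,dr\le C\int_0^1|\mathcal L\hat\varphi|^2r\,dr$ together with $|\partial_r(r\hat\varphi)|^2\le 2r^2|\partial_r\hat\varphi|^2+2|\hat\varphi|^2$, which after dividing by $r$ is absorbed since $r\le1$). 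The terms carrying $\xi^2$, $\xi^4$, $\xi^6$ as coefficients are handled the same way but using the weighted norm that appears in the definition of $\|\cdot\|_{H^4_r(D)}$: $\xi^2\int|\hat\varphi|^2r\,dr$ and $\xi^6\int|\hat\varphi|^2r\,dr$ are trivially bounded by $\xi^4\int|\hat\varphi|^2r\,dr+\xi^8\int|\hat\varphi|^2r\,dr$ (using $\xi^2\le1+\xi^4$ and $\xi^6\le\tfrac12(\xi^4+\xi^8)$), while $\xi^4\int|\partial_r\hat\varphi|^2r\,dr$ and $\xi^6\int|\partial_r(r\hat\varphi)|^2\frac1r\,dr$ follow from \eqref{3-3-1-3} applied after multiplying through by the appropriate power of $\xi$, namely $\xi^4\int|\partial_r\hat\varphi|^2r\,dr\le C\xi^4\int|\mathcal L\hat\varphi|^2r\,dr\le C(\xi^4+\xi^8)\int(\cdots)$ — all bounded by $B(\xi)$. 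The remaining three terms, $\xi^2\int|\mathcal L\hat\varphi|^2r\,dr$, $\int|\partial_r(\mathcal L\hat\varphi)|^2r\,dr$, and $\xi^2\int|\partial_r(r\mathcal L\hat\varphi)|^2\frac1r\,dr$, are obtained by applying \eqref{3-3-1-3} and \eqref{3-3-1-4} with $\hat\varphi$ replaced by $\mathcal L\hat\varphi$: indeed $\int|\partial_r(r\mathcal L\hat\varphi)|^2\frac1r\,dr\le C\int|\mathcal L^2\hat\varphi|^2r\,dr$ and likewise $\int|\partial_r(\mathcal L\hat\varphi)|^2r\,dr\le C\int|\mathcal L^2\hat\varphi|^2r\,dr$ by \eqref{3-3-1-4} (or equivalently the $\mathcal L$-shift of \eqref{3-3-1-3}), and $\xi^2\int|\mathcal L\hat\varphi|^2r\,dr$ appears verbatim in $B(\xi)$.

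The one delicate point — and the step I expect to be the main obstacle — is justifying the limiting procedure: one must verify that every term on the left of \eqref{2-4-5} is a continuous functional on $C_*^\infty(D)$ with respect to the $H^4_r(D)$–norm, so that the inequality proved on the dense subset extends to all of $H^4_*(D)$, and in particular that the weighted quantities such as $\int|\partial_r(r\mathcal L\hat\varphi)|^2\frac1r\,dr$ are actually finite for $\varphi\in H^4_*(D)$ (not merely for smooth $\varphi$). This is taken care of precisely by the pointwise-in-$\xi$ bounds above combined with Fatou's lemma: if $\varphi_n\to\varphi$ in $H^4_r(D)$ with $\varphi_n\in C_*^\infty(D)$, then $\hat\varphi_n(\cdot,\xi)\to\hat\varphi(\cdot,\xi)$ in the relevant weighted $X_4$–type norm for a.e.\ $\xi$, each left-hand term is lower semicontinuous under this convergence, and the already-established bound $A_n(\xi)\le CB_n(\xi)$ passes to the limit after integration in $\xi$ via dominated convergence on the right (the right side converges because $\|\varphi_n\|_{H^4_r(D)}\to\|\varphi\|_{H^4_r(D)}$). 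Care is needed near $r=0$, where one invokes the boundary/decay properties \eqref{3-3-1-1}–\eqref{3-3-1-2} from Lemma \ref{lemma3-3-1} (valid for $X_4$ functions, hence for a.e.\ slice $\hat\varphi(\cdot,\xi)$) to guarantee that no boundary contributions appear in the integrations by parts hidden inside \eqref{3-3-1-3}–\eqref{3-3-1-4}; and near $r=1$, the conditions $\hat\varphi(1)=\partial_r\hat\varphi(1)=0$ and $\mathcal L\hat\varphi(0)=0$ do the same. Once this is in place, integrating $A(\xi)\le CB(\xi)$ over $\xi\in\mathbb R$ yields \eqref{2-4-5} and completes the proof.
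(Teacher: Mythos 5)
Your overall architecture is the same as the paper's (reduce to one--dimensional weighted inequalities for each fixed $\xi$ using Lemmas \ref{lemma1}, \ref{lemma3-3-1}, \ref{lemmaA2}, then integrate in $\xi$), and most terms are handled correctly. But there is a genuine gap in the treatment of the term $\xi^2\int_0^1\bigl|\frac{\partial}{\partial r}(r\mathcal{L}\hat{\varphi})\bigr|^2\frac{1}{r}\,dr$, which is precisely the term the paper treats as the delicate one. Your strategy is to first bound the unweighted quantity by a single norm term, $\int_0^1|\partial_r(r\mathcal{L}\hat{\varphi})|^2\frac1r\,dr\le C\int_0^1|\mathcal{L}^2\hat{\varphi}|^2r\,dr$, and then multiply by $\xi^2$. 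Even granting that inequality, this yields $C\xi^2\int_0^1|\mathcal{L}^2\hat{\varphi}|^2r\,dr$, and no term of the form $\xi^2|\mathcal{L}^2\hat{\varphi}|^2$ (or anything that dominates it for large $|\xi|$) appears in the $H^4_r(D)$-norm, whose highest-order pieces are $|\mathcal{L}^2\hat{\varphi}|^2$, $\xi^4|\mathcal{L}\hat{\varphi}|^2$ and $\xi^8|\hat{\varphi}|^2$; one cannot recover $\xi^2\|\mathcal{L}^2\hat{\varphi}\|^2$ by interpolating among these. So the bound does not close. The fix — which is what the paper does — is to keep the bound in product form: by \eqref{3-3-1-20},
\[
\int_0^1\Bigl|\tfrac{\partial}{\partial r}(r\mathcal{L}\hat{\varphi})\Bigr|^2\tfrac1r\,dr\le C\Bigl(\int_0^1|\mathcal{L}\hat{\varphi}|^2r\,dr\Bigr)^{\frac12}\Bigl(\int_0^1|\mathcal{L}^2\hat{\varphi}|^2r\,dr\Bigr)^{\frac12}+C\int_0^1|\mathcal{L}\hat{\varphi}|^2r\,dr,
\]
so that after multiplying by $\xi^2$ the weight can be split as $\xi^4$ on the $\mathcal{L}\hat{\varphi}$ factor and $\xi^0$ on the $\mathcal{L}^2\hat{\varphi}$ factor, both of which are in the norm. (The paper implements this via the integration-by-parts identity \eqref{3-4-15-1} multiplied by $\xi^2$ and a separate estimate of the boundary terms $|\xi|^3|\mathcal{L}\hat{\varphi}(1)|^2$ and $|\xi||\partial_r(r\mathcal{L}\hat{\varphi})(1)|^2$, culminating in \eqref{3-4-16}.) A second, related imprecision: your justification "the $\mathcal{L}$-shift of \eqref{3-3-1-3}" is not legitimate, because $\mathcal{L}\hat{\varphi}$ does not satisfy the boundary conditions that \eqref{3-3-1-3} requires — in general $\mathcal{L}\hat{\varphi}(1)\neq0$ and $\partial_r(r\mathcal{L}\hat{\varphi})(1)\neq0$, which is exactly why the paper must estimate those boundary traces explicitly via Lemma \ref{lemmaA2}. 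The same "multiply a one-term bound by a power of $\xi$" issue recurs, more mildly, for $\xi^6\int_0^1|\partial_r(r\hat{\varphi})|^2\frac1r\,dr$: if you use $\int_0^1|\partial_r(r\hat{\varphi})|^2\frac1r\,dr\le\int_0^1|\mathcal{L}\hat{\varphi}|^2r\,dr$ you land on the uncontrolled $\xi^6\int|\mathcal{L}\hat{\varphi}|^2r\,dr$; here the remedy is the product form from Lemma \ref{lemma1}, distributing $\xi^6$ as $(\xi^4)^{1/2}(\xi^8)^{1/2}$ onto $\int|\mathcal{L}\hat{\varphi}|^2r\,dr$ and $\int|\hat{\varphi}|^2r\,dr$ respectively. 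The remaining terms, and your discussion of the density/limiting step, are fine.
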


\begin{proof}For simplicity, assume that $\varphi \in C_*^\infty(D )$, which is defined in Definition \ref{def1}. Following the proof
of Lemma \ref{lemma1}, due to the homogeneous boundary conditions for $ \varphi$, we have
\be \la{3-4-6}
\int_{-\infty}^{+ \infty} \int_0^1 \left| \frac{\partial}{\partial r} ( r \hat{\varphi})   \right|^2  \frac1r \, dr  d\xi \leq  \int_{-\infty}^{+ \infty} \int_0^1 |\mathcal{L} \hat{\varphi} |^2  r  \, dr d\xi \leq \|\varphi\|_{H^4_r(D)}^2,
\ee
and
\be \la{3-4-7}
\int_{-\infty}^{+\infty} \int_0^1 |\hat{\varphi}|^2  r  \, dr d\xi \leq
\int_{-\infty}^{+\infty} \int_0^1  \left| \frac{\partial}{\partial r} (r  \hat{\varphi} ) \right|^2  \frac1r \, dr d\xi  \leq  \|\varphi\|_{H^4_r(D)}^2.
\ee
Furthermore, it follows from  Cauchy-Schwarz inequality that one has
\be \la{3-4-8}
\int_{-\infty}^{+\infty} \int_0^1 \left( \xi^2  |\hat{\varphi}|^2 +  \xi^6 |\hat{\varphi}|^2 + \xi^2 |\mathcal{L} \hat{\varphi} |^2  \right)  r
 \, dr d\xi \leq C \|\varphi\|_{H_r^4(D)}^2.
\ee

Similarly, integration by parts, together with the homogeneous boundary conditions for $r \varphi$, yields
\be \nonumber \ba
\xi^6 \int_0^1 \left| \frac{\partial }{\partial r} (r  \hat{\varphi} ) \right|^2 \frac1r \, dr
 =  -  \xi^6 \int_0^1 \mathcal{L} \hat{\varphi}  \overline{\hat{\varphi}}  r  \, dr
 \leq \xi^8 \int_0^1 |\hat{\varphi}|^2  r  \, dr + \xi^4 \int_0^1 |\mathcal{L} \hat{\varphi}|^2  r  \, dr,
\ea \ee
which implies
\be \la{3-4-9}
\int_{-\infty}^{+\infty}  \int_0^1 \xi^6 \left| \frac{\partial}{\partial r } ( r\hat{\varphi}) \right|^2  \frac1r \, dr d\xi
\leq C \|\varphi\|_{H^4_r(D) }^2.
\ee

It follows from  Lemma \ref{lemma3-3-1} that
\be \la{3-4-10}
\int_{- \infty}^{+ \infty} \int_0^1 \xi^4 \left|  \frac{\partial}{\partial r} \hat{\varphi}  \right|^2  r \, dr d\xi
\leq C \int_{-\infty}^{+ \infty} \int_0^1 \xi^4 | \mathcal{L} \hat{\varphi} |^2  r \, dr d\xi
\leq C \|\varphi\|_{H_r^4(D)}.
\ee
Moreover, for every $\xi \in \mathbb{R}$, using \eqref{3-3-1-20} in Lemma \ref{lemmaA2} gives
\be \label{3-4-11}
 \int_0^1 \left| \frac{\partial}{\partial r} ( r \mathcal{L} \hat{\varphi})   \right|^2  \frac1r \, dr
\leq C \int_0^1 |\mathcal{L} \hat{\varphi} |^2 r \, dr + C \left( \int_0^1 |\mathcal{L} \hat{\varphi} |^2 r \, dr  \right)^{\frac12} \left(  \int_0^1
|\mathcal{L}^2 \hat{\varphi} |^2 r \, dr \right)^{\frac12}.
 \ee

For every fixed $\xi \in \mathbb{R}$, it holds that
\be \label{3-4-15-1}
\int_0^1 \left| \frac{\partial}{\partial r} ( r \mathcal{L} \hat{\varphi} ) \right|^2 \frac1r \, dr = - \int_0^1 \mathcal{L}^2 \hat{\varphi} \mathcal{L } \overline{\hat{\varphi}} r \, dr + \frac{\partial}{\partial r} (r \mathcal{L} \hat{\varphi} ) (1) \mathcal{L} \overline{\hat{\varphi}}(1).
\ee
Multiplying \eqref{3-4-15-1} by $\xi^2$ gives
\be \la{3-4-15}
\begin{aligned}
 &\xi^2 \int_0^1 \left|  \frac{\partial}{\partial r} (r \mathcal{L} \hat{\varphi} )     \right|^2 \frac1r \, dr
 =  - \xi^2 \int_0^1 \mathcal{L} \hat{\varphi}  \cdot \mathcal{L}^2 \overline{\hat{\varphi}}  r  \, dr + \xi^2 \frac{\partial}{\partial r}( r \mathcal{L} \hat{\varphi} ) (1)  \mathcal{L} \overline{\hat{\varphi}}(1) \\
\leq \,\,& \frac12 \xi^4 \int_0^1 | \mathcal{L} \hat{\varphi}|^2  r  \, dr
+ \frac12 \int_0^1  |\mathcal{L}^2 \hat{\varphi}|^2  r  \, dr
+ |\xi|^3 \left| \mathcal{L} \hat{\varphi}(1) \right|^2 + |\xi| \left|   \frac{\partial}{\partial r}(r  \mathcal{L} \hat{\varphi} ) (1)     \right|^2.
\end{aligned}
\ee
According to Lemma \ref{lemmaA2}, one has
\be \nonumber
\ba
&\,\, |\xi|^3 \left| \mathcal{L} \hat{\varphi}(1) \right|^2 \\
 \leq &\,\, 8 |\xi|^3 \int_{0}^1 | \mathcal{L} \hat{\varphi}|^2  r  \, dr +
 8 |\xi|^3 \left(  \int_{0}^1 | \mathcal{L} \hat{\varphi}|^2  r    \, dr \right)^{\frac12}
\left(  \int_{0}^1 \left|    \frac{\partial}{\partial r} (r \mathcal{L} \hat{\psi} )   \right|^2  \frac1r \, dr  \right)^{\frac12} \\
 \leq &\,\, 8    \left(  \int_0^1 | \mathcal{L} \hat{\varphi}|^2  r \, dr  \right)^{\frac14}  \left(  \xi^4
\int_0^1 | \mathcal{L} \hat{\varphi}|^2   r \,  dr          \right)^{\frac34} \\
&\ \ \  \ + 8 \left(  \xi^4 \int_0^1 | \mathcal{L} \hat{\varphi} |^2  r  \, dr    \right)^{\frac12} \left(            \xi^2 \int_0^1 \left|   \frac{\partial}{\partial r}( r  \mathcal{L} \hat{\varphi} )    \right|^2  \frac1r \, dr \right)^{\frac12}.
\ea \ee

Similarly, it follows from \eqref{3-3-1-20-1} in Lemma \ref{lemmaA2} that one has
\be \nonumber  \ba
& \,\,|\xi| \left|   \frac{\partial}{\partial r}(r  \mathcal{L} \hat{\varphi} ) (1)     \right|^2  \\
 \leq &\,\, 4  |\xi| \int_0^1 \left|   \frac{\partial}{\partial r} ( r \mathcal{L}\hat{\varphi}  )  \right|^2  \frac1r  \, dr +
4 |\xi| \left(  \int_{0}^1 \left| \frac{\partial}{\partial r} (r \mathcal{L}\hat{\varphi} ) \right|^2  \frac1r  \, dr \right)^{\frac12}  \left(  \int_{\frac12}^1 | \mathcal{L}^2 \hat{\varphi} |^2  r  \, dr\right)^{\frac12}\\
 \leq &\,\, 4 \left( \int_0^1 \left|   \frac{\partial}{\partial r} (r \mathcal{L}\hat{\varphi} )   \right|^2  \frac1r \, dr       \right)^{\frac12}  \left( \xi^2 \int_0^1 \left|   \frac{\partial}{\partial r} (r  \mathcal{L}\hat{\varphi} )  \right|^2  \frac1r \, dr      \right)^{\frac12}  \\
&\ \ \ \ \ \ + 8 \left(  \xi^2 \int_0^1 \left|  \frac{\partial}{\partial r} ( r \mathcal{L} \hat{\varphi} )  \right|^2  \frac1r \, dr \right)^{\frac12}
\left( \int_0^1 |\mathcal{L}^2 \hat{\varphi} |^2  r \, dr        \right)^{\frac12} .
\ea \ee
Using Young's inequality yields
\be \la{3-4-16}
 \int_{-\infty}^{+\infty} \int_0^1 \xi^2 \left|  \frac{\partial}{\partial r} ( r \mathcal{L} \hat{\varphi} )   \right|^2  \frac1r \, dr d\xi \leq C \|\varphi\|_{H_r^4(D)}^2.
\ee

Finally, according to the proof of Lemma \ref{lemma3-3-1}, it holds that
\be \la{3-4-17}
\int_{-\infty}^{+ \infty} \int_0^1 \left| \frac{\partial }{\partial r} \mathcal{L} \hat{\varphi}   \right|^2  r \, dr d\xi
\leq C \int_{-\infty}^{+ \infty} \| \hat{\varphi}(\cdot , \xi) \|_{X_4}^2 \, d \xi
\leq C \|\varphi \|_{H_r^4(D)}^2 .
\ee
This finishes the proof of Lemma \ref{lemma3-4-1}.
\end{proof}

Let
\be \nonumber
\Bv^* = v^r \Be_r + v^z \Be_z = \partial_z \psi \Be_r - \frac{\partial_r (r \psi)}{r } \Be_z ,
\ee
and
\begin{equation}\nonumber
\Bo^\theta(x, y, z) = \omega^\theta(r, z)\Be_\theta  = (\mathcal{L} + \partial_z^2) \psi \Be_\theta.
\end{equation}
Now we are ready to investigate the regularity of $\Bv^*$ and $\Bo^\theta$. First, we give the $L^2(\Omega)$-bound of $\Bv^*$ and $\Bo^\theta$.
\begin{lemma}\la{lemma3-4-2} Assume that $\psi \in H_*^4(D)$. There exists a constant $C$, independent of $\psi$, such that
\begin{equation}\nonumber
\| \Bv^* \|_{L^2(\Omega)} + \|\Bo^\theta \|_{L^2(\Omega)} \leq C \|\psi \|_{H_r^4(D)}.
\end{equation}
\end{lemma}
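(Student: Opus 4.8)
The plan is to pass to the partial Fourier transform in the axial variable $z$, where the components of $\Bv^*$ and $\Bo^\theta$ become elementary expressions in $\hat{\psi}$, and then read the bound directly off Definition \ref{def1} and Lemma \ref{lemma3-4-1}. First I would note that it suffices to prove the estimate for $\psi \in C_*^\infty(D)$ and then pass to the limit: $H_*^4(D)$ is by definition the closure of $C_*^\infty(D)$ in the $H^4_r(D)$-norm, and the left-hand side of the claimed inequality is, via Plancherel's theorem and the identities below, continuous with respect to that norm, so the general case follows by density. For $\psi \in C_*^\infty(D)$, Plancherel's theorem in $z$ together with the cylindrical volume element $2\pi r\,dr\,dz$ for axisymmetric functions gives, up to a universal constant,
\[
\|\Bv^*\|_{L^2(\Omega)}^2 \sim \int_{\mathbb{R}} \int_0^1 \left( |\widehat{v^r}|^2 + |\widehat{v^z}|^2 \right) r \, dr \, d\xi, \qquad \|\Bo^\theta\|_{L^2(\Omega)}^2 \sim \int_{\mathbb{R}} \int_0^1 |\widehat{\omega^\theta}|^2 r \, dr \, d\xi.
\]

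Next, from $\Bv^* = \partial_z \psi \, \Be_r - \tfrac1r \partial_r(r\psi)\, \Be_z$ and $\Bo^\theta = (\mathcal{L} + \partial_z^2)\psi\, \Be_\theta$, the partial Fourier transforms of the components are $\widehat{v^r} = i\xi\hat{\psi}$, $\widehat{v^z} = -\tfrac1r \tfrac{d}{dr}(r\hat{\psi})$, and $\widehat{\omega^\theta} = (\mathcal{L} - \xi^2)\hat{\psi}$. Hence
\[
\int_{\mathbb{R}} \int_0^1 |\widehat{v^r}|^2 r \, dr \, d\xi = \int_{\mathbb{R}} \int_0^1 \xi^2 |\hat{\psi}|^2 r \, dr \, d\xi, \qquad \int_{\mathbb{R}} \int_0^1 |\widehat{v^z}|^2 r \, dr \, d\xi = \int_{\mathbb{R}} \int_0^1 \left| \frac{d}{dr}(r\hat{\psi}) \right|^2 \frac1r \, dr \, d\xi,
\]
and the first integral is controlled by $C\|\psi\|_{H^4_r(D)}^2$ by \eqref{3-4-8}, the second by \eqref{3-4-6}. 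For the vorticity, $|\widehat{\omega^\theta}|^2 \le 2|\mathcal{L}\hat{\psi}|^2 + 2\xi^4|\hat{\psi}|^2$, and $\int_{\mathbb{R}}\int_0^1 (|\mathcal{L}\hat{\psi}|^2 + \xi^4|\hat{\psi}|^2) r \, dr\, d\xi$ is exactly one of the terms appearing in $\|\psi\|_{H^4_r(D)}^2$ by Definition \ref{def1}. Combining these bounds and taking square roots yields the assertion.

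Since every step is a direct application of Plancherel's theorem, an elementary Fourier-side identity, or an inequality already established in Lemma \ref{lemma3-4-1} or Definition \ref{def1}, there is no genuine difficulty here; the only points requiring a little care are the normalization constant in Plancherel's theorem and the Jacobian factor from cylindrical coordinates (both absorbed into $C$), and the fact that the potentially singular behaviour at the axis $r = 0$ is harmless — which is precisely why the estimate is first derived for $\psi \in C_*^\infty(D)$ and then extended by density.
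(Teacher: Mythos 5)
Your proposal is correct and follows essentially the same route as the paper: pass to the partial Fourier transform in $z$, use the identities $\widehat{v^r}=i\xi\hat\psi$, $\widehat{v^z}=-\tfrac1r\tfrac{d}{dr}(r\hat\psi)$, $\widehat{\omega^\theta}=(\mathcal{L}-\xi^2)\hat\psi$, and then invoke Lemma \ref{lemma3-4-1} and the definition of the $H^4_r(D)$-norm. The only addition is your explicit density remark, which the paper leaves implicit.
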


\begin{proof}
Note that $\widehat{v^r} =  i \xi  \hat{\psi} $ and $\widehat{v^z} = - \frac1r \partial_r (r \hat{\psi} ) $. By virtue of Lemma
\ref{lemma3-4-1}, one has
\be \la{3-4-21}
\|v^r\|_{L^2(\Omega)}^2 = \int_{-\infty}^{+\infty} \int_0^1 \xi^2 |\hat{\psi}|^2  r  \, dr d\xi
\leq C \|\psi\|_{H_r^4(D) }^2
\ee
and
\be \la{3-4-22}
\|v^z\|_{L^2(\Omega)}^2 = \int_{-\infty}^{+\infty} \int_0^1 \left| \frac{\partial}{\partial r} ( r\hat{\psi})       \right|^2  \frac1r  \, dr d\xi
\leq C \|\psi\|_{H_r^4(D)}^2.
\ee
It follows from \eqref{3-4-21} and \eqref{3-4-22} that
\be \la{3-4-23}
\|\Bv^*\|_{L^2(\Omega)} \leq C \|\psi \|_{H_r^4(D)}.
\ee

Since $\omega^\theta = ( \mathcal{L} + \partial_z^2) \psi$,  using  Lemma \ref{lemma3-4-1} again gives
\begin{equation}\nonumber
\int_{-\infty}^{+ \infty}  \int_0^1 | \widehat{\omega^\theta} |^2 r \, dr d\xi
= \int_{-\infty}^{+\infty} \int_0^1  | ( \mathcal{L} - \xi^2) \hat{\psi} |^2 r \, drd\xi
\leq C \|\psi \|_{H_r^4(D)}^2.
\end{equation}
Therefore,
\begin{equation}\la{3-4-25}
\|\Bo^\theta \|_{L^2(\Omega)} \leq C \|\psi \|_{H_r^4(D)}.
\end{equation}
This, together with \eqref{3-4-23}, proves Lemma \ref{lemma3-4-2}.
\end{proof}

Next, we derive the $H^2(\Omega)$-estimate for $\Bo^\theta$. In order to get the global regularity of $\Bo^\theta$ in $\Omega$,  we have to analyze the behavior of $\Bo^\theta$ near the axis very carefully since it may contain singularities there. The key point is to use the following equation for $\Bo^\theta$,
\begin{equation}\label{vorteq}
\Delta \Bo^\theta = ( \mathcal{L} + \partial_z^2)^2 \psi \Be_\theta ,\ \ \ \ \mbox{in}\ \Omega.
\end{equation}
However, there is no proper boundary conditions
for $\Bo^\theta $.  Hence our  strategy is to establish the $H^2$-estimate for $\Bo^\theta$ away from the axis first.

Given $0 < r_0 < 1$, denote $\Omega_{r_0} = ( B_1(0) \setminus
B_{r_0} (0) ) \times \mathbb{R}$.

\begin{lemma}\label{lemma3-4-3} Assume that $\psi \in H_*^4(D)$.
There exists a positive constant $C$ independent of $\psi$ such that
\begin{equation}\nonumber
\|\Bo^\theta \|_{H^2(\Omega_{r_0} )} \leq C \|\psi \|_{H_r^4(D)},
\end{equation}
where the constant $C$ may depend on $r_0$.
\end{lemma}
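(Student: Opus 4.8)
The plan is to derive the $H^2(\Omega_{r_0})$-bound for $\Bo^\theta$ from the elliptic equation \eqref{vorteq} by exploiting the fact that away from the axis the operators $\mathcal{L}+\partial_z^2$ and $\Delta$ agree up to harmless lower-order terms, so that the weighted $H^4_r(D)$-norm of $\psi$ already controls everything. First I would observe that on $\Omega_{r_0}$ the weight $r$ is bounded above and below by constants depending on $r_0$, so that all the $r$-weighted $L^2$ integrals appearing in the definition of $\|\psi\|_{H^4_r(D)}$ and in Lemma \ref{lemma3-4-1} are, on $\Omega_{r_0}$, comparable to ordinary $L^2$ integrals. In particular, Lemmas \ref{lemma3-4-2} and \ref{lemma3-4-1} already give $\|\Bo^\theta\|_{L^2(\Omega)}\le C\|\psi\|_{H^4_r(D)}$ and control of $\partial_r(\mathcal{L}\hat\psi)$, $\partial_r\mathcal{L}\hat\psi$, $\xi^2\mathcal{L}\hat\psi$, $\mathcal{L}^2\hat\psi$, $\xi^4\mathcal{L}\hat\psi$, $\xi^8\hat\psi$, etc., all in $L^2_r$, which after restriction to $r\ge r_0$ translate into $L^2(\Omega_{r_0})$ bounds.

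The main step is then a standard interior-type elliptic estimate applied to \eqref{vorteq}, but with a cutoff near the axis rather than near $\partial B_1(0)$. Concretely, I would fix a smooth cutoff $\eta=\eta(r)$ with $\eta\equiv 1$ on $B_1(0)\setminus B_{r_0}(0)$ and $\eta\equiv 0$ on $B_{r_0/2}(0)$, so that $\eta\Bo^\theta$ satisfies
\be \nonumber
\Delta(\eta\Bo^\theta) = \eta(\mathcal{L}+\partial_z^2)^2\psi\,\Be_\theta + [\Delta,\eta]\Bo^\theta
\ee
on the full cylinder $\Omega$, with $\eta\Bo^\theta$ supported away from the axis. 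Since the right-hand side is in $L^2(\Omega)$ — the first term is controlled by $\|(\mathcal{L}+\partial_z^2)^2\psi\|_{L^2_r(D)}\le C\|\psi\|_{H^4_r(D)}$ on the support of $\eta$, and the commutator $[\Delta,\eta]\Bo^\theta$ involves only $\Bo^\theta$ and $\nabla\Bo^\theta$ on the annular region $r\in(r_0/2,r_0)$, which again are controlled by $\|\psi\|_{H^4_r(D)}$ via Lemma \ref{lemma3-4-1} — the standard $H^2$-regularity theory for the Laplacian on $\Omega$ (with the periodicity/decay in $z$ supplied by the Fourier representation, or equivalently by the Plancherel identity in $\xi$) yields
\be \nonumber
\|\eta\Bo^\theta\|_{H^2(\Omega)} \le C\bigl(\|\Delta(\eta\Bo^\theta)\|_{L^2(\Omega)} + \|\eta\Bo^\theta\|_{L^2(\Omega)}\bigr) \le C\|\psi\|_{H^4_r(D)}.
\ee
Restricting to $\Omega_{r_0}$, where $\eta\equiv1$, gives the claimed bound.

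Alternatively, and perhaps more in the spirit of the Fourier-based arguments already in the paper, one can argue directly: on $r\ge r_0/2$ one has, for each fixed $\xi$, an ordinary fourth-order ODE estimate $\|\widehat{\Bo^\theta}(\cdot,\xi)\|_{H^2(r_0,1)}^2 \le C\bigl(\|\mathcal{L}^2\hat\psi\|_{L^2_r}^2 + \xi^4\|\mathcal{L}\hat\psi\|_{L^2_r}^2 + \xi^8\|\hat\psi\|_{L^2_r}^2 + \text{lower order}\bigr)$ obtained by integrating \eqref{vorteq} against suitable test functions with the cutoff $\eta$, then integrate in $\xi$ and use Plancherel together with Lemma \ref{lemma3-4-1}. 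The only point requiring care — and the place where I expect the real (though modest) work to lie — is bookkeeping the boundary terms at $r=1$ produced by integration by parts: these are handled exactly as in the proof of Lemma \ref{lemma3-4-1} and Lemma \ref{lemmaA2}, using the homogeneous conditions $\hat\psi(1)=\hat\psi'(1)=0$ and the trace estimates for $\mathcal{L}\hat\psi(1)$, $\partial_r(r\mathcal{L}\hat\psi)(1)$ already established there. No smallness in $\Phi$ is needed because $\Phi$ does not enter \eqref{vorteq} at all; the whole estimate is purely elliptic once $\psi\in H^4_*(D)$ is granted.
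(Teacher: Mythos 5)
Your main route has a genuine gap. You apply the global $H^2$ elliptic estimate
$\|\eta\Bo^\theta\|_{H^2(\Omega)}\leq C\left(\|\Delta(\eta\Bo^\theta)\|_{L^2(\Omega)}+\|\eta\Bo^\theta\|_{L^2(\Omega)}\right)$
to a function that vanishes only near the axis but carries \emph{no boundary condition on} $\partial\Omega=\partial B_1(0)\times\mathbb{R}$. Such an estimate is false without boundary data: a harmonic function on $\Omega$ concentrated near $r=1$ can have $H^2$ norm arbitrarily large compared with its $L^2$ norm while $\Delta u=0$. The region $\Omega_{r_0}$ includes a full neighborhood of $r=1$, so this is exactly where your argument breaks. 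The paper flags this point explicitly just before the lemma: ``there is no proper boundary conditions for $\Bo^\theta$,'' which is precisely why it does \emph{not} invoke elliptic regularity here (elliptic regularity is reserved for the interior estimate near the axis in Lemma \ref{lemma3-4-4}, and for $\Bv^*$, which does satisfy Dirichlet conditions).

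Your alternative Fourier-side sketch is much closer to what actually works, but it is both under-specified and more complicated than necessary. The paper's proof is purely algebraic: it writes out every Cartesian second derivative of $\Bo^\theta$ explicitly in terms of $\partial_r^2\omega^\theta$, $\partial_r\omega^\theta/r$, $\omega^\theta/r^2$, $\partial_r\partial_z\omega^\theta$, $\partial_z\omega^\theta/r$, $\partial_z^2\omega^\theta$, then bounds each of these on $r\geq r_0$ directly from Lemma \ref{lemma3-4-1}, using the identities $\partial_r^2\omega^\theta=\mathcal{L}\omega^\theta-\frac{1}{r}\partial_r\omega^\theta+\frac{\omega^\theta}{r^2}$ and $\mathcal{L}\omega^\theta=\mathcal{L}^2\psi+\partial_z^2\mathcal{L}\psi$, together with the lower bound $r\geq r_0$ to absorb the singular weights into constants $C(r_0)$. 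No integration by parts is performed, so the boundary terms at $r=1$ that you anticipate having to control simply never appear. If you replace your cutoff-plus-elliptic-regularity step by this direct pointwise computation (all the needed $L^2_r$ bounds on $\mathcal{L}^2\hat\psi$, $\xi^2\partial_r(r\mathcal{L}\hat\psi)$, $\xi^4\mathcal{L}\hat\psi$, $\xi^8\hat\psi$, etc.\ are already in Lemma \ref{lemma3-4-1}), the proof closes.
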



Combining the interior $H^2$-estimate for $\Bo^\theta$ and Lemma \ref{lemma3-4-3} together gives the following global $H^2$-estimate of $\Bo^\theta$.
\begin{lemma}\label{lemma3-4-4} Assume that $\psi \in H_*^4(D)$.
There exists a positive constant $C$ independent of $\psi$ such that
\begin{equation}\label{globalomega}
\|\Bo^\theta \|_{H^2(\Omega)} \leq C \|\psi \|_{H_r^4(D)}.
\end{equation}
\end{lemma}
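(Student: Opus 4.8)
The plan is to patch the estimate of Lemma \ref{lemma3-4-3}, which controls $\Bo^\theta$ on the annular cylinder $\Omega_{r_0}$, together with a standard interior $H^2$-estimate near the axis, exploiting that $\{r=0\}$ is not a genuine boundary of the three-dimensional pipe $\Omega=B_1(0)\times\mathbb{R}$ but merely an interior coordinate singularity. Fix $r_0=1/2$ once and for all, so that $\Omega=\Omega_{r_0}\cup\big(B_{r_0}(0)\times\mathbb{R}\big)$. Lemma \ref{lemma3-4-3} gives
\[
\|\Bo^\theta\|_{H^2(\Omega_{r_0})}\le C\|\psi\|_{H^4_r(D)},
\]
so it remains only to estimate $\Bo^\theta$ on the solid sub-cylinder $B_{r_0}(0)\times\mathbb{R}$, whose closure in $\mathbb{R}^3$ lies strictly inside $\Omega$.

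On that region I would use the identity \eqref{vorteq}, that is $\Delta\Bo^\theta=(\mathcal{L}+\partial_z^2)^2\psi\,\Be_\theta=:\BF_1$ in $\Omega$ (which holds in the distributional sense for $\psi\in H^4_*(D)$ by density of $C^\infty_*(D)$), together with interior elliptic regularity for the componentwise Laplacian. Since $B_{r_0}(0)\times\mathbb{R}$ is, in the radial variable, compactly contained in $B_{3/4}(0)\times\mathbb{R}\subset\Omega$, I would cover it by the cylinders $B_{r_0}(0)\times(n-1,n+1)$, $n\in\mathbb{Z}$, each contained in the fixed-shape subdomain $B_{3/4}(0)\times(n-2,n+2)\subset\Omega$, apply the usual interior $H^2$-estimate on each (with a constant independent of $n$ by invariance under translation in $z$), and sum using the finite overlap of the enlarged cylinders, obtaining
\[
\|\Bo^\theta\|_{H^2(B_{r_0}(0)\times\mathbb{R})}\le C\Big(\|\BF_1\|_{L^2(\Omega)}+\|\Bo^\theta\|_{L^2(\Omega)}\Big).
\]

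To close the argument I would bound the right-hand side by $\|\psi\|_{H^4_r(D)}$. By Plancherel and the expansion $(\mathcal{L}-\xi^2)^2\hat\psi=\mathcal{L}^2\hat\psi-2\xi^2\mathcal{L}\hat\psi+\xi^4\hat\psi$,
\[
\|\BF_1\|_{L^2(\Omega)}^2\le C\int_{-\infty}^{+\infty}\int_0^1\big(|\mathcal{L}^2\hat\psi|^2+\xi^4|\mathcal{L}\hat\psi|^2+\xi^8|\hat\psi|^2\big)\,r\,dr\,d\xi\le C\|\psi\|_{H^4_r(D)}^2
\]
by the definition of the $H^4_r(D)$-norm in Definition \ref{def1}, while $\|\Bo^\theta\|_{L^2(\Omega)}\le C\|\psi\|_{H^4_r(D)}$ is exactly Lemma \ref{lemma3-4-2}. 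Combining these with $\|\Bo^\theta\|_{H^2(\Omega)}^2\le\|\Bo^\theta\|_{H^2(\Omega_{r_0})}^2+\|\Bo^\theta\|_{H^2(B_{r_0}(0)\times\mathbb{R})}^2$ yields \eqref{globalomega}.

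The main point requiring care is the interior estimate on the infinitely long cylinder together with the passage through the axis: one must check that the constant in the local $H^2$-bound does not degenerate as the $z$-windows run off to infinity, which is ensured by $z$-translation invariance of the equation and of the domain, and that no boundary data are needed at $r=0$, since for the genuinely three-dimensional field $\Bo^\theta$ the set $\{r=0\}$ is an ordinary interior line on which $\Delta\Bo^\theta=\BF_1$ holds with $\BF_1\in L^2(\Omega)$; the possible singular behavior of $\Bo^\theta$ near the axis has already been absorbed, away from it, by Lemma \ref{lemma3-4-3}.
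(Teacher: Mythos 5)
Your overall strategy is the same as the paper's: split $\Omega$ into the annular region $\Omega_{r_0}$, handled by Lemma \ref{lemma3-4-3}, and a solid sub-cylinder around the axis, handled by interior elliptic regularity for the equation \eqref{vorteq}, then close with Lemma \ref{lemma3-4-2} and the bound $\|(\mathcal{L}+\partial_z^2)^2\psi\|_{L^2_r}\leq C\|\psi\|_{H^4_r(D)}$. Your explicit covering of the infinite cylinder by translated windows $B_{3/4}(0)\times(n-2,n+2)$ with a translation-invariant constant and finite overlap is a correct and welcome elaboration of what the paper leaves implicit when it writes the interior estimate directly on $B_{r_1}(0)\times\mathbb{R}$.

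The one genuine gap is your treatment of the assertion that $\Delta\Bo^\theta=(\mathcal{L}+\partial_z^2)^2\psi\,\Be_\theta$ holds in $\mathcal{D}'(\Omega)$, i.e.\ \emph{across the axis}. You dispose of this ``by density of $C^\infty_*(D)$,'' but density only reduces the claim to stream functions in $C^\infty_*(D)$; it does not prove it for them, and that is where the real work lies. The point is that $\Be_\theta$ is singular at $r=0$, so even for a smooth axisymmetric profile $\omega^\theta(r,z)$ the Cartesian vector field $\omega^\theta\Be_\theta$ need not be twice differentiable through the axis, and the distributional Laplacian could a priori contain a singular part supported on $\{r=0\}$. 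The compatibility conditions built into $C^\infty_*(D)$ (vanishing of $\mathcal{L}^k\varphi$ and of $\partial_r(r\mathcal{L}^k\varphi)$ at $r=0$) are exactly what rules this out, but one must actually verify it: the paper does so by testing against $\Bp\in C_0^\infty(\Omega;\mathbb{R}^3)$, integrating by parts on $(B_1(0)\setminus B_r(0))\times\mathbb{R}$, and showing that the flux terms through $\partial B_r(0)\times\mathbb{R}$ tend to zero as $r\to 0^+$, using the pointwise decay bounds near the axis from Lemma \ref{lemma3-3-1}, e.g.\ $|r\,\partial_r(\mathcal{L}\psi)(r,z)|\leq C\bigl(r^{3/4}+r|\ln r|^{1/2}\bigr)\|\psi(\cdot,z)\|_{X_4}$ and $|\mathcal{L}\psi(r,z)|+|\psi(r,z)|\leq Cr^{3/4}\|\psi(\cdot,z)\|_{X_4}$. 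Once this vanishing-flux computation is supplied (after which the passage to the limit along an approximating sequence is indeed routine, since both sides converge in $L^2(\Omega)$ by Lemma \ref{lemma3-4-2} and the $H^4_r$ bound), the rest of your argument goes through and reproduces the paper's proof.
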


The detailed proof of Lemmas \ref{lemma3-4-3} and \ref{lemma3-4-4} will be given in Appendix \ref{secappendix}.

Now we can improve  the regularity of $\Bv^*$ to $H^3(\Omega)$.

\begin{lemma}\label{theorem3-4-5}Assume that $\psi \in H_*^4(D)$. There exists a positive constant $C$ independent of $\psi$ such that
\be \label{H3est}
\|\Bv^* \|_{H^3(\Omega) } \leq C \|\psi \|_{H_r^4(D)} .
\ee
\end{lemma}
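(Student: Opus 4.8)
The plan is to recover the $H^3$ regularity of $\Bv^*$ from the first-order velocity--vorticity relations, reducing matters to elliptic regularity for a Poisson problem whose right-hand side is controlled by the $H^2$ bound on $\Bo^\theta$ already obtained in Lemma \ref{lemma3-4-4}. First I would record the structural identities: since $\psi$ realizes the divergence-free constraint, $\div \Bv^* = 0$ in $\Omega$; a direct computation in cylindrical coordinates (using that $\Bv^*$ has no swirl and is independent of the angular variable) gives $\curl \Bv^* = (\partial_z v^r - \partial_r v^z)\Be_\theta = (\mathcal{L}+\partial_z^2)\psi\,\Be_\theta = \Bo^\theta$; and $\Bv^* = 0$ on $\partial\Omega$ by the boundary conditions \eqref{BC-1}. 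Lemma \ref{lemma3-4-2} already gives $\Bv^* \in L^2(\Omega)$; combining the Fourier-side bounds of Lemma \ref{lemma3-4-1} with the Hardy-type inequality \eqref{3-3-1-3} of Lemma \ref{lemma3-3-1} (to absorb the contribution of $v^r/r$ near the axis) upgrades this to $\Bv^* \in H^1(\Omega)$ with $\|\Bv^*\|_{H^1(\Omega)} \le C\|\psi\|_{H^4_r(D)}$.

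Next I would use the vector identity $-\Delta \Bv^* = \curl\,\curl\,\Bv^* - \nabla\div\Bv^*$, valid in $\mathcal{D}'(\Omega)$ for the $H^1$ field $\Bv^*$, together with $\div\Bv^*=0$ and $\curl\Bv^*=\Bo^\theta$, to obtain
\[
-\Delta \Bv^* = \curl \Bo^\theta \quad\text{in}\ \Omega, \qquad \Bv^* = 0 \quad\text{on}\ \partial\Omega .
\]
Since $\Bo^\theta \in H^2(\Omega)$ by Lemma \ref{lemma3-4-4}, the right-hand side lies in $H^1(\Omega)$ with $\|\curl\Bo^\theta\|_{H^1(\Omega)} \le C\|\psi\|_{H^4_r(D)}$. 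Then I would apply elliptic regularity for this Dirichlet problem: in Cartesian coordinates $\Delta$ acts componentwise, the lateral boundary $\partial B_1(0)\times\mathbb{R}$ is smooth, and the whole problem is invariant under translations in $z$. So I would localize with a partition of unity $\{\zeta_k(z)=\zeta(z-k)\}_{k\in\mathbb{Z}}$ subordinate to the intervals $(k-2,k+2)$, run the standard boundary elliptic estimate on each bounded piece $B_1(0)\times(k-2,k+2)$ — first at the $H^2$ level, using $\curl\Bo^\theta\in L^2$ and $\Bv^*\in H^1$ to handle the commutator $[\Delta,\zeta_k]\Bv^* = 2\zeta_k'\partial_z\Bv^* + \zeta_k''\Bv^*$, which involves only first derivatives of $\Bv^*$; and then at the $H^3$ level, using $\curl\Bo^\theta\in H^1$ and $\Bv^*\in H^2(\Omega)$ (now available) so that this commutator term lies in $H^1$ — and sum the resulting uniform-in-$k$ estimates, the overlaps being uniformly finite, to conclude $\|\Bv^*\|_{H^3(\Omega)}\le C(\|\curl\Bo^\theta\|_{H^1(\Omega)}+\|\Bv^*\|_{L^2(\Omega)})\le C\|\psi\|_{H^4_r(D)}$.

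The main obstacle is essentially bookkeeping: one has to make the elliptic estimates genuinely uniform along the infinite cylinder, which is precisely what the $z$-translation invariance and the partition-of-unity argument deliver, and one has to organize the two-step bootstrap ($L^2$-datum $\Rightarrow H^2$, then $H^1$-datum $\Rightarrow H^3$) so that each step only uses regularity already established. No special treatment of the axis $r=0$ is needed here, since it is an interior line of $\Omega$ and $\Bv^*$, $\Bo^\theta$ are already bona fide Sobolev vector fields across it — the delicate near-axis analysis having been absorbed into the proof of Lemma \ref{lemma3-4-4}.
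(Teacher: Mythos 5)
Your proposal arrives at the same equation $\Delta \Bv^* = -\,{\rm curl}\,\Bo^\theta$ with homogeneous Dirichlet data and the same appeal to elliptic regularity, but it differs from the paper precisely at the one step that requires care: justifying that this equation holds \emph{across the axis} $r=0$. The paper does this concretely, by testing against $\Bp\in C_0^\infty(\Omega)$, integrating by parts on $(B_1(0)\setminus B_r(0))\times\mathbb{R}$, and showing that the surface integrals over $\partial B_r(0)\times\mathbb{R}$ vanish as $r\to 0+$ using the pointwise near-axis bounds $|r\,\partial_r\psi|\le C\bigl(r|\ln r|^{\frac12}+r^{\frac34}\bigr)\|\psi(\cdot,z)\|_{X_4}$ and $|r\,\mathcal{L}\psi|\le Cr\,\|\psi(\cdot,z)\|_{X_4}$ from Lemma \ref{lemma3-3-1}. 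You instead first place $\Bv^*$ in $H^1(\Omega)$ and then use the operator identity $-\Delta={\rm curl}\,{\rm curl}-\nabla\,{\rm div}$ in $\mathcal{D}'(\Omega)$. That route is sound, but your remark that ``no special treatment of the axis is needed'' hides the two facts it rests on, which you should state: (i) the weighted cylindrical bounds of Lemma \ref{lemma3-4-1} yield Cartesian weak derivatives in $L^2(\Omega)$ \emph{including} the axis, because a line in $\mathbb{R}^3$ has zero $W^{1,2}$-capacity, so an $L^2$ field that is $H^1$ on the punctured cylinder with gradient in $L^2$ lies in $H^1(\Omega)$ — this removability is exactly what the paper's surface-integral limits establish by hand; and (ii) once $\Bv^*\in H^1(\Omega)$, its distributional curl is determined a.e.\ by the cylindrical formula, so ${\rm curl}\,\Bv^*=\Bo^\theta$ holds in $\mathcal{D}'(\Omega)$ and not merely on $\Omega_r$. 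With these made explicit your argument is complete; your partition-of-unity bootstrap in $z$ is a more detailed rendering of the paper's one-line citation of elliptic theory, and what your approach buys is that it replaces the explicit pointwise decay estimates near $r=0$ by the abstract capacity/removability fact.
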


\begin{proof}
For every $0< r<1$, straightforward computations give that
\be \label{vorteq2}
\Delta \Bv^* = -{\rm curl}~\Bo^\theta,\ \ \ \ \mbox{in}\ \Omega_r.
\ee
 In fact,  the equation \eqref{vorteq2} holds on the whole domain $\Omega$. Suppose that $\Bp \in C_0^\infty(\Omega)$ is a  vector-valued function defined on $\Omega$ and $\supp(\Bp) \subseteq B_1(0) \times [-Z, Z]$,

\be \la{3-4-61} \ba
&\,\,\int_{-\infty}^{+\infty} \int_{B_1(0)\setminus B_r(0)} \Bv^* \cdot \Delta \Bp \, dxdy dz \\
  = &\,\, \int_{-\infty}^{+ \infty} \int_{B_1(0) \setminus B_r(0)} \Delta \Bv^* \cdot \Bp \, dxdydz
+  \int_{-\infty}^{+\infty} \int_{\partial B_r (0) } \frac{\partial \Bv^* }{\partial \Bn } \cdot \Bp \, dSdz \\
&\ \ \ \ \ \ \ \ - \int_{-\infty}^{+\infty} \int_{\partial B_r (0) } \Bv^* \cdot \frac{\partial \Bp }{\partial \Bn} \, dSdz.
\ea \ee
On $\partial B_r(0) \times \mathbb{R}$,  it holds that
\be \nonumber
\frac{\partial \Bv^*}{\partial \Bn } = \frac{\partial v^r}{\partial r} \Be_r
+ \frac{\partial v^z}{\partial r} \Be_z =  \frac{\partial^2 \psi}{\partial r \partial z} \Be_r - \mathcal{L}\psi
\Be_z.
\ee
Therefore,
\be \nonumber \ba
&\,\, \left|   \int_{-\infty}^{+ \infty} \int_{\partial B_r(0)}  \frac{\partial \Bv^* }{\partial \Bn} \cdot \Bp \, dSdz       \right| \\
 \leq &\,\,   \int_{-\infty}^{+ \infty} \int_{\partial B_r(0)}  \left|  \frac{\partial \psi}{\partial r} \right|   \left| \frac{\partial \Bp }{\partial z}  \right| \, dSdz +
 \int_{-\infty}^{+ \infty} \int_{\partial B_r(0)} | \mathcal{L}\psi|  |\Bp | \, dSdz\\
 \leq &\,\, C \sup_{\Omega} | \partial_z \Bp |  \int_{-Z}^{+ Z} \left| \frac{\partial \psi}{\partial r}(r, z) \right|  r  \, dz   + C \sup_{\Omega} | \Bp | \int_{-Z}^{+Z} | \mathcal{L} \psi (r, z) | r \, dz.
\ea
\ee
It follows from the proof of Lemma \ref{lemma3-3-1} that for every fixed $z \in \mathbb{R}$, one has
\be \nonumber
 \left|  r \frac{\partial \psi}{\partial r} \right|
= \left|  \frac{ \partial}{\partial r}(r \psi) - \psi \right|
\leq C (r |\ln r|^{\frac12} + r^{\frac34} )   \|\psi(\cdot , z ) \|_{X_4}
\ee
and
\be \nonumber
| r \mathcal{L} \psi(r, z) | \leq C r \|\psi(\cdot ,z) \|_{X_4}.
\ee
Hence,
\be \la{3-4-65}
\lim_{r \rightarrow 0+ } \left|   \int_{-\infty}^{+ \infty} \int_{\partial B_r(0)}  \frac{\partial \Bv^* }{\partial \Bn} \cdot \Bp \, dSdz       \right| = 0.
\ee

Similarly,
\be \la{3-4-66}
\lim_{r \rightarrow 0+ } \left| \int_{-\infty}^{+\infty} \int_{\partial B_r (0) } \Bv^* \cdot \frac{\partial \Bp }{\partial \Bn} \, dSdz \right| = 0.
\ee

It follows from \eqref{3-4-61}--\eqref{3-4-66} that the equation \eqref{vorteq2} holds in $\Omega$.
Applying the regularity theory for the elliptic equation \eqref{vorteq2} with homogeneous Dirichlet boundary condition for $\Bv^*$ (\cite{GT}) yields \eqref{H3est}.
This completes the proof of Lemma \ref{theorem3-4-5}.
\end{proof}

\begin{pro}\label{back} Let $\psi$ and $v^\theta$ be the solutions obtained in Propositions \ref{existence-stream}-\ref{existence-swirl}, the corresponding velocity field $\Bv$ is a strong solution to the problem \eqref{2-0-1} and \eqref{BC}, satisfying the estimates
\be \label{back1}
\|\Bv^*\|_{H^3(\Omega)} + \|\Bv^\theta\|_{H^2(\Omega) } \leq C (1 + \Phi )\| \BF\|_{H^1(\Omega)}.
\ee
\end{pro}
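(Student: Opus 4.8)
The plan is to assemble the strong-solution statement by combining the existence results already in hand with the regularity lemmas of this section, so almost all of the analytic work has been done: Propositions \ref{existence-stream} and \ref{existence-swirl} produce $\psi \in H^4_*(D)$ and $v^\theta$ with $\|\psi\|_{H^4_r(D)} \leq C(1+\Phi)\|f\|_{L^2_r(D)}$ and $\|\Bv^\theta\|_{H^2(\Omega)} \leq C\|\BF^\theta\|_{L^2(\Omega)}$, where $f = \partial_z F^r - \partial_r F^z$. First I would set $\Bv^* = \partial_z\psi\,\Be_r - \frac{\partial_r(r\psi)}{r}\,\Be_z$ and $\Bv = \Bv^* + \Bv^\theta$, and verify that $\Bv$ together with an appropriate pressure $P$ solves \eqref{2-0-1} and \eqref{BC}: by construction $\div\,\Bv = 0$ and the boundary and flux conditions \eqref{BC-1}, \eqref{BC-swirl} hold because of the boundary conditions \eqref{2-0-4-2}--\eqref{2-0-4-4} satisfied by functions in $H^4_*(D)$ (Lemma \ref{lemma3-3-1}, extended in Definition \ref{def1}) and the vanishing of $v^\theta$ on $\partial\Omega$; the azimuthal vorticity equation \eqref{2-0-2} is exactly \eqref{2-0-4-1} rewritten, so $\psi$ solving \eqref{2-0-4-1} means $\curl$ of the momentum equation holds, and since the domain is simply connected (a pipe) one recovers $P$ by integrating, giving a genuine solution of \eqref{2-0-1}.

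Next I would collect the regularity. From Lemma \ref{theorem3-4-5} one has $\|\Bv^*\|_{H^3(\Omega)} \leq C\|\psi\|_{H^4_r(D)}$, and from Proposition \ref{existence-swirl} (equivalently Proposition \ref{swirl}) one has $\|\Bv^\theta\|_{H^2(\Omega)} \leq C\|\BF^\theta\|_{L^2(\Omega)} \leq C\|\BF\|_{H^1(\Omega)}$. Combining with $\|\psi\|_{H^4_r(D)} \leq C(1+\Phi)\|f\|_{L^2_r(D)}$ and the elementary bound $\|f\|_{L^2_r(D)} = \|\partial_z F^r - \partial_r F^z\|_{L^2_r(D)} \leq C\|\BF\|_{H^1(\Omega)}$ (here one uses that $\Bv^*$, and hence the relevant components, depend only on $f$, which is controlled by one derivative of $\BF$) yields
\be \nonumber
\|\Bv^*\|_{H^3(\Omega)} + \|\Bv^\theta\|_{H^2(\Omega)} \leq C(1+\Phi)\|\BF\|_{H^1(\Omega)},
\ee
which is \eqref{back1}.

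The one point requiring care, and the main obstacle, is the passage from the reduced scalar/ODE picture back to the three-dimensional vector equation \eqref{2-0-1}: one must check that the reconstructed $\Bv$ is not merely a formal solution of the vorticity equation but an honest strong solution of the momentum system with an $H^1_{loc}$ pressure, and in particular that the decomposition $\Bv = \Bv^* + \Bv^\theta$ does not create spurious singularities along the axis $r=0$. This is precisely what the delicate axis analysis in Lemmas \ref{lemma3-4-2}--\ref{theorem3-4-5} (and the behavior estimates in Lemma \ref{lemma3-3-1}) is designed to handle: the bounds $|r\mathcal{L}\psi| \leq Cr^{7/4}\|\psi\|_{X_4}$ and the vanishing traces guarantee that the boundary terms at $\partial B_r(0)$ arising when one tests \eqref{vorteq2} go to zero as $r\to 0+$, so that $\Delta\Bv^* = -\curl\,\Bo^\theta$ holds across the whole of $\Omega$ and elliptic regularity with homogeneous Dirichlet data applies globally. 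With that reconstruction justified, \eqref{back1} is immediate from the chain of estimates above, and the proof is complete. $\Box$
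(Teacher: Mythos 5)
Your proposal is correct and follows essentially the same route as the paper: the paper likewise observes that $\psi$ solving \eqref{2-0-4-1} means the curl of the momentum equation holds, recovers the pressure $P\in H^1(\Omega)$ from the resulting curl-free field, and then chains $\|\Bv^*\|_{H^3(\Omega)}\leq C\|\psi\|_{H^4_r(D)}\leq C(1+\Phi)\|f\|_{L^2_r(D)}\leq C(1+\Phi)\|\BF^*\|_{H^1(\Omega)}$ together with Proposition \ref{existence-swirl} for the swirl component. Your extra remarks on the axis behavior correctly identify where the real work (already done in Lemmas \ref{lemma3-4-2}--\ref{theorem3-4-5}) lies.
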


\begin{proof} Since $\psi$ is a solution to \eqref{2-0-4-1}, following similar argument as in the proof of Lemma \ref{theorem3-4-5}, one has
\be \label{vorticityeq}
{\rm curl}~\left( (\bBU \cdot \nabla ) \Bv^* + (\Bv^* \cdot \nabla) \bBU   \right) - {\rm curl}~(\Delta \Bv^*) = {\rm curl}~\BF^*.
\ee
where $\BF^* = F^r \Be_r + F^z \Be_z$.

Hence, there exists some function $P \in H^1(\Omega)$, such that
\be \label{ns-new}
 (\bBU \cdot \nabla ) \Bv^* + (\Bv^* \cdot \nabla) \bBU  - \Delta \Bv^* + \nabla P = \BF^*,
\ee
and $\Bv^*$ is a strong solution to the problem \eqref{2-0-1} and \eqref{BC}.  According to Lemma \ref{theorem3-4-5} and Proposition \ref{existence-stream}, one has
\be \nonumber
\|\Bv^* \|_{H^3(\Omega)} \leq C \|\psi\|_{H_r^4(D)}\leq C (1 + \Phi) \|f\|_{L_r^2(D)} \leq C (1 + \Phi) \|\BF^*\|_{H^1(\Omega)}.
\ee
This, together with the result in Proposition \ref{existence-swirl} for swirl velocity, finishes the proof of Proposition \ref{back}.
\end{proof}

\begin{remark}
The coefficient in the estimate \eqref{back1} depends on $\Phi$, and $\BF$ is required to be a function in $H^1(\Omega)$.  Our next task is to establish some uniform estimates independent of the flux $\Phi$ of the Hagen-Poiseuille flow, when $\BF$ belongs to $L^2(\Omega)$.
\end{remark}

\section{Uniform estimate independent of the flux}\label{sec-res}

In this section, we  give the  proof for Theorem \ref{thm1}. The existence of strong solutions to the problem  \eqref{2-0-1} and \eqref{BC} has been established. It suffices to give the uniform estimate.    First, we give the proof for the uniform estimate when the flux is not big. Then we deal with the case with large flux in terms of three subcases.  More precisely,  choosing some small constant $\epsilon_1 \in (0, 1)$, we divide the proof into three subcases: (1)\ $ |\xi| \leq \frac{1}{\epsilon_1 \Phi}$ \ (2)\ $|\xi | \geq \epsilon_1 \sqrt{\Phi}$ \ (3) \ $\frac{1}{\epsilon_1 \Phi} <  |\xi| < \epsilon_1 \sqrt{\Phi}$. Basic energy estimates are enough for the first two subcases, while the last subcase requires much more elaborate analysis for the boundary layers.
\subsection{Estimate for the case with small flux}
In this subsection, we give the estimate for the solutions of the problem \eqref{2-0-8}--\eqref{FBC} in terms of $\BF^*$.
\begin{pro}\label{smallflux}
Let $\psi$ be the solution obtained in Proposition \ref{existence-stream}, the corresponding velocity field $\Bv^*$ satisfies
\be
\|\Bv^*\|_{H^2(\Omega)} \leq C (1 + \Phi^2) \|\BF^*\|_{L^2(\Omega)}.
\ee
\end{pro}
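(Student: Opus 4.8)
By Proposition~\ref{back}, $\Bv^*$ is already a strong solution of \eqref{2-0-1} in $\Omega$ with $\Bv^*|_{\partial\Omega}=0$, $\div\Bv^*=0$ and $\int_{B_1(0)}v^z(\cdot,\cdot,z)\,dS=0$, so it in particular lies in $H^2(\Omega)$; the plan is to exploit that, after moving the two terms containing $\bBU$ to the right side, $\Bv^*$ solves the Stokes system
\[
-\Delta\Bv^*+\nabla P=\BG:=\BF^*-\bBU\cdot\nabla\Bv^*-\Bv^*\cdot\nabla\bBU,\qquad \div\Bv^*=0,\qquad \Bv^*|_{\partial\Omega}=0 .
\]
Since $\Omega=B_1(0)\times\mathbb{R}$ is translation invariant in $z$, taking the Fourier transform in $z$ turns the $H^2$-regularity estimate for this Stokes problem into a family of two-dimensional estimates on the disk with parameter $\xi$ whose constant is uniform in $\xi$; summing in $\xi$ yields $\|\Bv^*\|_{H^2(\Omega)}\le C\|\BG\|_{L^2(\Omega)}$ with $C$ a universal constant. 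Because $\bBU=\bar{U}(r)\Be_z$ with $\|\bar{U}\|_{L^\infty}=\tfrac{2\Phi}{\pi}$ and $\Bv^*\cdot\nabla\bBU=v^r\bar{U}'(r)\Be_z$ with $\|\bar{U}'\|_{L^\infty}=\tfrac{4\Phi}{\pi}$, this gives $\|\Bv^*\|_{H^2(\Omega)}\le C\|\BF^*\|_{L^2(\Omega)}+C\Phi\,\|\Bv^*\|_{H^1(\Omega)}$. Interpolating $\|\Bv^*\|_{H^1}\le\delta\|\Bv^*\|_{H^2}+C\delta^{-1}\|\Bv^*\|_{L^2}$ with $\delta$ proportional to $\min\{1,\Phi^{-1}\}$ absorbs the $H^2$ term, leaving $\|\Bv^*\|_{H^2(\Omega)}\le C\|\BF^*\|_{L^2(\Omega)}+C\Phi^2\|\Bv^*\|_{L^2(\Omega)}$; hence it remains only to prove the \emph{flux-independent} bound $\|\Bv^*\|_{L^2(\Omega)}\le C\|\BF^*\|_{L^2(\Omega)}$.

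\textbf{The $L^2$ bound.} For this I would revisit the basic energy identity \eqref{2-1-5} used in Lemma~\ref{lemapri1}, the only modification being that the forcing must be controlled by $\widehat{\BF^*}$ rather than by the vorticity $\hat f=i\xi\widehat{F^r}-\partial_r\widehat{F^z}$. Integrating by parts and using the boundary conditions \eqref{FBC},
\[
\int_0^1\hat f\,\overline{\hat\psi}\,r\,dr=i\xi\int_0^1\widehat{F^r}\,\overline{\hat\psi}\,r\,dr+\int_0^1\widehat{F^z}\,\frac{d}{dr}(r\overline{\hat\psi})\,dr ,
\]
so the forcing term is bounded by $\|\widehat{F^r}\|_{L^2_r}\,\||\xi|\hat\psi\|_{L^2_r}+\|\widehat{F^z}\|_{L^2_r}\,\|r^{-1/2}\tfrac{d}{dr}(r\hat\psi)\|$, both $\hat\psi$-factors being quantities already present on the left of \eqref{2-1-5}; the term $-\tfrac{4\Phi}{\pi}\xi\,\Im\int_0^1\tfrac{d}{dr}(r\hat\psi)\,r\overline{\hat\psi}\,dr$ is still absorbed exactly as in Lemma~\ref{lemapri1} through \eqref{2-1-8-1} and Lemma~\ref{lemmaHLP}, which handles the degeneracy of $\bar{U}$ at $r=1$. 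This reproduces \eqref{lapr1} with the right-hand side replaced by $C\int_0^1(|\widehat{F^r}|^2+|\widehat{F^z}|^2)\,r\,dr$, the constant independent of $\xi$ and $\Phi$. Since $\widehat{v^r}=i\xi\hat\psi$ and $\widehat{v^z}=-r^{-1}\tfrac{d}{dr}(r\hat\psi)$, integrating in $\xi$, applying Plancherel's theorem, Lemma~\ref{lemma1} (to dominate $\int_0^1|\tfrac{d}{dr}(r\hat\psi)|^2 r^{-1}\,dr$ by $\int_0^1|\mathcal{L}\hat\psi|^2 r\,dr$) and a one-dimensional Poincar\'e inequality in $\xi$ yields $\|\Bv^*\|_{L^2(\Omega)}\le C\|\BF^*\|_{L^2(\Omega)}$; combining the three steps gives $\|\Bv^*\|_{H^2(\Omega)}\le C(1+\Phi^2)\|\BF^*\|_{L^2(\Omega)}$.

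\textbf{Main obstacle.} I expect the delicate point to be the $L^2$ estimate: one must check that after the integration by parts each forcing term is paired against a quantity that genuinely occurs, with the correct power of $r$ and of $\xi$, on the left of the energy identities of Lemma~\ref{lemapri1}, so that it can be absorbed without reintroducing a derivative of $\BF^*$ or a factor of $\Phi$. This is precisely why the present estimate only reaches $H^2$ — a derivative is lost relative to Proposition~\ref{back} — since the fourth-order estimate \eqref{hapr2} for $\mathcal{L}^2\hat\psi$ cannot be recast in terms of $\widehat{\BF^*}\in L^2$ (integrating by parts there would place a fifth-order derivative on $\hat\psi$). The Stokes regularity step is standard, but still requires the Fourier reduction in $z$ to keep the constant independent of the infinite length of the pipe.
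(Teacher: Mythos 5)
Your overall architecture (Stokes regularity for $-\Delta\Bv^*+\nabla P=\BF^*-\bar U\partial_z\Bv^*-v^r\partial_r\bBU$, then controlling the convection terms) coincides with the paper's, but the step on which your whole chain rests --- the flux-independent bound $\|\Bv^*\|_{L^2(\Omega)}\le C\|\BF^*\|_{L^2(\Omega)}$ --- is not justified, and it cannot be obtained by the elementary energy argument you invoke. The estimate \eqref{2-1-8-1} does not survive the replacement of $\hat f$ by $\widehat{\BF^*}$: after the integration by parts
$\int_0^1\hat f\,\overline{\hat\psi}\,r\,dr=i\xi\int_0^1\widehat{F^r}\,\overline{\hat\psi}\,r\,dr+\int_0^1\widehat{F^z}\,\tfrac{d}{dr}(r\overline{\hat\psi})\,dr$,
the imaginary-part identity \eqref{2-1-7} combined with Lemma \ref{lemmaHLP} only yields
\[
\Phi|\xi|\int_0^1|\hat\psi|^2r\,dr\le C\Big(\int_0^1|\widehat{\BF^*}|^2r\,dr\Big)^{1/2}\Big(\xi^2\int_0^1|\hat\psi|^2r\,dr+\int_0^1\Big|\tfrac{d}{dr}(r\hat\psi)\Big|^2\tfrac1r\,dr\Big)^{1/2},
\]
and the factor $\int_0^1|\tfrac{d}{dr}(r\hat\psi)|^2\tfrac1r\,dr$ on the right is \emph{not} controlled by $\Phi|\xi|\int_0^1|\hat\psi|^2r\,dr$, so you cannot square and divide to conclude $\Phi^2\xi^2\int_0^1|\hat\psi|^2r\,dr\le C\int_0^1|\widehat{\BF^*}|^2r\,dr$. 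Consequently the cross term $\tfrac{4\Phi}{\pi}\xi\,\Im\int_0^1\tfrac{d}{dr}(r\hat\psi)\,r\overline{\hat\psi}\,dr$ can only be absorbed at the cost of a factor $\Phi$ in front of the forcing (this is exactly \eqref{app1-2}), and the energy estimate in terms of $\BF^*$ closes only as \eqref{app2}, with constant $C(1+\Phi^2)$ on the squared norms, i.e.\ $\|\Bv^*\|_{L^2}\le C(1+\Phi)\|\BF^*\|_{L^2}$. A genuinely flux-independent $L^2$ (indeed $H^{5/3}$) bound is essentially the content of Theorem \ref{thm1} and requires the whole frequency decomposition and boundary-layer analysis of Section \ref{sec-res}; it cannot be presupposed in Proposition \ref{smallflux}, which is a preliminary input to that analysis.

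The gap is easy to repair, and the repair removes your interpolation detour: in the Stokes estimate $\|\Bv^*\|_{H^2}\le C\|\BF^*\|_{L^2}+C\Phi\|\partial_z\Bv^*\|_{L^2}+C\Phi\|v^r\|_{L^2}+C\|\Bv^*\|_{H^1}$, insert the bound $\|\Bv^*\|_{H^1}\le C(1+\Phi)\|\BF^*\|_{L^2}$ that does follow from \eqref{app2}--\eqref{app3-2}; the product $\Phi\cdot(1+\Phi)$ gives precisely the stated constant $C(1+\Phi^2)$, which is what the paper does in \eqref{app3-4}. Note that your interpolation $\|\Bv^*\|_{H^1}\le\delta\|\Bv^*\|_{H^2}+C\delta^{-1}\|\Bv^*\|_{L^2}$ with $\delta\sim\Phi^{-1}$, while correct, buys nothing here: combined with the true bound $\|\Bv^*\|_{L^2}\le C(1+\Phi)\|\BF^*\|_{L^2}$ it produces $C(1+\Phi^3)$, worse than required.
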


\begin{proof}
It follows from \eqref{2-1-7}, Lemma \ref{lemmaHLP} and integration by parts that one has
\be \label{app1-1}
\Phi |\xi| \int_0^1 |\hat{\psi}|^2 r \, dr
\leq C \left( \int_0^1 | \widehat{F^r} | | \xi \hat{\psi}| r \, dr + \int_0^1 |\widehat{F^z} | \left| \frac{d}{dr} ( r \hat{\psi} ) \right| \, dr \right).
\ee
Similarly, the estimate \eqref{2-1-5} gives
\be \la{app1} \ba
&  \int_0^1 |\mathcal{L} \hat{\psi}|^2 r \, dr + 2\xi^2 \int_0^1 \left| \frac{d}{dr}( r \hat{\psi})    \right|^2 \frac1r \, dr + \xi^4 \int_0^1 |\hat{\psi} |^2 r \, dr  \\
\leq & \int_0^1 | \widehat{F^r} | | \xi \hat{\psi}| r \, dr + \int_0^1 |\widehat{F^z} | \left| \frac{d}{dr} ( r \hat{\psi} ) \right| \, dr +
\frac{4\Phi}{\pi} |\xi| \int_0^1 \left| \frac{d}{dr} (r \hat{\psi})  \right| |r \hat{\psi}| \, dr .
\ea
\ee
According to \eqref{app1-1}, one has
\be \la{app1-2}
\ba
 & \frac{4\Phi}{\pi} |\xi | \int_0^1 \left| \frac{d}{dr} (r \hat{\psi})  \right| |r \hat{\psi}| \, dr \\
\leq & C \Phi^{\frac12} \left[ |\xi|  \int_0^1 \left| \frac{d}{dr}(r \hat{\psi} )  \right|^2 \frac1r \, dr   \right]^{\frac12} \left( \Phi |\xi| \int_0^1 |\hat{\psi}|^2 r \, dr  \right)^{\frac12} \\
\leq & \frac14 ( 1  + \xi^2) \int_0^1 \left| \frac{d}{dr}(r \hat{\psi} )  \right|^2 \frac1r \, dr + C \Phi \left( \int_0^1 | \widehat{F^r} | | \xi \hat{\psi}| r \, dr + \int_0^1 |\widehat{F^z} | \left| \frac{d}{dr} ( r \hat{\psi} ) \right| \, dr \right)
\ea
\ee
By virtue of Lemma \ref{lemma1} and Cauchy-Schwarz inequality, one has
\be \la{app2}
  \int_0^1\left( |\mathcal{L} \hat{\psi}|^2 r \, + \xi^2  \left| \frac{d}{dr}(r \hat{\psi} )  \right|^2 \frac1r \, + \xi^4  |\hat{\psi}|^2 r \right)\, dr
\leq   C (1+ \Phi^2) \int_0^1  (|\widehat{F^r} |^2 + |\widehat{F^z}|^2 ) r \, dr .
\ee
Integrating \eqref{app2} with respect to $\xi$ yields
\be \la{app3}
\int_{-\infty}^{+\infty} \int_0^1 \left\{\left( |\mathcal{L} \hat{\psi}|^2 + \xi^4 |\hat{\psi}|^2 \right)r  \,
+  \xi^2 \left|  \frac{d}{dr}( r \hat{\psi})   \right|^2 \frac{1}{r} \right\}\, dr d\xi
\leq C(1 + \Phi^2) \| \BF^* \|_{L^2(\Omega)}^2 .
\ee

Since $\Bo^\theta = ( \mathcal{L} + \partial_z^2) \psi \Be_\theta$, it follows from  \eqref{app3}  that
\be \la{app3-1}
\|\nabla \Bv^* \|_{L^2(\Omega)} = \|\Bo^\theta \|_{L^2(\Omega)} \leq C (1 + \Phi) \|\BF^* \|_{L^2(\Omega)}.
\ee
And by Poincar\'e's inequality,
\be \la{app3-2}
\|\Bv^*\|_{H^1(\Omega)} \leq C (1 + \Phi) \| \BF^* \|_{L^2(\Omega)}.
\ee


Next, let us consider the $H^2$-norm of $\Bv^*$. As shown in Proposition \ref{back}, $\Bv^*$ satisfies the equation
\be \label{app3-3} \left\{  \ba
& \bar{U} \partial_z  \Bv^* + v^r  \partial_r \bBU -\Delta \Bv^*  + \nabla P = \BF^*, \ \ \ \mbox{in}\ \ \Omega, \\
& {\rm div}~\Bv^* = 0,\ \ \ \ \mbox{in} \ \ \Omega, \\
&\Bv^* = 0, \ \ \ \ \mbox{on} \ \ \partial\Omega.
\ea  \right.
\ee
According to the regularity theory for Stokes equations\cite[Lemma VI.1.2]{Galdi}, one has
\be \label{app3-4} \ba
\|\Bv^*\|_{H^2 (\Omega)} & \leq C\|\BF^*\|_{L^2(\Omega)} + \Phi \|\partial_z \Bv^*\|_{L^2(\Omega)} + \Phi \|v^r\|_{L^2(\Omega)} + C \|\Bv^*\|_{H^1(\Omega)} \\
& \leq C (1 + \Phi^2 ) \|\BF^*\|_{L^2(\Omega)}.
\ea
\ee
This finishes the proof of the proposition.
\end{proof}

\subsection{Uniform estimate for the case with large flux and low frequency}
In this subsection, we give the uniform estimate for the solutions of \eqref{2-0-8}--\eqref{FBC} with respect to the flux $\Phi$ when the flux is large and the frequency is low.
\begin{pro}\label{Bpropcase1}
Assume that $|\xi| \leq \frac{1}{\epsilon_1 \Phi}\leq 1 $. Let $\hat{\psi}(r, \xi)$ be a smooth solution of the problem
\eqref{2-0-8}--\eqref{FBC}, then one has
\be \label{B-1}
\int_0^1 |\mathcal{L} \hat{\psi} |^2 r \, dr + 2 \xi^2 \int_0^1 \left| \frac{d}{dr}(r \hat{\psi})   \right|^2 \frac{1}{r}\, dr
+ \xi^4 \int_0^1 |\hat{\psi}|^2 r\, dr \leq C (\epsilon_1) \int_0^1 |\what{\BF^*}|^2 r \, dr.
\ee
\end{pro}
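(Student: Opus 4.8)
The plan is to revisit the basic energy identity \eqref{2-1-5} and show that, in the low-frequency regime $|\xi|\le \frac{1}{\epsilon_1\Phi}\le 1$, the only potentially dangerous term — the one carrying the large factor $\Phi$, namely $\frac{4\Phi}{\pi}\xi\,\Im\int_0^1 \frac{d}{dr}(r\hat\psi)\,r\overline{\hat\psi}\,dr$ — can be absorbed into the left-hand side because the smallness of $|\xi|$ exactly compensates the size of $\Phi$. The starting point is the identity \eqref{2-1-5},
\be\nonumber
\int_0^1|\mathcal{L}\hat\psi|^2 r\,dr + 2\xi^2\int_0^1\left|\tfrac{d}{dr}(r\hat\psi)\right|^2\tfrac1r\,dr + \xi^4\int_0^1|\hat\psi|^2 r\,dr
= -\Re\int_0^1\hat f\,\overline{\hat\psi}\,r\,dr - \tfrac{4\Phi}{\pi}\xi\,\Im\int_0^1\tfrac{d}{dr}(r\hat\psi)\,r\overline{\hat\psi}\,dr,
\ee
together with the imaginary-part identity in the form \eqref{2-1-8}, which after using Lemma \ref{lemmaHLP} and Cauchy--Schwarz gives the crucial bound \eqref{2-1-8-1}, i.e. $\Phi^2\xi^2\int_0^1|\hat\psi|^2 r\,dr\le C\int_0^1|\hat f|^2 r\,dr$. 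Here $\hat f = i\xi\widehat{F^r} - \frac{d}{dr}\widehat{F^z}$, so that $\int_0^1|\hat f|^2 r\,dr$ can be controlled by $\int_0^1(|\widehat{F^r}|^2+|\widehat{F^z}|^2+|\partial_r\widehat{F^z}|^2)r\,dr$; I should instead integrate by parts in the $\Re\int\hat f\overline{\hat\psi}r\,dr$ term to move the $r$-derivative off $\widehat{F^z}$ and onto $\hat\psi$, exactly as done in \eqref{app1-1}, so that only $\widehat{\BF^*}=(\widehat{F^r},\widehat{F^z})$ and not its derivatives appear on the right.

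The key estimate is to bound the large term. By Lemma \ref{lemma1} (Hardy-type inequality) and Young's inequality,
\be\nonumber
\left|\tfrac{4\Phi}{\pi}\xi\int_0^1\tfrac{d}{dr}(r\hat\psi)\,r\overline{\hat\psi}\,dr\right|
\le C\Phi|\xi|\left(\int_0^1\left|\tfrac{d}{dr}(r\hat\psi)\right|^2\tfrac1r\,dr\right)^{1/2}\left(\int_0^1|\hat\psi|^2 r\,dr\right)^{1/2}.
\ee
Now split the factor $\Phi|\xi| = (\Phi|\xi|)^{1/2}\cdot(\Phi|\xi|)^{1/2}$ and use $\Phi|\xi|\le \frac{1}{\epsilon_1}$ on one of them, while pairing the other $(\Phi|\xi|)^{1/2}$ with $\left(\int_0^1|\hat\psi|^2 r\,dr\right)^{1/2}$ and invoking \eqref{2-1-8-1} in the form $(\Phi|\xi|)\int_0^1|\hat\psi|^2 r\,dr\le \frac{1}{\Phi|\xi|}\cdot C\int_0^1|\hat f|^2 r\,dr$ — or more simply $\Phi^2\xi^2\int|\hat\psi|^2 r\,dr \le C\int|\hat f|^2 r\,dr$ — to obtain
\be\nonumber
\left|\tfrac{4\Phi}{\pi}\xi\int_0^1\tfrac{d}{dr}(r\hat\psi)\,r\overline{\hat\psi}\,dr\right|
\le \tfrac{C}{\sqrt{\epsilon_1}}\left(\int_0^1\left|\tfrac{d}{dr}(r\hat\psi)\right|^2\tfrac1r\,dr\right)^{1/2}\left(\int_0^1|\hat f|^2 r\,dr\right)^{1/2}
\le \tfrac14\int_0^1|\mathcal{L}\hat\psi|^2 r\,dr + \tfrac{C(\epsilon_1)}{1}\int_0^1|\hat f|^2 r\,dr,
\ee
where the last step uses Lemma \ref{lemma1} (so that $\int_0^1|\frac{d}{dr}(r\hat\psi)|^2\frac1r\,dr\le\int_0^1|\mathcal{L}\hat\psi|^2 r\,dr$) and Young's inequality. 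The first term is absorbed by the left-hand side of the energy identity, and the $|\hat f|^2$ term is converted to $|\widehat{\BF^*}|^2$ by the integration-by-parts remark above; the term $\Re\int\hat f\overline{\hat\psi}r\,dr$ is handled by the same Cauchy--Schwarz/Young argument, again absorbing a $\frac14\int|\mathcal{L}\hat\psi|^2 r\,dr$ piece via Lemma \ref{lemma1} and Poincaré-type control of $\int|\hat\psi|^2 r\,dr$ by $\int|\mathcal{L}\hat\psi|^2 r\,dr$.

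The main obstacle is purely bookkeeping: one must be careful that \emph{every} appearance of $\Phi$ is either paired with a factor $|\xi|$ (bounded by $\frac{1}{\epsilon_1}$ in this regime) or eliminated through \eqref{2-1-8-1}, so that the final constant depends only on $\epsilon_1$ and not on $\Phi$; and one must route the $\widehat{F^z}$-contribution through an integration by parts so the estimate closes with $\|\widehat{\BF^*}\|_{L^2}$ rather than $\|\widehat{\BF^*}\|_{H^1}$. Once these two points are tracked, collecting terms yields \eqref{B-1}. $\Box$
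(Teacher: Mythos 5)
Your overall strategy --- the real-part identity \eqref{2-1-5}, the imaginary-part estimate, integration by parts so that only $\what{\BF^*}$ appears on the right, and the use of $\Phi|\xi|\le 1/\epsilon_1$ to absorb the convection term --- is the same as the paper's, but there is a genuine gap in how you combine the ingredients. You invoke \eqref{2-1-8-1}, i.e.\ $\Phi^2\xi^2\int_0^1|\hat\psi|^2 r\,dr\le C\int_0^1|\hat f|^2 r\,dr$, and then assert at the end that ``the $|\hat f|^2$ term is converted to $|\what{\BF^*}|^2$ by the integration-by-parts remark.'' That conversion is not a valid operation: $\hat f$ contains the derivative $\frac{d}{dr}\what{F^z}$, so $\int_0^1|\hat f|^2 r\,dr$ is simply not bounded by $\int_0^1|\what{\BF^*}|^2 r\,dr$. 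Integration by parts helps only for the bilinear pairing $\int_0^1\hat f\,\overline{\hat\psi}\,r\,dr$, and once you estimate that pairing by Cauchy--Schwarz you can no longer recover the two-power estimate \eqref{2-1-8-1}: the $\what{\BF^*}$-version of the imaginary-part estimate is only the one-power inequality \eqref{app1-1} (equivalently \eqref{B3}), namely $\Phi|\xi|\int_0^1|\hat\psi|^2 r\,dr\le C\bigl(\int_0^1|\what{\BF^*}|^2 r\,dr\bigr)^{1/2}\bigl(\xi^2\int_0^1|\hat\psi|^2r\,dr+\int_0^1\bigl|\frac{d}{dr}(r\hat\psi)\bigr|^2\frac1r\,dr\bigr)^{1/2}$. (Your alternative reading, $\Phi|\xi|\int_0^1|\hat\psi|^2 r\,dr\le\frac{C}{\Phi|\xi|}\int_0^1|\hat f|^2 r\,dr$, is also unusable here, since $\frac{1}{\Phi|\xi|}$ is unbounded as $\xi\to0$ in the low-frequency regime.) So as written your argument only reproduces Lemma \ref{lemapri1}, whose right-hand side is $\|\hat f\|$, not $\|\what{\BF^*}\|$.

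The repair is exactly the chain \eqref{B5} in the paper: bound the convection term by $\frac14\int_0^1\bigl|\frac{d}{dr}(r\hat\psi)\bigr|^2\frac1r\,dr+C\Phi^2\xi^2\int_0^1|\hat\psi|^2 r\,dr$, write $\Phi^2\xi^2=(\Phi|\xi|)\cdot(\Phi|\xi|)\le\frac{1}{\epsilon_1}\,\Phi|\xi|$, control $\Phi|\xi|\int_0^1|\hat\psi|^2 r\,dr$ by the one-power estimate \eqref{B3}, and then absorb the resulting factor $\bigl(\xi^2\int_0^1|\hat\psi|^2 r\,dr+\int_0^1\bigl|\frac{d}{dr}(r\hat\psi)\bigr|^2\frac1r\,dr\bigr)^{1/2}$ into the left-hand side by one more application of Young's inequality together with Lemma \ref{lemma1}. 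The hypothesis $\Phi|\xi|\le1/\epsilon_1$ is used precisely to make up for the power of $\Phi|\xi|$ that is lost in passing from the $\hat f$-based estimate \eqref{2-1-8-1} to the $\what{\BF^*}$-based estimate \eqref{B3}; your write-up treats this as bookkeeping, but it is the mathematical crux of the proposition.
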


\begin{proof}
It follows from \eqref{2-1-5}, \eqref{2-1-7}, Lemma \ref{lemmaHLP}, and integration by parts that one has
\be \label{B3} \ba
& \Phi |\xi| \int_0^1 |\hat{\psi}|^2 r\, dr + \Phi |\xi| \int_0^1 \frac{1-r^2}{r} \left|  \frac{d}{dr} ( r \hat{\psi}) \right|^2 \frac1r \, dr + \Phi |\xi|^3 \int_0^1 (1 - r^2) |\hat{\psi}|^2 r \, dr \\
\leq & C \left( \int_0^1 |\what{\BF^*}|^2 r \, dr \right)^{\frac12} \left( \int_0^1 \xi^2 |\hat{\psi}|^2 r \, dr + \int_0^1 \left| \frac{d}{dr}(r \hat{\psi} ) \right|^2 \frac1r \, dr   \right)^{\frac12},
 \ea \ee
and
\be \label{B4} \ba
& \int_0^1 |\mathcal{L} \hat{\psi}|^2 r \, dr + 2\xi^2 \int_0^1 \left| \frac{d}{dr}( r \hat{\psi} )  \right|^2 \frac1r \, dr + \xi^4 \int_0^1 |\hat{\psi}|^2 r \, dr  \\
\leq &  C \left( \int_0^1 |\what{\BF^*}|^2 r \, dr \right)^{\frac12} \left( \int_0^1 \xi^2 |\hat{\psi}|^2 r \, dr + \int_0^1 \left| \frac{d}{dr}(r \hat{\psi} ) \right|^2 \frac1r \, dr   \right)^{\frac12}+ \frac{4\Phi}{\pi} |\xi| \int_0^1 \left| \frac{d}{dr}( r \hat{\psi} )  \right| | r \hat{\psi}| \, dr .
\ea
\ee
According to Lemma \ref{lemma1} and \eqref{B3},
\be \label{B5} \ba
& \frac{4 \Phi}{\pi} |\xi| \int_0^1 \left| \frac{d}{dr} (  r \hat{\psi} ) \right| |r \hat{\psi}| \, dr \\
\leq & \frac14 \int_0^1 \left| \frac{d}{dr} ( r \hat{\psi} ) \right|^2 \frac1r \, dr + C \Phi^2 \xi^2 \int_0^1 |\hat{\psi}|^2 r \, dr \\
\leq & \frac14 \int_0^1 |\mathcal{L} \hat{\psi} |^2 r\, dr + \frac{C}{\epsilon_1} \left( \int_0^1 |\what{\BF^*}|^2 r \, dr  \right)^{\frac12}
\left[ \int_0^1 \left( \xi^2 |\hat{\psi}|^2 r + \left|   \frac{d}{dr}(r \hat{\psi}) \right|^2 \frac1r \right) \, dr   \right]^{\frac12} \\
\leq & \frac14 \int_0^1 |\mathcal{L} \hat{\psi} |^2 r\, dr + \frac14 \xi^2 \int_0^1 |\hat{\psi}|^2 r \, dr + \frac14 \int_0^1 \left| \frac{d}{dr} (r \hat{\psi}) \right|^2 \frac1r \, dr + C (\epsilon_1) \int_0^1 |\what{\BF^*}|^2 r \, dr \\
\leq & \frac12 \int_0^1 |\mathcal{L} \hat{\psi}|^2 r \, dr + \frac14 \xi^2 \int_0^1 \left| \frac{d}{dr} ( r \hat{\psi})  \right|^2 \frac1r \, dr +
C (\epsilon_1) \int_0^1 |\what{\BF^*}|^2 r \, dr .
\ea
\ee
Taking \eqref{B5} into \eqref{B4}, one completes  the proof of Proposition \ref{Bpropcase1}.
\end{proof}

Let
\be \nonumber
\chi_1 (\xi ) = \left\{ \ba & 1 , \ \ \ |\xi| \leq \frac{1}{\epsilon_1 \Phi} , \\
& 0,\ \ \ \ otherwise,  \ea  \right.
\ee
and $\psi_{low}$ be the function  such that
$
\what{\psi_{low}}  =  \chi_1 (\xi) \hat{\psi}.
$
Define
\be \nonumber
v^r_{low} = \partial_z \psi_{low},\ \ \ \ v^z_{low} = - \frac{\partial_r ( r \psi_{low} )}{r}, \ \ \ \ \Bv^*_{low} = v^r_{low}\Be_r + v^z_{low}\Be_z.
\ee
And similarly, we define $F^r_{low}, F^z_{low}, \BF^*_{low}$, $\Bo^\theta_{low}$.

\begin{pro}\label{Bpropcase1-1} The solution $\Bv^*$  satisfies
\be \label{B6}
\|\Bv^*_{low} \|_{H^2(\Omega)} \leq C \|\BF^*_{low}\|_{L^2(\Omega)},
\ee
where $C$ is a uniform constant independent of $\Phi$ and $\BF$.
\end{pro}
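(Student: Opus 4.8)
The plan is to follow the scheme of Proposition \ref{smallflux}---recover the Stokes system for $\Bv^*_{low}$ and invoke the Stokes $H^2$-regularity estimate---but this time to exploit the frequency localization $|\xi|\le\frac{1}{\epsilon_1\Phi}$ carried by $\psi_{low}$ in order to absorb the factors of $\Phi$ that appear in front of the lower order terms, so that the resulting constant depends only on the fixed number $\epsilon_1$.

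First I would record the $H^1$ bound. Integrating \eqref{B-1} in $\xi$ (the left-hand side being supported on $\{\chi_1\equiv1\}$) and using $\what{\omega^\theta_{low}}=(\mathcal L-\xi^2)\what{\psi_{low}}$ together with Lemma \ref{lemma1} gives
\be \nonumber
\|\Bo^\theta_{low}\|_{L^2(\Omega)}^2=\int_{\mathbb{R}}\int_0^1\left|(\mathcal L-\xi^2)\what{\psi_{low}}\right|^2 r\,dr\,d\xi\le C(\epsilon_1)\|\BF^*_{low}\|_{L^2(\Omega)}^2 .
\ee
Since $\curl\Bv^*_{low}=\Bo^\theta_{low}$, $\div\Bv^*_{low}=0$ and $\Bv^*_{low}=0$ on $\partial\Omega$, integration by parts gives $\|\nabla\Bv^*_{low}\|_{L^2(\Omega)}=\|\Bo^\theta_{low}\|_{L^2(\Omega)}$ (as in \eqref{app3-1}), and Poincar\'e's inequality then yields $\|\Bv^*_{low}\|_{H^1(\Omega)}\le C(\epsilon_1)\|\BF^*_{low}\|_{L^2(\Omega)}$.

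Next, since $\bar U\partial_z$ acts as a Fourier multiplier in the $z$ variable, it commutes with the cutoff $\chi_1(\xi)$, so applying $\chi_1(\xi)$ to the Stokes system \eqref{app3-3} satisfied by $\Bv^*$ (Proposition \ref{back}) shows that $\Bv^*_{low}$ is a strong solution of \eqref{app3-3} with $\BF^*$ and $P$ replaced by $\BF^*_{low}$ and $P_{low}$. The Stokes estimate \cite[Lemma VI.1.2]{Galdi} then gives, exactly as in \eqref{app3-4},
\be \nonumber
\|\Bv^*_{low}\|_{H^2(\Omega)}\le C\|\BF^*_{low}\|_{L^2(\Omega)}+C\Phi\|\partial_z\Bv^*_{low}\|_{L^2(\Omega)}+C\Phi\|v^r_{low}\|_{L^2(\Omega)}+C\|\Bv^*_{low}\|_{H^1(\Omega)} .
\ee
The last step is to bound the two $\Phi$-weighted terms using $\Phi|\xi|\le\epsilon_1^{-1}$ on $\supp\chi_1$. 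By Plancherel, $\what{v^r_{low}}=i\xi\what{\psi_{low}}$, Lemma \ref{lemma1}, and \eqref{B-1},
\be \nonumber
\Phi^2\|v^r_{low}\|_{L^2(\Omega)}^2=\int_{\mathbb{R}}\int_0^1(\Phi|\xi|)^2\left|\what{\psi_{low}}\right|^2 r\,dr\,d\xi\le\epsilon_1^{-2}\int_{\mathbb{R}}\int_0^1\left|\mathcal L\what{\psi_{low}}\right|^2 r\,dr\,d\xi\le C(\epsilon_1)\|\BF^*_{low}\|_{L^2(\Omega)}^2 ;
\ee
writing $\partial_z\Bv^*_{low}$ through $\xi^2\what{\psi_{low}}$ and $\xi\,\tfrac{d}{dr}(r\what{\psi_{low}})$ and using $(\Phi|\xi|)^2\le\epsilon_1^{-2}$ once more, a similar computation together with Lemma \ref{lemma1} and \eqref{B-1} gives $\Phi^2\|\partial_z\Bv^*_{low}\|_{L^2(\Omega)}^2\le C(\epsilon_1)\|\BF^*_{low}\|_{L^2(\Omega)}^2$. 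Inserting these two bounds and the $H^1$ estimate into the Stokes inequality yields \eqref{B6}, with $C=C(\epsilon_1)$ a uniform constant since $\epsilon_1$ has been fixed once and for all.

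The proposition is essentially a corollary of Proposition \ref{Bpropcase1}, so the only point that really requires attention is the bookkeeping in the last step: one must see that the support condition $\Phi|\xi|\le\epsilon_1^{-1}$ exactly compensates the factor $1+\Phi^2$ appearing in the small-flux estimate of Proposition \ref{smallflux}, so that the $\Phi$-weighted terms in the Stokes inequality are absorbed with a constant depending only on $\epsilon_1$; the structural facts (that $\Bv^*_{low}$ is a genuine strong Stokes solution with associated pressure $P_{low}$, and that the Galdi estimate applies on the unbounded domain $\Omega$) are inherited verbatim from Propositions \ref{back} and \ref{smallflux}.
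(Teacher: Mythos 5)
Your proof is correct and follows essentially the same route as the paper: both recover the Stokes system for $\Bv^*_{low}$, invoke the Galdi $H^2$ estimate, and absorb the $\Phi$-weighted lower-order terms using $\Phi|\xi|\le\epsilon_1^{-1}$ on the support of $\chi_1$ together with Proposition \ref{Bpropcase1} and Lemma \ref{lemma1}. The only cosmetic difference is that you phrase the cancellation as $(\Phi|\xi|)^2\le\epsilon_1^{-2}$ inside the Plancherel integral while the paper pulls out the factor $(\epsilon_1\Phi)^{-2}$ first; the bookkeeping is identical.
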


\begin{proof}
In fact, $\Bv^*_{low}$ is a strong solution to the following Stokes equations
\be \label{stokes-low} \left\{ \ba
& - \Delta \Bv^*_{low} + \nabla P = \BF^*_{low} - \bar{U} \partial_z \Bv^*_{low} - v^r_{low} \partial_r \bBU, \ \ \ \mbox{in}\ \Omega, \\
& {\rm div}~\Bv^*_{low} = 0,\ \ \ \ \mbox{in}\ \Omega, \\
& \Bv^*_{low}= 0,\ \ \ \ \mbox{on}\ \partial\Omega.
\ea \right. \ee
According to the regularity theory for Stokes equations\cite[Lemma VI.1.2]{Galdi}, one has
\be \label{B7}
\|\Bv^*_{low} \|_{H^2(\Omega)}
\leq C \|\BF^*_{low}\|_{L^2(\Omega)} + C \Phi \|\partial_z \Bv^*_{low} \|_{L^2(\Omega)} +
C \Phi\| v^r_{low} \|_{L^2(\Omega)} + C \|\Bv^*_{low}\|_{H^1(\Omega)}.
\ee
Herein, by virtue of the estimate \eqref{B-1} and Lemma \ref{lemma1},
\be \label{B8} \ba
\| v^r_{low} \|_{L^2(\Omega)} & \leq C \left( \int_{|\xi| \leq \frac{1}{\epsilon_1 \Phi} }\xi^2 \int_0^1 |\hat{\psi}|^2 r\, dr d\xi       \right)^{\frac12} \\
& \leq C \left( \int_{|\xi| \leq \frac{1}{\epsilon_1 \Phi} }\frac{1}{(\epsilon_1\Phi)^2} C(\epsilon_1)  \int_0^1 \left| \what{\BF^*}  \right|^2 \frac1r \, dr d\xi       \right)^{\frac12} \\
& \leq C \Phi^{-1} \|\BF^*_{low}\|_{L^2(\Omega)}
\ea
\ee
and
\be \label{B9} \ba
& \| \partial_z \Bv^*_{low}\|_{L^2(\Omega)}  \leq \|\partial_z v^r_{low}\|_{L^2(\Omega)} + \|\partial_z v^z_{low} \|_{L^2(\Omega)} \\
\leq & C \left(  \int_{|\xi| \leq \frac{1}{\epsilon_1 \Phi} } \xi^4 \int_0^1  |\hat{\psi}|^2 r \, dr d\xi \right)^{\frac12} + C  \left(
\int_{|\xi| \leq \frac{1}{\epsilon_1 \Phi} } \xi^2 \int_0^1 \left| \frac{\partial}{\partial r} ( r \hat{\psi} ) \right| \frac1r    \, dr d\xi         \right)^{\frac12} \\
\leq & C \Phi^{-1} \|\BF^*_{low}\|_{L^2(\Omega)} .
\ea \ee
Moreover, by Poincar\'e's inequality, one has
\be \label{B10} \ba
& \|\Bv^*_{low} \|_{H^1(\Omega)}\leq C \|\nabla \Bv^*_{low} \|_{L^2(\Omega)}  = C \|\Bo^\theta_{low}\|_{L^2(\Omega)} \\
\leq & C \left( \int_{|\xi| \leq \frac{1}{\epsilon_1 \Phi} }  \int_0^1  | ( \mathcal{L} - \xi^2) \hat{\psi}|^2 r \, dr d\xi \right)^{\frac12}  \\
\leq &   C \| \BF^*_{low} \|_{L^2(\Omega)}.
\ea \ee
Summing \eqref{B7}-\eqref{B10} together gives \eqref{B6}. This completes the proof of Proposition \ref{Bpropcase1-1}.
\end{proof}

\subsection{Uniform estimate for the case with large flux and high frequency}
In this subsection, we give the uniform estimate for the solutions of \eqref{2-0-8}--\eqref{FBC} with respect to the flux $\Phi$ when the flux is large and the frequency is high.
\begin{pro}\label{Bpropcase2}
Assume that $|\xi| \geq \epsilon_1 \sqrt{\Phi} \geq 1 $. Let $\hat{\psi}$ be a  smooth solution to the problem \eqref{2-0-8}--\eqref{FBC}, then one has
\be \label{highf1}
 \int_0^1 |\mathcal{L} \hat{\psi}|^2 r \, dr + \xi^2 \int_0^1 \left| \frac{d}{dr} ( r \hat{\psi})    \right|^2 \frac{1}{r} \, dr
+ \xi^4 \int_0^1 |\hat{\psi}|^2 r \, dr
\leq C |\xi|^{-2}  \int_0^1 |\what{\BF^*}|^2 r \, dr ,
\ee
and
\be \label{highf2}
\Phi |\xi| \int_0^1 \frac{1 - r^2}{r} \left| \frac{d}{dr} ( r \hat{\psi}) \right|^2 \, dr +  \Phi |\xi|^3 \int_0^1 (1 - r^2) |\hat{\psi}|^2 r \, dr
\leq C |\xi|^{-2}  \int_0^1 |\what{\BF^*}|^2 r \, dr .
\ee
\end{pro}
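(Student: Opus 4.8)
The plan is to exploit the high-frequency regime $|\xi|\ge \epsilon_1\sqrt\Phi$ to absorb the convective contribution of the Hagen--Poiseuille profile into the diffusive terms. The starting point is the pair of energy identities \eqref{2-1-5} and \eqref{2-1-7} already derived in Section \ref{Linear}: the real-part identity \eqref{2-1-5} controls $\int_0^1|\mathcal{L}\hat\psi|^2 r\,dr + 2\xi^2\int_0^1|\frac{d}{dr}(r\hat\psi)|^2\frac1r\,dr + \xi^4\int_0^1|\hat\psi|^2 r\,dr$ by $-\Re\int_0^1\hat f\,\overline{\hat\psi}\,r\,dr$ plus the term $\frac{4\Phi}{\pi}|\xi|\int_0^1|\frac{d}{dr}(r\hat\psi)||r\hat\psi|\,dr$, while the imaginary-part identity \eqref{2-1-7}, combined with the elementary weighted inequality of Lemma \ref{lemmaHLP}, gives the degenerate bound \eqref{2-1-8} and then the crucial gain \eqref{2-1-8-1}, namely $\Phi^2\xi^2\int_0^1|\hat\psi|^2 r\,dr\le C\int_0^1|\hat f|^2 r\,dr$. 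Here, since $f = \partial_z F^r - \partial_r F^z$, one has $\hat f = i\xi\widehat{F^r} - \partial_r\widehat{F^z}$, and after integration by parts $\int_0^1\hat f\,\overline{\hat\psi}\,r\,dr$ should be rewritten in terms of $\widehat{\BF^*}$ against $\xi\hat\psi$ and $\frac{d}{dr}(r\hat\psi)$, exactly as in \eqref{app1-1}--\eqref{app1} of Proposition \ref{smallflux}.

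The main point is then the estimate of the bad term. Using Lemma \ref{lemma1} (the Hardy-type inequality $\int_0^1|\frac{d}{dr}(r\hat\psi)|^2\frac1r\,dr\le\int_0^1|\mathcal{L}\hat\psi|^2 r\,dr$) and Young's inequality, one bounds
\[
\frac{4\Phi}{\pi}|\xi|\int_0^1\left|\frac{d}{dr}(r\hat\psi)\right||r\hat\psi|\,dr
\le \frac14\int_0^1\left|\frac{d}{dr}(r\hat\psi)\right|^2\frac1r\,dr + C\Phi^2\xi^2\int_0^1|\hat\psi|^2 r\,dr
\le \frac14\int_0^1|\mathcal{L}\hat\psi|^2 r\,dr + C\int_0^1|\hat f|^2 r\,dr,
\]
where the last step is precisely \eqref{2-1-8-1}; this is the same computation as \eqref{2-1-9}. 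Absorbing the $\frac14\int_0^1|\mathcal{L}\hat\psi|^2 r\,dr$ into the left of \eqref{2-1-5} yields a bound of the left-hand side of \eqref{highf1} by $C\int_0^1|\hat f|^2 r\,dr$ — but this is not yet sharp enough, since we need the extra factor $|\xi|^{-2}$ on the right and the right-hand side expressed via $\widehat{\BF^*}$ rather than $\hat f$. To get the $|\xi|^{-2}$ gain, the idea is to run the argument for the weighted quantity: first rewrite the forcing term using $\widehat{\BF^*}$, so that on the right we see $\int_0^1|\widehat{F^r}||\xi\hat\psi|r\,dr + \int_0^1|\widehat{F^z}||\frac{d}{dr}(r\hat\psi)|\,dr$, and then observe that in the regime $|\xi|\ge\epsilon_1\sqrt\Phi$ one has $\Phi\le\epsilon_1^{-2}\xi^2$, so that the gain \eqref{2-1-8-1} upgrades to $\xi^4\int_0^1|\hat\psi|^2 r\,dr\le C\epsilon_1^{-2}\xi^2\cdot\Phi^{-1}\cdot\Phi\xi^2\int_0^1|\hat\psi|^2 r\,dr$; more efficiently, since $\Phi^2\xi^2\int_0^1|\hat\psi|^2 r\,dr\le C\int_0^1|\hat f|^2 r\,dr$ and $\Phi^2\ge\epsilon_1^4\cdot\Phi\cdot\xi^{-2}\cdot\xi^2\Phi=\dots$, one should carefully track powers so that the $|\xi|^{-2}$ surfaces. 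The cleanest route is: from the imaginary-part identity and $\Phi\ge\epsilon_1^{-2}\xi^{-2}\cdot\Phi^2\xi^2\cdot$ juggling, derive $\xi^2\int_0^1|\mathcal{L}\hat\psi|^2 r\,dr + \xi^6\int_0^1|\hat\psi|^2 r\,dr \le C\int_0^1|\widehat{\BF^*}|^2 r\,dr$, then divide by $\xi^2\ge\epsilon_1^2\Phi\ge\epsilon_1^2$ (using also $|\xi|\ge1$) to obtain \eqref{highf1}.

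Once \eqref{highf1} is in hand, \eqref{highf2} follows directly from the imaginary-part identity \eqref{2-1-8}: its left-hand side $\Phi|\xi|\int_0^1\frac{1-r^2}{r}|\frac{d}{dr}(r\hat\psi)|^2\,dr + \Phi|\xi|^3\int_0^1(1-r^2)|\hat\psi|^2 r\,dr$ is bounded by $C\int_0^1|\hat f\,\hat\psi|r\,dr$, which by Cauchy--Schwarz is at most $C(\int_0^1|\widehat{\BF^*}|^2 r\,dr)^{1/2}(\xi^2\int_0^1|\hat\psi|^2 r\,dr + \int_0^1|\frac{d}{dr}(r\hat\psi)|^2\frac1r\,dr)^{1/2}$; inserting \eqref{highf1} to control the second factor by $C|\xi|^{-1}(\int_0^1|\widehat{\BF^*}|^2 r\,dr)^{1/2}\cdot|\xi|^{?}$ — one checks the powers match to leave $|\xi|^{-2}\int_0^1|\widehat{\BF^*}|^2 r\,dr$ on the right. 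The main obstacle, I expect, is the careful bookkeeping of the powers of $|\xi|$ and $\Phi$: the constraint $|\xi|\ge\epsilon_1\sqrt\Phi$ is used to trade one factor of $\Phi$ for $\epsilon_1^{-2}\xi^2$, and one must make sure this trade, together with the constraint $|\xi|\ge1$, is enough to produce exactly the $|\xi|^{-2}$ decay claimed — and no more is needed, since this decay is what makes the high-frequency piece summable in $\xi$ when one later assembles the $H^2$ estimate for $\Bv^*$.
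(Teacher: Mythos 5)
Your setup is right --- the two identities \eqref{2-1-5} and \eqref{2-1-7}, the rewriting of the forcing in terms of $\what{\BF^*}$, and the derivation of \eqref{highf2} from \eqref{highf1} via \eqref{B3} all match the paper --- but the core of the proof of \eqref{highf1}, namely the treatment of the cross term $\frac{4\Phi}{\pi}|\xi|\int_0^1|\frac{d}{dr}(r\hat\psi)|\,|r\hat\psi|\,dr$, has a genuine gap. Your proposed bound via Young's inequality and the analogue of \eqref{2-1-8-1} does not close. Once the forcing is expressed through $\what{\BF^*}$, the imaginary-part identity only gives
\begin{equation*}
\Phi|\xi|\int_0^1|\hat\psi|^2 r\,dr \leq C\Bigl(\int_0^1|\what{\BF^*}|^2 r\,dr\Bigr)^{\frac12}\Bigl(\xi^2\int_0^1|\hat\psi|^2 r\,dr+\int_0^1\bigl|\tfrac{d}{dr}(r\hat\psi)\bigr|^2\tfrac1r\,dr\Bigr)^{\frac12},
\end{equation*}
so $C\Phi^2\xi^2\int_0^1|\hat\psi|^2 r\,dr\leq C\Phi|\xi|\cdot(\text{RHS above})$; using $\Phi\leq\epsilon_1^{-2}\xi^2$ and Young's inequality to absorb the solution-dependent factor into the left side of \eqref{B4} leaves a remainder of order $|\xi|^{4}\int_0^1|\what{\BF^*}|^2 r\,dr$, which \emph{grows} in $|\xi|$ instead of decaying like $|\xi|^{-2}$. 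Your claimed intermediate estimate $\xi^2\int_0^1|\mathcal{L}\hat\psi|^2 r\,dr+\xi^6\int_0^1|\hat\psi|^2 r\,dr\leq C\int_0^1|\what{\BF^*}|^2 r\,dr$ is asserted but never derived, and the power ``juggling'' you describe does not produce it.

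The missing idea is the splitting of the cross term at $r=\tfrac12$ (the paper's \eqref{B11}--\eqref{B12}). On $[0,\tfrac12]$ the weight $\tfrac{1-r^2}{r}$ is bounded below, so one controls $\int_0^{1/2}|\frac{d}{dr}(r\hat\psi)|^2\frac1r\,dr$ by the \emph{weighted} quantity $\Phi|\xi|\int_0^1\frac{1-r^2}{r}|\frac{d}{dr}(r\hat\psi)|^2\,dr$ from \eqref{B3}, at the cost of only $(\Phi|\xi|)^{1/2}$ rather than $\Phi|\xi|$ as a prefactor; on $[\tfrac12,1]$ one uses Hardy's inequality on $r\hat\psi$ (which vanishes at $r=1$) to convert $\int_{1/2}^1|\hat\psi|^2\frac{r}{1-r^2}\,dr$ into derivative terms and again exploit the degenerate weight. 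It is precisely this half-power gain, combined with $\Phi\leq\epsilon_1^{-2}\xi^2$ and a Young inequality with exponents $4$ and $\tfrac43$, that produces remainders of order $\Phi^2|\xi|^{-8}$ and $\Phi^2|\xi|^{-6}$, both bounded by $C\epsilon_1^{-4}|\xi|^{-2}$. Without this splitting the proposition cannot be reached by the route you describe.
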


\begin{proof} According to \eqref{B3}, one has
\be \label{B11} \ba
& \frac{4\Phi}{\pi} |\xi| \int_0^{\frac12} \left| \frac{d}{dr}(r \hat{\psi} )   \right| |r \hat{\psi}| \, dr \\
\leq & C \Phi |\xi| \left( \int_0^1 \frac{1- r^2}{r} \left| \frac{d}{dr} ( r \hat{\psi} ) \right|^2 \, dr   \right)^{\frac12}
 \left( \int_0^{\frac12} |\hat{\psi}|^2 r \, dr  \right)^{\frac12} \\
 \leq & C (\Phi |\xi|)^{\frac12} \left(  \int_0^1 |\what{\BF^*} |^2 r \, dr \right)^{\frac14} \left[ \int_0^1 \left( \xi^2 |\hat{\psi}|^2 r + \left|
 \frac{d}{dr} ( r\hat{\psi} ) \right|^2 \frac1r   \right) \, dr   \right]^{\frac14}   \left( \int_0^{\frac12} |\hat{\psi}|^2 r \, dr  \right)^{\frac12}  \\
 \leq & C \Phi^{\frac12} |\xi|^{-2} \left(  \int_0^1 |\what{\BF^*} |^2 r \, dr \right)^{\frac14} \left[ \int_0^1 \left( \xi^4 |\hat{\psi}|^2 r + \xi^2 \left|
 \frac{d}{dr} ( r  \hat{\psi} ) \right|^2 \frac1r   \right) \, dr   \right]^{\frac34}.
\ea
\ee
Using Hardy's inequality yields
\be \label{B12} \ba
& \frac{4\Phi}{\pi} |\xi| \int_{\frac12}^1  \left| \frac{d}{dr}(r \hat{\psi} )   \right| |r \hat{\psi}| \, dr \\
\leq & C  \Phi |\xi| \left( \int_{\frac12}^1  \frac{1- r^2}{r} \left| \frac{d}{dr} ( r \hat{\psi} ) \right|^2 \, dr   \right)^{\frac12}
 \left( \int_{\frac12}^1  |\hat{\psi}|^2 \frac{r}{1 -r^2}  \, dr  \right)^{\frac12} \\
 \leq & C \Phi |\xi| \left( \int_{\frac12}^1  \frac{1- r^2}{r} \left| \frac{d}{dr} ( r \hat{\psi} ) \right|^2 \, dr   \right)^{\frac12}
 \left( \int_{\frac12}^1  |r \hat{\psi}|^2 \, dr  \right)^{\frac14 }
 \left( \int_{\frac12}^1 \left|  \frac{ r\hat{\psi} }{1- r}  \right|^2  \, dr  \right)^{\frac14} \\
 \leq & C  \Phi |\xi|^{-\frac12} \left( \int_{\frac12}^1  \frac{1- r^2}{r} \left| \frac{d}{dr} ( r \hat{\psi} ) \right|^2 \, dr   \right)^{\frac12}
 \left( \xi^4  \int_{\frac12}^1  |r \hat{\psi}|^2 \, dr  \right)^{\frac14 }
 \left( \xi^2  \int_{\frac12}^1 \left|  \frac{d}{dr} ( r \hat{\psi} )  \right|^2  \, dr  \right)^{\frac14} \\
 \leq & C \Phi^{\frac12} |\xi|^{-\frac32} \left(  \int_0^1 |\what{\BF^*} |^2 r \, dr \right)^{\frac14}   \left[ \int_0^1 \left( \xi^4 |\hat{\psi}|^2 r + \xi^2 \left|
 \frac{d}{dr} ( r \hat{\psi} ) \right|^2 \frac1r   \right) \, dr   \right]^{\frac34}.
\ea
\ee
Taking \eqref{B11}-\eqref{B12} into \eqref{B4} and using Young's inequality, one completes the proof of \eqref{highf1}.

According to \eqref{B3} and \eqref{highf1}, one has
\be \label{B12-1} \ba
& \Phi |\xi| \int_0^1 \frac{1 - r^2}{r} \left| \frac{d}{dr} ( r \hat{\psi}) \right|^2 \, dr +  \Phi |\xi|^3 \int_0^1 (1 - r^2) |\hat{\psi}|^2 r \, dr \\
\leq & C \left( \int_0^1 |\what{\BF^*}|^2 r \, dr  \right)^{\frac12} \left( \int_0^1 \left| \frac{d}{dr}( r \hat{\psi})  \right|^2 \frac1r + \xi^2 |\hat{\psi}|^2 r \, dr             \right)^{\frac12} \\
\leq & C |\xi|^{-2} \int_0^1 |\what{\BF^*}|^2 r \, dr .
\ea
\ee
This completes the proof of Proposition \ref{Bpropcase2}.
\end{proof}

\begin{remark}
It follows from the proof of Proposition \ref{Bpropcase2} that the estimates \eqref{highf1} and \eqref{highf2} can be improved when $|\xi|$ is much bigger than $\epsilon_1\sqrt{\Phi}$. The reason that we choose the high frequency region as $\xi \in [\epsilon_1\sqrt{\Phi}, +\infty)$ is that we can only deal with the intermediate frequency region as $[\frac{1}{\epsilon_1\Phi}, \epsilon_1\sqrt{\Phi}]$ by our current analysis in Subsection \ref{sec-intermediate}.
\end{remark}

Let
\be \nonumber
\chi_2 (\xi ) = \left\{ \ba & 1 , \ \ \ |\xi| \geq \epsilon_1 \sqrt{\Phi} , \\
& 0,\ \ \ \ \text{otherwise},  \ea  \right.
\ee
and $\psi_{high}$ be the function  such that
$
\what{\psi_{high}}  =  \chi_2 (\xi) \hat{\psi}.
$
Define
\be \nonumber
v^r_{high} = \partial_z \psi_{high},\ \ \ \ v^z_{high} = - \frac{\partial_r ( r \psi_{high} )}{r}, \ \ \ \ \Bv^*_{high} = v^r_{high}\Be_r + v^z_{high}\Be_z.
\ee
And similarly, we define $F^r_{high}, F^z_{high}, \BF^*_{high}$, $\Bo_{high}^\theta $.

\begin{pro}\label{Bpropcase2-1} The solution $\Bv^*$  satisfies
\be \label{B16}
\|\Bv^*_{high} \|_{H^2(\Omega)} \leq C (1 + \Phi^{\frac14} ) \|\BF^*_{high} \|_{L^2(\Omega)},
\ee
and
\be \label{B16-1}
\|\Bv^*_{high} \|_{H^{\frac53} (\Omega)} \leq C  \|\BF^*_{high} \|_{L^2(\Omega)},
\ee
where $C$ is a uniform constant independent of $\Phi$ and $\BF$.
\end{pro}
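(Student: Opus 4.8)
The plan is to combine the frequency-localized a priori estimates of Proposition~\ref{Bpropcase2} with the Stokes regularity theory, exactly as in the proof of Proposition~\ref{Bpropcase1-1}, but keeping careful track of the powers of $\xi$ and $\Phi$. First I would record that $\Bv^*_{high}$ solves the Stokes system
\be \nonumber \left\{ \ba
& - \Delta \Bv^*_{high} + \nabla P = \BF^*_{high} - \bar{U} \partial_z \Bv^*_{high} - v^r_{high} \partial_r \bBU, \ \ \ \mbox{in}\ \Omega, \\
& {\rm div}~\Bv^*_{high} = 0,\ \ \ \mbox{in}\ \Omega, \\
& \Bv^*_{high}= 0,\ \ \ \mbox{on}\ \partial\Omega,
\ea \right. \ee
so that by \cite[Lemma VI.1.2]{Galdi},
\be \nonumber
\|\Bv^*_{high}\|_{H^2(\Omega)} \leq C\|\BF^*_{high}\|_{L^2(\Omega)} + C\Phi\|\partial_z \Bv^*_{high}\|_{L^2(\Omega)} + C\Phi\|v^r_{high}\|_{L^2(\Omega)} + C\|\Bv^*_{high}\|_{H^1(\Omega)}.
\ee
The $H^1$-term is controlled by $\|\Bo^\theta_{high}\|_{L^2}$, which by \eqref{highf1} (the $|\mathcal{L}\hat\psi|^2$ and $\xi^4|\hat\psi|^2$ pieces) is bounded by $C\|\BF^*_{high}\|_{L^2}$ uniformly. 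The two genuinely $\Phi$-dependent terms are $\Phi\|\partial_z\Bv^*_{high}\|_{L^2}$ and $\Phi\|v^r_{high}\|_{L^2}$.

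For these I would exploit that $\Bv^*_{high}$ is supported in $|\xi|\geq \epsilon_1\sqrt\Phi$. Using $\widehat{v^r} = i\xi\hat\psi$, $\widehat{v^z} = -\frac1r\partial_r(r\hat\psi)$, the bound \eqref{highf1} gives on each such frequency
\be \nonumber
\int_0^1 \left(\xi^4|\hat\psi|^2 r + \xi^2\Big|\frac{d}{dr}(r\hat\psi)\Big|^2\frac1r\right) dr \leq C|\xi|^{-2}\int_0^1|\what{\BF^*}|^2 r\,dr,
\ee
hence $\|\partial_z v^r_{high}\|_{L^2}^2 + \|\partial_z v^z_{high}\|_{L^2}^2 \leq C\int_{|\xi|\geq\epsilon_1\sqrt\Phi}|\xi|^{-2}\int_0^1|\what{\BF^*}|^2 r\,dr\,d\xi \leq C(\epsilon_1\sqrt\Phi)^{-2}\|\BF^*_{high}\|_{L^2}^2$, so that $\Phi\|\partial_z\Bv^*_{high}\|_{L^2}\leq C\|\BF^*_{high}\|_{L^2}$. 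Similarly $\|v^r_{high}\|_{L^2}^2 = \int_{|\xi|\geq\epsilon_1\sqrt\Phi}\xi^2\int_0^1|\hat\psi|^2 r\,dr\,d\xi \leq C\int_{|\xi|\geq\epsilon_1\sqrt\Phi}|\xi|^{-6}\int_0^1|\what{\BF^*}|^2 r\,dr\,d\xi \leq C(\epsilon_1\sqrt\Phi)^{-6}\|\BF^*_{high}\|_{L^2}^2$, so $\Phi\|v^r_{high}\|_{L^2}$ is even better than bounded. Plugging these back yields $\|\Bv^*_{high}\|_{H^2(\Omega)}\leq C\|\BF^*_{high}\|_{L^2(\Omega)}$, which is stronger than \eqref{B16}; in particular \eqref{B16} follows a fortiori. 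Wait — this suggests the $(1+\Phi^{1/4})$ in \eqref{B16} is not sharp for the high-frequency piece, and indeed the remark after Proposition~\ref{Bpropcase2} already hints the high-frequency estimates are not optimal; so I would simply state \eqref{B16} with the understanding that it is what is needed later for combining with the other regimes.

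For the fractional bound \eqref{B16-1}, the plan is interpolation: $\|\Bv^*_{high}\|_{H^{5/3}}$ sits between $\|\Bv^*_{high}\|_{H^1}$ (uniformly bounded by $C\|\BF^*_{high}\|_{L^2}$ as above) and $\|\Bv^*_{high}\|_{H^2}$. If one uses the crude $\|\Bv^*_{high}\|_{H^2}\leq C(1+\Phi^{1/4})\|\BF^*_{high}\|_{L^2}$ together with the improved decay in $\xi$, the interpolation weight kills the $\Phi^{1/4}$: schematically, the $H^{5/3}$-norm involves $\xi^{5/3}$ and $|\mathcal{L}\hat\psi|$-type quantities with one less derivative than $H^2$, and on the region $|\xi|\geq\epsilon_1\sqrt\Phi$ every extra factor $|\xi|^{-1}$ we save from \eqref{highf1}–\eqref{highf2} is worth a factor $\Phi^{-1/2}$, which more than absorbs $\Phi^{1/4}$. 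Concretely I would estimate the $H^{5/3}$ Sobolev norm directly on the Fourier side, bounding $\int_{|\xi|\geq\epsilon_1\sqrt\Phi}(1+\xi^2)^{5/3}(\dots)\,d\xi$ by the quantities appearing in \eqref{highf1} with suitable powers of $|\xi|^{-2}$ to spare, and then checking that the resulting $\Phi$-power is $\leq 0$; the Stokes estimate in the form used above (applied with the $H^{5/3}$ rather than $H^2$ scale, or by interpolating the two endpoint Stokes bounds) makes this rigorous.

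\textbf{Main obstacle.} The delicate point is not any single inequality but the bookkeeping of $\xi$- versus $\Phi$-powers: one must verify that on the high-frequency window $|\xi|\geq\epsilon_1\sqrt\Phi$ the decay $|\xi|^{-2}$ in \eqref{highf1}–\eqref{highf2} is enough to beat the factor $\Phi$ (and, for $H^{5/3}$, the factor $\Phi^{1/4}$) coming from the convective terms $\bar U\partial_z\Bv^*$ and $v^r\partial_r\bBU$ in the Stokes estimate, while simultaneously controlling the boundary behavior of $\Bv^*_{high}$ well enough to invoke \cite[Lemma VI.1.2]{Galdi}. Once the worst term, $\Phi\|\partial_z\Bv^*_{high}\|_{L^2}$, is seen to be $O(1)\cdot\|\BF^*_{high}\|_{L^2}$ via the crucial inequality $\Phi\cdot(\epsilon_1\sqrt\Phi)^{-1}=\epsilon_1^{-1}$, the rest is routine.
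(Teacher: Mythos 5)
Your overall architecture (Stokes regularity for the frequency-localized system, then the a priori bounds of Proposition \ref{Bpropcase2}, then interpolation for $H^{5/3}$) is the same as the paper's, but the central step contains an arithmetic error that invalidates the argument. You claim $\Phi\|\partial_z \Bv^*_{high}\|_{L^2}\leq C\|\BF^*_{high}\|_{L^2}$ from $\|\partial_z\Bv^*_{high}\|_{L^2}\leq C(\epsilon_1\sqrt{\Phi})^{-1}\|\BF^*_{high}\|_{L^2}$, and you summarize this as the ``crucial inequality'' $\Phi\cdot(\epsilon_1\sqrt{\Phi})^{-1}=\epsilon_1^{-1}$. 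In fact $\Phi\cdot(\epsilon_1\sqrt{\Phi})^{-1}=\epsilon_1^{-1}\Phi^{1/2}$, so your unweighted estimate only yields $\|\Bv^*_{high}\|_{H^2}\leq C(1+\Phi^{1/2})\|\BF^*_{high}\|_{L^2}$, which is weaker than \eqref{B16}, not stronger. The missing idea is that the convective term is $\bar U\,\partial_z\Bv^*_{high}=\tfrac{2\Phi}{\pi}(1-r^2)\partial_z\Bv^*_{high}$, so one should estimate $\Phi\|(1-r^2)\partial_z\Bv^*_{high}\|_{L^2}$ and invoke the \emph{weighted} bound \eqref{highf2} rather than \eqref{highf1}; the extra factor $(1-r^2)$ buys an additional $\Phi^{-1}$ per frequency, which is exactly what produces $C\Phi^{1/4}$ in \eqref{B18} and hence \eqref{B16}.

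The same issue propagates to \eqref{B16-1}. Interpolating $H^{5/3}=[H^1,H^2]_{2/3}$ with only a uniform $H^1$ bound, as you propose, gives $\Phi^{0\cdot\frac13+\frac14\cdot\frac23}=\Phi^{1/6}$ even if one grants the correct $H^2$ bound, and $\Phi^{1/3}$ with your $\Phi^{1/2}$ bound --- neither is uniform. What makes the interpolation close is the \emph{improved} $H^1$ estimate $\|\Bv^*_{high}\|_{H^1(\Omega)}\leq C\Phi^{-1/2}\|\BF^*_{high}\|_{L^2(\Omega)}$ (which does follow from \eqref{highf1} via $\|\Bo^\theta_{high}\|_{L^2}^2\leq C\int_{|\xi|\geq\epsilon_1\sqrt{\Phi}}|\xi|^{-2}\int_0^1|\what{\BF^*}|^2r\,dr\,d\xi$, and is the paper's \eqref{B20}); then the exponents $-\tfrac12\cdot\tfrac13+\tfrac14\cdot\tfrac23=0$ cancel exactly. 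You need to state and use both the $(1-r^2)$-weighted $H^2$ ingredient and the $\Phi^{-1/2}$-decaying $H^1$ ingredient for the proof to go through.
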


\begin{proof}
In fact, $\Bv^*_{high}$ is a strong solution to the following Stokes equations
\be \label{stokes-high}
- \Delta \Bv^*_{high} + \nabla P = \BF^*_{high} - \bar{U} \partial_z \Bv^*_{high} - v^r_{high} \partial_r \bBU, \ \ \ \mbox{in}\ \Omega.
\ee
According to the regularity theory for Stokes equations\cite[Lemma VI.1.2]{Galdi}, one has
\be \label{B17} \ba
\|\Bv^*_{high}\|_{H^2(\Omega)}
\leq &  C \|\BF^*_{high}\|_{L^2(\Omega)}
+ C \Phi \| (1 - r^2) \partial_z \Bv^*_{high} \|_{L^2(\Omega)} \\
&\ \ \ \ + C \Phi \|v^r_{high}\|_{L^2(\Omega)} + C \|\Bv^*_{high}\|_{H^1(\Omega)} .
\ea
\ee
It follows from \eqref{highf1} and \eqref{highf2} that
\be \label{B18} \ba
& \Phi \|(1 - r^2) \partial_z \Bv^*_{high}\|_{L^2(\Omega)}  \\
\leq & C \left\{ \int_{|\xi| \geq \epsilon_1 \sqrt{\Phi}} \Phi^2  \left[ \xi^4 \int_0^1 (1 - r^2)^2  |\hat{\psi}|^2 r \, dr + \xi^2 \int_0^1 (1 -r^2)^2 \left|  \frac{d}{dr} ( r \hat{\psi})  \right|^2 \, dr   \right] \frac1r \, d\xi   \right\}^{\frac12} \\
\leq & C \left( \int_{|\xi| \geq \epsilon_1 \sqrt{\Phi} } \Phi |\xi|^{-1} \int_0^1 |\hat{\BF}|^2 r \, dr d\xi  \right)^{\frac12}\\
\leq & C \Phi^{\frac14} \|\BF^*_{high}\|_{L^2(\Omega)},
\ea
\ee
and
\be \label{B19} \ba
\Phi \|v^r_{high}\|_{L^2(\Omega)}&  \leq C \Phi \left( \int_{|\xi| \geq \epsilon_1 \sqrt{\Phi}} \xi^2 \int_0^1 |\hat{\psi} |^2 r \, dr d\xi   \right)^{\frac12}\\
& \leq C  \|\BF^*_{high}\|_{L^2(\Omega)}.
\ea
\ee
Following the same argument as in the proof of \eqref{B10} yields
\be \label{B20}
\|\Bv^*_{high} \|_{H^1(\Omega)} \leq C \|\Bo^\theta_{high}\|_{L^2(\Omega)} \leq C \Phi^{-\frac12}  \| \BF^*_{high}\|_{L^2(\Omega)}.
\ee
Hence, substituting  \eqref{B18}-\eqref{B20} into \eqref{B17} gives \eqref{B16}.
The estimate  \eqref{B16-1} can be obtained via the interpolation between $H^2(\Omega)$ and $H^1(\Omega)$. This finishes the proof of the proposition.
\end{proof}

\subsection{Uniform estimate for the case with large flux  and  intermediate frequency}\label{sec-intermediate}
In this subsection, we give the uniform estimate for the solutions of \eqref{2-0-8}--\eqref{FBC} with respect to the flux $\Phi$ when the flux is large and the frequency is intermediate. The analysis in this case is much more involved and is inspired by \cite{M}.

\begin{pro}\label{Bpropcase3} Assume that $\Phi \gg 1$.
There exists a small constant $\epsilon_1 \in (0, 1)$, such that as long as $\frac{1}{\epsilon_1 \Phi}
\leq |\xi| \leq \epsilon_1 \sqrt{\Phi}$,
the solution $\hat{\psi}(r)$ to the problem \eqref{2-0-8}-\eqref{FBC} can be decomposed into four parts,
\be \label{case3-1}
\hat{\psi}(r) = \what{\psi_s}(r) + a I_1(|\xi|r) + b\left[ \chi \what{\psi_{BL}}(r) + \what{\psi_e}(r) \right] .
\ee
Here $(1)$\ $\widehat{\psi_s}$ is a solution to the following equation
\be \label{slip}
\left\{ \ba   & i \xi \bar{U}(r) ( \mathcal{L} - \xi^2) \what{\psi_s} - (\mathcal{L} - \xi^2)^2 \what{\psi_s} = \hat{f} = \frac{d}{dr} \what{F^z}- i\xi \what{F^r}, \\
& \what{\psi_s}(0) = \what{\psi_s}(1) = \mathcal{L} \what{\psi_s} (0) = \mathcal{L} \what{\psi_s} (1) = 0,
\ea      \right.
\ee
with the estimates
\be \label{case3-3}
\int_0^1 |\what{\psi_s}|^2 r \, dr \leq C (\Phi |\xi|)^{-\frac53} \int_0^1 |\what{\BF^*}|^2 r \, dr ,
\ee
\be \label{case3-3-1}
\int_0^1 \left|\frac{d}{dr}(r \what{\psi}_s) \right|^2  \frac{1}{r}\, dr
+ \xi^2 \int_0^1 \left| \what{\psi}_s \right|^2 r \, dr
\leq C (\Phi |\xi|)^{- \frac43 } \int_0^1 |\what{\BF^*}|^2 r \, dr .
\ee
\be \label{case3-4}
\int_0^1 | \mathcal{L} \what{\psi_s}|^2 r \, dr + \xi^2 \int_0^1 \left|  \frac{d}{dr} ( r \what{\psi_s} )\right|^2 \frac1r \, dr + \xi^4
\int_0^1 |\what{\psi_s}|^2  r \, dr \leq C ( \Phi |\xi|)^{-\frac23} \int_0^1 |\what{\BF^*}|^2 r \, dr .
\ee
\be \label{case3-5} \ba
& \int_0^1 \left| \frac{d}{dr}( r \mathcal{L} \what{\psi_s} )\right|^2 \frac1r \, dr +
\xi^2 \int_0^1 |\mathcal{L} \what{\psi_s} |^2 r \, dr + \xi^4 \int_0^1 \left|  \frac{d}{dr}( r \what{\psi_s})       \right|^2 \frac1r \, dr +\xi^6 \int_0^1 |\what{\psi_s} |^2 \, dr \\
\leq  & C \int_0^1 |\what{\BF^*}|^2 r\, dr .
\ea
\ee

$(2)$\ $I_1(\rho)$ is the modified Bessel function of the first kind, i.e.,
\be \label{eqBessel1}
\left\{
\ba & z^2 \frac{d^2}{dz^2} I_1 (z) + z \frac{d}{dz} I_1 (z)  - (z^2 + 1) I_1 (z) = 0, \\
& I_1 (0) = 0,\quad I_1(z)>0 \,\,\text{if}\,\, z>0.
\ea
\right.
\ee
Furthermore, $a$ is a constant satisfying
\be \label{case3-6}
|a| \leq C (\Phi|\xi|)^{-\frac56} I_1(\xi)^{-1} \left( \int_0^1 |\what{\BF^*}|^2 r \, dr      \right)^{\frac12}.
\ee

$(3)$ \ $\what{\psi_{BL}}$ is the boundary layer profile of the form
\be \label{case3-7}
\what{\psi_{BL}}(r)  = G_{\xi, \Phi} ( |\beta| (1 - r)) \quad \text{with}\quad |\beta|=\left(\frac{4|\xi|\Phi}{\pi}\right)^{1/3},
\ee
where $G_{\xi, \Phi}(\rho) $ is a smooth function, decaying exponentially at infinity, uniformly bounded in the set
\be \nonumber
\mcE = \left\{ ( \xi, \Phi, \rho):\ \Phi \geq 1, \ \frac{1}{\Phi} \leq |\xi| \leq \sqrt{\Phi},\  0\leq \rho < + \infty   \right\}.
\ee
Moreover,  $\chi$ is a  smooth increasing cut-off function, which satisfies
\be \label{defchi}
\chi (r) = \left\{ \ba  &  1, \ \ \ \ r\geq \frac12,  \\ & 0, \ \ \  \ r \leq \frac14,   \ea  \right.
\ee
and the constant $b$ satisfies
\be \label{case3-8}
|b|\leq C (\Phi |\xi|)^{-\frac{5}{6}} \left( \int_0^1 |\what{\BF^*}|^2 r \, dr  \right)^{\frac12}.
\ee

$(4)$\ $\widehat{\psi_e}$ is a remainder term, which satisfies
\be \label{case3-9}
\left\{  \ba  &  i\xi \bar{U}(r) ( \mathcal{L} - \xi^2) (\chi \what{\psi_{BL}} + \what{\psi_e}) - (\mathcal{L} - \xi^2)^2 (\chi \what{\psi_{BL}}+ \what{\psi_e} ) = 0,   \\
& \what{\psi_e}(0) = \what{\psi_e}(1) = \mathcal{L}\what{\psi_{e}}(0) = \mathcal{L} \what{\psi_e}(1) = 0.
\ea  \right.
\ee
Here $\chi$ is a cut-off function defined in \eqref{defchi}. And $\what{\psi_e}$ satisfies the following estimates,
\be \label{case3-10}
\int_0^1 |\what{\psi_e}|^2 r \, dr \leq C (\Phi |\xi|)^{-\frac{1}{3}},
\ee
\be \label{case3-11}
\int_0^1 |\mathcal{L} \what{\psi_e}|^2 r \, dr + \xi^2 \int_0^1 \left|\frac{d}{dr}(r \what{\psi_e}) \right|^2 \frac1r \, dr + \xi^4 \int_0^1 |\what{\psi_e}|^2 r\, dr
\leq C (\Phi |\xi|)^{\frac{17}{21}}
\ee
\be \label{case3-12}
\int_0^1 \left| \frac{d}{dr}( r \mathcal{L} \what{\psi_e}) \right|^2 \frac1r \, dr + \xi^2  \int_0^1 |\mathcal{L}\what{\psi_e}|^2 r\, dr +
\xi^4\int_0^1 \left| \frac{d}{dr}( r \what{\psi_e})  \right|^2 \frac1r \, dr + \xi^6 \int_0^1 |\what{\psi_e}|^2 r\, dr
\leq C (\Phi|\xi|)^{\frac53}.
\ee

In conclusion, $\hat{\psi}$ satisfies the following estimate,
\be \label{case3-15}
\ba
& \int_0^1 |\mathcal{L} \hat{\psi}|^2 r\, dr + \int_0^1 \left| \frac{d}{dr}( r\hat{\psi}) \right|^2 \frac1r \, dr + \int_0^1 |\hat{\psi}|^2 r \, dr \\
&\ + \int_0^1 \left| \frac{d}{dr}( r \mathcal{L} \hat{\psi}) \right|^2 \frac1r \, dr + \xi^2  \int_0^1 |\mathcal{L}\hat{\psi}|^2 r\, dr +
\xi^4\int_0^1 \left| \frac{d}{dr}( r \hat{\psi})  \right|^2 \frac1r \, dr + \xi^6 \int_0^1 |\hat{\psi}|^2 r\, dr \\
\leq & C \int_0^1 |\what{\BF^*}|^2 \, dr .
\ea
\ee

\end{pro}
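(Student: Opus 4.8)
The plan is to realise the four-part splitting \eqref{case3-1} by \emph{constructing} $\widehat{\psi_s}$, $I_1(|\xi|r)$, $\widehat{\psi_{BL}}$ and $\widehat{\psi_e}$ separately, and then choosing the scalars $a$ and $b$ so that the sum meets the no-slip conditions $\hat\psi(1)=\hat\psi'(1)=0$ at the wall (the axis conditions $\hat\psi(0)=\mathcal{L}\hat\psi(0)=0$ being automatic for every piece). I would \textbf{first} construct $\widehat{\psi_s}$ as the solution of the Navier-slip problem \eqref{slip} by the Galerkin method of Section \ref{sec-ex}, now with a basis built from eigenfunctions of $\mathcal{L}^2$ adapted to the boundary conditions $\varphi(1)=\mathcal{L}\varphi(1)=0$. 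The a priori bounds \eqref{case3-3}--\eqref{case3-5} are obtained exactly as in Lemmas \ref{lemapri1} and \ref{lemhapr}: testing \eqref{slip} against $r\overline{\widehat{\psi_s}}$, the boundary terms again vanish, the imaginary part gives $\Phi|\xi|\int_0^1\frac{1-r^2}{r}\big|\frac{d}{dr}(r\widehat{\psi_s})\big|^2\,dr+\Phi|\xi|^3\int_0^1(1-r^2)|\widehat{\psi_s}|^2r\,dr\lesssim\int_0^1|\widehat f\,\widehat{\psi_s}|\,r\,dr$, and Lemma \ref{lemmaHLP} together with the weighted interpolation Lemma \ref{weightinequality} turns the degenerate weight into genuine $L^2$-control with a gain of $(\Phi|\xi|)^{-2/3}$ per use; iterating this, and exploiting $\widehat f=\frac{d}{dr}\widehat{F^z}-i\xi\widehat{F^r}$ to move the $z$-derivative back onto $\widehat{\psi_s}$, yields the sharp powers $-\tfrac53,-\tfrac43,-\tfrac23$ in \eqref{case3-3}--\eqref{case3-4}, while multiplying the real-part identity by powers of $\xi^2$ and arguing as in Lemma \ref{lemhapr} yields the top-order estimate \eqref{case3-5}. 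The point is that the slip boundary data produces no boundary layer, so $\widehat{\psi_s}$ is honestly small in negative powers of $\Phi|\xi|$.

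\textbf{Next} I would treat the two correctors. The function $I_1(|\xi|r)$ satisfies $(\mathcal{L}-\xi^2)I_1(|\xi|r)=0$, hence solves the homogeneous version of \eqref{2-0-8} and vanishes at $r=0$ along with $\mathcal{L}$; no estimate is needed beyond the classical asymptotics of $I_1$, which I would record (in particular $\int_0^1|I_1(|\xi|r)|^2r\,dr$ and the analogous $r$-weighted norms are controlled by $\max(1,|\xi|^{-1})\,I_1(|\xi|)^2$). For the boundary layer I would freeze $\bar U(r)\approx\frac{4\Phi}{\pi}(1-r)$ near $r=1$, keep the dominant $\partial_r$-terms of $\mathcal{L}-\xi^2$, rescale $\rho=|\beta|(1-r)$ with $|\beta|=(4|\xi|\Phi/\pi)^{1/3}$, and reduce \eqref{2-0-8} to a parameter-free Airy-type equation for $W=\partial_\rho^2 G_{\xi,\Phi}$; solving it and integrating twice produces $\widehat{\psi_{BL}}(r)=G_{\xi,\Phi}(|\beta|(1-r))$ with exponential decay and derivative bounds that are uniform over $\mcE$. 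The remainder is then \emph{defined} by \eqref{case3-9}, i.e. $\widehat{\psi_e}$ solves the full operator with homogeneous data and forcing equal to $-\big[i\xi\bar U(\mathcal{L}-\xi^2)(\chi\widehat{\psi_{BL}})-(\mathcal{L}-\xi^2)^2(\chi\widehat{\psi_{BL}})\big]$; this forcing is the sum of the cut-off commutators (supported in $[\tfrac14,\tfrac12]$, hence exponentially small), the linearisation error $\big(\bar U(r)-\frac{4\Phi}{\pi}(1-r)\big)(\mathcal{L}-\xi^2)\widehat{\psi_{BL}}$, and the lower-order terms dropped in passing to the reduced equation, each estimated via the decay of $G_{\xi,\Phi}$ and powers of $|\beta|$. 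Feeding it into the energy estimates of Lemmas \ref{lemapri1}--\ref{lemhapr} for $\widehat{\psi_e}$ (whose existence again follows from Galerkin) gives \eqref{case3-10}--\eqref{case3-12}.

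\textbf{Then} I would fix $a,b$. Since $\widehat{\psi_s}(1)=0$, the conditions $\hat\psi(1)=\hat\psi'(1)=0$ become a $2\times2$ linear system for $(a,b)$ with matrix entries $I_1(|\xi|)$, $G_{\xi,\Phi}(0)$ in the first row and $|\xi|I_1'(|\xi|)$, $-|\beta|G_{\xi,\Phi}'(0)+\widehat{\psi_e}'(1)$ in the second, and right-hand side $(0,-\widehat{\psi_s}'(1))$. The Bessel column is governed by its exponentially large second entry and the boundary-layer column by its $O(|\beta|)$ derivative entry, so the determinant has the expected size and the system inverts with $|b|\lesssim|\beta|^{-1}|\widehat{\psi_s}'(1)|$ and $|a|\lesssim|\beta|^{-1}|\widehat{\psi_s}'(1)|\,I_1(|\xi|)^{-1}$; a trace estimate of the type in Lemma \ref{lemmaA2}, combined with \eqref{case3-3-1} and \eqref{case3-4}, gives $|\widehat{\psi_s}'(1)|^2\lesssim(\Phi|\xi|)^{-1}\int_0^1|\widehat{\BF^*}|^2r\,dr$, hence \eqref{case3-6} and \eqref{case3-8}. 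Finally I would add the four contributions in \eqref{case3-15}: the $\widehat{\psi_s}$-part is controlled directly by \eqref{case3-4}--\eqref{case3-5}; the $aI_1$-part because $(\mathcal{L}-\xi^2)I_1=0$ reduces every norm to $\xi$-powers of $\int_0^1|I_1(|\xi|r)|^2r\,dr$, which \eqref{case3-6} and the constraint $|\xi|\le\epsilon_1\sqrt\Phi$ absorb; and the $b\chi\widehat{\psi_{BL}}$- and $b\widehat{\psi_e}$-parts because each $r$-derivative costs $|\beta|\sim(\Phi|\xi|)^{1/3}$ and, after multiplication by $|b|^2\lesssim(\Phi|\xi|)^{-5/3}\int_0^1|\widehat{\BF^*}|^2r\,dr$ and use of \eqref{case3-10}--\eqref{case3-12} and both constraints $\frac{1}{\epsilon_1\Phi}\le|\xi|\le\epsilon_1\sqrt\Phi$ (so that $\Phi|\xi|\ge1/\epsilon_1$), all residual powers of $\Phi|\xi|$ come out non-positive.

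The heart of the argument, and the step I expect to be the main obstacle, is the boundary-layer analysis in the second paragraph: producing $G_{\xi,\Phi}$ with decay and derivative estimates that are truly \emph{uniform} over the whole range $\frac{1}{\epsilon_1\Phi}\le|\xi|\le\epsilon_1\sqrt\Phi$, and then estimating $\widehat{\psi_e}$ sharply enough that the competition among the three scales $\Phi$, $|\xi|$, $|\beta|$ reproduces precisely the exponents $-\tfrac13$, $\tfrac{17}{21}$, $\tfrac53$ in \eqref{case3-10}--\eqref{case3-12} — these have to be just small enough that, once multiplied by $|b|^2\lesssim(\Phi|\xi|)^{-5/3}$, the closing inequality \eqref{case3-15} holds with a constant independent of $\Phi$ and $|\xi|$.
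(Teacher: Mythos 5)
Your plan follows essentially the same route as the paper's proof: the slip problem for $\what{\psi_s}$ with the $(\Phi|\xi|)^{-2/3}$-gain from Lemmas \ref{lemmaHLP} and \ref{weightinequality} and the trace bound $|\frac{d}{dr}(r\what{\psi_s})(1)|\lesssim(\Phi|\xi|)^{-1/2}\|\what{\BF^*}\|_{L^2_r}$; the Airy-type boundary layer in the variable $\rho=|\beta|(1-r)$; the remainder $\what{\psi_e}$ driven by the cut-off and linearisation errors and estimated through the slip-problem lemma; and the $2\times2$ system for $(a,b)$ inverted because the boundary-layer derivative entry is of size $|\beta|$. One caveat: the invertibility does not come from the Bessel column being "exponentially large" (for $|\xi|$ near $\frac{1}{\epsilon_1\Phi}$ it is not); it comes from a genuine non-degeneracy lower bound $|\frac{d}{dr}\what{\psi_{BL}}(1)|\geq\kappa|\beta|$, which in the paper requires the quantitative Airy-integral estimate of Lemma \ref{bound} together with the smallness of $\epsilon_1$ to absorb the term $|\xi|I_1'(|\xi|)/I_1(|\xi|)\leq 1+|\xi|$ — a point your sketch should make explicit.
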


Before the proof of Proposition \ref{Bpropcase3}, we  consider the linear problem \eqref{slip}, which is similar to \eqref{2-0-8}--\eqref{FBC}, but with a different boundary condition. The vorticity vanishes on the boundary for this problem.
\begin{lemma}\label{propslip}
Given $\hat{f} \in L_r^2(0, 1)$, the system \eqref{slip} admits a unique solution $\what{\psi_s}$  satisfying  the estimates
\be \label{propslip1-1}
\int_0^1 |\what{\psi_s}|^2 r \, dr \leq C (\Phi |\xi|)^{-2} \int_0^1 |\hat{f}|^2 r \, dr ,
\ee
\be \label{propslip1-2}
\int_0^1 | \mathcal{L} \what{\psi_s}|^2 r \, dr + \xi^2 \int_0^1 \left|  \frac{d}{dr} ( r \what{\psi_s} )\right|^2 \frac1r \, dr + \xi^4
\int_0^1 |\what{\psi_s}|^2  r \, dr \leq C ( \Phi |\xi|)^{-\frac67} \int_0^1 |\hat{f}|^2 r \, dr .
\ee
\be \label{propslip1-3} \ba
& \int_0^1 \left| \frac{d}{dr}( r \mathcal{L} \what{\psi_s} )\right|^2 \frac1r \, dr +
\xi^2 \int_0^1 |\mathcal{L} \what{\psi_s} |^2 r \, dr + \xi^4 \int_0^1 \left|  \frac{d}{dr}( r \what{\psi_s})       \right|^2 \frac1r \, dr +\xi^6 \int_0^1 |\what{\psi_s} |^2 \, dr \\
\leq  & C \int_0^1 |\hat{f}|^2 r\, dr .
\ea
\ee

Moreover, the solution $\what{\psi_s}$ also satisfies the estimates \eqref{case3-3}--\eqref{case3-5} and
\begin{equation}\label{propslip2}
\left| \frac{d}{dr} ( r \widehat{\psi_s} ) \right| (1) \leq C ( \Phi |\xi|)^{-\frac12} \left( \int_0^1 |\what{\BF^*}|^2 r \, dr\right)^{\frac12} .
\end{equation}
\end{lemma}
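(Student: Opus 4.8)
The plan is to exploit the factorization forced by the Navier-type boundary conditions. Setting $\omega_s:=(\mathcal{L}-\xi^2)\what{\psi_s}$ and inserting the data $\what{\psi_s}(0)=\what{\psi_s}(1)=\mathcal{L}\what{\psi_s}(0)=\mathcal{L}\what{\psi_s}(1)=0$ into \eqref{slip}, the pair $(\what{\psi_s},\omega_s)$ solves the two second order problems $(\mathcal{L}-\xi^2)\omega_s=i\xi\bar{U}(r)\omega_s-\hat f$ with $\omega_s(0)=\omega_s(1)=0$, and $(\mathcal{L}-\xi^2)\what{\psi_s}=\omega_s$ with $\what{\psi_s}(0)=\what{\psi_s}(1)=0$. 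The first is \emph{exactly} the swirl equation \eqref{swirl-2} with $\hat f$ in place of $\widehat{F^\theta}$, so existence, uniqueness and the entire estimate chain \eqref{swirl-4}--\eqref{swirl-16-1} of Section~\ref{sec-swirl} apply verbatim to $\omega_s$; the second is an elementary Dirichlet problem for the shifted Bessel operator $\mathcal{L}-\xi^2$, for which existence, uniqueness and the resolvent bound $\|(\mathcal{L}-\xi^2)^{-1}\|_{L_r^2\to L_r^2}\le C(1+\xi^2)^{-1}$ are classical ($-\mathcal{L}$ being positive with a spectral gap on $(0,1)$ under these boundary conditions). This yields the unique $\what{\psi_s}$, and the identity $\mathcal{L}\what{\psi_s}=\omega_s+\xi^2\what{\psi_s}$ shows it obeys all four conditions in \eqref{slip}; alternatively one may run the Galerkin scheme of Section~\ref{sec-ex} on \eqref{slip} directly, using the eigenbasis of $\mathcal{L}^2$ under the Navier conditions built exactly as in Lemma~\ref{lemma3-3-2}.

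For \eqref{propslip1-1} I would argue directly, as in Lemma~\ref{lemapri1}: multiplying \eqref{slip} by $r\,\overline{\what{\psi_s}}$ and integrating by parts, all boundary terms vanish (now because $\what{\psi_s}(1)=\mathcal{L}\what{\psi_s}(1)=\bar{U}(1)=0$ and by the compatibility at $r=0$), so one recovers the real-part identity \eqref{2-1-5} and the imaginary-part identity \eqref{2-1-7} with $\what{\psi}$ replaced by $\what{\psi_s}$, the term $\Re\int_0^1\frac{d}{dr}(r\what{\psi_s})\,r\overline{\what{\psi_s}}\,dr$ again being zero; inserting $\bar{U}(r)=\frac{2\Phi}{\pi}(1-r^2)$ into \eqref{2-1-7} and applying Lemma~\ref{lemmaHLP} and Cauchy--Schwarz then gives \eqref{propslip1-1} exactly as \eqref{2-1-8-1} was obtained (the $\Phi|\xi|$-prefactor that makes Lemma~\ref{lemmaHLP} usable is present precisely because the equation is fourth order).

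The decay estimates \eqref{propslip1-2}--\eqref{propslip1-3} are the heart of the matter, and I expect them to be the main obstacle: a naive use of \eqref{2-1-5} only bounds the left side of \eqref{propslip1-2} by $C\int_0^1|\hat f|^2 r\,dr$, with no power of $\Phi|\xi|$, whereas Proposition~\ref{Bpropcase3} needs a genuine negative power, enough for $\xi^2$ times these quantities to stay bounded on $\tfrac1{\epsilon_1\Phi}\le|\xi|\le\epsilon_1\sqrt\Phi$. Through the factorization this becomes painless: the swirl estimates applied to $\omega_s$ give $\int_0^1|\omega_s|^2 r\,dr\le C(\Phi|\xi|)^{-4/3}\int_0^1|\hat f|^2 r\,dr$ (analogue of \eqref{swirl-15-2}) and $\int_0^1\frac1r|\frac{d}{dr}(r\omega_s)|^2\,dr+\xi^2\int_0^1|\omega_s|^2 r\,dr\le C\int_0^1|\hat f|^2 r\,dr$ (analogue of \eqref{swirl-6}); then, since $\what{\psi_s}=(\mathcal{L}-\xi^2)^{-1}\omega_s$ with $\|\what{\psi_s}\|_{L_r^2}\le C(1+\xi^2)^{-1}\|\omega_s\|_{L_r^2}$, testing $(\mathcal{L}-\xi^2)\what{\psi_s}=\omega_s$ against $r\overline{\what{\psi_s}}$ and combining with $\mathcal{L}\what{\psi_s}=\omega_s+\xi^2\what{\psi_s}$ bounds each of the three terms in \eqref{propslip1-2} by $C\int_0^1|\omega_s|^2 r\,dr\le C(\Phi|\xi|)^{-4/3}\int_0^1|\hat f|^2 r\,dr$, which is $\le C(\Phi|\xi|)^{-6/7}\int_0^1|\hat f|^2 r\,dr$ since $\Phi|\xi|\ge\epsilon_1^{-1}>1$ on the intermediate range. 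For \eqref{propslip1-3} one expands $\frac{d}{dr}(r\mathcal{L}\what{\psi_s})=\frac{d}{dr}(r\omega_s)+\xi^2\frac{d}{dr}(r\what{\psi_s})$ and $\mathcal{L}\what{\psi_s}=\omega_s+\xi^2\what{\psi_s}$, reinserts \eqref{propslip1-2} and the second $\omega_s$-bound, and absorbs the leftover pieces via $\xi^2(\Phi|\xi|)^{-4/3}=\xi^{2/3}\Phi^{-4/3}\le\Phi^{-1}\le1$ on the intermediate range. A self-contained route avoiding the spectral gap would instead feed the weighted imaginary-part bounds $\int_0^1\frac{1-r^2}{r}|\frac{d}{dr}(r\what{\psi_s})|^2\,dr+\xi^2\int_0^1(1-r^2)|\what{\psi_s}|^2 r\,dr\le C(\Phi|\xi|)^{-2}\int_0^1|\hat f|^2 r\,dr$ into Lemma~\ref{weightinequality} and a Hardy inequality at the wall and bootstrap \eqref{2-1-5}; this is the delicate interpolation argument in which the exact exponent gets pinned down.

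Finally, the $\what{\BF^*}$-versions \eqref{case3-3}--\eqref{case3-5}, which are needed because $\hat f=\frac{d}{dr}\what{F^z}-i\xi\what{F^r}$ is of curl type and need not lie in $L_r^2$, follow by repeating the above with $\hat f$ integrated by parts whenever it is paired against a function vanishing at $r=1$ --- e.g. $\int_0^1\hat f\,\overline{\omega_s}\,r\,dr=-\int_0^1\what{F^z}\,\frac{d}{dr}(r\overline{\omega_s})\,dr-i\xi\int_0^1\what{F^r}\,\overline{\omega_s}\,r\,dr$, legitimate since $\omega_s(1)=0$, so that only $\what{\BF^*}$ itself enters the source pairings --- and then combining the resulting real- and imaginary-part inequalities for $\int_0^1|\what{\psi_s}|^2 r\,dr$, $\int_0^1\frac1r|\frac{d}{dr}(r\what{\psi_s})|^2\,dr$ and $\int_0^1|\mathcal{L}\what{\psi_s}|^2 r\,dr$ with Lemma~\ref{weightinequality}, the Hardy estimate and the resolvent bound, and solving the resulting system, out of which the hierarchy $-\tfrac53,-\tfrac43,-\tfrac23,0$ of \eqref{case3-3}--\eqref{case3-5} falls. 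The trace bound \eqref{propslip2} then comes from a one-dimensional trace inequality on $[\tfrac12,1]$, $|\frac{d}{dr}(r\what{\psi_s})(1)|^2\le C(\int_0^1\tfrac1r|\tfrac{d}{dr}(r\what{\psi_s})|^2\,dr)^{1/2}(\int_0^1|\mathcal{L}\what{\psi_s}|^2 r\,dr+\int_0^1\tfrac1r|\tfrac{d}{dr}(r\what{\psi_s})|^2\,dr)^{1/2}+C\int_0^1\tfrac1r|\tfrac{d}{dr}(r\what{\psi_s})|^2\,dr$ (using $\frac{d^2}{dr^2}(r\what{\psi_s})=r\mathcal{L}\what{\psi_s}+\frac1r\frac{d}{dr}(r\what{\psi_s})$ and that the weights are comparable to $1$ near $r=1$), into which \eqref{case3-3-1} and \eqref{case3-4} feed the rates $(\Phi|\xi|)^{-4/3}$ and $(\Phi|\xi|)^{-2/3}$, whose geometric mean is the claimed $(\Phi|\xi|)^{-1/2}$.
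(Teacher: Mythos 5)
Your factorization $\omega_s:=(\mathcal{L}-\xi^2)\what{\psi_s}$ is a genuinely different route from the paper, which works throughout with the fourth-order equation, testing it against $r\overline{\what{\psi_s}}$ and $r\mathcal{L}\overline{\what{\psi_s}}$ and extracting the gain $(\Phi|\xi|)^{-6/7}$ from a delicate splitting of the convection term over $[0,\tfrac12]$ and $[\tfrac12,1]$ with Hardy's inequality. Your observation that the Navier conditions make $\omega_s$ satisfy homogeneous Dirichlet conditions and solve \emph{exactly} the swirl equation \eqref{swirl-2} is correct and attractive: the swirl estimates \eqref{swirl-6} and \eqref{swirl-15-2} transfer verbatim to $\omega_s$, and the elementary bounds $\int_0^1\frac1r|\frac{d}{dr}(r\what{\psi_s})|^2\,dr+\xi^2\int_0^1|\what{\psi_s}|^2r\,dr\le \|\omega_s\|_{L^2_r}\|\what{\psi_s}\|_{L^2_r}$ together with $\xi^2\|\what{\psi_s}\|_{L^2_r}\le\|\omega_s\|_{L^2_r}$ do yield \eqref{propslip1-2} with the even better exponent $-\tfrac43$ and \eqref{propslip1-3} with no extra work (for $\Phi|\xi|\ge1$; for $\Phi|\xi|<1$ you should say explicitly that the trivial energy bound $\le C\|\hat f\|^2_{L^2_r}$ already implies the claim, since the lemma as stated carries no frequency restriction). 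Existence, uniqueness, \eqref{propslip1-1} by the direct imaginary-part/Lemma~\ref{lemmaHLP} argument, and the trace bound \eqref{propslip2} from the rates $-\tfrac43$ and $-\tfrac23$ are all fine.

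The gap is in \eqref{case3-3} and \eqref{case3-3-1}, which is where the precise exponents actually matter (they feed into \eqref{case3-6} and \eqref{case3-8} and hence into \eqref{case3-15}). The factorization alone cannot produce them: from $\int_0^1\frac1r|\frac{d}{dr}(r\what{\psi_s})|^2\,dr+\xi^2\int_0^1|\what{\psi_s}|^2r\,dr\le\|\omega_s\|_{L^2_r}\|\what{\psi_s}\|_{L^2_r}\le\|\omega_s\|_{L^2_r}^2$ and the swirl-type bound $\|\omega_s\|^2_{L^2_r}\le C(\Phi|\xi|)^{-2/3}\int_0^1|\what{\BF^*}|^2r\,dr$ you only reach the exponent $-\tfrac23$ for the left side of \eqref{case3-3-1}, and even adding the plain imaginary-part inequality $\Phi|\xi|\|\what{\psi_s}\|^2_{L^2_r}\le C\|\what{\BF^*}\|_{L^2_r}(\int_0^1\frac1r|\frac{d}{dr}(r\what{\psi_s})|^2\,dr+\xi^2\int_0^1|\what{\psi_s}|^2r\,dr)^{1/2}$ and iterating only gets you to $-\tfrac{10}{9}$, short of the claimed $-\tfrac43$. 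The missing step is the one the paper carries out in \eqref{B66-1}--\eqref{B66-4}: one must first bound the $(1-r^2)$-\emph{weighted} quantities $\Phi|\xi|\int_0^1\frac{1-r^2}{r}|\frac{d}{dr}(r\what{\psi_s})|^2\,dr+\Phi|\xi|^3\int_0^1(1-r^2)|\what{\psi_s}|^2r\,dr$ by the right-hand side of the imaginary-part identity, then convert weighted to unweighted via Lemma~\ref{weightinequality} (whose interpolation exponents $\tfrac23,\tfrac13$ are exactly what turns $-\tfrac53$ and $-\tfrac23$ into $-\tfrac43$), and only then close the resulting algebraic system; \eqref{case3-3} with $-\tfrac53$ then follows from Lemma~\ref{lemmaHLP} and \eqref{case3-3-1}. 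You name these ingredients as an ``alternative self-contained route'' but do not execute the bootstrap or verify that the exponents $-\tfrac53,-\tfrac43$ actually come out; since ``solving the resulting system'' is precisely where the work is, this part needs to be written out.
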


\begin{proof}
We first show the a priori estimates \eqref{propslip1-1}-\eqref{propslip1-3}.  Similar to the proof of Lemma \ref{lemapri1}, multiplying the equation in \eqref{slip} by $r\overline{\what{\psi_s}} $ and integrating over $[0, 1]$ yield
\be \label{5-51}
\xi \int_0^1 \frac{\bar{U}(r)}{r} \left|\frac{d}{dr} ( r \what{\psi_s})   \right|^2 \, dr + \xi^3
\int_0^1 \bar{U}(r) |\what{\psi_s}|^2 r \, dr = - \Im \int_0^1 \hat{f} \overline{\what{\psi_s}} r \, dr, \ee
and
\be \label{5-52} \ba
& \int_0^1 |\mathcal{L} \what{\psi_s} |^2 r \, dr + 2\xi^2 \int_0^1 \left| \frac{d}{dr} ( r \what{\psi_s} )\right|^2 \frac1r \, dr + \xi^4 \int_0^1 |\what{\psi_s}|^2 r \, dr \\
= & - \Re \int_0^1 \hat{f} \overline{\what{\psi_s}} r \, dr - \frac{4 \Phi}{\pi} \xi \Im \int_0^1
\left[ \frac{d}{dr}( r \what{\psi_s}) r \overline{\what{\psi_s}}  \right] \, dr .
\ea
\ee
The inequality \eqref{5-51} together with Lemma \ref{lemmaHLP} implies
\be \label{5-53}
\Phi |\xi| \int_0^1 \left|  \frac{d}{dr}( r \what{\psi_s} ) \right|^2 \frac{1 - r^2}{r} \, dr + \Phi |\xi| \int_0^1 |\what{\psi_s}|^2 r \, dr
\leq C \int_0^1 |\hat{f}| |\what{\psi_s}| r\, dr.
\ee
By  H\"older inequality, one has
\be \label{5-55}
\Phi^2 \xi^2 \int_0^1 \left|  \frac{d}{dr}( r \what{\psi_s} ) \right|^2 \frac{1 - r^2}{r} \, dr + \Phi^2 \xi^2  \int_0^1 |\what{\psi_s}|^2 r \, dr
\leq C \int_0^1 |\hat{f}|^2  r\, dr.
\ee
Consequently, one has
\be \label{5-56} \ba
 & \frac{4\Phi}{\pi} |\xi| \int_0^{\frac12} \left| \frac{d}{dr}(r \what{\psi_s} ) \right| | r\what{\psi_s}| \, dr \\
\leq & C \Phi |\xi| \left(  \int_0^{\frac12} \left| \frac{d}{dr} ( r \what{\psi_s})  \right|^2 \frac{1 - r^2}{r} \, dr       \right)^{\frac12 } \left( \int_0^{\frac12} |r \what{\psi_s}|^2 \frac{r}{1 - r^2} \, dr  \right)^{\frac12}\\
\leq & C \Phi |\xi| \left(  \int_0^1 \left| \frac{d}{dr} ( r \what{\psi_s} )   \right|^2 \frac{1-r^2}{r} \, dr     \right)^{\frac12} \left( \int_0^1  |\what{\psi_s} |^2 r \, dr  \right)^{\frac12} \\
\leq & C \Phi |\xi|  \left(  \int_0^1 \left| \frac{d}{dr} ( r \what{\psi_s} )   \right|^2 \frac{1-r^2}{r} \, dr     \right)^{\frac12} \left( \int_0^1  |\mathcal{L} \what{\psi_s} |^2 r \, dr  \right)^{\frac18} \left( \int_0^1  |\what{\psi_s} |^2 r \, dr  \right)^{\frac38} \\\\
\leq & \frac14 \int_0^1 |\mathcal{L} \what{\psi_s} |^2 r \, dr +  C (\Phi |\xi|)^{\frac17}  \int_0^1 |\hat{f}| |\hat{\psi_s}| r \, dr.
\ea \ee
Furthermore, it follows from  Hardy's inequality, Lemma \ref{lemma1} and \eqref{5-53} that one has
\be \label{5-57}
\ba
 & \frac{4\Phi}{\pi} |\xi| \int_{\frac12}^{1} \left| \frac{d}{dr}(r \what{\psi_s} ) \right| | r\what{\psi_s}| \, dr \\
\leq & C \Phi |\xi| \left(  \int_{\frac12}^{1} \left| \frac{d}{dr} ( r \what{\psi_s})  \right|^2 \frac{1 - r^2}{r} \, dr       \right)^{\frac12 } \left( \int_{\frac12}^1 \frac{|r\what{\psi_s}|^2}{(1 - r)^2 } \, dr  \right)^{\frac14} \left(  \int_{\frac12}^1 |\what{\psi_s}|^2 r \, dr \right)^{\frac14}\\
\leq & C \Phi |\xi| \left(  \int_{0}^{1} \left| \frac{d}{dr} ( r \what{\psi_s})  \right|^2 \frac{1 - r^2}{r} \, dr       \right)^{\frac12 } \left( \int_0^1 \left| \frac{d}{dr} ( r \what{\psi_s} ) \right|^2 \frac1r \, dr    \right)^{\frac14} \left(  \int_{0}^1 |\what{\psi_s}|^2 r \, dr \right)^{\frac14}\\
\leq & C \Phi |\xi| \left(  \int_{0}^{1} \left| \frac{d}{dr} ( r \what{\psi_s})  \right|^2 \frac{1 - r^2}{r} \, dr       \right)^{\frac12 } \left( \int_0^1 |\mathcal{L} \what{\psi_s}|^2 r \, dr   \right)^{\frac18} \left(   \int_0^1 |\what{\psi_s}|^2 r\, dr   \right)^{\frac38}\\
\leq & \frac14 \int_0^1 |\mathcal{L} \what{\psi_s}|^2 r \,dr + C ( \Phi |\xi|)^{\frac17} \int_0^1 |\hat{f}| |\what{\psi_s}| r \, dr .
\ea \ee
Hence, taking the estimates \eqref{5-56}--\eqref{5-57} into \eqref{5-52} gives
\be \label{5-58}
\int_0^1 |\mathcal{L} \what{\psi_s} |^2 r \, dr + \xi^2 \int_0^1 \left|  \frac{d}{dr} ( r \what{\psi_s})  \right|^2 \frac1r \, dr
+ \xi^4 \int_0^1 |\what{\psi_s}|^2 r \, dr
\leq C ( \Phi |\xi|)^{\frac17} \int_0^1 |\hat{f}| |\what{\psi_s}| r \, dr ,
\ee
which together with \eqref{5-53} implies that
\be \label{5-59}
\int_0^1 |\mathcal{L} \what{\psi_s} |^2 r \, dr + \xi^2 \int_0^1 \left|  \frac{d}{dr} ( r \what{\psi_s})  \right|^2 \frac1r \, dr
+ \xi^4 \int_0^1 |\what{\psi_s}|^2 r \, dr
\leq C (\Phi |\xi|)^{-\frac67}  \int_0^1 |\hat{f}|^2 r \, dr .
\ee

Moreover, multiplying \eqref{5-58} by $\xi^2$ and applying Lemma \ref{lemma1} yield
\be \label{5-61} \ba
& \xi^2 \int_0^1 |\mathcal{L} \what{\psi_s}|^2 r \, dr + \xi^4  \int_0^1 \left|  \frac{d}{dr} ( r \what{\psi_s})  \right|^2 \frac1r \, dr
+ \xi^6 \int_0^1 |\what{\psi_s}|^2 r \, dr \\
\leq & C (\Phi |\xi|)^{\frac17} \left( \int_0^1 |\hat{f}|^2 r \, dr \right)^{\frac12}
\left(  \int_0^1 |\what{\psi_s}|^2 r \, dr \right)^{\frac16} \left( \xi^6 \int_0^1 |\what{\psi_s}|^2 r\, dr  \right)^{\frac13} \\
\leq & \frac12 \xi^6 \int_0^1 |\what{\psi_s}|^2 r \, dr + C  ( \Phi |\xi|)^{\frac{3}{14}} \left( \int_0^1 |\hat{f}|^2 r \, dr\right)^{\frac34} \left( \int_0^1 | \mathcal{L} \what{\psi_s}|^2 r \, dr   \right)^{\frac14} \\
\leq & \frac12 \xi^6 \int_0^1 |\what{\psi_s}|^2 r \, dr + C   \int_0^1 |\hat{f}|^2 r \, dr ,
\ea
\ee
where the last inequality is due to \eqref{propslip1-2}. Therefore, one has
\be \label{5-62}
\xi^2 \int_0^1 |\mathcal{L}\what{\psi_s}|^2 r \, dr + \xi^4 \int_0^1 \left| \frac{d}{dr} ( r \what{\psi_s}) \right|^2 \frac1r \, dr
+ \xi^6 \int_0^1 |\what{\psi_s}|^2 r \, dr
\leq C \int_0^1 |\hat{f}|^2 r \, dr .
\ee

 Multiplying the equation in \eqref{slip} by $r\mathcal{L} \overline{\what{\psi_s}} $ and integrating over $[0, 1]$ give
\be \label{5-65} \ba
&\int_0^1 \left| \frac{d}{dr}( r \mathcal{L} \what{\psi_s}) \right|^2 \frac1r \, dr + 2\xi^2 \int_0^1 |\mathcal{L} \what{\psi_s}|^2 r \, dr
+ \xi^4 \int_0^1 \left| \frac{d}{dr}(r \what{\psi_s} )  \right|^2  \frac1r \, dr \\
= & - \Re \int_0^1 \hat{f} \mathcal{L} \overline{\what{\psi_s}} r \, dr + \frac{4 \Phi}{\pi }\xi^3 \Im \int_0^1 \left[\frac{d}{dr}( r \what{\psi_s} ) r \overline{\what{\psi_s} }  \right] \, dr.
\ea \ee
Note that
\be \label{5-66}
\left| \frac{4 \Phi}{\pi }\xi^3 \int_0^1 \left[\frac{d}{dr}( r \what{\psi_s} ) r \overline{\what{\psi_s} }  \right] \, dr \right|
\leq \frac12 \xi^4 \int_0^1 \left|  \frac{d}{dr} ( r \what{\psi_s})   \right|^2 \frac1r \, dr + \Phi^2 \xi^2 \int_0^1 |\what{\psi_s}|^2 r \, dr.
\ee
Hence, one has
\be \label{5-67}
\int_0^1 \left| \frac{d}{dr}( r \mathcal{L} \what{\psi_s}) \right|^2 \frac1r \, dr + \xi^2 \int_0^1 |\mathcal{L} \what{\psi_s}|^2 r \, dr
+ \xi^4 \int_0^1 \left| \frac{d}{dr}(r \what{\psi_s} )  \right|^2  \frac1r \, dr
\leq C \int_0^1 |\hat{f}|^2 r \, dr .
\ee

Next, we establish some a priori  estimates in terms of $\BF^*$.
Multiplying the equation in \eqref{slip} by $\xi^2r\overline{\what{\psi_s}} $ and integrating over $[0, 1]$ yield
\be \label{B-51}
\xi^3 \int_0^1 \frac{\bar{U}(r)}{r} \left|\frac{d}{dr} ( r \what{\psi_s})   \right|^2 \, dr + \xi^5
\int_0^1 \bar{U}(r) |\what{\psi_s}|^2 r \, dr = - \xi^2\Im \int_0^1 \hat{f} \overline{\what{\psi_s}} r \, dr, \ee
and
\be \label{B-52}
\begin{aligned}
& \xi^2\int_0^1 |\mathcal{L} \what{\psi_s} |^2 r \, dr + 2\xi^4 \int_0^1 \left| \frac{d}{dr} ( r \what{\psi_s} )\right|^2 \frac1r \, dr + \xi^6 \int_0^1 |\what{\psi_s}|^2 r \, dr \\
= & - \xi^2\Re \int_0^1 \hat{f} \overline{\what{\psi_s}} r \, dr - \frac{4 \Phi}{\pi} \xi^3 \Im \int_0^1
\left[ \frac{d}{dr}( r \what{\psi_s}) r \overline{\what{\psi_s}}  \right] \, dr .
\end{aligned}
\ee
If one multiplies the both sides by $\mathcal{L} \overline{ \what{\psi_s} }r$ and integrates over $[0, 1]$, then one has
\be \label{B-51.5}
\begin{aligned}
&i\xi^3 \int_0^1 \frac{\bar{U}(r)}{r} \left|\frac{d}{dr} ( r \what{\psi_s})   \right|^2 \, dr + i\xi
\int_0^1 \bar{U}(r) |\mathcal{L}\what{\psi_s}|^2 r \, dr  -i \frac{4 \Phi}{\pi} \xi^3 \int_0^1
\left[ \frac{d}{dr}( r \overline{\what{\psi_s}}) r {\what{\psi_s}}  \right] \, dr\\
&\quad + \int_0^1 \frac{1}{r}\left|\frac{d}{dr}(r\mathcal{L} \what{\psi_s}) \right|^2 r \, dr
+2\xi^2 \int_0^1 \frac{1}{r}\left|\mathcal{L} \what{\psi_s} \right|^2 r \, dr
+ \xi^4 \int_0^1 \left| \frac{d}{dr} ( r \what{\psi_s} )\right|^2 \frac1r \, dr  \\
= &  \int_0^1 \hat{f} \mathcal{L}\overline{\what{\psi_s}} r \, dr.
\end{aligned}
\ee
Summing \eqref{B-52}-\eqref{B-51.5} together yields
\begin{equation}
\begin{aligned}
&\xi^2\int_0^1 |\mathcal{L} \what{\psi_s} |^2 r \, dr + 2\xi^4 \int_0^1 \left| \frac{d}{dr} ( r \what{\psi_s} )\right|^2 \frac1r \, dr + \xi^6 \int_0^1 |\what{\psi_s}|^2 r \, dr \\
&\quad + \int_0^1 \frac{1}{r}\left|\frac{d}{dr}(r\mathcal{L} \what{\psi_s}) \right|^2 r \, dr
+2\xi^2 \int_0^1 \left|\mathcal{L} \what{\psi_s} \right|^2 r \, dr
+ \xi^4 \int_0^1 \left| \frac{d}{dr} ( r \what{\psi_s} )\right|^2 \frac1r \, dr  \\
= & \Re \int_0^1 \hat{f} \mathcal{L}\overline{\what{\psi_s}} r \, dr
 - \xi^2\Re \int_0^1 \hat{f} \overline{\what{\psi_s}} r \, dr.
 \end{aligned}
\end{equation}

Note that
\begin{equation}
\begin{aligned}
\left|\int_0^1 \hat{f} \mathcal{L}\overline{\what{\psi_s}} r \, dr\right|\leq & \left|\int_0^1 \what{F^r} i\xi \mathcal{L}\bar{\hat{\psi}}r +\what{F^z}
\frac{d}{dr}(r\mathcal{L}\overline{\what{\psi_s}})\, dr\right|\\
\leq & 2 \int_0^1 |\what{\BF^*}|^2 r \, dr +\frac14   \int_0^1  \left|\frac{d}{dr}(r\mathcal{L} \what{\psi_s}) \right|^2 \frac1r  \, dr
+\frac14 \xi^2 \int_0^1 \left|\mathcal{L} \what{\psi_s} \right|^2 r \, dr ,
\end{aligned}
\end{equation}
and
\begin{equation}
\begin{aligned}
\left|\int_0^1 \hat{f} \xi^2 \overline{\what{\psi_s}} r \, dr\right|\leq & \left|\int_0^1 \what{F^r} i\xi^3 \overline{\what{\psi_s}}r +\what{F^z} \xi^2\frac{d}{dr} (r\overline{\what{\psi_s}})dr\right|\\
\leq & 2 \int_0^1 |\what{\BF^*}|^2 rdr + \frac14 \xi^4 \int_0^1 \left|\frac{d}{dr}(r \what{\psi}_s) \right|^2 \frac1r  \, dr
+\frac14 \xi^6 \int_0^1 \left| \what{\psi}_s \right|^2 r \, dr
\end{aligned}
\end{equation}
Therefore, we have
\begin{equation}  \label{B-58}
\ba
&\xi^2\int_0^1 |\mathcal{L} \what{\psi_s} |^2 r \, dr + \xi^4 \int_0^1 \left| \frac{d}{dr} ( r \what{\psi_s} )\right|^2 \frac1r \, dr + \xi^6 \int_0^1 |\what{\psi_s}|^2 r \, dr  +  \int_0^1 \left|\frac{d}{dr}(r\mathcal{L} \what{\psi_s}) \right|^2  \frac1r \, dr  \\
\leq  &  C\int_0^1| \what{\BF^*}|^2  r \, dr.
 \ea
\end{equation}
This is exactly the estimate \eqref{case3-5}.

Furthermore, it follows from \eqref{B-51.5} that
\begin{equation} \label{B-59}
\begin{aligned}
&\frac{2\Phi |\xi|^3}{\pi}\int_0^1 \frac{1-r^2}{r} \left|\frac{d}{dr}(r\what{\psi_s})\right|^2 \, dr + \frac{2\Phi|\xi|}{\pi}\int_0^1 (1-r^2)|\mathcal{L}\what{\psi_s}|^2 r \, dr\\
=&\Im \int_0^1 \hat{f}\mathcal{L}\overline{ \what{\psi_s} } r\, dr\\
\leq & 2 \left(  \int_0^1 |\what{\BF^*}|^2 r \, dr \right)^{\frac12} \left[ \xi^2 \int_0^1 |\mathcal{L} \what{\psi_s}|^2 r \, dr + \int_0^1 \left|\frac{d}{dr} ( r \mathcal{L} \what{\psi_s} ) \right|^2 \frac1r \, dr  \right]^{\frac12}\\
\leq & C \int_0^1 |\what{\BF^*}|^2 r\, dr,
\end{aligned}
\end{equation}
where we used \eqref{B-58} to get the last inequality.
Hence, it follows from Lemma \ref{weightinequality}, Lemma \ref{lemma1} and \eqref{B-58}-\eqref{B-59} that one has
\begin{equation} \label{B-60}
\begin{aligned}
\int_0^1 |\mathcal{L}\what{\psi_s}|^2 r \, dr \leq& C \int_0^1 (1-r^2)|\mathcal{L}\what{\psi_s}|^2 r\, dr\\
&\quad + C\left(\int_0^1 (1-r^2)|\mathcal{L}\what{\psi_s}|^2 r \, dr\right)^{\frac23}\left(\int_0^1 \left|\frac{d}{dr}(r\mathcal{L}\what{\psi_s})\right|^2 \frac1r \, dr\right)^{\frac13}\\
\leq & C\left(\int_0^1 (1-r^2)|\mathcal{L}\what{\psi_s}|^2 r \, dr\right)^{\frac23}\left(\int_0^1 \left|\frac{d}{dr}(r\mathcal{L}\what{\psi_s})\right|^2 \frac1r \, dr\right)^{\frac13} \\
\leq & C (\Phi |\xi|)^{-\frac23} \int_0^1 |\hat{\BF}|^2 r \, dr.
\end{aligned}
\end{equation}
Using Lemmas \ref{weightinequality} and \ref{lemma1} again gives
\begin{equation} \label{B-61}
\begin{aligned}
\xi^2\int_0^1 |\what{\psi_s}|^2 r \, dr \leq & C \xi^2\int_0^1 (1-r^2)|\what{\psi_s}|^2 r \, dr\\
&\quad + C\xi^2\left(\int_0^1 (1-r^2)|\what{\psi_s}|^2 r \,  dr\right)^{\frac23}\left(\int_0^1 \left|\frac{d}{dr}(r\what{\psi_s})\right|^2  \frac1r \, dr\right)^{\frac13} \\
\leq & C\xi^2\left(\int_0^1 (1-r^2)|\what{\psi_s}|^2 r \,  dr\right)^{\frac23}\left(\int_0^1 \left|\frac{d}{dr}(r\what{\psi_s})\right|^2  \frac1r \, dr\right)^{\frac13}.
\end{aligned}
\end{equation}
According to \eqref{B-51} and \eqref{B-58}, one has
\begin{equation} \label{B-62}
\begin{aligned}
&\xi^2 \left(\Phi|\xi|\int_0^1 \frac{1-r^2}{r} \left|\frac{d}{dr}(r\what{\psi_s})\right|^2 \, dr +\Phi|\xi|^3 \int_0^1 (1-r^2) |\what{\psi_s}|^2 r \, dr\right)\\
\leq & \left(\int_0^1 |\what{F^r}|^2 r \, dr\right)^{\frac12}\left(\xi^6\int_0^1 |\what{\psi_s}|^2 r \, dr\right)^{\frac12} \\
&\quad + \left(\int_0^1 |\what{F^z}|^2 r\, dr\right)^{\frac12}\left(\xi^4 \int_0^1  \left|\frac{d}{dr}(r\what{\psi_s})\right|^2 \frac{1}{r} \, dr\right)^{\frac12}\\
\leq & \int_0^1 |\what{\BF^*}|^2 r \, dr.
\end{aligned}
\end{equation}
Therefore, combining \eqref{B-61}-\eqref{B-62} and \eqref{B-58} yields
\begin{equation} \label{B-63}
\xi^2\int_0^1 |\what{\psi_s}|^2 r \, dr \leq  C \left(\frac{\int_0^1 |\what{\BF^*}|^2 r \, dr}{\Phi |\xi|^3}\right)^{\frac23} \left(\frac{\int_0^1 |\what{\BF^*}|^2 r \, dr}{ |\xi|^2}\right)^{\frac13}.
\end{equation}
Hence one has
\begin{equation} \label{B-64}
\xi^4 \int_0^1 |\what{\psi_s}|^2 r \, dr
\leq  C (\Phi |\xi|)^{-\frac23}  \int_0^1 |\what{\BF^*}|^2 r \,  dr .
\end{equation}
Therefore, we have
\begin{equation} \label{B-65}
\xi^4 \int_0^1 |\what{\psi_s}|^2 rdr +
\int_0^1 |\mathcal{L}\what{\psi_s}|^2 r \, dr
\leq C (\Phi |\xi|)^{-\frac23} \int_0^1 |\what{\BF^*}|^2 r \, dr ,
\end{equation}
and by Cauchy-Schwarz inequality,
\begin{equation} \label{B-66} \ba
\xi^2\int_0^1  \left|\frac{d}{dr}(r\what{\psi_s})\right|^2 \frac1r  \, dr &  \leq \left(\int_0^1 |\mathcal{L}\what{\psi_s}|^2 r\, dr\right)^{\frac12}\left(\xi^4\int_0^1 |\what{\psi_s}|^2 r\, dr\right)^{\frac12} \\
&  \leq C (\Phi |\xi|)^{-\frac23} \int_0^1 |\what{\BF^*}|^2 r\, dr.
\ea \end{equation}
The two inequalities \eqref{B-65}--\eqref{B-66} give the estimate \eqref{case3-4}.

Moreover, according to \eqref{B-51},
\be \label{B66-1} \ba
& \Phi |\xi| \int_0^1 \frac{1-r^2}{r} \left| \frac{d}{dr} (r \what{\psi_s} )  \right|^2  \, dr + \Phi |\xi|^3 \int_0^1 (1 - r^2) |\what{\psi_s}|^2 r \, dr \\
\leq &  C  \left(\int_0^1 |\what{\BF^*}|^2 r \, dr\right)^{\frac12} \left(  \int_0^1 \left|\frac{d}{dr}(r \what{\psi}_s) \right|^2 \frac{1}{r} \, dr
+ \xi^2 \int_0^1 \left| \what{\psi}_s \right|^2 r \, dr\right)^{\frac12}.
\ea \ee
Herein, it follows from Lemma \ref{weightinequality} and Lemma \ref{lemma1} that one has
\be \label{B66-2}
\ba
& \int_0^1 \left|\frac{d}{dr}(r \what{\psi_s}) \right|^2  \frac{1}{r}\, dr
+ \xi^2 \int_0^1 \left| \what{\psi_s} \right|^2 r \, dr \\
\leq &  C \left[ \int_0^1 \frac{1-r^2}{r} \left| \frac{d}{dr} ( r \what{\psi_s} )  \right|^2 \, dr \right]^{\frac23} \left( \int_0^1 |\mathcal{L} \what{\psi_s} |^2 r \, dr \right)^{\frac13}\\
&\ \ \ \ + C \xi^2 \left( \int_0^1 (1-r^2) |\what{\psi_s}|^2 r \, dr \right)^{\frac23} \left[  \int_0^1  \left| \frac{d}{dr} ( r \what{\psi_s} )  \right|^2 \frac1r \, dr\right]^{\frac13}.
\ea
\ee
Hence, substituting \eqref{B66-2} into \eqref{B66-1} and  using Young's inequality and the estimates \eqref{B-65}--\eqref{B-66}  yield
\be \label{B66-3}
\Phi |\xi| \int_0^1 \frac{1-r^2}{r} \left| \frac{d}{dr} (r \what{\psi_s} )  \right|^2  \, dr + \Phi |\xi|^3 \int_0^1 (1 - r^2) |\what{\psi_s}|^2 r \, dr
\leq C (\Phi |\xi| )^{-\frac23} \int_0^1 |\what{\BF^*}|^2 r \, dr .
\ee
Consequently, combining \eqref{B66-2}--\eqref{B66-3} gives
\be \label{B66-4}
\int_0^1 \left|\frac{d}{dr}(r \what{\psi}_s) \right|^2  \frac{1}{r}\, dr
+ \xi^2 \int_0^1 \left| \what{\psi}_s \right|^2 r \, dr
\leq C (\Phi |\xi|)^{- \frac43 } \int_0^1 |\hat{\BF}|^2 r \, dr .
\ee
Therefore, it follows from  Lemma \ref{lemmaA2} that one has
\begin{equation} \ba
\left|\frac{d}{dr}(r\what{\psi_s})\right|(1) & \leq C \left( \int_0^1 |\mathcal{L} \what{\psi_s} |^2 r \, dr    \right)^{\frac{1}{4}} \left( \int_0^1 \left|  \frac{d}{dr}(r \what{\psi_s} ) \right|^2 \frac1r \, dr  \right)^{\frac14} \\
&\leq  C (\Phi |\xi|)^{-\frac12}  \left( \int_0^1 |\what{\BF^*}|^2 r \, dr \right)^{\frac12}.
\ea
\end{equation}
Finally, the inequality \eqref{B66-1} together with Lemma \ref{lemmaHLP} gives
\begin{equation} \label{B-67} \ba
 \int_0^1 |\what{\psi_s}|^2 r\, dr & \leq  C (\Phi |\xi|)^{-1}  \left(\int_0^1 |\what{\BF^*}|^2 r \, dr\right)^{\frac12} \left(  \int_0^1 \left|\frac{d}{dr}(r \what{\psi_s}) \right|^2 \frac{1}{r} \, dr
+ \xi^2 \int_0^1 \left| \what{\psi_s} \right|^2 r \, dr\right)^{\frac12} \\
& \leq C (\Phi |\xi| )^{-\frac53} \int_0^1 |\what{\BF^*}|^2 r \, dr.
\ea
\end{equation}

The existence of the solution $\what{\psi_s}$ follows from the similar arguments as in Section \ref{sec-ex}. Hence the proof of Lemma \ref{propslip} is completed.
\end{proof}

Now we give the proof of Proposition \ref{Bpropcase3}.

\begin{proof}[Proof of Proposition \ref{Bpropcase3}]
Let $\what{\psi_s}$ denote the solution to \eqref{slip}. Note that $\what{\psi_s}$ satisfies the same equation as $\hat{\psi}$, but with a slip boundary condition. We will recover the no slip boundary condition by the boundary layer analysis.
Define
\be \nonumber
\mcA = i \xi \bar{U}(r) - \mathcal{L} + \xi^2 , \ \ \ \ \mcH= \mathcal{L} - \xi^2,
\ee
and
\be \nonumber
\widetilde{\mcA} = i \frac{\xi \Phi}{\pi} 4 (1 - r) - \frac{d^2}{dr^2} + \xi^2,\ \ \ \ \widetilde{\mcH}= \frac{d^2}{dr^2} - \xi^2.
\ee
$\widetilde{\mathcal{A}}$ and $\widetilde{\mathcal{H}}$ can be regarded as the leading parts of $\mcA$ and $\mcH$, respectively, for the boundary layer function.

We look for a boundary layer $\what{\psi_{BL}}$, which is a solution to
\be \nonumber
\widetilde{\mcA}\widetilde{\mcH} \what{\psi_{BL}} = 0 .
\ee
First, consider the problem
\be \nonumber
\widetilde{\mcA} \phi = 0.
\ee
As discussed in \cite{M}, the operator $\widetilde{\mcA}$ can be written as  the Airy operator with  complex coefficients. Let Airy function $Ai(z)$ denote the solution to
\be \nonumber
\frac{d^2 Ai}{dz^2} -  z Ai =0, \ \ \ \mbox{in}\ \mathbb{C}.
\ee
Define
\be \label{5-73}
\widetilde{G}_{\xi, \Phi} ( \rho) = \left\{  \ba  & Ai \left( C_{+} (\rho + \frac{\pi |\beta | \xi}{4 i \Phi }) \right), \ \ \ \mbox{when}\ \xi>0 , \\
 & Ai \left(  C_{-}  (\rho + \frac{\pi |\beta | \xi}{4 i \Phi })   \right),\ \ \ \ \mbox{when}\ \xi < 0 , \ea
\right.
\ee
where $ |\beta | = \left( \frac{ 4 \xi \Phi}{\pi}  \right)^{\frac13}$,  $C_{+} = e^{ i \frac{\pi}{6}}$ and $C_{-} = e^{-i \frac{\pi}{6}}$.
Define
\be \label{5-74}
\widetilde{\widetilde{G}}_{\xi, \Phi}(r) = \widetilde{G}_{\xi, \Phi} (|\beta| ( 1  - r) ).
\ee
It is straightforward to check that
\be \nonumber
\widetilde{\mcA} \widetilde{\widetilde{G}}_{\xi, \Phi} = 0.
\ee

Without loss of generality, we assume that $\xi > 0$ from now on.
Next, define
\be \label{5-75}
G_{\xi, \Phi} (\rho) = \int_{\rho}^{+\infty} e^{- \frac{|\xi|}{|\beta|} ( \rho - \tau) } \int_{\tau}^{+ \infty}
e^{- \frac{|\xi|}{|\beta|} (\sigma - \tau)} \widetilde{G}_{\xi, \Phi} (\sigma) \, d\sigma d\tau ,
\ee
which satisfies
\be \label{5-76}
\frac{d^2 G_{\xi, \Phi} }{d \rho^2} - \frac{|\xi|^2 }{|\beta|^2 } G_{\xi, \Phi} = \tilde{G}(\rho).
\ee
Define
\be \label{5-77}
C_{0, \xi, \Phi} = \left\{  \ba & \frac{1}{G_{\xi, \Phi} (0) }, \ \ \ \mbox{if}\ |G_{\xi, \Phi} (0) | \geq 1,   \\ &
1,\ \ \ \ \ \ \ \ \ \ \ \ \mbox{otherwise}.   \ea  \right.
\ee
Set
\be \label{5-78}
\what{\psi_{BL}} (r) : = C_{0, \xi, \Phi} G_{\xi, \Phi} (|\beta| ( 1  - r) ).
\ee
It can be verified that
\be \label{5-79}
\widetilde{\mcA}\widetilde{\mcH} \what{\psi_{BL}} = 0, \ \ \ 0 < r <1 ,
\ee
and $|\what{\psi_{BL}}(1)| \leq 1. $

Let us insert a lemma which gives the estimates of $C_{0, \xi, \Phi}$ and $\frac{d \what{\psi_{BL}} }{dr}|_{r=1}$.
\begin{lemma}\label{bound} $(1)$\ It holds that
\be \nonumber
\tilde{C_0} : = \inf \left\{  | C_{0, \xi, \Phi} | :\ \Phi\geq 1, \ \frac{1}{ \Phi} \leq |\xi| \leq  \sqrt{\Phi}    \right\} >0 ,
\ee
and the function $G_{\xi, \Phi}$ defined in \eqref{5-75} satisfies for $\varsigma$ large enough
\be \label{5-80}
\sup_{\Phi \geq 1 } \sup_{ \frac{1}{ \Phi} \leq |\xi| \leq  \sqrt{\Phi} } \sup_{\rho \geq \varsigma}  e^{\rho}
\left| \frac{d^k G_{\xi, \Phi}}{d \rho^k} (\rho) \right| < \infty, \ \ \ k =0, 1, 2, 3.
\ee

$(2)$\  There is a constant $\epsilon \in (0, 1)$ such that
\be \nonumber
\Sigma_{\epsilon} : =  \{  \mu \in \mathbb{C}\, | \, arg \mu = -\frac{\pi}{6} ,\  0\leq |\mu| \leq \epsilon \},
\ee
then
\be \nonumber
K_{\epsilon} : = \inf_{\mu \in \Sigma_{\epsilon}} \left| \int_0^{+ \infty} e^{- \mu s } Ai (s + \mu^2) \, ds    \right| \geq \frac16.
\ee

\end{lemma}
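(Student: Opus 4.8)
The plan is to treat the two claims separately, with part (2) as the analytic input that feeds part (1).

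\emph{Part (2).} I would set $\Psi(\mu) := \int_0^{+\infty} e^{-\mu s}\,Ai(s+\mu^2)\,ds$ and argue by continuity and compactness. On the ray $\arg\mu=-\tfrac{\pi}{6}$ one has $\mathrm{Re}\,\mu=|\mu|\cos\tfrac\pi6\ge 0$, hence $|e^{-\mu s}|\le 1$ for $s\ge 0$, while $s+\mu^2$ stays in the closed sector $|\arg z|\le\tfrac\pi3$ (since $\mathrm{Re}(s+\mu^2)=s+\tfrac12|\mu|^2\ge 0$), so the classical asymptotics $|Ai(z)|\le C(1+|z|)^{-1/4}e^{-\frac23\mathrm{Re}\,z^{3/2}}$ give $|Ai(s+\mu^2)|\le C e^{-c s^{3/2}}$ for $s\ge 1$ together with a uniform bound on $[0,1]$, all uniformly for $|\mu|$ bounded. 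Dominated convergence then makes $\mu\mapsto\Psi(\mu)$ continuous near the origin, and since $\Psi(0)=\int_0^{+\infty}Ai(s)\,ds=\tfrac13>\tfrac16$ (a classical value, complementary to $\int_{-\infty}^{0}Ai(t)\,dt=\tfrac23$), there is $\epsilon\in(0,1)$ with $|\Psi(\mu)|\ge\tfrac16$ whenever $|\mu|\le\epsilon$; restricting to the compact set $\Sigma_\epsilon$ yields $K_\epsilon\ge\tfrac16$.

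\emph{Part (1), the bound $\tilde{C}_0>0$.} Since $|C_{0,\xi,\Phi}|=\min\{1,\,|G_{\xi,\Phi}(0)|^{-1}\}$, it suffices to prove $\sup|G_{\xi,\Phi}(0)|\le M<\infty$ over the admissible range, giving $\tilde{C}_0\ge\min\{1,M^{-1}\}>0$. A short computation using $|\beta|^3=\tfrac{4|\xi|\Phi}{\pi}$ identifies the complex shift in \eqref{5-73} as $\tfrac{\pi|\beta|\xi}{4i\Phi}=-i\lambda^2$ with $\lambda:=\tfrac{|\xi|}{|\beta|}=\bigl(\tfrac{\pi|\xi|^2}{4\Phi}\bigr)^{1/3}\le(\pi/4)^{1/3}<1$ throughout $\tfrac1\Phi\le|\xi|\le\sqrt{\Phi}$; thus $\widetilde{G}_{\xi,\Phi}(\sigma)=Ai\bigl(e^{i\pi/6}(\sigma-i\lambda^2)\bigr)$, and the same sector estimate as in part (2) gives $|\widetilde{G}_{\xi,\Phi}(\sigma)|\le C e^{-c\sigma^{3/2}}$ for all $\sigma\ge 0$, uniformly in $(\xi,\Phi)$. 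Inserting this into \eqref{5-75} at $\rho=0$, using $e^{-\lambda(\sigma-\tau)}\le 1$ inside and $\tfrac{e^{\lambda\tau}-1}{\lambda}\le\tau e^{\lambda\tau}$ after interchanging the order of integration, reduces the bound to $\int_0^{+\infty}\sigma\,e^{-c\sigma^{3/2}+\lambda\sigma}\,d\sigma<\infty$, finite uniformly for $\lambda\le(\pi/4)^{1/3}$. The same computation bounds $|G'_{\xi,\Phi}(0)|=|H_{\xi,\Phi}(0)+\lambda G_{\xi,\Phi}(0)|$, where $H_{\xi,\Phi}(0)=\int_0^{+\infty}e^{-\lambda\sigma}\widetilde{G}_{\xi,\Phi}(\sigma)\,d\sigma$; rotating this contour to the ray $\arg u=\tfrac\pi6$ turns it into $e^{-i\pi/6}\Psi(\lambda e^{-i\pi/6})$, so part (2) controls $|H_{\xi,\Phi}(0)|$ \emph{from below} as long as $\lambda\le\epsilon$ --- which is where the intermediate-frequency restriction $|\xi|\le\epsilon_1\sqrt{\Phi}$ places us --- and this is what pins down $\tfrac{d}{dr}\widehat{\psi_{BL}}\big|_{r=1}=-|\beta|\,C_{0,\xi,\Phi}\,G'_{\xi,\Phi}(0)$.

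\emph{Part (1), the uniform decay \eqref{5-80}.} From \eqref{5-76} (equivalently, differentiating \eqref{5-75}) I would extract the first-order identities $G'_{\xi,\Phi}=-H_{\xi,\Phi}-\lambda G_{\xi,\Phi}$, $G''_{\xi,\Phi}=\widetilde{G}_{\xi,\Phi}+\lambda^2 G_{\xi,\Phi}$, $G'''_{\xi,\Phi}=\widetilde{G}'_{\xi,\Phi}+\lambda^2 G'_{\xi,\Phi}$, with $H_{\xi,\Phi}(\tau)=\int_\tau^{+\infty}e^{-\lambda(\sigma-\tau)}\widetilde{G}_{\xi,\Phi}(\sigma)\,d\sigma$ and $\widetilde{G}'_{\xi,\Phi}(\rho)=e^{i\pi/6}Ai'\bigl(e^{i\pi/6}(\rho-i\lambda^2)\bigr)$. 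The superexponential bounds $|\widetilde{G}_{\xi,\Phi}(\rho)|,\,|\widetilde{G}'_{\xi,\Phi}(\rho)|\le C e^{-c\rho^{3/2}}$ (same Airy asymptotics, for $Ai$ and $Ai'$) give $|H_{\xi,\Phi}(\tau)|\le C e^{-c'\tau^{3/2}}$ for $\tau\ge 1$, and hence $|G_{\xi,\Phi}(\rho)|\le\int_\rho^{+\infty}e^{\lambda(\tau-\rho)}|H_{\xi,\Phi}(\tau)|\,d\tau\le C e^{-\rho}$ for $\rho\ge\varsigma$, once $\varsigma$ is fixed large enough that $c'\varsigma^{1/2}\ge\lambda+1$ for every admissible $\lambda$ (possible because $\lambda$ is uniformly bounded); the identities then propagate the same $e^{-\rho}$ decay to $k=1,2,3$, which is \eqref{5-80}.

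\emph{Main obstacle.} The essential difficulty is bookkeeping the uniformity in $(\xi,\Phi)$ --- equivalently in $\lambda\in(0,(\pi/4)^{1/3}]$ --- of the complex Airy bounds along the rotated ray $\{e^{i\pi/6}(\sigma-i\lambda^2):\sigma\ge 0\}$ and, more delicately, their stable propagation through the exponentially weighted double integral \eqref{5-75}: every constant must stay independent of $\lambda$, and the degenerate end $\lambda\to 0$ (where the weight $e^{\lambda\cdot}$ flattens to $1$) has to be handled, though it turns out to be the benign case. Part (2) is comparatively soft, its only genuine ingredients being the legitimacy of rotating the contour to $\arg\mu=-\tfrac\pi6$ and the classical value $\int_0^{+\infty}Ai=\tfrac13$.
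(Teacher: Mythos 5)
Your proof is correct, and it is essentially the argument the paper points to: the paper omits the proof entirely, stating that it is identical to that of Lemma 3.7 of \cite{M}, and your route --- continuity of $\Psi(\mu)=\int_0^{+\infty}e^{-\mu s}Ai(s+\mu^2)\,ds$ at $\mu=0$ together with the classical value $\int_0^{+\infty}Ai(s)\,ds=\tfrac13$ for part (2), and the uniform superexponential decay of $Ai$ along the rotated ray $e^{i\pi/6}(\sigma-i\lambda^2)$ with $\lambda=|\xi|/|\beta|\le(\pi/4)^{1/3}$, propagated through the double integral \eqref{5-75}, for part (1) --- is exactly that argument, including the correct reduction $|C_{0,\xi,\Phi}|=\min\{1,|G_{\xi,\Phi}(0)|^{-1}\}$ and the identities $G'=-H-\lambda G$, $G''=\widetilde G+\lambda^2G$, $G'''=\widetilde G'+\lambda^2G'$ that carry the $e^{-\rho}$ decay up to $k=3$. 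The only blemish is the justification in part (2) that $s+\mu^2$ lies in the sector $|\arg z|\le\pi/3$: this follows from $\mathrm{Re}(s+\mu^2)\ge\tfrac12|\mu|^2$ \emph{combined with} $|\mathrm{Im}(s+\mu^2)|=\tfrac{\sqrt3}{2}|\mu|^2$, not from nonnegativity of the real part alone (which would only give $|\arg z|\le\pi/2$, where $Ai$ can grow), though nothing downstream is affected since you only use boundedness near the origin and genuine decay for $s\ge1$.
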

The proof of Lemma \ref{bound} is exactly the same as that of  \cite[Lemma 3.7]{M}, so we omit the details here.

Now we are ready to construct the remainder term $\what{\psi_e}$, which is the solution to the following problem
\be \label{5-81}
\left\{  \ba
&  i \xi \bar{U}(r) ( \mathcal{L} -\xi^2) \what{\psi_e} - (\mathcal{L} - \xi^2)^2 \what{\psi_e} = \widetilde{\mcA}\widetilde{\mcH} ((1 - \chi) \what{\psi_{BL}} ) + (\widetilde{\mcA}\widetilde{\mcH} - \mcA\mcH) ( \chi \what{\psi_{BL}} ) ,\\
&  \what{\psi_e} (0) = \what{\psi_e} (1) = \mathcal{L} \what{\psi_e} (0) = \mathcal{L} \what{\psi_e} (1) = 0,
\ea
\right.
\ee
where $\chi$ is a smooth cut-off function, satisfying
\be \nonumber
\chi(r) = \left\{ \ba & 1,\ \ \ \ \ \ r\geq \frac12,\\ & 0,\ \ \ \ \ \ r\leq \frac14    \ea  \right.
\ee
The $L^2_r$-norm of the right hand of \eqref{5-81} can be estimated as follows.
\be \nonumber
\widetilde{\mcA}\widetilde{\mcH} ( (1  -\chi ) \what{\psi_{BL}} )
= \left( i \frac{\xi \Phi }{\pi } 4 (1 - r) - \frac{d^2}{dr^2}  + \xi^2   \right) \left( \frac{d^2}{dr^2} - \xi^2   \right) ( (1 - \chi ) \what{\psi_{BL} })
\ee
According to Lemma \ref{bound}, when $\frac12 |\beta| \geq \varsigma$, one has
\be \label{5-82-1} \ba
& \int_0^1 \left| i \frac{\xi \Phi}{\pi} 4(1 -r)  \frac{d^2}{dr^2} ( (1 - \chi ) \what{\psi_{BL} })          \right|^2  r \, dr  \\
\leq & C \Phi^2 \xi^2 \int_0^{\frac12}  \left( \left| \frac{d^2}{dr^2} \what{\psi_{BL}}         \right|^2  + \left| \frac{d}{dr}  \what{\psi_{BL}} \right|^2 + | \widehat{\psi_{BL}} |^2 \right) \, dr \\
\leq & C \Phi^2 \xi^2 \int_{\frac{ |\beta| }{2}}^{|\beta|} \left(  |\beta|^3 \left|  \frac{d^2 G_{\xi, \Phi} }{d\rho^2}  \right|^2 + |\beta| \left| \frac{d G_{\xi, \Phi} }{d\rho}   \right|^2 + |\beta|^{-1} | G_{\xi, \Phi}  |^2 \right) \, d\rho \\
\leq & C \Phi^2 \xi^2 \int_{\frac{ |\beta| }{2}}^{|\beta|} \left( |\beta|^3  + |\beta| + |\beta|^{-1} \right) e^{-2\rho} \, d\rho \\
\leq & C e^{ - |\beta| } ( |\beta|^{10} + |\beta|^8 + |\beta|^6)  \leq C .
\ea
\ee
Similarly, one has
\be \label{5-82-2}
\int_0^1 \left|  \widetilde{\mcA}\widetilde{\mcH} ( (1  -\chi ) \what{\psi_{BL}} ) \right|^2 r \, dr
\leq C .
\ee

On the other hand, the estimates
\be \label{5-83} \ba
&( \widetilde{\mcA}\widetilde{\mcH} - \mcA\mcH ) (\chi \what{\psi_{BL}} )   =
(\widetilde{\mcA} - \mcA) \mcH ( \chi \what{\psi_{BL}} )  + \widetilde{\mcA} ( \widetilde{\mcH} - \mcH) ( \chi \what{\psi_{BL}} ) \\
 = &\left( i \frac{\xi \Phi}{\pi}2(1 - r)^2 + \frac1r \frac{d}{dr} - \frac{1}{r^2}  \right)
\left( \frac{d^2}{dr^2} + \frac1r \frac{d}{dr} - \frac{1}{r^2} - \xi^2 \right) ( \chi \what{\psi_{BL}} ) \\
&\ \  \ + \left(  i \frac{\xi \Phi}{\pi } 4 (1 - r) - \frac{d^2}{dr^2} + \xi^2     \right) \left( - \frac1r \frac{d}{dr} +
\frac{1}{r^2}  \right) ( \chi \what{\psi_{BL}} ).
\ea
\ee
and
\be \label{5-83-1}
\ba
& \int_0^1  \left| i \frac{ \xi \Phi }{\pi } 2(1 - r)^2 \frac{d^2}{dr^2}  ( \chi \what{\psi_{BL}} )        \right|^2 r \, dr \\
\leq & C \Phi^2 \xi^2 \int_0^1 (1 -r)^4  \left[ \left|  \chi^{\prime \prime} (r ) \what{\psi_{BL}} \right|^2  +
\left| \chi^{\prime}(r) \frac{d}{dr} \what{\psi_{BL}}   \right|^2 + \left| \chi(r) \frac{d^2}{dr^2}\what{\psi_{BL}}  \right|^2 \right] \, dr \\
\leq & C \Phi^2 \xi^2 \int_{\frac12 |\beta| }^{\frac34 |\beta| } |\beta|^{-4} \rho^4 |G_{\xi, \Phi}(\rho) |^2 |\beta|^{-1} \, d \rho
+ C \Phi^2 \xi^2 \int_{\frac12 |\beta| }^{\frac34 |\beta| } |\beta|^{-4} \rho^4 |\beta|^2 |G^{\prime}_{\xi, \Phi} (\rho) |^2 |\beta|^{-1} \, d\rho\\
&\ \ \ + C \Phi^2 \xi^2 \int_0^{\frac34 |\beta| } |\beta|^{-4} \rho^4 |\beta|^4 |G_{\xi, \Phi}^{\prime \prime} (\rho)|^2 |\beta|^{-1} \, d\rho \\
\leq & C( |\beta|+ |\beta|^3)   \int_{\frac12 |\beta| }^{\frac34 |\beta| } \rho^4 e^{-2 \rho} \, d\rho + C |\beta|^5
\left(  \int_0^{\varsigma} \rho^4 \, d\rho + \int_{\varsigma}^{\frac34 |\beta|} \rho^4 e^{-2\rho} \, d\rho \right)  \\
\leq & C (1+ |\beta|^5)
\ea
\ee
hold. Similarly, one has
\be \label{5-83-2}
\int_0^1 \left|  ( \widetilde{\mcA}\widetilde{\mcH} - \mcA\mcH ) (\chi \what{\psi_{BL}} ) \right|^2 r \, dr \leq C |\beta|^5.
\ee
Hence, it follows from Lemma \ref{propslip} that one has
\be \label{5-84}
\int_0^1 |\what{\psi_e}|^2 r \, dr
\leq C |\beta|^5 ( \Phi |\xi|)^{-2} \leq C ( \Phi |\xi| )^{-\frac13},
\ee
\be \label{5-85}
\int_0^1 | \mathcal{L} \what{\psi_e}  |^2 r\, dr + \xi^2 \int_0^1 \left| \frac{d}{dr} ( r \what{\psi_e}) \right|^2 \frac1r \, dr + \xi^4 \int_0^1 |\what{\psi_e}|^2 r\, dr
\leq C |\beta|^5 (\Phi |\xi| )^{-\frac67} \leq C ( \Phi |\xi| )^{\frac{17}{21}}
\ee
\be \label{5-86}\ba
& \int_0^1 \left| \frac{d}{dr} ( r \mathcal{L} \what{\psi_e} ) \right|^2 \frac1r \, dr + \xi^2 \int_0^1 |\mathcal{L} \what{\psi_e} |^2 r \, dr + \xi^4 \int_0^1 \left| \frac{d}{dr} ( r \what{\psi_e}) \right|^2 \frac1r \, dr + \xi^6 \int_0^1 |\what{\psi_e}|^2 r\, dr \\
\leq & C (\Phi |\xi| )^{\frac53}.
\ea
\ee

Finally, suppose the stream function  $\hat{\psi}$  is of the form \eqref{case3-1} with the constants $a$ and $b$ to be determined, where $I_1(z)$ is the modified Bessel function of the first kind satisfying \eqref{eqBessel1}. It is straightforward to show that
\[
\mcH I_1(|\xi|r)=0.
\]
Let us determine $a$ and $b$ now. On the boundary $r = 1$, it holds that
\be \nonumber
\hat{\psi}(1) = \frac{d}{dr} (  r \hat{\psi} ) (1) = \frac{d \hat{\psi} }{d r}(1)  = 0.
\ee
Hence we have
\be \label{5-87} \left\{ \ba
& a I_1 (|\xi|) + b \what{\psi_{BL}} (1) = 0, \\
& a |\xi| I_1^{\prime} (|\xi| )  + b \frac{d}{dr} \what{\psi_{BL}} (1) + b \frac{d}{dr} \what{\psi_e} (1) = - \frac{d}{dr} \what{\psi_s} (1) .    \ea
\right.
\ee
Solving the linear system \eqref{5-87} yields
\be \label{5-88}
b= \frac{\frac{d}{dr} \what{\psi_s} (1) I_1 (|\xi|) }{ \what{\psi_{BL}}(1)|\xi| I_1^{\prime} (|\xi|) -
\left( \frac{d}{dr} \what{\psi_{BL}} (1) + \frac{d}{dr} \what{\psi_e} (1)  \right) I_1 (|\xi|) } \ \ \text{and}\ \ a = - \frac{ b \what{\psi_{BL}}(1)}{I_1 (|\xi|) }.
\ee

The quantity $\frac{d  }{dr}\what{\psi_{BL}} (1) $ is computed as follows, when $\xi>0$,
\be \label{5-89}
 \frac{d }{dr } \what{\psi_{BL}} (1) = C_{0, \xi, \Phi} |\beta| \left[ C_{-} \int_0^{+\infty} e^{-\lambda s} Ai (s + \lambda^2) \, ds + \frac{|\xi|}{|\beta|} G_{\xi, \Phi} (0)  \right],
\ee
where $C_{-} = e^{-  i \frac{\pi}{6}}$ and $\lambda = \frac{|\xi|}{|\beta|} C_{-}$.
Note that
\be \nonumber
|\lambda | = \frac{|\xi|}{|\beta|}  = \left(\frac{\pi}{4} \right)^{\frac13} |\xi|^{\frac23} \Phi^{-\frac13} \leq \left(\frac{\pi}{4} \right)^{\frac13} (\epsilon_1)^{\frac23}.
\ee
When $\epsilon_1$ satisfies
\be \nonumber
\left(\frac{\pi}{4}\right)^{\frac13} \epsilon_1^{\frac23} \leq \min \left\{  \epsilon, \ \frac{1}{12} \tilde{C_0}  \right \} ,
\ee
 according to Lemma \ref{bound}, one has
\be \label{5-90}
\left| \frac{d}{dr} \what{\psi_{BL}} (1) \right| \geq \tilde{C}_0 |\beta| \left(  \frac16 - \frac{|\xi|}{|\beta|} \frac{1}{\tilde{C}_0}\right) \geq \frac{1}{12} \tilde{C}_0 |\beta| = \kappa |\beta|.
\ee
When $\xi <0$, similar computations verify that \eqref{5-90} also holds.

On the other hand, it follows from Lemma \ref{lemmaA2} that one has
\be \label{5-91} \ba
 &\left| \frac{d}{dr} \what{\psi_e}(1)  \right|^2  =  \left| \frac{d}{dr} ( r \what{\psi_e} ) (1)  \right|^2  \\
\leq &  C  \left( \int_0^1 |\mathcal{L} \what{\psi_e} |^2  r \, dr \right)^{\frac34} \left( \int_0^1 \left| \what{\psi_e}\right|^2 r \, dr  \right)^{\frac14}
\leq  C(\Phi |\xi| )^{\frac{11}{21}}.
\ea
\ee
Moreover, using Lemma \ref{lemBessel} gives
\be \nonumber
\frac{|\xi| I_1^{\prime}(|\xi|) }{I_1(|\xi|) }  \leq 1 + |\xi|.
\ee
Hence one has
\be \nonumber \ba
 \left| \left[ \frac{d}{dr}\what{\psi_{BL}} (1)  + \frac{d}{dr} \what{\psi_e}(1)  \right] - \xi \frac{I_1^{\prime}(|\xi|) }{I_1 (|\xi|) } \what{\psi_{BL}} (1)  \right|  \geq & \kappa |\beta| - C |\beta|^{\frac{11}{14}} - (1 + |\xi|) \\
\geq & \kappa |\beta| - C |\beta|^{\frac{11}{14}} - 1 - \left(\frac{\pi}{4}\right)^{\frac13} \epsilon_1^{\frac23} |\beta|.
\ea
\ee
When $\left(\frac{\pi}{4}\right)^{\frac13} \epsilon_1^{\frac23} \leq \frac{\kappa}{4} $ and  $|\beta|$ is large enough,
\be \label{5-92}
\left| \left[ \frac{d}{dr}\what{\psi_{BL}} (1)  + \frac{d}{dr} \what{\psi_e}(1)  \right] - |\xi| \frac{I_1^{\prime}(|\xi|) }{I_1 (|\xi|) } \what{\psi_{BL}} (1)  \right|  \geq \frac{\kappa}{2} |\beta|.
\ee

Therefore, taking \eqref{5-92} into \eqref{5-88}, by Lemma \ref{propslip}, one has
\be \label{5-93}
 \ba
|b|    \leq & C |\beta|^{-1} \left| \frac{d}{dr} \what{\psi_s} (1)  \right|
\leq C |\beta|^{-1} (\Phi |\xi|)^{-\frac12}  \left( \int_0^1 |\what{\BF^*}|^2 r \, dr \right)^{\frac12} \\
\leq  & C ( \Phi |\xi|)^{-\frac{5}{6} } \left( \int_0^1 |\what{\BF^*}|^2 r \, dr  \right)^{\frac12},
\ea \ee
and
\be \label{5-94}
|a| \leq C ( \Phi |\xi|)^{-\frac{5}{6} } |I_1 (|\xi| )|^{-1} \left( \int_0^1 |\what{\BF^*}|^2 r \, dr  \right)^{\frac12}.
\ee
Hence, according to Lemma \ref{bound}, one has
\be \label{B-95-1} \ba
& b^2 \int_0^1 \left| \chi \what{\psi_{BL}}  \right|^2 r \, dr
\leq Cb^2 \int_{\frac14}^1 \left|   \chi \what{\psi_{BL}} \right|^2 \, dr \\
\leq & Cb^2 \int_{0}^{\frac34 |\beta| } |G_{\xi, \Phi} ( \rho ) |^2 |\beta|^{-1} \, d\rho \\
\leq & Cb^2 \left[  \int_{0}^{\varsigma } |G_{\xi, \Phi} ( \rho ) |^2 |\beta|^{-1} \, d\rho  + \int_{\varsigma}^{\frac34 |\beta| } |G_{\xi, \Phi} ( \rho ) |^2 |\beta|^{-1} \, d\rho          \right] \\
\leq & Cb^2 \left(  |\beta|^{-1} + \int_{\varsigma}^{\frac34 \beta} e^{-2\rho} |\beta|^{-1} \, d\rho     \right) \leq C (\Phi |\xi|)^{-2}\int_0^1 |\what{\BF^*}|^2 r \, dr ,
\ea
\ee
where $\varsigma$ is the one appeared in \eqref{5-80} in  Lemma \ref{bound}.
Similarly, one has
\be \label{B-95-2}
\ba
& b^2 \int_0^1 \left|\mathcal{L} (\chi \what{\psi_{BL}} ) \right|^2 r \, dr \\
\leq &  Cb^2 \int_0^1 \left( \left| \frac{d^2}{dr^2} ( \chi \what{\psi_{BL}}) \right|^2 + \frac{1}{r^2} \left| \frac{d}{dr}( \chi \what{\psi_{BL}})   \right|^2 + \frac{1}{r^4} \left| \chi \what{\psi{BL}} \right|^2  \right)^2 r \, dr\\
\leq & C b^2 \int_{\frac14}^{\frac12} |\chi^{\prime \prime}|^2 |\what{\psi_{BL}}|^2  \, dr
+ Cb^2 \int_{\frac14}^{\frac12} |\chi^{\prime}|^2 \left|  \frac{d}{dr} \what{\psi_{BL}}  \right|^2 \, dr
+ Cb^2 \int_{\frac14}^{1} \chi^2 |\what{\psi_{BL}}|^2 \, dr \\
& \ + C b^2 \int_{\frac14}^{\frac12} |\chi^{\prime}|^2 |\what{\psi_{BL}}|^2 \, dr
+ C b^2 \int_{\frac14}^1 \chi ^2 \left| \frac{d}{dr} \what{\psi_{BL}}  \right|^2 \, dr + C b^2 \int_{\frac14}^1 \chi^2 |\what{\psi_{BL}}|^2 \, dr \\
\leq & Cb^2 \int_{\frac14}^{1} \left( |\what{\psi_{BL}}|^2 + \left| \frac{d}{dr} \what{\psi_{BL}}  \right|^2 + \left| \frac{d^2}{dr^2} \what{\psi_{BL}}  \right|^2 \right) \, dr \\
\leq & Cb^2 ( |\beta|^{-1} + |\beta| + |\beta|^3) \leq C (\Phi |\xi|)^{-\frac23} \int_0^1 |\what{\BF^*}|^2 r \, dr ,
\ea
\ee
\be \label{B-95-3}
b^2  \xi^2 \int_0^1 \left| \frac{d}{dr} ( r \chi \what{\psi_{BL}} )\right|^2 \frac1r \, dr + b^2 \xi^4 \int_0^1 \left| \chi \what{\psi_{BL}} \right|^2 r \, dr
\leq C (\Phi |\xi|)^{-\frac23} \int_0^1 | \what{\BF^*}|^2 r \, dr ,
\ee
and
\be \label{B-95-4}\ba
& b^2 \int_0^1 \left|  \frac{d}{dr} ( r \mathcal{L} (\chi \what{\psi_{BL}} ) \right|^2 \frac1r \, dr + b^2 \xi^2 \int_0^1 \left|\mathcal{L} (\chi \what{\psi_{BL}} ) \right|^2 r \, dr +   b^2  \xi^4 \int_0^1 \left| \frac{d}{dr} ( r \chi \what{\psi_{BL}} )\right|^2 \frac1r \, dr  \\ & \ \ \ \  + b^2 \xi^6 \int_0^1 \left| \chi \what{\psi_{BL}} \right|^2 r \, dr \leq C \int_0^1 |\what{\BF^*}|^2 r \, dr .
\ea
\ee

Meanwhile, according to Lemma \ref{AlemBessel2}, it holds that
\be \label{B-96-1}
\ba
a^2 \int_0^1 | I_1(|\xi|r)|^2 \, dr
\leq &  C \min\left\{ 1,  |\xi|^{-1} \right\} (\Phi |\xi|)^{-\frac53}    \int_0^1 |\what{\BF^*}|^2 r \, dr \\
\leq &  C (\Phi |\xi|)^{-\frac53}   \int_0^1 |\what{\BF^*}|^2 r \, dr
\ea
\ee
and
\be \label{B-96-2}
\ba
&|a|^2\left( \xi^2 \int_0^1 | I_1(|\xi|r)|^2 \, dr +
\int_0^1 \left| \frac{d}{dr} (r I_1 (\xi r) ) \right|^2 \frac1r \, dr \right)\\
\leq & C (\min\{1, |\xi|^{-1}\} \xi^2 + \max\{1, |\xi|\} )  (\Phi |\xi|)^{- \frac{5}{3}} \int_0^1|\what{\BF^*}|^2 r \, dr \\
\leq & C  (\Phi |\xi|)^{- \frac{4}{3}} \int_0^1|\what{\BF^*}|^2 r \, dr.
\ea
\ee
Furthermore, one has
\be \label{B-96-3}
\ba
& a^2 \left(\int_0^1 |\mathcal{L} I_1(|\xi|r)|^2 r \, dr + \xi^4 \int_0^1 |I_1(|\xi|r)|^2 r\, dr \right)\\
\leq & C \min\{1, |\xi|^{-1}\} |\xi|^4 (\Phi |\xi| )^{-\frac{5}{3} } \int_0^1 |\hat{f}|^2 r \, dr \leq  C (\Phi |\xi| )^{-\frac{2}{3} } \int_0^1 |\what{\BF^*}|^2 r \, dr
\ea
\ee
and
\be \label{B-96-4} \ba
& a^2 \int_0^1 \left| \frac{d}{dr} ( r \mathcal{L} I_1(|\xi|r))  \right|^2 \frac1r \, dr + a^2 \xi^2 \int_0^1 |\mathcal{L} I_1(|\xi|r)|^2 r \, dr \\
&\quad + a^2 \xi^4 \int_0^1 \left| \frac{d}{dr} (r I_1 (|\xi| r)) \right|^2 \frac1r \, dr   + a^2  \xi^6 \int_0^1 |I_1 (|\xi| r)|^2 r \, dr\\
 \leq &  C \left( \min\{1, |\xi|^{-1}\} |\xi|^6 + \max\{1, |\xi|\}|\xi|^4 \right) (\Phi |\xi| )^{-\frac{5}{3} } \int_0^1 |\what{\BF^*}|^2 r \, dr \\
 \leq &  C   \int_0^1 |\what{\BF^*}|^2 r \, dr.
\ea
\ee
 Combining the estimates in Lemma \ref{propslip},  \eqref{B-95-1}--\eqref{B-96-4}, one proves \eqref{case3-15}.     This finishes the proof of Proposition \ref{Bpropcase3}.
\end{proof}

Let
\be \nonumber
\chi_3 (\xi ) = \left\{ \ba & 1 , \ \ \  \frac{1}{\epsilon_1 \Phi}  \leq |\xi| \leq \epsilon_1 \sqrt{\Phi} , \\
& 0,\ \ \ \ \text{otherwise},  \ea  \right.
\ee
and $\psi_{med}$ be the function  such that
$
\what{\psi_{med}}  =  \chi_3 (\xi) \hat{\psi}.
$
Define
\be \nonumber
v^r_{med} = \partial_z \psi_{med},\ \ \ \ v^z_{med} = - \frac{\partial_r ( r \psi_{med} )}{r}, \ \ \ \ \text{and}\ \ \ \ \Bv^*_{med} = v^r_{med}\Be_r + v^z_{med}\Be_z.
\ee
And similarly, we define $F^r_{med}, F^z_{med}, \BF^*_{med}$, $\Bo^\theta_{med}$.
\begin{pro} \label{Bpropcase3-1}
The solution $\Bv^*$  satisfies
\be \label{B-99}
\|\Bv^*_{med} \|_{H^2(\Omega)} \leq C  \|\BF^*_{med} \|_{L^2(\Omega)},
\ee
where $C$ is a uniform constant independent of $\Phi$ and $\BF$.
\end{pro}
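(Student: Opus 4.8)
The plan is to deduce Proposition~\ref{Bpropcase3-1} from the single frequency-by-frequency estimate \eqref{case3-15} of Proposition~\ref{Bpropcase3}, in the same spirit as Propositions~\ref{Bpropcase1-1} and \ref{Bpropcase2-1} were deduced from their a~priori bounds, but reformulating the problem through the vorticity rather than through the Stokes system. Since $\widehat{\psi_{med}}=\chi_3(\xi)\hat\psi$ and $\chi_3$ is a $\{0,1\}$-valued multiplier in the $z$-frequency, the function $\psi_{med}$ still lies in $H_*^4(D)$, so the argument of Lemma~\ref{theorem3-4-5} applies verbatim to $\psi_{med}$ and produces the distributional identity $\Delta\Bv^*_{med}=-\curl\Bo^\theta_{med}$ in $\Omega$, with $\Bv^*_{med}$ vanishing on $\partial\Omega$. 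Applying the elliptic regularity theory for the vector Laplacian with homogeneous Dirichlet data on the cylinder $\Omega$ (as in \cite{GT}) then gives
\[
\|\Bv^*_{med}\|_{H^2(\Omega)}\le C\big(\|\curl\Bo^\theta_{med}\|_{L^2(\Omega)}+\|\Bv^*_{med}\|_{L^2(\Omega)}\big),
\]
so it remains only to bound the two norms on the right by $C\|\BF^*_{med}\|_{L^2(\Omega)}$ with $C$ independent of $\Phi$.

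For the first norm I would pass to the partial Fourier transform in $z$. Using $\widehat{\omega^\theta}=(\mathcal{L}-\xi^2)\hat\psi$ and $\curl(\omega^\theta\Be_\theta)=-\partial_z\omega^\theta\,\Be_r+\frac1r\partial_r(r\omega^\theta)\,\Be_z$, one has
\[
\|\curl\Bo^\theta_{med}\|_{L^2(\Omega)}^2=\int_{\frac{1}{\epsilon_1\Phi}\le|\xi|\le\epsilon_1\sqrt{\Phi}}\int_0^1\Big(\xi^2\,|(\mathcal{L}-\xi^2)\hat\psi|^2\,r+\big|\tfrac{d}{dr}\big(r(\mathcal{L}-\xi^2)\hat\psi\big)\big|^2\tfrac1r\Big)\,dr\,d\xi .
\]
Expanding $(\mathcal{L}-\xi^2)\hat\psi=\mathcal{L}\hat\psi-\xi^2\hat\psi$ and $\frac{d}{dr}(r(\mathcal{L}-\xi^2)\hat\psi)=\frac{d}{dr}(r\mathcal{L}\hat\psi)-\xi^2\frac{d}{dr}(r\hat\psi)$ and using $|a+b|^2\le2|a|^2+2|b|^2$, the inner integrand is dominated by a fixed linear combination of the quantities $\xi^2\int_0^1|\mathcal{L}\hat\psi|^2r\,dr$, $\xi^6\int_0^1|\hat\psi|^2r\,dr$, $\int_0^1|\frac{d}{dr}(r\mathcal{L}\hat\psi)|^2\frac1r\,dr$ and $\xi^4\int_0^1|\frac{d}{dr}(r\hat\psi)|^2\frac1r\,dr$, each of which appears on the left-hand side of \eqref{case3-15}. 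Integrating \eqref{case3-15} over the intermediate band and recalling that $\BF^*_{med}$ is the $z$-frequency localization of $\BF^*$ to that band therefore yields $\|\curl\Bo^\theta_{med}\|_{L^2(\Omega)}\le C\|\BF^*_{med}\|_{L^2(\Omega)}$. For $\|\Bv^*_{med}\|_{L^2(\Omega)}$ I would argue either through Poincar\'e's inequality together with $\|\nabla\Bv^*_{med}\|_{L^2}=\|\Bo^\theta_{med}\|_{L^2}$ (the expansion of $\int_0^1|(\mathcal{L}-\xi^2)\hat\psi|^2r\,dr$ being again controlled term by term by \eqref{case3-15}), or directly from $\|\Bv^*_{med}\|_{L^2(\Omega)}^2=\int_{band}\big(\xi^2\int_0^1|\hat\psi|^2r\,dr+\int_0^1|\frac{d}{dr}(r\hat\psi)|^2\frac1r\,dr\big)\,d\xi$, noting $\xi^2\int_0^1|\hat\psi|^2r\,dr\le(\xi^6\int_0^1|\hat\psi|^2r\,dr)^{1/3}(\int_0^1|\hat\psi|^2r\,dr)^{2/3}$ and invoking \eqref{case3-15} once more.

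The two points that call for a little care are the following. First, one must check that $\Delta\Bv^*_{med}=-\curl\Bo^\theta_{med}$ holds across the axis $r=0$ and not just on $\Omega_r=(B_1(0)\setminus B_r(0))\times\mathbb{R}$: this is exactly the distributional computation in the proof of Lemma~\ref{theorem3-4-5}, where the boundary terms on $\partial B_r(0)\times\mathbb{R}$ are shown to vanish as $r\to0+$ by means of the near-axis bounds $|r\,\partial_r\psi_{med}|\le C(r|\ln r|^{1/2}+r^{3/4})\|\psi_{med}(\cdot,z)\|_{X_4}$ and $|r\,\mathcal{L}\psi_{med}|\le Cr\|\psi_{med}(\cdot,z)\|_{X_4}$ of Lemma~\ref{lemma3-3-1}, all valid because $\psi_{med}\in H_*^4(D)$. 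Second, the bookkeeping that matches the weighted radial integrals occurring in $\|\curl\Bo^\theta_{med}\|_{L^2}$ and $\|\Bv^*_{med}\|_{L^2}$ with the seven specific terms on the left of \eqref{case3-15} uses only elementary inequalities such as $2\xi^2a\le a+\xi^4a$ and $\xi^2a\le(\xi^6a)^{1/3}a^{2/3}$. I do not expect a genuine obstacle at this stage: all the substantive work — the four-term decomposition \eqref{case3-1}, the boundary-layer profile with the exponential-decay estimates for $G_{\xi,\Phi}$, and the bounds for the Bessel and remainder pieces — has already been spent in establishing \eqref{case3-15}, and Proposition~\ref{Bpropcase3-1} is essentially the re-assembly of that pointwise-in-$\xi$ information into a statement about $\Bv^*_{med}$ on the whole pipe $\Omega$.
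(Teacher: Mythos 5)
Your proposal is correct and follows essentially the same route as the paper: the paper's proof likewise writes $\Delta\Bv^*_{med}=-\,{\rm curl}\,\Bo^\theta_{med}$, bounds $\|\partial_z\omega^\theta_{med}\|_{L^2}$ and $\|\partial_r\omega^\theta_{med}+\omega^\theta_{med}/r\|_{L^2}$ frequency-by-frequency using \eqref{case3-15}, controls $\|\Bv^*_{med}\|_{L^2}$ from the same estimate, and concludes by elliptic regularity with homogeneous Dirichlet data. Your extra care about the distributional identity across the axis is exactly the content of Lemma \ref{theorem3-4-5}, which the paper invokes implicitly.
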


\begin{proof}
It follows from the definition of $\Bv^*_{med}$ and Proposition \ref{Bpropcase3} that
\be \label{5-102}
\|\Bv^*_{med} \|_{L^2(\Omega)}^2 \leq C \int_{ \frac{1}{\epsilon_1 \Phi} \leq |\xi| \leq \epsilon_1 \sqrt{\Phi}   } \int_0^1 \left( \xi^2 |\hat{\psi}|^2 r +
\left| \frac{\partial}{\partial r}( r \hat{\psi} ) \right|^2 \frac1r   \right) \, dr
\leq C \|\BF^*_{med} \|_{L^2(\Omega)}^2 .
\ee

On the other hand, the straightforward computations give
\be \label{5-103}
\Delta \Bv^*_{med} = -{\rm curl}~\Bo^\theta_{med} = \partial_z \omega^\theta_{med} \Be_r - \left( \partial_r \omega^\theta_{med} + \frac{\omega^\theta_{med}}{r} \right)\Be_z.
\ee
Thus according to Proposition \ref{Bpropcase3}, one has
\be \label{5-104}
\|\partial_z \omega^\theta_{med} \|_{L^2(\Omega)}^2 \leq
C \int_{\frac{1}{\epsilon_1 \Phi} \leq |\xi| \leq \epsilon_1 \sqrt{\Phi} } \int_0^1 \xi^2 | ( \mathcal{L} - \xi^2) \hat{\psi}|^2 r \, dr d\xi
\leq C\|\BF^*_{med} \|_{L^2(\Omega)}^2 ,
\ee
and
\be \label{5-105} \ba
\left\| \partial_r \omega^\theta_{med} + \frac{\omega^\theta_{med}}{r}   \right\|_{L^2(\Omega)}^2
& \leq C \int_{\frac{1}{\epsilon_1 \Phi} \leq |\xi| \leq \epsilon_1 \sqrt{\Phi} } \int_0^1 \left| \frac{\partial}{\partial r} [ r ( \mathcal{L} -\xi^2) \hat{\psi} ]   \right|^2
\frac1r \, dr d\xi  \\
& \leq C\|\BF^*_{med} \|_{L^2(\Omega)}^2 .
\ea
\ee

Applying the regularity theory for the elliptic equation \eqref{5-103} with homogeneous Dirichlet boundary condition for $\Bv^*_{med}$ yields
\eqref{B-99}. This finishes the proof of Proposition \ref{Bpropcase3-1}.
\end{proof}

Combining the existence result in Proposition \ref{back}  and the estimates in Propositions \ref{smallflux} \ref{Bpropcase1-1}, \ref{Bpropcase2-1}, \ref{Bpropcase3-1}, we finish the proof of Theorem \ref{thm1}.


\section{Nonlinear Structural Stability}\label{secnonlinear}
In this section, we use a fixed point theorem to prove the existence of solutions for the nonlinear problem \eqref{SNS}-\eqref{fluxBC}, which gives the uniform nonlinear structural stability of Hagen-Poiseuille flows.

 Let $\Bv$ denote the perturbed  velocity. Then $\Bv$ satisfies the  system
 \be  \label{perturb1}
\left\{ \ba
&\bBU \cdot \nabla \Bv + \Bv \cdot \nabla \bBU + (\Bv \cdot \nabla )\Bv - \Delta \Bv + \nabla P = \BF, \ \ \ \mbox{in}\ \Omega, \\
& {\rm div}~\Bv = 0,\ \ \ \mbox{in}\ \Omega, \\
\ea
\right.
\ee
supplemented with the following conditions
\be \label{perturb2} \Bv = 0\ \ \ \mbox{on}\ \partial \Omega,
\ \ \  \int_{\Sigma} \Bv \cdot \Bn \, dS = 0.
\ee

Now we apply Lemma \ref{nonlinear} to show the existence of solutions for the problem \eqref{perturb1}--\eqref{perturb2}. Set
$Y = H^{\frac53} (\Omega)$. For any given $\BF \in L^2(\Omega)$, as proved in Theorem \ref{thm1}, there exists  a strong solution $\Bv\in H^2(\Omega)$ to the linear problem \eqref{2-0-1} and  \eqref{BC}. We denote this solution by $\mathcal{T} \BF$.
According to Theorem \ref{thm1}, one has
\be \label{perturb3}
\|\mathcal{T} \BF\|_{Y} \leq C \| \BF\|_{L^2(\Omega)}.
\ee
For any $\Bv_1, \Bv_2 \in Y$, the bilinear form
\be
\mathcal{B} ( \Bv_1, \Bv_2) = - \mathcal{T} ( (\Bv_1 \cdot \nabla) \Bv_2)),
\ee
is well-defined and $\mathcal{B} (\Bv_1, \Bv_2) \in Y$, satisfying
\be
\|\mathcal{B}( \Bv_1, \Bv_2) \|_{Y} \leq C\|\Bv_1 \cdot \nabla) \Bv_2 \|_{L^2(\Omega)} \leq C \|\Bv_1\|_{L^{12}(\Omega)} \|\nabla \Bv_2\|_{L^{\frac{12}{5}}(\Omega)}
\leq C \|\Bv_1\|_{Y} \|\Bv_2\|_{Y},
\ee
where the last inequality comes from the Sobolev embedding for functions in three dimensional domains.
Hence, it follows from Lemma \ref{nonlinear} that if $\epsilon_0$ is suitably small and
\be \nonumber
\|\BF\|_{L^2(\Omega)} \leq \epsilon_0,
\ee
 then
the problem
\be
\Bv = \mathcal{T} \BF + \mathcal{B}(\Bv, \Bv)
\ee
has a unique solution $\Bv$ satisfying $\|\Bv\|_{Y} \leq C \epsilon_0$.

Furthermore, in fact the solution $\Bv$ satisfies that
\be
\|\Bv\|_{H^2(\Omega)}
\leq C (1 + \Phi^{\frac14} ) \|\BF\|_{L^2(\Omega)} +
C (1 + \Phi^{\frac14} ) \|\Bv\|_{Y}^2
\leq  C (1 + \Phi^{\frac14} ) \|\BF\|_{L^2(\Omega)},
\ee
and $\Bv$ is a strong solution to the problem \eqref{perturb1}--\eqref{perturb2}. Hence, the proof of Theorem \ref{mainthm} is completed.




\appendix
\section{Some elementary lemmas}
In this appendix, we collect some basic lemmas which play important roles in the paper and might be useful elsewhere.
We first give some Poincar\'e type inequalities.
\begin{lemma}\label{lemma1}
For a  function $g\in C^2([0,1])$
it holds that
\be \label{2-1-11}
\int_0^1 |g |^2  r \, dr \leq  \int_0^1 \left|   \frac{d}{dr} (r g )     \right|^2 \frac1r \, dr.
\ee
If, in addition, $g(0)=g(1)=0$, then one has
\be
  \int_0^1 \left|   \frac{d}{dr} (r g )     \right|^2 \frac1r \, dr
  \leq \left( \int_0^1 | \mathcal{L}g|^2  r \, dr  \right)^{\frac12} \left( \int_0^1 |g|^2 r\, dr  \right)^{\frac12}
\leq  \int_0^1 | \mathcal{L}g|^2  r \, dr.
\ee
\end{lemma}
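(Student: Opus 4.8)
The statement is Lemma \ref{lemma1}, a pair of one-dimensional weighted Poincaré-type inequalities. The plan is to prove each inequality by integration by parts, treating the weight $r$ (which degenerates at $r=0$) with care. For the first inequality, I would expand the square:
\[
\int_0^1 \left| \frac{d}{dr}(rg) \right|^2 \frac1r \, dr = \int_0^1 \left( r (g')^2 + 2 g g' + \frac{g^2}{r} \right) \, dr = \int_0^1 r (g')^2 \, dr + \int_0^1 \frac{d}{dr}(g^2) \, dr + \int_0^1 \frac{g^2}{r}\, dr.
\]
The middle term is $g(1)^2 - g(0)^2 \geq -g(0)^2$, but more cleanly one observes $\frac{d}{dr}(rg) \cdot \frac1r = g' + \frac{g}{r}$ and integrates $\int_0^1 |g' + g/r|^2 r\,dr = \int_0^1 r(g')^2 + 2gg' + g^2/r\,dr$; since $g \in C^2([0,1])$ the boundary term at $0$ contributes $g(0)^2 \ge 0$ with a sign that helps, so in fact $\int_0^1 |\frac{d}{dr}(rg)|^2 \frac1r\,dr \ge \int_0^1 \frac{g^2}{r}\,dr + g(1)^2 - g(0)^2$. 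Actually the cleanest route: note $\int_0^1 |\frac{d}{dr}(rg)|^2\frac1r\,dr \geq \int_0^1 \frac{g^2}{r}\,dr + (\text{boundary})$ is not quite what is wanted; instead use $\int_0^1 g^2 r\,dr \le \int_0^1 \frac{g^2}{r}\,dr$ trivially (since $r \le 1/r$ on $(0,1]$... no, that's false). The right approach is the substitution $h = rg$: then $\int_0^1 (h')^2 \frac1r\,dr \ge \int_0^1 \frac{h^2}{r^3}\,dr = \int_0^1 g^2/r \,dr \ge \int_0^1 g^2 r\,dr$ provided a Hardy-type inequality $\int_0^1 (h')^2 r^{-1}\,dr \ge \int_0^1 h^2 r^{-3}\,dr$ holds for $h$ with $h(0)=0$; this is the classical Hardy inequality with the sharp constant $1$, and $h(0)=0$ holds automatically since $h=rg$ with $g$ bounded. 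So the first inequality reduces to invoking (and proving, via integration by parts) this Hardy inequality.

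For the second inequality, I would integrate by parts twice. Starting from $\int_0^1 |\mathcal{L}g|^2 r\,dr$ and using $\mathcal{L}g = \frac{d}{dr}(\frac1r \frac{d}{dr}(rg))$, one gets
\[
\int_0^1 \left|\frac{d}{dr}(rg)\right|^2 \frac1r\,dr = -\int_0^1 \mathcal{L}g \cdot (rg)\,dr + \text{boundary terms},
\]
where the boundary terms at $r=1$ vanish because $g(1)=0$ (the boundary contribution involves $g(1)$ and $\frac{d}{dr}(rg)(1)$ paired against factors that vanish), and at $r=0$ they vanish because $rg \to 0$ and $\frac1r\frac{d}{dr}(rg)$ stays bounded for $g \in C^2$. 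Then Cauchy--Schwarz gives
\[
\int_0^1 \left|\frac{d}{dr}(rg)\right|^2\frac1r\,dr \le \left(\int_0^1 |\mathcal{L}g|^2 r\,dr\right)^{1/2}\left(\int_0^1 |g|^2 r\,dr\right)^{1/2},
\]
and combining with the first inequality $\int_0^1 |g|^2 r\,dr \le \int_0^1 |\frac{d}{dr}(rg)|^2\frac1r\,dr$ and cancelling one factor yields the final bound $\le \int_0^1 |\mathcal{L}g|^2 r\,dr$.

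\textbf{Main obstacle.} The only delicate point is the justification of the boundary terms at $r=0$ in both integrations by parts: one must verify that quantities like $rg(r)$, $\frac1r\frac{d}{dr}(rg)(r)$, and $\frac{d}{dr}(rg)(r) \cdot g(r)$ have the right limits (zero, or finite) as $r \to 0^+$. For $g \in C^2([0,1])$ this follows from Taylor expansion: $rg(r) = O(r)$, and $\frac{d}{dr}(rg) = g + rg' = g(0) + O(r)$ so $\frac1r\frac{d}{dr}(rg) = \frac{g(0)}{r} + O(1)$, which means the product $\frac1r\frac{d}{dr}(rg)\cdot rg = g(0)\,g(r) + O(r) \to g(0)^2$ — a finite, nonnegative quantity that in fact strengthens the inequality. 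I would handle this by integrating on $[\epsilon, 1]$ and passing to the limit $\epsilon \to 0^+$, tracking signs so that the discarded boundary term at $0$ has the favorable sign. Everything else is routine manipulation.
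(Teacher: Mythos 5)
Your treatment of the second chain of inequalities is essentially the paper's own argument: integrate by parts using $g(0)=g(1)=0$ to get $\int_0^1 |\frac{d}{dr}(rg)|^2\frac1r\,dr=-\int_0^1 \mathcal{L}g\,\overline{rg}\,dr$ (your sign is the correct one; the boundary terms vanish exactly as you say), apply Cauchy--Schwarz, and then absorb one factor using \eqref{2-1-11}. That part is fine and matches the paper.

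For \eqref{2-1-11} itself you take a genuinely different, and more roundabout, route. The paper simply writes $rg(r)=\int_0^r\frac{d}{ds}(sg)\,ds$ and applies Cauchy--Schwarz with the weight $s$, which gives $\sup_{[0,1]}|g|^2\le\int_0^1|\frac{d}{dr}(rg)|^2\frac1r\,dr$ and hence \eqref{2-1-11} immediately, with no integration by parts, no boundary terms, and no case distinctions. Your route via $h=rg$ and the Hardy inequality $\int_0^1 (h')^2 r^{-1}dr\ge\int_0^1 h^2 r^{-3}dr$ does work, but two points need cleaning up. First, the aside ``$r\le 1/r$ on $(0,1]$\dots no, that's false'' is itself false: $r\le 1\le 1/r$ does hold there, and this is precisely what you need for the final step $\int_0^1 g^2 r^{-1}dr\ge\int_0^1 g^2 r\,dr$, so the step you half-retracted is the one you end up using. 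Second, the hypothesis ``$h(0)=0$'' is not quite the right one for the Hardy inequality with constant $1$: the integration-by-parts proof produces the boundary term $\lim_{r\to0^+}h(r)^2/(2r^2)=g(0)^2/2$ with an unfavorable sign, so the clean argument requires $g(0)=0$, not merely $h(0)=0$. This does not sink the claim, because when $g(0)\neq0$ both $\int_0^1 (h')^2r^{-1}dr$ and $\int_0^1 h^2r^{-3}dr$ are $+\infty$ and \eqref{2-1-11} is vacuous, but you should say so explicitly rather than assert the Hardy inequality applies ``automatically.'' All of this bookkeeping is what the paper's one-line Cauchy--Schwarz argument avoids.
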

\begin{proof}
For every $r\in [0, 1]$, one has
\be \nonumber
|r g (r) | = \left| \int_0^r  \frac{d}{ds} (s g(s) ) \, ds         \right|
\leq   r  \left( \int_0^1   \left|  \frac{d}{dr} (r g)                \right|^2 \frac1r \, dr
\right)^{\frac12} .
\ee
Therefore,
\be \la{2-1-14}
\sup_{r\in [0, 1]} |g (r)|^2  \leq \int_0^1 \left|  \frac{d}{dr} ( r g)    \right|^2 \frac1r \, dr ,
\ee
which implies \eqref{2-1-11}.

If $g(0)=g(1)=0$, then
integration by parts and using the homogeneous boundary conditions for $g$ give
\be \la{2-1-12} \ba
\int_0^1 \left| \frac{d}{dr } ( rg )     \right|^2  \frac1r \, dr
&= \int_0^1 \mathcal{L} g  \overline{g } r \, dr
 \leq \left( \int_0^1 | \mathcal{L} g |^2  r \, dr \right)^{\frac12}
 \left( \int_0^1 |g |^2  r \, dr   \right)^{\frac12}.
 \ea \ee
Hence, it follows from \eqref{2-1-11}  that one has
\be \nonumber
\int_0^1 |g|^2 r \, dr \leq \int_0^1 \left| \frac{d}{dr} (r g)    \right|^2   \frac{1}{r} \, dr \leq \int_0^1 | \mathcal{L} g |^2 r \, dr.
\ee
This finishes the proof of the lemma.
\end{proof}

In the following lemma, we give the pointwise estimate for the functions evaluated on the boundary.
\begin{lemma}\label{lemmaA2}
For a function $g\in C^3([0,1])$, one has
\begin{equation}\label{estLinfty}
\begin{aligned}
\left|\frac{d}{dr}(rg)(1)\right|\leq  & 2  \left( \int_0^{1}\left|\frac{d}{dr}(rg)\right|^2 \frac{1}{r}\, dr \right)^{\frac14} \left( \int_0^{1}\left|\mathcal{L}g\right|^2r \, dr \right)^{\frac14} + 4 \left( \int_0^1 \left| \frac{d}{dr}(r g) \right|^2 \frac1r \, dr \right)^{\frac12},
\end{aligned}
\end{equation}
and
\begin{equation}\label{3-3-1-16}
| \mathcal{L} g (1) |  \leq 2 \left( \int_{\frac12}^1 |\mathcal{L } g |^2  r \, dr \right)^{\frac12}
+ 2 \left(   \int_{\frac12}^1 \left| \frac{d}{dr} ( r \mathcal{L} g)   \right|^2  \frac{1}{r} \, dr \right)^{\frac14}  \left( \int_{\frac12}^1 \left| \mathcal{L} g \right|^2 r \, dr       \right)^{\frac14}.
\end{equation}
Furthermore, if $g\in C^4([0,1])$, with $\mathcal{L} g (0) = 0$ and $\frac{d}{dr}( r \mathcal{L}g) \in C[0, 1]$, then one has
\begin{equation}\label{3-3-1-20}
\int_0^1 \left| \frac{d}{dr} ( r \mathcal{L} g )   \right|^2 \frac{1}{r} \, dr  \leq C \left(\int_0^1 |\mathcal{L} g|^2 r \, dr \right)^{\frac12} \left( \int_0^1 |\mathcal{L}^2 g |^2 r \, dr   \right)^{\frac12} + C \int_0^1 |\mathcal{L}g|^2 r\, dr ,
\end{equation}
\begin{equation}\label{3-3-1-20-1}
\ba
 \left| \frac{d}{dr} (r \mathcal{L}g)(1)  \right|^2
 \leq & 4 \int_{\frac12}^1 \left|  \frac{d}{dr} ( r \mathcal{L} g)   \right|^2  \frac{1}{r}\, dr\\
  &\quad    + 8 \left(  \int_{\frac12}^1 | \mathcal{L}^2 g |^2  r\, dr   \right)^{\frac12}  \left(
\int_{\frac12}^1 \left| \frac{d}{dr} ( r \mathcal{L} g) \right|^2 \frac{1}{r} \, dr  \right)^{\frac12},
\ea
\end{equation}
and
\be \label{estlinfty}
|\mathcal{L} g (1)  |  + \left| \frac{d}{dr} ( r \mathcal{L} g) (1)  \right| \leq C \left( \int_0^1| \mathcal{L} g|^2 r \, dr +  \int_0^1 | \mathcal{L}^2 g |^2 r \, dr   \right)^{\frac12}.
\ee

If, in addition, $g(0)=g(1)=0$, then one has
\begin{equation}\label{estLinfty1}
\left|\frac{d}{dr}(rg)(1)\right|\leq  2\sqrt{3} \left( \int_0^{1}\left|g\right|^2 r \, dr \right)^{\frac18} \left( \int_0^{1}\left|\mathcal{L}g\right|^2 r \, dr \right)^{\frac38}.
\end{equation}
\end{lemma}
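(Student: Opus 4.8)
The plan is to treat all six inequalities as boundary-trace estimates descending from a single Newton--Leibniz identity, combined with the algebraic relations tying $\mathcal{L}$ to ordinary derivatives. For $\phi\in C^1([a,b])$ one has $\phi(b)^2=\phi(r)^2+2\int_r^b\phi\phi'\,ds$ for every $r\in[a,b]$, and integrating this in $r$ over $[a,b]$ gives
\[
(b-a)\,\phi(b)^2=\int_a^b\phi^2\,dr+2\int_a^b(s-a)\,\phi(s)\phi'(s)\,ds .
\]
To bring $\mathcal{L}$ in, I use that, writing $h:=\frac{d}{dr}(rg)$, the definition $\mathcal{L}g=\frac{d}{dr}\bigl(\frac1r\frac{d}{dr}(rg)\bigr)$ is the pointwise identity $h'=r\,\mathcal{L}g+\frac{h}{r}$; applying it with $g$ replaced by $\mathcal{L}g$ gives likewise $\frac{d^2}{dr^2}(r\mathcal{L}g)=r\,\mathcal{L}^2g+\frac1r\frac{d}{dr}(r\mathcal{L}g)$. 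On the fixed interval $[\frac12,1]$ the weights $r$ and $\frac1r$ are pinched between $\frac12$ and $2$, which is what makes \eqref{3-3-1-16} and \eqref{3-3-1-20-1} carry explicit numerical constants.

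For \eqref{estLinfty} I would take $h=\frac{d}{dr}(rg)$ and apply the identity above on $[0,1]$, $h(1)^2=\int_0^1 h^2\,dr+2\int_0^1 s\,h(s)h'(s)\,ds$; substituting $s\,hh'=s^2 h\,\mathcal{L}g+h^2$ turns this into $h(1)^2=3\int_0^1 h^2\,dr+2\int_0^1 s^2 h\,\mathcal{L}g\,ds$. Then $\int_0^1 h^2\,dr\le\int_0^1\frac{h^2}{r}\,dr$, while Cauchy--Schwarz together with $s^3\le s$ on $[0,1]$ gives $\bigl|\int_0^1 s^2 h\,\mathcal{L}g\,ds\bigr|\le\bigl(\int_0^1\frac{h^2}{r}\,dr\bigr)^{1/2}\bigl(\int_0^1 r|\mathcal{L}g|^2\,dr\bigr)^{1/2}$; taking square roots with $\sqrt{x+y}\le\sqrt x+\sqrt y$ (and $\sqrt3<4$, $\sqrt2<2$) yields \eqref{estLinfty}. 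Inequality \eqref{estLinfty1} then follows by feeding into \eqref{estLinfty} the two consequences of Lemma \ref{lemma1} valid when $g(0)=g(1)=0$, namely $\int_0^1\frac1r|\frac{d}{dr}(rg)|^2\le(\int_0^1 r|\mathcal{L}g|^2)^{1/2}(\int_0^1 r|g|^2)^{1/2}$ and $\int_0^1 r|g|^2\le\int_0^1\frac1r|\frac{d}{dr}(rg)|^2$, which together force $\int_0^1 r|g|^2\le\int_0^1 r|\mathcal{L}g|^2$; collecting exponents produces \eqref{estLinfty1} (the displayed constant can be sharpened by starting the Newton--Leibniz identity from a zero of $h$, which exists since $\int_0^1\frac{d}{dr}(rg)\,dr=0$).

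For \eqref{3-3-1-16} and \eqref{3-3-1-20-1} I would apply the basic identity on $[\frac12,1]$ with $\phi:=r\,\mathcal{L}g$ in the first case and $\phi:=\frac{d}{dr}(r\mathcal{L}g)$ in the second; the factor $s-\frac12\le\frac12$ disposes of the interval length, and in the second case $\phi'=r\,\mathcal{L}^2g+\frac1r\phi$ together with $\frac1r\le2$ introduces the $\mathcal{L}^2g$ term. The estimate \eqref{3-3-1-20} is the one requiring real care: with $\phi:=\mathcal{L}g$ and $u:=\frac{d}{dr}(r\phi)$, integration by parts gives
\[
\int_0^1\frac{u^2}{r}\,dr=\bigl[u\phi\bigr]_0^1-\int_0^1(\mathcal{L}\phi)(r\phi)\,dr=u(1)\phi(1)-\int_0^1(\mathcal{L}^2g)(r\,\mathcal{L}g)\,dr,
\]
where the boundary contribution at $r=0$ vanishes because $\mathcal{L}g(0)=0$ while $u=\frac{d}{dr}(r\mathcal{L}g)$ is continuous up to $0$ by hypothesis. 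Bounding the last integral by Cauchy--Schwarz and $u(1)\phi(1)$ by \eqref{3-3-1-16} (for $\phi(1)=\mathcal{L}g(1)$) and \eqref{3-3-1-20-1} (for $u(1)$), every resulting monomial carries $\int_0^1\frac{u^2}{r}\,dr$ to a power strictly below $1$ (the largest being $3/4$), so Young's inequality absorbs them into the left side and leaves exactly the right side of \eqref{3-3-1-20}. Finally \eqref{estlinfty} is immediate: insert the bound \eqref{3-3-1-20} for $\int_0^1\frac1r|\frac{d}{dr}(r\mathcal{L}g)|^2$ into \eqref{3-3-1-16} and \eqref{3-3-1-20-1}, and apply AM--GM to the mixed term $(\int_0^1 r|\mathcal{L}g|^2)^{1/2}(\int_0^1 r|\mathcal{L}^2g|^2)^{1/2}$.

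I do not expect a genuine obstacle; the two points deserving attention are the vanishing of the $r=0$ boundary term in the integration by parts for \eqref{3-3-1-20} (which is exactly why the hypotheses $\mathcal{L}g(0)=0$ and $\frac{d}{dr}(r\mathcal{L}g)\in C([0,1])$ are imposed) and the exponent bookkeeping in the absorption step for \eqref{3-3-1-20}, where one must check that after substituting \eqref{3-3-1-16} and \eqref{3-3-1-20-1} no term contains $\int_0^1\frac{u^2}{r}\,dr$ to the first power or higher. Both are routine, and the sharp numerical constants displayed in the statement — none of which is needed later in the paper — can be recovered by tracking the constants in the Cauchy--Schwarz and square-root steps above.
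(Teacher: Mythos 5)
Your proposal is correct and follows essentially the same route as the paper: averaging the Newton--Leibniz identity over an interval to control boundary traces (for \eqref{estLinfty}, \eqref{3-3-1-16}, \eqref{3-3-1-20-1}), integrating by parts with the boundary term at $r=0$ killed by $\mathcal{L}g(0)=0$ and then absorbing via Young's inequality for \eqref{3-3-1-20}, and combining these for \eqref{estlinfty} and \eqref{estLinfty1}. The only cosmetic differences are that you average over $[0,1]$ using $h'=r\mathcal{L}g+h/r$ where the paper averages $\tfrac1r\frac{d}{dr}(rg)$ over $[\tfrac12,1]$, and that your literal route to \eqref{estLinfty1} yields a constant $6$ rather than $2\sqrt3$ (the paper squares first, obtaining $\sqrt{12}$), but as you note the sharpening is routine and the constant is never used.
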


\begin{proof}
For every $r\in [\frac{1}{2}, 1] $, one has
\begin{equation} \label{A2-5}
\begin{aligned}
\left(\left|\frac{1}{r}\frac{d}{dr}(rg)\right|\Big|_{r=1}\right)^2 = & \left|\frac{1}{r}\frac{d}{dr}(rg)(r)\right|^2 + \int_r^1 \frac{d}{ds}\left|\frac{1}{s}\frac{d}{ds}(sg)\right|^2 \, ds\\
  \leq & \left|\frac{1}{r}\frac{d}{dr}(rg)(r)\right|^2 + 2 \int_r^1 \left|\frac{1}{s}\frac{d}{ds}(sg)\right| |\mathcal{L}g| \,  ds.
  \end{aligned}
  \end{equation}
  Integrating \eqref{A2-5} over $(\frac{1}{2}, 1)$ yields
  \begin{equation} \label{A2-6}
  \begin{aligned}
  \left(\left|\frac{1}{r}\frac{d}{dr}(rg)\right|_{r=1}\right)^2 \leq &
    2 \int_{\frac{1}{2}}^{1}\left|\frac{d}{dr}(rg)\right|^2 \frac{1}{r^2} \, dr +   4 \left( \int_{\frac12}^{1}\left|\mathcal{L}g\right|^2r \, dr  \right)^{\frac12}
   \left(\int_{\frac12}^1  \left|\frac{d}{dr}(rg)\right|^2 \frac{1}{r^3} \,  dr\right)^{\frac12}\\
   \leq &  4 \int_{\frac{1}{2}}^{1} \left|\frac{d}{dr}(rg)\right|^2 \frac{1}{r}\, dr +  8 \left( \int_{\frac12}^{1}\left|\mathcal{L}g\right|^2r \, dr  \right)^{\frac12}
   \left(\int_{\frac12}^1  \left|\frac{d}{dr}(rg)\right|^2 \frac{1}{r} \, dr\right)^{\frac12}.
\end{aligned}
\end{equation}

In particular, if $g(0)= g(1) = 0$, according to Lemma \ref{lemma1}, one has
\be \label{A2-7} \ba
\left| \frac{d}{dr}(r g) (1) \right|^2  & \leq 12 \left( \int_0^1 |\mathcal{L} g|^2 r \, dr   \right)^{\frac12} \left( \int_0^1 \left| \frac{d}{dr} ( rg) \right|^2 \frac1r \, dr  \right)^{\frac12} \\
& \leq 12 \left( \int_0^1 |\mathcal{L} g|^2 r \, dr  \right)^{\frac34} \left( \int_0^1 | g|^2 r \, dr   \right)^{\frac14},
\ea
\ee
which gives \eqref{estLinfty1}.

Similarly, for every $s\in [\frac12, 1]$, one has
\be \la{3-3-1-15}
|\mathcal{L} g (1)|^2
= | s \mathcal{L}g (s) |^2  + \int_s^1 \frac{d}{dr} ( r \mathcal{L}g )  r \mathcal{L} \overline{g} \, dr
+ \int_s^1 r \mathcal{L} g  \frac{d}{dr} ( r \mathcal{L} \overline{g} ) \, dr .
\ee
Integrating \eqref{3-3-1-15}  over $[\frac12, 1]$ with respect to $s$ and using H\"{o}lder inequality yield \eqref{3-3-1-16}.

Furthermore, if $g\in C^4([0,1])$, with $\mathcal{L} g (0) = 0$, then the straightforward computations give
\begin{equation}\label{3-3-1-12}
\int_0^1  \left| \frac{d}{dr} ( r \mathcal{L} g ) \right|^2\frac{1}{r}\, dr
= - \int_0^1 \mathcal{L}^2 g \mathcal{L}\overline{g} r \, dr
+ \frac{d}{dr}(r \mathcal{L} g )(1) \mathcal{L} \overline{g} (1).
\end{equation}
The similar computations yield
\begin{equation}\label{3-3-1-17}
\ba
& \left| \frac{d}{dr} (r \mathcal{L}g)(1)  \right|^2
=   \left| \left[ \frac{1}{r} \frac{d}{dr} ( r \mathcal{L} g)   \right](1) \right|^2 \\
 \leq &  4 \left[ \int_{\frac12}^1  \left| \frac{d}{dr} \left( \frac{1}{r} \frac{d}{dr} (r \mathcal{L}
g)  \right) \right|^2   \frac{1}{r} \, dr   \right]^{\frac12}  \left[ \int_{\frac12}^1
\left|  \frac{1}{r} \frac{d}{dr} (r \mathcal{L} g)  \right|^2  r \, dr \right]^{\frac12}\\
&\ \ \ \  +  4 \int_{\frac12}^1 \left| \frac{1}{r} \frac{d}{dr} ( r \mathcal{L} g )     \right|^2  r \, dr \\
 \leq &    8 \left(  \int_{\frac12}^1 | \mathcal{L}^2 g |^2  r\, dr   \right)^{\frac12}  \left(
\int_{\frac12}^1 \left| \frac{d}{dr} ( r \mathcal{L} g) \right|^2 \frac{1}{r} \, dr  \right)^{\frac12}
+ 4 \int_{\frac12}^1 \left|  \frac{d}{dr} ( r \mathcal{L} g)   \right|^2  \frac{1}{r}\, dr.
\ea
\end{equation}
This gives \eqref{3-3-1-20-1}. If one
denotes
\begin{equation}\nonumber
N_2 = \int_0^1 |\mathcal{L} g|^2 r \, dr ,\,\,\,\,  N_3 =\int_0^1 \left|\frac{d}{dr}
(r \mathcal{L} g) \right|^2 \frac{1}{r}\, dr,\,\,\,\,\text{and}\,\,\,\, N_4 = \int_0^1 |\mathcal{L}^2 g |^2  r \, dr,
\end{equation}
then the inequality \eqref{3-3-1-12}, together with \eqref{3-3-1-16} and \eqref{3-3-1-17}, gives
\begin{equation}\nonumber
N_3 \leq N_2^{\frac{1}{2}}  N_4^{\frac{1}{2}} + \left(2 N_2^{\frac{1}{2}} + 4 N_3^{\frac14} N_2^{\frac14} \right)
\left( 4 N_3^{\frac{1}{2}} + 4 N_3^{\frac14} N_4^{\frac14} \right).
\end{equation}
Therefore, one has
\begin{equation}\label{3-3-1-19}
N_3 \leq 2N_2^{\frac{1}{2}} N_4^{\frac{1}{2}} + C N_2 + C N_2^{\frac23} N_4^{\frac13} \leq C N_2 + C N_2^{\frac12} N_4^{\frac14} \leq  C (N_2 + N_4),
\end{equation}
which is exactly \eqref{3-3-1-20}. The estimate \eqref{estlinfty} follows from \eqref{3-3-1-20} and \eqref{3-3-1-17}.
This finishes the proof of the lemma.
\end{proof}


The following lemma is a variant of Hardy-Littlewood-Polya type inequality.
\begin{lemma}\label{lemmaHLP}
Let $g\in C^1([0,1])$ satisfy $g(0)=0$, one has
\begin{equation}\label{ineqHLP}
\int_0^1|g(r)|^2 dr \leq \frac{1}{2} \int_0^1 |g^{\prime}(r)|^2 (1-r^2) \, dr,
\end{equation}
and
\be \label{HLP-2}
\int_0^1 |g|^2 r \, dr \leq C \int_0^1 \left| \frac{d(r g) }{dr}   \right|^2 \frac{1-r^2}{r} \, dr .
\ee
\end{lemma}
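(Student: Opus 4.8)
The plan is to prove the two inequalities separately. The first, \eqref{ineqHLP}, is a sharp weighted Hardy inequality whose constant $\tfrac12$ forces a careful argument, while \eqref{HLP-2} is a softer estimate for which any finite constant suffices.

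For \eqref{ineqHLP} I would use the ground-state substitution $g(r)=r\,h(r)$. Since $g\in C^1([0,1])$ with $g(0)=0$, the quotient $h(r)=g(r)/r$ extends to a continuous function on $[0,1]$ (with $h(0)=g'(0)$) and belongs to $C^1((0,1])$. Working first on $[\varepsilon,1]$ and writing $g'=h+rh'$, so that $|g'|^2=|h|^2+r(|h|^2)'+r^2|h'|^2$, an integration by parts in the middle term using $\frac{d}{dr}\big(r(1-r^2)\big)=1-3r^2$ yields the identity
\[
\int_\varepsilon^1 (1-r^2)|g'|^2\,dr=2\int_\varepsilon^1 r^2|h|^2\,dr+\int_\varepsilon^1 r^2(1-r^2)|h'|^2\,dr-\varepsilon(1-\varepsilon^2)|h(\varepsilon)|^2 .
\]
Since $h$ is bounded near $0$, the boundary term vanishes as $\varepsilon\to0^+$; as the left-hand side is finite, monotone convergence shows $\int_0^1 r^2(1-r^2)|h'|^2\,dr<\infty$, and in the limit, using $|g|^2=r^2|h|^2$,
\[
\int_0^1 (1-r^2)|g'|^2\,dr-2\int_0^1 |g|^2\,dr=\int_0^1 r^2(1-r^2)|h'|^2\,dr\ge 0 ,
\]
which is exactly \eqref{ineqHLP} (and is sharp, with equality at $g(r)=r$).

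For \eqref{HLP-2} I would set $G(r)=r\,g(r)$, so $G(0)=0$, $G'(r)=\frac{d}{dr}(rg)$, and $\int_0^1 |g|^2r\,dr=\int_0^1 |G|^2/r\,dr$. Writing $G(r)=\int_0^r G'(s)\,ds$ and applying Cauchy--Schwarz with the splitting $1=\sqrt{\tfrac{s}{1-s^2}}\cdot\sqrt{\tfrac{1-s^2}{s}}$ gives
\[
|G(r)|^2\le\Big(\int_0^r\frac{s}{1-s^2}\,ds\Big)\Big(\int_0^r|G'(s)|^2\frac{1-s^2}{s}\,ds\Big)=-\tfrac12\ln(1-r^2)\int_0^r|G'(s)|^2\frac{1-s^2}{s}\,ds .
\]
Dividing by $r$, integrating over $(0,1)$ and using Fubini reduces matters to the elementary bound $\sup_{0\le s\le1}\int_s^1\frac{-\ln(1-r^2)}{2r}\,dr<\infty$, which holds because the integrand is $O(r)$ as $r\to0$ and $O(-\ln(1-r))$ as $r\to1$ (in fact the full integral equals $\pi^2/24$). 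This yields \eqref{HLP-2}.

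The only genuinely delicate step is the sharp constant in \eqref{ineqHLP}: a direct Cauchy--Schwarz applied to $g(r)=\int_0^r g'$ produces only a constant of size $\ln 2$ (or worse), so recovering $\tfrac12$ really requires factoring out the ground state $r$ of the Sturm--Liouville operator $-\big((1-r^2)(\cdot)'\big)'$, after which the inequality becomes a manifest square. Everything else — the $\varepsilon$-truncation justifying the integration by parts near the axis, and the Fubini/weight-integrability verification in \eqref{HLP-2} — is routine.
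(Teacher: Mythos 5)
Your proof is correct, and it takes a genuinely different route from the paper's on both halves. For \eqref{ineqHLP} the paper does not argue from scratch: it quotes the classical inequality $2\int_{-1}^1|g|^2\,dt-\bigl(\int_{-1}^1 g\,dt\bigr)^2\le\int_{-1}^1(1-t^2)|g'|^2\,dt$ from Hardy--Littlewood--P\'olya and applies it to the odd extension of $g$ to $[-1,1]$, so that the mean-value term drops out and the claimed constant $\tfrac12$ falls out immediately. Your ground-state substitution $g=rh$ proves the same thing self-containedly, exhibits the remainder $\int_0^1 r^2(1-r^2)|h'|^2\,dr$ as a manifest square, and identifies the extremal $g(r)=r$; the identity, the vanishing of the boundary term $\varepsilon(1-\varepsilon^2)|h(\varepsilon)|^2$ (since $h(r)=g(r)/r\to g'(0)$), and the monotone-convergence step all check out, including for complex-valued $g$ via $r(|h|^2)'=2r\,\Re(\bar h h')$. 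For \eqref{HLP-2} the paper splits $[0,1]$ at $r=\tfrac12$: on $[\tfrac12,1]$ it applies \eqref{ineqHLP} to $rg$ and uses $r\ge\tfrac12$, while on $[0,\tfrac12]$ it bounds $\sup|g|^2$ pointwise by $\tfrac43\int_0^{1/2}|\frac{d}{dr}(rg)|^2\frac{1-r^2}{r}\,dr$ using $1-r^2\ge\tfrac34$ there. Your weighted Cauchy--Schwarz plus Fubini argument handles the whole interval in one stroke and yields the explicit constant $\pi^2/24$ (your evaluation via $u=r^2$ and $\int_0^1\frac{-\ln(1-u)}{u}\,du=\pi^2/6$ is right); the only thing to watch is the integrability of $\frac{-\ln(1-r^2)}{2r}$ near $r=1$, which you address. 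Net effect: your argument is more self-contained and quantitatively sharper, at the cost of not reusing \eqref{ineqHLP} in the proof of \eqref{HLP-2} the way the paper does.
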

\begin{proof}
It follows from \cite[p. 165]{HLP} that for any function $g\in C^1([-1,1])$, one has
\begin{equation}\label{ineqHLP1}
2\int_{-1}^1 |g(t)|^2dt -\left(\int_{-1}^1 g(t)dt\right)^2 \leq \int_{-1}^{1} (1 - t^2) |g^{\prime}(t)|^2 dt.
\end{equation}
Let $\tilde{g}$ be the odd extension of $g$ on $[-1,1]$, i.e.,
\begin{equation}
\tilde{g} (t) =\left\{
\begin{aligned}
& g(t), \quad \, \ \  \ \ \ \,\, 0\leq t \leq 1 ,\\
& - g(-t), \quad \,\, -1 \leq t < 0 .
\end{aligned}
\right.
\end{equation}
Since $g(0)= 0$ and $g\in C^1[0, 1]$, $\tilde{g} \in C^1[-1, 1]$.
Using \eqref{ineqHLP1} for $\tilde{g}$ gives
\begin{equation}
\begin{aligned}
& 4\int_{0}^1 |g(t)|^2 =2\int_{-1}^1 |\tilde{g}(t)|^2dt -\left(\int_{-1}^1 \tilde{g}(t)dt\right)^2\\
\leq &  \int_{-1}^1 (1-t^2) |\tilde{g}'(t)|^2dt =2\int_0^1 (1-t^2)|{g}'(t)|^2dt
\end{aligned}
\end{equation}
This yields the inequality \eqref{ineqHLP}.

Applying \eqref{ineqHLP} for the function $r g$ gives
\be \label{HLP-5}
\int_0^1 |r g|^2 \, dr \leq \frac12  \int_0^1  (1 - r^2) \left| \frac{d ( r g )}{dr}   \right|^2
\, dr,
\ee
which implies that
\be \label{HLP-6}
\int_{\frac12}^1 |g|^2 r \, dr \leq  \int_0^1 \left| \frac{d}{dr}( r g ) \right|^2 \frac{1-r^2}{r} \, dr.
\ee
On the other hand, one has
\be \nonumber
|r g(r)| = \left| \int_0^r \frac{d}{ds}(s g) \, ds  \right| \leq
\left( \int_0^r \left|  \frac{d}{ds}( s g) \right|^2 \frac{1}{s} \, ds  \right)^{\frac12} r,
\ee
which implies
\be \label{HLP-7}
\sup_{r\in [0, \frac12]}  |g(r)|^2 \leq \frac43 \int_0^{\frac12} \left| \frac{d}{dr}( r g)   \right|^2 \frac{1-r^2}{r}\, dr.
\ee
Hence, one has
\be \label{HLP-8}
\int_0^{\frac12} |g(r)|^2 r \,dr \leq \frac23  \int_0^1 \left| \frac{d}{dr}( r g)  \right|^2 \frac{1-r^2}{r} \, dr .
\ee
The estimate \eqref{HLP-2} follows from \eqref{HLP-6} and \eqref{HLP-8}. The proof of Lemma \ref{lemmaHLP} is completed.
\end{proof}

We collect some basic properties of the modified Bessel functions of the first kind in the following lemma.
\begin{lemma}\label{lemBessel}
Let $I_1(z)$ be the modified Bessel function of the first kind, i.e. it is the solution of the problem
\eqref{eqBessel1}.
Assume that $0 < x <y$, it holds that
\be \label{Bessel1}
e^{x - y} \frac{x}{y} < \frac{I_1(x)}{I_1 (y)} < e^{x - y} \left(\frac{y}{x}\right)^{1/2}.
\ee
Furthermore, for every $x>0$, it holds that
\be\label{Bessel1-5}
\frac{x}{2}\leq  I_1(x)  \leq \frac{x}{2} \cosh x
\ee
and
\be \label{Bessel2}
 0\leq I_1^{\prime}(x)  \leq I_1(x)+\frac{I_1(x)}{x}.
\ee
\end{lemma}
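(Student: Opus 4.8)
The plan is to base the entire lemma on the power-series representation
\[
I_1(x)=\frac{x}{2}\sum_{k=0}^{\infty}\frac{(x/2)^{2k}}{k!\,(k+1)!},
\]
which is readily checked to satisfy the ODE in \eqref{eqBessel1} together with $I_1(0)=0$ and $I_1>0$ on $(0,\infty)$, and which fixes the standard normalization $I_1(x)\sim x/2$ as $x\to 0^+$. Retaining only the term $k=0$ gives $I_1(x)\ge x/2$, while the elementary bound $\binom{2k}{k}\le 2^{2k}$ yields $\frac{1}{2^{2k}k!\,(k+1)!}\le\frac{1}{2^{2k}(k!)^2}\le\frac{1}{(2k)!}$, hence $I_1(x)\le\frac{x}{2}\sum_{k}\frac{x^{2k}}{(2k)!}=\frac{x}{2}\cosh x$; this proves \eqref{Bessel1-5}. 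Differentiating the series term by term shows that every coefficient of $I_1'$ is strictly positive, so $I_1'(x)>0$ for $x>0$, which is the lower bound in \eqref{Bessel2}.

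For the upper bound in \eqref{Bessel2} I would first record the classical recurrence $I_1'(x)=I_2(x)+\frac{1}{x}I_1(x)$, obtained by matching coefficients of the series for $I_1'$, $\tfrac1x I_1$ and $I_2$. Since $I_2(x)>0$ for $x>0$ (again by positivity of the series), this recurrence immediately re-proves $I_1'(x)>0$, and it reduces the desired inequality $I_1'(x)\le I_1(x)+\frac{1}{x}I_1(x)$ to the single estimate $I_2(x)\le I_1(x)$. To obtain this I would use the Poisson-type integral representations
\[
I_1(x)=\frac{x}{\pi}\int_{-1}^{1}\sqrt{1-s^2}\,e^{xs}\,ds,\qquad
I_2(x)=\frac{x}{\pi}\int_{-1}^{1}s\sqrt{1-s^2}\,e^{xs}\,ds,
\]
both of which follow from the series by termwise integration together with the beta integral $\int_{-1}^{1}s^{2m}\sqrt{1-s^2}\,ds=\frac{\pi(2m)!}{2^{2m+1}m!\,(m+1)!}$. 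Subtracting gives $I_1(x)-I_2(x)=\frac{x}{\pi}\int_{-1}^{1}(1-s)\sqrt{1-s^2}\,e^{xs}\,ds>0$, since the integrand is strictly positive on $(-1,1)$; this completes \eqref{Bessel2}.

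It remains to prove the two-sided bound \eqref{Bessel1}, which I would recast as monotonicity statements. Cross-multiplying and dividing, for $0<x<y$ the inequality $e^{x-y}\frac{x}{y}<\frac{I_1(x)}{I_1(y)}$ is equivalent to $t\mapsto t^{-1}e^{-t}I_1(t)$ being strictly decreasing, and $\frac{I_1(x)}{I_1(y)}<e^{x-y}\sqrt{y/x}$ is equivalent to $t\mapsto t^{1/2}e^{-t}I_1(t)$ being strictly increasing. Both become transparent from the substitution $s=1-u$ in the first Poisson formula, which gives
\[
e^{-t}I_1(t)=\frac{t}{\pi}\int_{0}^{2}\sqrt{u(2-u)}\,e^{-tu}\,du .
\]
Indeed $t^{-1}e^{-t}I_1(t)=\frac{1}{\pi}\int_{0}^{2}\sqrt{u(2-u)}\,e^{-tu}\,du$ is manifestly strictly decreasing in $t$, while the further substitution $u=v/t$ yields $t^{1/2}e^{-t}I_1(t)=\frac{1}{\pi}\int_{0}^{2t}\sqrt{v\,(2-v/t)}\,e^{-v}\,dv$, in which both the integrand (through the factor $\sqrt{2-v/t}$) and the length of the interval of integration strictly increase with $t$; hence this quantity is strictly increasing, and \eqref{Bessel1} follows.

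The only genuinely nontrivial ingredient is the Poisson representation, or equivalently the inequality $I_2(x)\le I_1(x)$, which resists a direct series comparison because the Maclaurin series of $I_1$ and $I_2$ involve powers of opposite parity. Once that representation is in hand, everything else is bookkeeping with the series and elementary estimates; in particular, all the subtlety of the sharp two-sided bound \eqref{Bessel1} is absorbed into the elementary monotone behaviour of the integrals above.
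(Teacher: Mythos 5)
Your proof is correct, but it takes a genuinely different route from the paper: the paper does not prove any of these inequalities itself, it simply cites the literature (the lower bound in \eqref{Bessel1} to Bordelon, the upper bound to Baricz, the bound \eqref{Bessel1-5} to Ifantis, the recurrence $I_1'=I_2+x^{-1}I_1$ to Watson, and $I_2<I_1\tanh x$ to Baricz), whereas you rebuild everything from the Maclaurin series and the Poisson integral $I_1(x)=\frac{x}{\pi}\int_{-1}^1\sqrt{1-s^2}\,e^{xs}\,ds$. Your series comparison for \eqref{Bessel1-5} (via $\binom{2k}{k}\le 2^{2k}$), your derivation of $I_2=\frac{x}{\pi}\int_{-1}^1 s\sqrt{1-s^2}\,e^{xs}\,ds$ and hence $I_2<I_1$, and your reduction of \eqref{Bessel1} to the monotonicity of $t^{-1}e^{-t}I_1(t)$ and $t^{1/2}e^{-t}I_1(t)$ (made transparent by the substitutions $s=1-u$ and $u=v/t$) all check out; in effect you reprove the cited results of Bordelon and Baricz from scratch. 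Note that you only need $I_2\le I_1$, which is weaker than the $I_2<I_1\tanh x$ the paper invokes, and that suffices for \eqref{Bessel2}. The trade-off is clear: the paper's proof is a three-line appeal to references, while yours is self-contained and makes the lemma independent of the special-function literature; you also rightly flag that the paper's characterization \eqref{eqBessel1} of $I_1$ does not by itself fix the normalization needed for \eqref{Bessel1-5}, so adopting the standard series normalization is necessary and harmless.
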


\begin{proof}
The first inequality in \eqref{Bessel1} was proved in \cite{Bordelon}, while the second inequality was proved in \cite[(2.6)]{Baricz}. The inequality \eqref{Bessel1-5} can be found in  \cite[(1.1)]{Ifantis}. Moreover, it follows from \cite[p. 79]{Watson} that
\be \nonumber
I_1^{\prime}(x) = I_2(x)+ x^{-1} I_1(x).
\ee
This, together with the estimate (\cite[(2.16)]{Baricz})
\be \nonumber
I_2(x) < I_1(x) \tanh x,
\ee
gives  \eqref{Bessel2}. Hence the proof of  Lemma \ref{lemBessel} is completed.
\end{proof}

The integrals of the modified Bessel functions of the first kind are estimated in the following lemma.
\begin{lemma}\label{AlemBessel2}
It holds that
\be \label{A-96}
\int_0^1 | I_1(|\xi| r)|^2 r\, dr
\leq  C \min\{1, |\xi|^{-1}\} (I_1(|\xi|))^2
\ee
and
\be \label{A-97}
 \int_0^1 \left| \frac{d}{dr} \left(r I_1 (|\xi| r) \right) \right|^2 \frac1r \, dr
\leq  C \max\{1, |\xi|\} (I_1(|\xi|))^2.
\ee
Furthermore, one has
\be\label{A-98}
\int_0^1 | \mathcal{L}I_1(|\xi| r)|^2 r\, dr
\leq  C  \min\{1, |\xi|^{-1}\} \xi^4 (I_1(|\xi|))^2
\ee
and
\be\label{A-99}
\int_0^1 \left| \frac{d}{dr} \left( r \mathcal{L} I_1(|\xi|r)\right)  \right|^2 \frac1r \, dr
\leq  C  \max\{1, |\xi|\} \xi^4 (I_1(|\xi|))^2.
\ee
\end{lemma}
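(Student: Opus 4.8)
The plan is to prove Lemma \ref{AlemBessel2} by reducing all four integrals to elementary computations using the ODE \eqref{eqBessel1}, the monotonicity of $I_1$, and a change of variables. The key structural observation is that $I_1(\rho)$ is positive and increasing on $(0,\infty)$, so $I_1(|\xi|r) \le I_1(|\xi|)$ for $r \in [0,1]$; this immediately gives the ``$\min$'' side of each estimate up to the factor that tracks whether $|\xi|$ is large or small. The ``$\max$'' side on the derivative integrals comes from the term $\frac{d}{dr}(rI_1(|\xi|r)) = I_1(|\xi|r) + |\xi|r\,I_1'(|\xi|r)$ together with the bound \eqref{Bessel2}, namely $I_1'(x) \le I_1(x) + I_1(x)/x$, which lets one replace every $I_1'$ by $I_1$ plus a controllable lower-order piece.

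\textbf{Step 1 (the $L^2$ estimate \eqref{A-96}).} First I would substitute $\rho = |\xi| r$, so that $\int_0^1 |I_1(|\xi|r)|^2 r\,dr = |\xi|^{-2}\int_0^{|\xi|} I_1(\rho)^2\rho\,d\rho$. When $|\xi| \le 1$, bound $I_1(\rho) \le I_1(|\xi|)$ on the whole interval to get $\le |\xi|^{-2}\cdot \frac{|\xi|^2}{2}I_1(|\xi|)^2 = \frac12 I_1(|\xi|)^2$, which is the $\min\{1,|\xi|^{-1}\}=1$ case. When $|\xi|>1$, the point is that $I_1$ grows roughly like $e^\rho/\sqrt{\rho}$, so $\int_0^{|\xi|} I_1(\rho)^2\rho\,d\rho$ is dominated by its endpoint contribution. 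One clean way: use \eqref{Bessel1} with $x = \rho$, $y = |\xi|$ to get $I_1(\rho) \le e^{\rho - |\xi|}(|\xi|/\rho)^{1/2} I_1(|\xi|)$, hence $I_1(\rho)^2 \rho \le e^{2(\rho-|\xi|)}|\xi|\, I_1(|\xi|)^2$, and $\int_0^{|\xi|} e^{2(\rho - |\xi|)}\,d\rho \le \frac12$. This yields $\int_0^1 |I_1(|\xi|r)|^2 r\,dr \le |\xi|^{-2}\cdot\frac12 |\xi|\,I_1(|\xi|)^2 = \frac{1}{2}|\xi|^{-1}I_1(|\xi|)^2$, matching $\min\{1,|\xi|^{-1}\} = |\xi|^{-1}$. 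Combining the two cases gives \eqref{A-96}.

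\textbf{Step 2 (the derivative estimate \eqref{A-97}).} Write $\frac{d}{dr}(rI_1(|\xi|r)) = I_1(|\xi|r) + |\xi|r I_1'(|\xi|r)$, so $\left|\frac{d}{dr}(rI_1(|\xi|r))\right|^2 \le 2 I_1(|\xi|r)^2 + 2\xi^2 r^2 I_1'(|\xi|r)^2$. By \eqref{Bessel2}, $I_1'(|\xi|r) \le I_1(|\xi|r) + I_1(|\xi|r)/(|\xi|r)$, so $|\xi|r I_1'(|\xi|r) \le |\xi|r I_1(|\xi|r) + I_1(|\xi|r)$; squaring and using $r\le 1$, everything is bounded by $C(1 + \xi^2)I_1(|\xi|r)^2$. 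Dividing by $r$ and integrating, $\int_0^1 |\cdot|^2 \frac1r\,dr \le C(1+\xi^2)\int_0^1 I_1(|\xi|r)^2 \frac1r\,dr$. Now change variables to $\rho = |\xi|r$: this equals $C(1+\xi^2)\int_0^{|\xi|} I_1(\rho)^2 \frac{d\rho}{\rho}$. When $|\xi|\le 1$ bound $I_1(\rho)^2/\rho$ using $I_1(\rho) \le \frac{\rho}{2}\cosh\rho \le C\rho$ from \eqref{Bessel1-5} near $0$, giving a finite integral $\le C|\xi|^2 I_1(|\xi|)^2$ (one uses $I_1(\rho)\le I_1(|\xi|)$ and $I_1(|\xi|)\ge |\xi|/2$); altogether $\le C(1+\xi^2)\cdot C\xi^2 \le C$ times $I_1(|\xi|)^2$, consistent with $\max\{1,|\xi|\} = 1$. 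When $|\xi|>1$, again use \eqref{Bessel1} to get $I_1(\rho)^2/\rho \le e^{2(\rho-|\xi|)}|\xi|\rho^{-2}I_1(|\xi|)^2$; since $\rho^{-2} \le$ (its value is largest near $0$, but there $I_1(\rho)/\sqrt\rho$ stays bounded) one should instead split $\int_0^1 + \int_1^{|\xi|}$, bounding the first piece by $I_1(|\xi|)^2$ directly and the second by the exponential estimate, giving $\le C|\xi|\,I_1(|\xi|)^2$; then $C(1+\xi^2)\cdot C|\xi|\,I_1(|\xi|)^2 \cdot \xi^{-2}$ — wait, one must be careful to retain the right power. The cleaner route, and the one I would use, is to note $\int_0^1 I_1(|\xi|r)^2\frac1r\,dr \le C\min\{1,|\xi|^{-1}\}|\xi|\, I_1(|\xi|)^2/|\xi|$-type bound obtained exactly as in Step 1 but with weight $\frac1r$ instead of $r$; carrying the same endpoint-domination argument gives $\int_0^1 I_1(|\xi|r)^2\frac1r\,dr \le C I_1(|\xi|)^2$ for $|\xi|\ge 1$ and $\le C\xi^2 I_1(|\xi|)^2$ for $|\xi|\le 1$, and then $(1+\xi^2)$ times this is $\le C\max\{1,|\xi|\}\cdot|\xi|\cdot$... — the bookkeeping must be done carefully so that the final exponent is exactly $\max\{1,|\xi|\}$, which it is because the $(1+\xi^2)$ factor is absorbed against the gain from the weight change.

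\textbf{Step 3 (the $\mathcal{L}$ estimates \eqref{A-98}, \eqref{A-99}).} The crucial simplification is that $I_1(|\xi|r)$ almost diagonalizes $\mathcal{L}$: from \eqref{eqBessel1}, setting $z = |\xi|r$, one computes $\mathcal{L}I_1(|\xi|r) = \left(\frac{d^2}{dr^2} + \frac1r\frac{d}{dr} - \frac{1}{r^2}\right)I_1(|\xi|r) = \xi^2 I_1(|\xi|r)$. Indeed $\frac{d^2}{dr^2}I_1(|\xi|r) = \xi^2 I_1''(z)$, $\frac1r\frac{d}{dr}I_1(|\xi|r) = \frac{|\xi|}{r}I_1'(z) = \frac{\xi^2}{z}I_1'(z)$, $\frac{1}{r^2}I_1(|\xi|r) = \frac{\xi^2}{z^2}I_1(z)$, so $\mathcal{L}I_1(|\xi|r) = \xi^2\left(I_1''(z) + \frac1z I_1'(z) - \frac{1}{z^2}I_1(z)\right) = \xi^2\cdot\frac{1}{z^2}\left(z^2 I_1'' + z I_1' - I_1\right) = \xi^2\cdot\frac{z^2 I_1(z)}{z^2} = \xi^2 I_1(z) = \xi^2 I_1(|\xi|r)$, using \eqref{eqBessel1}. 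Therefore $\int_0^1 |\mathcal{L}I_1(|\xi|r)|^2 r\,dr = \xi^4 \int_0^1 I_1(|\xi|r)^2 r\,dr$, and \eqref{A-98} follows immediately from \eqref{A-96}. Likewise $\frac{d}{dr}(r\mathcal{L}I_1(|\xi|r)) = \xi^2\frac{d}{dr}(rI_1(|\xi|r))$, so $\int_0^1|\frac{d}{dr}(r\mathcal{L}I_1(|\xi|r))|^2\frac1r\,dr = \xi^4\int_0^1|\frac{d}{dr}(rI_1(|\xi|r))|^2\frac1r\,dr$, and \eqref{A-99} follows from \eqref{A-97}.

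\textbf{Main obstacle.} The only genuine subtlety is the case-tracking of the factors $\min\{1,|\xi|^{-1}\}$ and $\max\{1,|\xi|\}$ in Steps 1 and 2 — one must handle $|\xi| \le 1$ and $|\xi| \ge 1$ separately and be precise about which endpoint dominates each integral, using \eqref{Bessel1} for the large-argument regime and \eqref{Bessel1-5} (together with $I_1(|\xi|) \ge |\xi|/2$) for the small-argument regime. The identity $\mathcal{L}I_1(|\xi|r) = \xi^2 I_1(|\xi|r)$ from Step 3 is what makes the last two estimates essentially free; without it one would face a much messier computation involving $I_1''$ and $I_1'''$.
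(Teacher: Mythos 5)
Your Steps 1 and 3 are correct and follow essentially the same route as the paper: the substitution $\eta=|\xi|r$, monotonicity of $I_1$ for $|\xi|\le 1$, the ratio bound \eqref{Bessel1} to capture the endpoint domination for $|\xi|>1$ (your direct integration of $e^{2(\rho-|\xi|)}$ over all of $(0,|\xi|)$ is in fact slightly cleaner than the paper's split at $|\xi|/2$), and the identity $\mathcal{L}I_1(|\xi|r)=\xi^2 I_1(|\xi|r)$, which reduces \eqref{A-98}--\eqref{A-99} to \eqref{A-96}--\eqref{A-97} exactly as the paper does.

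Step 2, however, has a genuine gap, and you half-notice it yourself. After bounding $|\xi|rI_1'(|\xi|r)\le |\xi|rI_1(|\xi|r)+I_1(|\xi|r)$ you discard the factor $r$ by ``using $r\le 1$'' and arrive at $\int_0^1\bigl|\tfrac{d}{dr}(rI_1(|\xi|r))\bigr|^2\tfrac1r\,dr\le C(1+\xi^2)\int_0^1 I_1(|\xi|r)^2\tfrac{dr}{r}$. For this to yield $C\max\{1,|\xi|\}(I_1(|\xi|))^2$ when $|\xi|\ge 1$ you would need the sharp bound $\int_0^1 I_1(|\xi|r)^2\tfrac{dr}{r}=\int_0^{|\xi|}I_1(\eta)^2\tfrac{d\eta}{\eta}\le C|\xi|^{-1}(I_1(|\xi|))^2$; but you only assert the weaker $\le C(I_1(|\xi|))^2$, which leaves you with $C\xi^2(I_1(|\xi|))^2$ --- off by a full factor of $|\xi|$ from the statement. (The spurious ``$\cdot\,\xi^{-2}$'' in your first attempt has no source: the change of variables $\eta=|\xi|r$ in $\int_0^1 g(|\xi|r)\tfrac{dr}{r}$ produces no Jacobian factor at all.) There are two clean repairs. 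Either do not throw away the weight: squaring $(2+|\xi|r)I_1(|\xi|r)$ and dividing by $r$ gives $C\,I_1(|\xi|r)^2/r + C\xi^2 r\,I_1(|\xi|r)^2$, so the $\xi^2$ multiplies the $r$-weighted integral, which by your own Step 1 carries the extra $|\xi|^{-1}$, and then the crude bound $\int_0^{|\xi|}I_1(\eta)^2\tfrac{d\eta}{\eta}\le C(I_1(|\xi|))^2$ suffices for the remaining term. Or prove the sharp weighted bound $\int_0^{|\xi|}I_1(\eta)^2\tfrac{d\eta}{\eta}\le C|\xi|^{-1}(I_1(|\xi|))^2$ for $|\xi|\ge1$, which is what the paper does (its estimate \eqref{A-90-7}) by splitting the integral at $1$ and at $|\xi|/2$ and using \eqref{Bessel1} and \eqref{Bessel1-5} on each piece; note the paper keeps the two terms $\int_0^{|\xi|}|I_1|^2\eta^{-1}\,d\eta$ and $\int_0^{|\xi|}|I_1'|^2\eta\,d\eta$ separate rather than extracting a uniform $(1+\xi^2)$ prefactor, which is precisely what avoids the loss.
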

\begin{proof}
The straightforward computations give
\be \label{A-90-1}
 \int_0^1 |I_1 (|\xi| r)|^2 r \, dr  = |\xi|^{-2} \int_0^{|\xi|} |I_1 (\eta) |^2 \eta \, d\eta.
\ee
For $|\xi| \in (0, 1]$, since $I_1 (\eta) $ is increasing, one has
\be \label{A-90-2} \ba
\int_0^{|\xi|} |I_1 (\eta)|^2 \eta \, d \eta
\leq  C  (I_1(|\xi|))^{2} \int_0^{|\xi|} \left| \frac{I_1(\eta)}{I_1(|\xi|)} \right|^2 \eta \, d\eta
\leq C  \xi^{2} (I_1(|\xi|^2))^2.
\ea \ee
When $|\xi| > 1$, since $I_1(\eta)$ is an increasing function, one has
\be\label{Besseltrick}  \ba
&\int_0^{|\xi|} | I_1 (\eta) |^2 \eta \, d\eta
=  \int_0^{\frac{|\xi|}{2}} \left| I_1 (\eta ) \right|^2 \eta \, d\eta +  \int_{\frac{|\xi|}{2}}^{|\xi|} \left| I_1 (\eta ) \right|^2 \eta \, d\eta
\leq   2 \int_{\frac{|\xi|}{2}}^{|\xi|} \left| I_1 (\eta ) \right|^2 \eta \, d\eta.
\ea
\ee
It follows from \eqref{Bessel1} in Lemma \ref{lemBessel} that one has
\be \label{A-90-3}
\ba
&\int_0^{|\xi|} | I_1 (\eta) |^2 \eta \, d\eta
\leq  2  (I_{1}(|\xi|))^2 \int_{\frac{|\xi|}{2}}^{|\xi|}  \left| \frac{I_1 (\eta )}{I_1(|\xi|) } \right|^2 \eta \, d\eta \\
\leq & C(I_{1}(|\xi|))^2 \int_{\frac{|\xi|}{2}}^{|\xi|} e^{2(\eta - |\xi|)} \eta \, d\eta
\leq  C |\xi| (I_{1}(|\xi|))^2.
\ea
\ee
The estimate \eqref{A-96} follows from \eqref{A-90-1}-\eqref{A-90-2} and \eqref{A-90-3}.

Next, the straightforward computations give
\be \label{A-90-4}
\ba
 \int_0^1 \left| \frac{d}{dr} ( r I_1 (|\xi| r) )   \right|^2 \frac1r \, dr
\leq &  2 \int_0^1 |I_1 (|\xi| r) |^2  \frac1r \, dr + 2  \xi^2 \int_0^1 | I_1^{\prime} (|\xi| r)|^2 r \, dr \\
= & 2  \int_0^{|\xi|} |I_1 (\eta)|^2 \frac{1}{\eta} \, d \eta + 2  \int_0^{|\xi|} | I_1^{\prime} (\eta) |^2 \eta \, d\eta .
\ea
\ee
For $|\xi| \in (0, 1]$, it follows from \eqref{Bessel1-5} in Lemma \ref{lemBessel} that one has
\be \label{A-90-5} \ba
& \int_0^{|\xi|} |I_1 (\eta)|^2 \frac{1}{\eta} \, d\eta
\leq  \int_0^{|\xi|} \left|  \frac{\eta \cosh \eta }{2  } \right|^2 \frac{1}{\eta} \, d\eta
\leq  C |\xi|^2\leq C(I_1(|\xi|))^2.
\ea
\ee
Since $I_1(\eta)$ is smooth on $[0, +\infty)$, $I_1^{\prime}(\eta)$ is bounded on $[0, 1]$. Hence,
\be \label{5-90-6}
\ba
& \int_0^{|\xi|} |I_1^{\prime} (\eta) |\eta \, d\eta
\leq  C |\xi|^2 \leq C (I_1(|\xi|))^2.
\ea
\ee
When $|\xi| >1$, it follows from Lemma \ref{lemBessel}  that
\be \label{A-90-7}
\ba
&\int_0^{|\xi|} | I_1 (\eta)|^2 \frac{1}{\eta} \, d \eta
\leq  \int_0^1 |I_1(\eta)|^2 \frac{1}{\eta} \, d\eta +  \int_1^{|\xi|} |I_1(\eta)|^2 \frac{1}{\eta} \, d\eta \\
\leq & C  \int_0^1 \left| \frac{\eta \cosh \eta }{2} \right|^2 \frac{1}{\eta} \, d\eta
+ (I_1(|\xi|))^2  \int_{1}^{|\xi|} \left| \frac{ I_1(\eta)} {I_1(|\xi|)}\right|^2 \frac{1}{\eta} \, d\eta  \\
\leq & C  + (I_1(|\xi|))^2  \int_{1}^{|\xi|} e^{2(\eta-|\xi|)}\frac{|\xi|}{\eta} \frac{1}{\eta} \, d\eta  \\
\leq & C + (I_1(|\xi|))^2 \left( e^{-2|\xi|}|\xi| \int_{1}^{\frac{|\xi|}{2}} \frac{e^{2\eta}}{\eta^2} \, d\eta  +\frac{4e^{-2|\xi|}}{|\xi|}  \int_{\frac{|\xi|}{2}}^{|\xi|} e^{2\eta} \, d\eta \right)\\
\leq & C
+ (I_1(|\xi|))^2 \left( |\xi|e^{-|\xi|} + \frac{1}{|\xi|}\right)  \\
\leq& \frac{C}{|\xi|} (I_1(|\xi|))^2,
\ea
\ee
where we used the estimate \eqref{Bessel1-5} for the last inequality.
Similarly, due to the boundedness of $I_1^{\prime}(\eta)$ on $[0, 1]$ and \eqref{Bessel2}, one has
\be \label{A-90-8} \ba
&  \int_0^{\xi} |I_1^{\prime} (\eta) |^2 \eta \, d\eta
\leq  \int_0^1  |I_1^{\prime} (\eta) |^2 \eta \, d\eta + \int_1^{|\xi|} |I_1^{\prime} (\eta) |^2\eta \, d\eta\\
\leq&    C  + 2\left(
\int_1^{|\xi|} |I_1(\eta)|^2 \frac{1}{\eta} \, d\eta + \int_1^{|\xi|} |I_1(\eta)|^2 {\eta} \, d\eta\right) \\
\leq & C + 2 \int_1^{|\xi|} |I_1(\eta)|^2 \eta \, d\eta  .
\ea \ee
It follows from  \eqref{A-90-3} that
\be \label{A-90-8-5} \ba
&  \int_0^{\xi} |I_1^{\prime} (\eta) |^2 \eta \, d\eta
\leq C  |\xi|  (I_1(|\xi|))^2\quad \mbox{for}\,\, |\xi|\geq 1.
\ea \ee
Combining the estimate \eqref{A-90-4}-\eqref{A-90-8-5} yields \eqref{A-97}.

Since $( \mathcal{L} -\xi^2) I_1(|\xi|r) = 0$, one has
\be \label{A-90-9} \ba
 \int_0^1 |\mathcal{L}  I_1(|\xi|r) |^2 r \, dr
=  |\xi|^4 \int_0^1 | I_1(|\xi|r)|^2 r \, dr
\ea
\ee
and
\be \label{A-90-10} \ba
 \int_0^1\frac{1}{r} \left|\frac{d}{dr}(r\mathcal{L}  I_1(|\xi|r)) \right|^2  \, dr
=  |\xi|^4\int_0^1\frac{1}{r} \left|\frac{d}{dr}(r  I_1(|\xi|r)) \right|^2  \, dr.
\ea \ee
Thus the estimates \eqref{A-98}-\eqref{A-99} follow from \eqref{A-90-9}-\eqref{A-90-10} and \eqref{A-96}-\eqref{A-97}.
\end{proof}

The following lemma is about two weighted interpolation inequalities, which are quite similar to \cite[(3.28)]{M}.
\begin{lemma}\label{weightinequality} Let $g \in C^2[0, 1]$, then one has
\begin{equation} \label{weight1} \ba
\int_0^1 |g|^2r \, dr  \leq & C \left(\int_0^1 (1-r^2)|g|^2 r \, dr\right)^{\frac23} \left(\int_0^1  \left|\frac{d}{dr}(rg)\right|^2 \frac{1}{r} \, dr\right)^{\frac13} \\
&\ \ \ \  + C \int_0^1 (1-r^2)|g|^2 r\, dr ,
\ea
\end{equation}
and
\be \label{weight2} \ba
\int_0^1 \left| \frac{d}{dr}(rg) \right|^2 \frac1r \, dr &  \leq C \left( \int_0^1 \frac{1-r^2}{r} \left| \frac{d}{dr} ( rg)  \right|^2 \, dr \right)^{\frac23} \left( \int_0^1 |\mathcal{L} g|^2 r \, dr  \right)^{\frac13} \\
&\ \ \ \ \ \ \ \ \ \ \ + C \int_0^1 \frac{1-r^2}{r} \left| \frac{d}{dr} ( rg)  \right|^2 \, dr .
\ea
\ee
\end{lemma}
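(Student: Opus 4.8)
The plan is to prove both inequalities by one device: introduce a small parameter $\delta\in(0,\tfrac14]$, split $[0,1]$ into an interior part $[0,1-\delta]$ and a boundary layer $[1-\delta,1]$, estimate each piece in terms of $\delta$, and then optimize in $\delta$. The lower-order additive term on the right of each inequality will appear precisely because the optimal $\delta$ may fall outside the admissible range.

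For \eqref{weight1}, write $A=\int_0^1(1-r^2)|g|^2r\,dr$ and $B=\int_0^1\big|\tfrac{d}{dr}(rg)\big|^2\tfrac1r\,dr$. On $[0,1-\delta]$ one has $1-r^2\ge 2\delta-\delta^2\ge\delta$, hence $\int_0^{1-\delta}|g|^2r\,dr\le\delta^{-1}A$. On the boundary layer, for $r\in[1-\delta,1]$ and $s\in[1-2\delta,1-\delta]$ I would use $rg(r)=sg(s)+\int_s^r\tfrac{d}{dt}(tg)\,dt$, apply Cauchy--Schwarz to the integral (producing a factor $\big(\int_{1-2\delta}^1 t\,dt\big)^{1/2}\le C\delta^{1/2}$ against $B^{1/2}$), average in $s$ over $[1-2\delta,1-\delta]$, and use $1-s^2\ge\delta$ there to bound $\int_{1-2\delta}^{1-\delta}|sg(s)|^2\,ds\le C\delta^{-1}A$. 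This gives $\sup_{[1-\delta,1]}|g|^2\le C\delta^{-2}A+C\delta B$, whence $\int_{1-\delta}^1|g|^2r\,dr\le C\delta^{-1}A+C\delta^2B$. Adding the two pieces,
\[
\int_0^1|g|^2r\,dr\le C\delta^{-1}A+C\delta^2B,\qquad 0<\delta\le\tfrac14 .
\]
Taking $\delta=(A/B)^{1/3}$ when this is $\le\tfrac14$ yields the term $CA^{2/3}B^{1/3}$; when $(A/B)^{1/3}>\tfrac14$, i.e. $B<CA$, taking $\delta=\tfrac14$ and absorbing the $B$-term into the $A$-term yields $CA$. The degenerate cases $B=0$ (which forces $g\equiv0$ since $g\in C^2[0,1]$) or $A,B=+\infty$ are trivial.

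For \eqref{weight2}, set $h=\tfrac1r\tfrac{d}{dr}(rg)$, so that $\tfrac{d}{dr}(rg)=rh$ and, by the definition of $\mathcal L$, $\mathcal Lg=h'$. Then the left side of \eqref{weight2} equals $\int_0^1|h|^2r\,dr$, the weighted quantity equals $\int_0^1(1-r^2)|h|^2r\,dr$, and $\int_0^1|\mathcal Lg|^2r\,dr=\int_0^1|h'|^2r\,dr$, so it suffices to prove
\[
\int_0^1|h|^2r\,dr\le C\Big(\int_0^1(1-r^2)|h|^2r\,dr\Big)^{2/3}\Big(\int_0^1|h'|^2r\,dr\Big)^{1/3}+C\int_0^1(1-r^2)|h|^2r\,dr .
\]
This is handled by the identical interior/boundary-layer split: on $[0,1-\delta]$ one again uses $1-r^2\ge\delta$, and on $[1-\delta,1]$ one writes $h(r)=h(s)+\int_s^r h'(t)\,dt$ and uses $\big(\int_{1-2\delta}^1\tfrac1t\,dt\big)^{1/2}=\big(-\ln(1-2\delta)\big)^{1/2}\le C\delta^{1/2}$; since all of this happens for $r$ bounded away from $0$, the weights $r^{\pm1}$ only contribute bounded factors. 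The same optimization in $\delta$ concludes the proof.

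The computations are routine; the only genuinely delicate point is bookkeeping the powers of $\delta$ in the boundary-layer estimate so that the optimization reproduces exactly the exponents $\tfrac23$ and $\tfrac13$, together with the two-regime case split that produces the additive lower-order term. I expect that to be the main obstacle.
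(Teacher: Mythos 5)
Your proposal is correct and follows essentially the same route as the paper: an interior/boundary-layer splitting at $r=1-\delta$, a sup bound on the layer obtained from the fundamental theorem of calculus plus Cauchy--Schwarz and an averaging in the auxiliary point $s$, and an optimization in $\delta$ whose out-of-range regime produces the additive lower-order term (the paper encodes the same optimization by the explicit choice $\delta = A^{1/2}/(\sup|rg|+A^{1/2})$ followed by Young's inequality). Your substitution $h=\tfrac1r\tfrac{d}{dr}(rg)$, which turns \eqref{weight2} into a formal repeat of the argument for \eqref{weight1}, is a mild streamlining of the paper's second half but not a different method.
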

\begin{proof}
First, one has
\begin{equation}
\begin{aligned}
\int_0^1 |g|^2r \, dr \leq & 2 \int_0^{\frac12} (1-r^2)|g|^2 r \, dr+  \int_{\frac12}^{1-\delta} \frac{1-r^2}{\delta} |g|^2 r \, dr  + \int_{1-\delta}^1 |g|^2 r \, dr\\
\leq & 2 \int_0^{\frac12} (1-r^2)|g|^2 r \, dr+  \frac{2}{\delta} \int_{\frac12}^{1-\delta} {(1-r^2)}|g|^2 r \, dr  + 2\delta \sup_{r\in [0,1]} |rg(r)|^2 .
\end{aligned}
\end{equation}
If one chooses
\begin{equation}
\delta =  \frac{ \displaystyle \left(\int_0^1 (1-r^2)|g|^2 r \, dr\right)^{\frac12}}{ \displaystyle \sup_{r\in [0,1]} |rg(r)|+\left(\int_0^1 (1-r^2)|g|^2 r \, dr\right)^{\frac12}},
\end{equation}
then one has
\begin{equation}
\int_0^1 |g|^2rdr
\leq  4 \int_0^{1} (1-r^2)|g|^2 r\, dr+ 4 \left(\int_{0}^{1} {(1-r^2)}|g|^2 r\, dr \right)^{\frac12} \sup_{r\in [0,1]} |rg(r)|.
\end{equation}
Note that
\begin{equation}
\begin{aligned}
|r g(r)|^2  = &\int_0^r \frac{d}{ds} \left| (sg(s))\right|^2 \,  ds  \\
\leq & 2 \left(\int_0^r  \left|\frac{d}{ds}(sg(s))\right|^2 \frac{1}{s} \, ds\right)^{\frac12}  \left(\int_0^r \left|sg(s)\right|^2 s \, ds\right)^{\frac12 }\\
\leq & 2  \left(\int_0^1 \left|\frac{d}{ds}(sg(s))\right|^2 \frac{1}{s}  \, ds\right)^{\frac12 }  \left(\int_0^1 \left|sg(s)\right|^2 s\, ds\right)^{\frac12}.
\end{aligned}
\end{equation}
Hence, one has
\begin{equation}
\begin{aligned}
\int_0^1 |g|^2r \, dr
\leq &  4 \int_0^{1} (1-r^2)|g|^2 r\, dr\\
&\quad + 8 \left(\int_{0}^{1} {(1-r^2)}|g|^2 r \, dr \right)^{\frac12} \left(\int_0^1  \left|\frac{d}{dr}(rg)\right|^2 \frac{1}{r}\, dr \right)^{\frac14}  \left(\int_0^1 \left|rg \right|^2 r \, dr\right)^{\frac14}\\
\leq & 4 \int_0^{1} (1-r^2)|g|^2 r\, dr + C  \left(\int_{0}^{1} {(1-r^2)}|g|^2 r\, dr \right)^{\frac23} \left(\int_0^1 \left|\frac{d}{dr}(rg)\right|^2 \frac{1}{r}  \, dr\right)^{\frac13}\\
&\quad  +  \frac{1}{2}\int_0^1 \left|g \right|^2 r \, dr.
\end{aligned}
\end{equation}
Thus we get
 \eqref{weight1}.

The proof of \eqref{weight2} is similar to that of \eqref{weight1}. First, one has
\be \label{weight15}
\int_0^{\frac12} \left| \frac{d}{dr} (rg) \right|^2 \frac1r \, dr \leq \frac43 \int_0^1 \frac{1 - r^2}{r} \left| \frac{d}{dr} (r g)  \right|^2 \, dr.
\ee
Second, the straightforward computations give
\be \label{weight16} \ba
\int_{\frac12}^1 \left| \frac{d}{dr} ( r g) \right|^2 \frac1r \, dr
& \leq \frac{1}{\delta} \int_{\frac12}^{1 -\delta} \frac{1-r^2}{r} \left| \frac{d}{dr} (r g) \right|^2 \, dr +  \int_{1- \delta}^1 \left| \frac1r \frac{d}{dr}(r g) \right|^2 \, dr \\
& \leq \frac{1}{\delta} \int_{\frac12}^{1- \delta} \frac{1-r^2}{r} \left| \frac{d}{dr} (r g) \right|^2 \, dr +  \delta \sup_{r \in [\frac12, 1]} \left| \frac{d}{dr} ( rg ) \frac1r  \right|^2 .
\ea
\ee
Let us choose
\be \nonumber
\delta = \frac{\displaystyle \left( \int_{\frac12}^1 \frac{1- r^2 }{r} \left| \frac{d}{dr} (r g ) \right|^2 \, dr \right)^{\frac12}    }{ \displaystyle \sup_{r \in [\frac12, 1]} \left| \frac{d}{dr} ( rg ) \frac1r  \right| + \left( \int_{\frac12}^1 \frac{1- r^2 }{r} \left| \frac{d}{dr} (r g ) \right|^2 \, dr \right)^{\frac12}}.
\ee
Therefore, it follows from \eqref{weight16} that one has
\be \label{weight17} \ba
\int_{\frac12}^1 \left| \frac{d}{dr} (r g)  \right|^2  \frac1r \, dr
\leq & 2 \int_{\frac12}^1  \frac{1-r^2}{r}  \left| \frac{d}{dr} (r g)  \right|^2  \frac1r \, dr \\
&\ \ \ \ \ + 2 \left( \int_{\frac12}^1 \frac{1- r^2 }{r} \left| \frac{d}{dr} (r g ) \right|^2 \, dr \right)^{\frac12} \sup_{r \in [\frac12, 1]} \left| \frac{d}{dr} ( rg ) \frac1r  \right|.
\ea
\ee
For every $r, s \in [\frac12, 1]$, one has
\be \label{weight18}
\left| \frac1r \frac{d}{dr}( r g)  \right|^2
\leq \left| \frac1s \frac{d}{ds}( s g)  \right|^2 + 2 \int_s^r | \mathcal{L} g|  \left| \frac1t \frac{d}{dt}(t g) \right| \, dt .
\ee
Integrating \eqref{weight18} with respect to $s$ over $[\frac12, 1]$ yields
\be \label{weight19}
\left| \frac1r \frac{d}{dr}( r g)  \right|^2
\leq 4 \int_{\frac12}^1 \left| \frac{d}{ds} (s g)  \right|^2 \frac1s \, ds + 4 \int_{\frac12}^1 |\mathcal{L}g | \left| \frac{d}{ds} (sg)  \right| \, ds.
\ee
Hence one has
\be \label{weight20}
\ba
& \int_{\frac12}^1 \left| \frac{d}{dr} (r g ) \right|^2 \frac1r \, dr  \\
\leq & 2 \int_{\frac12}^1 \frac{1- r^2}{r} \left| \frac{d}{dr} ( rg) \right|^2 \, dr
+ 8 \left( \int_{\frac12}^1 \frac{1- r^2}{r} \left| \frac{d}{dr} ( rg) \right|^2 \, dr   \right)^{\frac12} \left( \int_{\frac12}^1 \left| \frac{d}{dr} (r g ) \right|^2 \frac1r \, dr \right)^{\frac12} \\
& \ \ + 8 \left( \int_{\frac12}^1 \frac{1- r^2}{r} \left| \frac{d}{dr} ( rg) \right|^2 \, dr   \right)^{\frac12}
\left(  \int_{\frac12}^1 |\mathcal{L} g |^2 r \, dr \right)^{\frac14} \left( \int_{\frac12}^1 \left| \frac{d}{dr} (r g ) \right|^2 \frac1r \, dr  \right)^{\frac14} \\
\ea
\ee
By Young's inequality, one proves that
\be \label{weight21} \ba
\int_{\frac12}^1 \left| \frac{d}{dr} (r g ) \right|^2 \frac1r \, dr & \leq
C \left( \int_{\frac12}^1 \frac{1- r^2}{r} \left| \frac{d}{dr} ( rg) \right|^2 \, dr   \right)^{\frac23} \left(  \int_{\frac12}^1 |\mathcal{L} g |^2 r \, dr \right)^{\frac13} \\
&\ \ \ \ \ + C  \int_{\frac12}^1 \frac{1- r^2}{r} \left| \frac{d}{dr} ( rg) \right|^2 \, dr.
\ea
\ee
The inequalities \eqref{weight15} and \eqref{weight21} give the inequality \eqref{weight2}. This finishes the proof of Lemma \ref{weightinequality}.
\end{proof}

The following elementary fixed point theorem is the basic tool to prove the nonlinear structural stability.
\bl \la{nonlinear} Let $Y$ be a Banach space with the norm $\|\cdot\|_{Y}$, and $\mathcal{B}:\  Y\times Y\rightarrow Y$ be a bilinear map. If for all $\zeta_1, \zeta_2 \in Y$, one has
\be \nonumber
\|\mathcal{B}(\zeta_1, \zeta_2) \|_{Y} \leq \eta \|\zeta_1\|_{Y}  \|\zeta_2\|_{Y},
\ee
then for all $\zeta^* \in Y$ satisfying $4 \eta \| \zeta^* \|_{Y} < 1$, the equation
\be \nonumber
\zeta = \zeta^*  + \mathcal{B}(\zeta, \zeta)
\ee
has a unique solution $\zeta \in Y$ satisfying
\be \nonumber
\|\zeta\|_{Y} \leq 2\|\zeta^*\|_{Y}.
\ee
\el

\begin{proof}
Let $\zeta_0 = \zeta^*$ and $\zeta_n = \zeta^* + \mathcal{B}(\zeta_{n-1}, \zeta_{n-1})$ for $n \in \mathbb{N}$. First, we  prove that
for every $n \in \mathbb{N}$,
\be \la{nonlinear1}
\| \zeta_n\|_{Y} \leq 2 \|\zeta^*\|_{Y}.
\ee
Obviously, the assertion \eqref{nonlinear1} holds for $n=0$.
Assume that the assertion \eqref{nonlinear1} holds for $n = k$, then
\be \nonumber
\|\zeta_{k+1}\|_{Y} \leq \|\zeta^*\|_{Y}  + \eta \|\zeta_k \|_{Y}^2 \leq \|\zeta^*\|_{Y} + 4\eta \|\zeta^*\|_{Y}^2
\leq 2 \|\zeta^*\|_{Y}.
\ee
Hence,  the assertion \eqref{nonlinear1} holds for every $n \in \mathbb{N}$ by induction.

Next, we  prove that $\{ \zeta_n \}$ converges in $Y$.
\be \la{nonlinear2} \ba
\| \zeta_{n+1} - \zeta_n\|_{Y} & = \|\mathcal{B}(\zeta_n , \zeta_n) - \mathcal{B}(\zeta_{n -1}, \zeta_{n-1} )\|_{Y }\\
& \leq \eta \|\zeta_n - \zeta_{n-1}\|_{Y}  \left( \|\zeta_n\|_{Y} + \|\zeta_{n-1}\|_{Y} \right)\\
& \leq 4 \eta  \|\zeta^*\|_{Y}  \|\zeta_n - \zeta_{n-1} \|_{Y}.
\ea
\ee
Since $4\eta  \|\zeta^*\|_{Y} < 1, $ \eqref{nonlinear2} implies that $\{\zeta_n\}$ converges in $Y$.

Let $\zeta= \lim_{n \rightarrow + \infty} \zeta_n$. It is easy to check that
\be \nonumber
\zeta = \zeta^* + \mathcal{B}(\zeta, \zeta)\ \ \ \mbox{and}\ \ \|\zeta\|_{Y} \leq 2 \|\zeta^*\|_{Y}.
\ee
If there exists another solution $\tilde{\zeta}$ satisfying $\|\tilde{\zeta} \|_{Y} \leq 2 \|\zeta^*\|_{Y}$, then
\be \nonumber
\|\zeta - \tilde{\zeta}\|_{Y} \leq 4\eta \|\zeta^*\|_{Y} \|\zeta - \tilde{\zeta}\|_{Y},
\ee
which implies that $\zeta = \tilde{\zeta}$. Hence, the proof of Lemma \ref{nonlinear} is completed.
\end{proof}

\section{Analysis on vorticity}\label{secappendix}

In this appendix, we give the detailed proof for Lemmas \ref{lemma3-4-3} and  \ref{lemma3-4-4}.

\begin{proof}[Proof of Lemma \ref{lemma3-4-3}]
Direct calculations show that
in the domain $\Omega_{r_0}$,
\be \nonumber
\partial_x \Bo^\theta = \left(-\left( \partial_r \omega^\theta - \frac{\omega^\theta}{r} \right) \cos \theta \sin \theta, \
\partial_r \omega^\theta \cos^2 \theta + \frac{\omega^\theta}{r} \sin^2 \theta , \  0                 \right),
\ee
\be \nonumber
\partial_y \Bo^\theta = \left( -\partial_r \omega^\theta \sin^2 \theta - \frac{\omega^\theta}{r} \cos^2 \theta,
\ \left(  \partial_r \omega^\theta - \frac{\omega^\theta}{r} \right) \cos \theta \sin \theta  , \ 0                \right),
\ee
\be \nonumber
\partial_z \Bo^\theta = \partial_z \omega^\theta \Be_\theta,
\ee
\be \nonumber
 \partial_x^2 \Bo^\theta = \left( \ba & -\partial_r^2 \omega^\theta \cos^2 \theta \sin \theta - \left( \frac{\partial_r \omega^\theta}{r} - \frac{\omega^\theta}{r^2} \right) (\sin^3 \theta - 2\cos^2 \theta \sin \theta)  \\
&\partial_r^2 \omega^\theta \cos^3 \theta + 3\left(
\frac{\partial_r \omega^\theta}{r}- \frac{\omega^\theta}{r^2}  \right) \sin^2 \theta \cos \theta \\
&  0                      \ea \right)^t,
\ee
\be \nonumber
 \partial_y^2 \Bo^\theta = \left( \ba & -\partial_r^2 \omega^\theta  \sin^3 \theta   - 3 \left( \frac{\partial_r \omega^\theta}{r} - \frac{\omega^\theta}{r^2} \right) \cos^2 \theta \sin \theta  \\
&\partial_r^2 \omega^\theta \cos \theta \sin^2 \theta + \left(
\frac{\partial_r \omega^\theta}{r}- \frac{\omega^\theta}{r^2}  \right) ( \cos^3 \theta - 2 \sin^2 \theta \cos \theta)  \\
&  0                      \ea \right)^t,
\ee
\be \nonumber
 \partial_z^2 \Bo^\theta = ( - \partial_z^2 \omega^\theta \sin\theta, \ \partial_z^2 \omega^\theta \cos \theta, \ 0 ),
\ee
\be \nonumber
 \partial_x \partial_y \Bo^\theta= \left( \ba & -\partial_r^2 \omega^\theta  \sin^2 \theta \cos \theta    -  \left( \frac{\partial_r \omega}{r} - \frac{\omega^\theta}{r^2} \right) (\cos^3 \theta - 2 \sin^2 \theta \cos \theta )  \\
&\partial_r^2 \omega^\theta \cos^2 \theta \sin \theta + \left(
\frac{\partial_r \omega^\theta}{r}- \frac{\omega^\theta}{r^2}  \right) ( \sin^3 \theta - 2 \cos^2 \theta \sin \theta)  \\
&  0                      \ea \right)^t,
\ee
\be \nonumber
\begin{aligned}
 \partial_x \partial_z \Bo^\theta = \left( \ba & - \left( \partial_r \partial_z \omega^\theta - \frac{\partial_z \omega^\theta}{r} \right) \sin \theta \cos \theta  \\
& \partial_r \partial_z \omega^\theta \cos^2 \theta + \frac{\partial_z \omega^\theta}{r} \sin^2 \theta   \\
&  0                      \ea \right)^t,
\end{aligned}\quad
\begin{aligned}
 \partial_y \partial_z \Bo^\theta = \left( \ba &  - \partial_r \partial_z \omega^\theta \sin^2 \theta - \frac{\partial_z \omega^\theta}{r} \cos^2 \theta  \\
& \left(   \partial_r \partial_z \omega^\theta - \frac{\partial_z \omega^\theta}{r}  \right) \cos \theta \sin \theta   \\
&  0                      \ea \right)^t.
\end{aligned}
\ee
It follows from Lemma \ref{lemma3-4-1} that
\be \nonumber \ba
&\,\, \int_{- \infty}^{+ \infty} \int_{r_0}^1  \left| \frac{\partial_r \omega^\theta}{r} \right|^2  r \, dr dz
 \leq  C \int_{-\infty}^{+ \infty} \int_{r_0}^1 \left( \left| \frac{\partial}{\partial r} \mathcal{L} \hat{\psi} \right|^2 + \xi^4 \left|    \frac{\partial \hat{\psi} }{\partial r} \right|^2 \right)  \frac1r  \,  dr d\xi
 \leq \,\, \frac{C}{r_0^2} \|\psi\|_{H_r^4(D)}^2.
\ea \ee
Similarly, it holds that
\be \nonumber \ba
&\,\, \int_{-\infty}^{+ \infty} \int_{r_0}^1 \left| \frac{\omega^\theta}{r^2} \right|^2  r \, dr dz
 \leq  C \int_{-\infty}^{+\infty} \int_{r_0}^1 \left(   | \mathcal{L} \hat{\psi} |^2 + \xi^4 |\hat{\psi} |^2   \right)  \frac{1}{r^3} \, dr d\xi \\
 \leq &\,\, \frac{C}{r_0^4} \int_{-\infty}^{+ \infty} \int_{r_0}^1 \left( | \mathcal{L} \hat{\psi} |^2 + \xi^4 |\hat{\psi}|^2      \right)  r \, dr d\xi
 \leq \,\,  \frac{C}{r_0^4} \| \psi \|_{H_r^4(D)}^2
\ea \ee
and
\be \nonumber \ba
&\,\, \int_{-\infty}^{+ \infty} \int_{r_0}^1 | \partial_r \partial_z \omega^\theta |^2 r \, dr dz
 \leq  C \int_{-\infty}^{+ \infty} \int_{r_0}^1  \left( \xi^2 \left|\frac{\partial}{\partial r} \mathcal{L} \hat{\psi}   \right|^2 + \xi^6 \left| \frac{\partial}{\partial r} \hat{\psi}  \right|^2         \right)  r \, dr d\xi  \\
 \leq &\,\, C \int_{-\infty}^{+ \infty} \int_{r_0}^1  \left(  \xi^2 \left| \frac{\partial}{\partial r} (r \mathcal{L} \hat{\psi} ) \right|^2
+ \xi^2 | \mathcal{L} \hat {\psi}|^2 + \xi^6 \left| \frac{\partial}{\partial r} ( r\hat{\psi} )   \right|^2 + \xi^6 |\hat{\psi}|^2 \right)  \frac{1}{r} \, dr d\xi \\
 \leq&\,\, \frac{C}{r_0^2} \|\psi\|_{H_r^4(D)}^2.
\ea \ee
Furthermore, one has
\be \nonumber \ba
&\,\, \int_{-\infty}^{+ \infty} \int_{r_0}^1 \left| \frac{\partial_z \omega^\theta}{r}     \right|^2  r \, dr dz
 \leq  C \int_{-\infty}^{+ \infty} \int_{r_0}^1 \left(
\xi^2 | \mathcal{L} \hat{\psi}|^2 + \xi^6 |\hat{\psi} |      \right)  \frac1r \, dr d\xi\\
 \leq &\,\, \frac{C}{r_0^2} \int_{-\infty}^{+ \infty} \int_{r_0}^1 \left( \xi^2 | \mathcal{L} \hat{\psi} |^2 + \xi^6 |\hat{\psi}|^2 \right)  r \, dr d\xi
 \leq \,\, \frac{C}{r_0^2} \|\psi\|_{H_r^4(D)}^2
\ea \ee
and
\be \nonumber \ba
&\,\, \int_{- \infty}^{+ \infty} \int_{r_0}^1 \left| \partial_r^2 \omega^\theta \right|^2  r\, dr dz
 =  \int_{-\infty}^{+\infty} \int_{r_0}^1 \left| \mathcal{L} \omega^\theta - \frac{\partial_r \omega^\theta}{r} + \frac{\omega^\theta}{r^2} \right|^2  r \, drdz \\
\leq &\,\, C \int_{-\infty}^{+\infty} \int_{r_0}^1 \left( | \mathcal{L}^2 \psi|^2 + |\partial_z^2 \mathcal{L} \psi|^2 \right) r \, dr dz
+ \frac{C}{r_0^4} \|\psi\|_{H_r^4(D)}^2
 \leq \,\, \frac{C}{r_0^4} \|\psi \|_{H_r^4(D)}^2.
\ea \ee
Thus we get  the following estimate,
\be \la{3-4-31}
\|\Bo^\theta \|_{H^2( \Omega_{r_0}  )}
\leq C(r_0) \|\psi \|_{H_r^4(D)},
\ee
Hence the proof of  Lemma \ref{lemma3-4-3} is finished.
\end{proof}

Now we give the proof of Lemma \ref{lemma3-4-4}.
\begin{proof}[Proof of Lemma \ref{lemma3-4-4}] With the $H^2$-estimate for $\Bo^\theta$ over the domain $\Omega_r$, it suffices to get the interior
$H^2$-estimate for $\Bo^\theta$. First, we claim that the equation
\be \la{3-4-40}
\Delta \Bo^\theta = (\mathcal{L} + \partial_z^2)^2 \psi \Be_\theta.
\ee
actually holds  on the whole domain $\Omega$.

Indeed, let $\Bp \in C_0^\infty(\Omega; \mathbb{R}^3)$ be  a smooth vector-valued function defined on $\Omega$, and suppose that $\supp(\Bp) \subseteq B_1(0) \times [-Z, Z]$. One has
\be \la{3-4-41} \ba
&\,\, \int_{- \infty}^{ + \infty} \int_{B_1(0)\setminus B_r(0) } \Bo^\theta \cdot \Delta \Bp \, dx dy dz  \\
 = &\,\, - \int_{-\infty}^{+ \infty} \int_{B_1(0) \setminus B_r(0) } \nabla \Bo^\theta : \nabla \Bp \, dx dy dz
- \int_{-\infty}^{+\infty} \int_{\partial B_r (0) } \Bo^\theta \cdot \frac{\partial \Bp }{\partial \Bn } \, dSdz \\
= &\,\, \int_{-\infty}^{+\infty} \int_{B_1(0) \setminus B_r(0) } \Delta \Bo^\theta \cdot \Bp \, dx dy dz
+ \int_{-\infty}^{+\infty} \int_{\partial B_r(0)} \frac{\partial \Bo^\theta } {\partial \Bn } \cdot \Bp \, dSdz\\
&\ \ \ \
- \int_{-\infty}^{+\infty} \int_{\partial B_r(0)} \Bo^\theta \cdot \frac{\partial \Bp }{\partial \Bn} \, dSdz,
\ea \ee
where $\Bn$ is the unit outer normal.
Since $\Bo^\theta\in L^2(\Omega)$ and
\be \nonumber
\int_{-\infty}^{+ \infty} \int_0^1  \left| ( \mathcal{L} + \partial_z^2)^2 \psi    \right|^2  r \, dr dz \leq C \|\psi\|_{H_r^4(D) }^2,
\ee
it follows that
\be \la{3-4-42}
\lim_{r\rightarrow 0+ }   \int_{- \infty}^{ + \infty} \int_{B_1(0)\setminus B_r(0) } \Bo^\theta \cdot \Delta \Bp \, dx dy dz
= \int_{- \infty}^{ + \infty} \int_{B_1(0)} \Bo^\theta \cdot \Delta \Bp \, dx dy dz
\ee
and
\be \la{3-4-43}
\lim_{r \rightarrow 0+ } \int_{- \infty}^{ + \infty} \int_{B_1(0)\setminus B_r(0) }  \Delta \Bo^\theta \cdot  \Bp \, dx dy dz
= \int_{- \infty}^{ + \infty} \int_{B_1(0)} ( \mathcal{L} + \partial_z^2)^2 \psi \Be_\theta \cdot  \Bp \, dx dy dz.
\ee

Note that on $\partial B_r(0)$,
\be \nonumber
\frac{\partial \Bo^\theta }{\partial \Bn} = \frac{\partial \omega^\theta}{\partial r} \Be_\theta.
\ee
Thus
\be \nonumber  \ba
& \,\,\left| \int_{-\infty}^{+\infty} \int_{\partial B_r(0)} \frac{\partial \Bo^\theta } {\partial \Bn } \cdot \Bp \, dSdz \right|
 =  \left| \int_{-\infty}^{+\infty} \int_{\partial B_r(0)} \frac{\partial \omega^\theta}{\partial r} \Be_\theta \cdot \Bp \, dSdz \right| \\
= &\,\, \left| \int_{-\infty}^{+\infty} \int_{\partial B_r(0) } \frac{\partial ( \mathcal{L} \psi) }{\partial r} \Be_\theta \cdot \Bp \, dSdz + \int_{-\infty}^{+ \infty} \int_{\partial B_r(0) } \frac{\partial \psi}{\partial r} \Be_\theta \cdot \frac{\partial^2}{\partial z^2} \Bp \, dSdz                \right| \\
 \leq &\,\, C \int_{-Z }^{+Z} \left|  \frac{\partial}{\partial r}( \mathcal{L} \psi )   \right|(r,z)   \int_{\partial B_r(0)} |\Bp |\, dS dz
+ C \int_{- Z }^{+ Z} \left| \frac{\partial \psi}{\partial r}      \right|(r,z)  \int_{\partial B_r(0)} |\partial_z^2 \Bp|\, dSdz \\
 \leq &\,\, C \int_{-Z}^{+ Z} \left( \sup_{\Omega} | \Bp |
\left| r \frac{\partial }{\partial r} (  \mathcal{L} \psi) ( r, z)  \right| + \sup_{\Omega} |\partial_z^2 \Bp | \cdot \left| r \frac{\partial \psi}{\partial r} ( r, z)  \right| \right) \, dz .
\ea \ee
It follows from the proof of Lemma \ref{lemma3-3-1} that for every fixed $z\in \mathbb{R}$,
\be \nonumber
\left| r \frac{\partial }{\partial r} (  \mathcal{L} \psi)(r, z) \right|
\leq C \left( r^{\frac34} + r |\ln r|^{\frac12} \right) \|\psi( \cdot , z)\|_{X_4}
\ee
and
\be \nonumber
\left| r \frac{\partial \psi}{\partial r}(r, z)   \right| \leq \left|  \frac{\partial (r \psi)}{\partial r} (r, z)   \right| + |\psi(r, z)| \leq C \left( r |\ln r|^{\frac12} + r^{\frac34} \right)  \|\psi(\cdot, z) \|_{X_4}.
\ee
Hence,
\be \la{3-4-48}
\lim_{r\rightarrow 0+} \left| \int_{-\infty}^{+\infty} \int_{\partial B_r(0)} \frac{\partial \Bo^\theta } {\partial \Bn } \cdot \Bp \, dSdz \right|   = 0 .
\ee

On the other hand,
\be \la{3-4-52}
\left| \int_{-\infty}^{+\infty} \int_{\partial B_r(0)}
\Bo^\theta \cdot \frac{\partial \Bp}{\partial \Bn} \, dSdz    \right| \leq C \int_{-Z}^{+ Z}
\left( \sup_{\Omega} |\nabla \Bp | r |\mathcal{L}\psi (r, z)|
+ \sup_{\Omega} |\nabla^3 \Bp |  r |\psi(r, z) |   \right)\, dz.
\ee
It follows from the proof of Lemma \ref{lemma3-3-1} that for every fixed $z \in \mathbb{R}$, one has
\be \nonumber
|\mathcal{L}\psi(r ,z) | +
|\psi(r, z)| \leq C r^{\frac34} \|\psi( \cdot , z)\|_{X_4}.
\ee
Hence,
\be \la{3-4-55}
\lim_{r \rightarrow 0+ } \int_{-\infty}^{+ \infty} \int_{\partial B_r(0)} \Bo^\theta
\cdot \frac{\partial \Bp}{\partial \Bn } \, dSdz = 0.
\ee

Collecting \eqref{3-4-41}--\eqref{3-4-55} together gives
\be \nonumber
 \int_{- \infty}^{ + \infty} \int_{B_1(0) } \Bo^\theta \cdot \Delta \Bp \, dx dy dz  =   \int_{-\infty}^{+\infty} \int_{B_1(0) } ( \mathcal{L} + \partial_z^2)^2 \psi   \Be_\theta \cdot \Bp \, dx dy dz.
\ee
This implies that the equation \eqref{3-4-40} holds in $\Omega$ as claimed.
 According to the regularity theory for elliptic equations (\cite{GT}),  for every $0<r_1 < 1$, one has
\be \nonumber
\|\Bo^\theta\|_{H^2(B_{r_1} (0) \times \mathbb{R})} \leq C (r_1) \|( \mathcal{L} + \partial_z^2)^2 \psi \Be_
\theta\|_{L^2(\Omega)} + C(r_1) \|\Bo^\theta\|_{L^2(\Omega)}
\leq C (r_1) \|\psi \|_{H_r^4(D)}.
\ee
This, together with Lemma \ref{lemma3-4-3}, gives \eqref{globalomega} so that the proof of Lemma \ref{lemma3-4-4} is completed.
\end{proof}


{\bf Acknowledgement.}
The research of Wang was partially supported by NSFC grant 11671289. The research of  Xie was partially supported by  NSFC grants 11971307, 11631008, and 11422105,  and Young Changjiang Scholar of Ministry of Education in China. Xie would like to thank the hospitality and support of The Institute of Mathematical Sciences, The Chinese University of Hong Kong where part of the work was done.  The authors would like to thank Professor Maekawa for helpful discussions.

\end{document}